\else\message{WARNING: the \string\diagram\space
command is already defined and will not be loaded again}\expandafter\endinput
\edef\cdrestoreat{%%
\noexpand\catcode`\noexpand\@=\the\catcode`\@%%
\noexpand\catcode`\noexpand\#=\the\catcode`\#%%
\noexpand\catcode`\noexpand\$=\the\catcode`\$%%
\noexpand\catcode`\noexpand\<=\the\catcode`\<%%
\noexpand\catcode`\noexpand\>=\the\catcode`\>%%
\noexpand\catcode`\noexpand\:=\the\catcode`\:%% Johannes L. Braams's
\noexpand\catcode`\noexpand\;=\the\catcode`\;%% Babel languages package
\noexpand\catcode`\noexpand\!=\the\catcode`\!%% makes these \active.
\noexpand\catcode`\noexpand\?=\the\catcode`\?%%
\noexpand\catcode`\noexpand\+=\the\catcode'53%% texinfo @+ is @outer@active
}\catcode`\@=11 \catcode`\#=6 \catcode`\<=12 \catcode`\>=12 \catcode'53=12
\let\diagram@help@messages y\fi
\def\cdps@Rokicki#1{\special{ps:#1}}\let\cdps@dvips\cdps@Rokicki\let
\let\CD@HB\cdps@Rokicki\let\CD@IK\cdps@Rokicki
\let\CD@HB\cdps@Rokicki%%
\def\cdps@Bechtolsheim#1{\special{dvitps: Literal "#1"}}%
\let\cdps@dvitps\cdps@Bechtolsheim\let\cdps@IntegratedComputerSystems
\def\cdps@Clark#1{\special{dvitops: inline #1}}%%
\let\cdps@dvitops\cdps@Clark%%
\let\cdps@OzTeX\empty\let\cdps@oztex\empty\let\cdps@Trevorrow\empty%%
\def\cdps@Coombes#1{\special{ps-string #1}}%%
\def\CD@DE{\global\let}\def\CD@RH{\outer\def}
\xdef\CD@o{\string\{}\xdef\CD@yC{\string\}}%%
\xdef\CD@S{\string\&}%%ascii three ands
\xdef\CD@nC{\string\$}\gdef\CD@LG{$$}%%ascii three dollars
\gdef\CD@uG{^^J}%%ascii two carets
\gdef\CD@uG{^^M}%%ascii two carets
\gdef\CD@uG{^^J}%%ascii two carets
\mathchardef\lessthan='30474 \mathchardef\greaterthan='30476
\font\tenln=line10\relax%% Hit return - who needs diagonals?
\let\tenlnw\nullfont\else%%
\font\tenlnw=linew10\relax%% Hit return - who needs diagonals?
\def\cd@shouldnt#1{\CD@KB{* THIS (#1) SHOULD NEVER HAPPEN! *}}
\def\get@round@pair#1(#2,#3){#1{#2}{#3}}%%ascii round brackets ()
\def\get@square@arg#1[#2]{#1{#2}}%%ascii square brackets []
\def\CD@AE#1{\CD@PK\let\CD@DH\CD@@E\CD@@E#1,],}%%ascii sq brackets
\def\CD@m{[}\def\CD@RD{]}\def\commdiag#1{{\let\enddiagram\relax\diagram[]#1%
\enddiagram}}
\def\CD@BF{{\ifx\CD@EH[\aftergroup\get@square@arg\aftergroup\CD@YH\else
\aftergroup\CD@JH\fi}}%%
\def\CD@CF#1#2{\def\CD@YH{#1}\def\CD@JH{#2}\futurelet\CD@EH\CD@BF}
\def\CD@KK{|}
\def\CD@PB{%% arguments to maps inside diagrams
\tokcase\CD@DD:\CD@y\break@args;\catcase\@super:\upper@label;\catcase\CD@lJ:%
\lower@label;\tokcase{~}:\middle@label;%%ascii tilde
\tokcase<:\CD@iF;%%ascii less-than
\tokcase>:\CD@iI;%%ascii greater-than
\tokcase(:\CD@BC;%%)%ascii open round bracket
\tokcase[:\optional@;%%]%ascii open square bracket
\tokcase.:\CD@JJ;%%ascii dot 12.7.94
\catcase\space:\eat@space;\catcase\bgroup:\positional@;\default:\CD@@A
\break@args;\endswitch}
\def\switch@arg{%% arguments to horizontal maps outside diagrams
\catcase\@super:\upper@label;\catcase\CD@lJ:\lower@label;\tokcase[:\optional@
;%%]%ascii open square bracket
\tokcase.:\CD@JJ;%%ascii dot 12.7.94 % ; was : before 15.6.97
\catcase\space:\eat@space;\catcase\bgroup:\positional@;\tokcase{~}:%
\middle@label;%%ascii tilde (questionable!)
\default:\CD@y\break@args;\endswitch}
\let\CD@tJ\relax\ifx\protect\CD@qK\let\protect\relax\fi\ifx\AtEndDocument
\def\CD@PG{\CD@gB}\def\CD@GF#1#2{}\else\def\CD@PG#1{\edef\CD@CH{#1}%
\expandafter\CD@oC\CD@CH\CD@OD}\def\CD@oC#1\CD@OD{\AtEndDocument{\typeout{%
\CD@tA: #1}}}\def\CD@GF#1#2{\gdef#1{#2}\AtEndDocument{#1}}\fi\def\CD@ZA#1#2{%
\def#1{\CD@PG{#2\CD@mD\CD@W}\CD@DE#1\relax}}\def\CD@uF#1\repeat{\def\CD@p{#1}%
\CD@OF}\def\CD@OF{\CD@p\relax\expandafter\CD@OF\fi}\def\CD@sF#1\repeat{\def
\CD@q{#1}\CD@PF}\def\CD@PF{\CD@q\relax\expandafter\CD@PF\fi}\def\CD@tF#1%
\def\CD@QF{\CD@r\relax\expandafter\CD@QF\fi}\def
\def\CD@rG#1#2{\csname newtoks\endcsname#1#1=%
\expandafter{\csname#2\endcsname}}\else\csname newtoks\endcsname\no@cd@help
\def\CD@rG#1#2{\let#1\no@cd@help}\fi\chardef\CD@lF
\chardef\CD@lI=2 \chardef\CD@MH=5 \chardef\CD@tH=6 \chardef\CD@sH=7
\chardef\CD@PC=9 \dimendef\CD@hI=2 \dimendef\CD@hF=3 \dimendef\CD@mF=4
\def\sdef#1#2{\def#1{#2}%
}\def\CD@L#1{\expandafter\aftergroup\csname#1\endcsname}\def\CD@RC#1{%
\expandafter\def\csname#1\endcsname}\def\CD@sD#1{\expandafter\gdef\csname#1%
\endcsname}\def\CD@vC#1{\expandafter\edef\csname#1\endcsname}\def\CD@nF#1#2{%
\expandafter\let\csname#1\expandafter\endcsname\csname#2\endcsname}\def\CD@EE
\def\CD@AK#1{\csname#1\endcsname}\def\CD@XJ#1{\expandafter\show\csname#1%
\endcsname}\def\CD@ZJ#1{\expandafter\showthe\csname#1\endcsname}\def\CD@WJ#1{%
\expandafter\showbox\csname#1\endcsname}\def\CD@tA{Commutative Diagram}\edef
\edef\CD@dC{\string\diagram}\edef\CD@HD{\string\enddiagram
}\edef\CD@EC{\string\\}\def\CD@eF{LaTeX}\ifx\@ignoretrue\CD@qK\expandafter
\def\@ignoretrue{%
\global\ignore@true}\def\@ignorefalse{\global\ignore@false}\fi
\def\CD@g{{\ifnum0=`}\fi}\def\CD@wC{\ifnum0=`{\fi}}\def\catcase#1:{\ifcat
\noexpand\CD@EH#1\CD@tJ\expandafter\CD@kC\else\expandafter\CD@dJ\fi}\def
\def\CD@kC#1;#2\endswitch{#1}\def\CD@dJ#1;{}\let\endswitch\relax\def\default:%
\def\at@{@}\fi\edef\CD@P{\CD@o pt\CD@yC}%
\lTo\sp{#1}\sb{#2}\CD@z}\CD@RC{\CD@P)}#1)#2){\CD@z\rTo\sp{#1}\sb{#2}\CD@z}%
\def\CD@O{\def\endCD{\enddiagram}\CD@RC{\CD@P A}##1A##2A{\uTo<{##1}>{##2}%
\CD@z\CD@z}\CD@RC{\CD@P V}##1V##2V{\dTo<{##1}>{##2}\CD@z\CD@z}\CD@RC{\CD@P=}{%
\CD@z\hEq\CD@z}\CD@RC{\CD@P\CD@KK}{\vEq\CD@z\CD@z}\CD@RC{\CD@P\string\vert}{%
\vEq\CD@z\CD@z}\CD@RC{\CD@P.}{\CD@z\CD@z}\let\CD@z\CD@Q}\def\CD@IE{\let\tmp
\CD@JE\ifcat A\noexpand\CD@CH\else\ifcat=\noexpand\CD@CH\else\ifcat\relax
\noexpand\CD@CH\else\let\tmp\at@\fi\fi\fi\tmp}\def\CD@JE#1{\CD@nF{tmp}{\CD@P
\string#1}\ifx\tmp\relax\def\tmp{\at@#1}\fi\tmp}\def\CD@z{}\begingroup
\def\aftergroup\CD@T\aftergroup{\aftergroup\def\catcode`\@\active
\aftergroup @\endgroup{\futurelet\CD@CH\CD@IE}}\def\CD@uK#1{\CD@nF{x}{tex_#1:%
D}\ifx\x\relax\else\CD@nF{#1}{x}\fi} \def\CD@vK{\CD@uK{par}\CD@uK{everypar}%
\CD@uK{noindent}\CD@uK{parskip}\CD@uK{unskip}\CD@uK{hskip}\CD@uK{indent}}
\newdimen\CD@OA
\newdimen\CD@PA\CD@tG\CD@gE\CD@@A\CD@y\CD@tG\CD@hE\CD@EA\CD@BA\newdimen\CD@RA
\newdimen\CD@SA\newcount\CD@yA\newcount\CD@zA\newdimen\CD@QA\newbox\CD@DA
\def\CD@V#1#2{\ifdim#1<%
#2\relax#1=#2\relax\fi}\def\CD@X#1#2{\ifdim#1>#2\relax#1=#2\relax\fi}%
\newdimen\CD@XH\CD@XH=1sp \newdimen\CD@zC\CD@zC\z@\def\CD@cJ{\ifdim\CD@zC=1em%
\else\CD@nJ\fi}\def\CD@nJ{\CD@zC1em\def\CD@NC{\fontdimen8\textfont3 }\CD@@J
\CD@NJ\setbox0=\vbox{\CD@t\noindent\CD@k\null\penalty-9993\null\CD@ND\null
\endgraf\setbox0=\lastbox\unskip\unpenalty\setbox1=\lastbox\global\setbox
\CD@IG=\hbox{\unhbox0\unskip\unskip\unpenalty\setbox0=\lastbox}\global\setbox
\CD@KG=\hbox{\unhbox1\unskip\unpenalty\setbox1=\lastbox}}}\newdimen\CD@@I
\def\CD@zH#1{\multiply#1\tw@\advance#1\ifnum
#1<\z@-\else+\fi\CD@@I\divide#1\tw@\divide#1\CD@@I\multiply#1\CD@@I}\def
\newdimen\CD@LF\newdimen\CD@oI\def
\def\CD@RJ#1{\CD@zD\count@\CD@@I#1\ifnum
\count@>\z@\divide\CD@@I\count@\fi\CD@gI\CD@NJ}\def\CD@NJ{\dimen@\CD@QC
\count@\dimen@\divide\count@5\divide\count@\CD@@I\edef\CD@OC{\the\count@}}%
\def\CD@AJ{\CD@QJ\z@}\def\CD@QJ#1{\CD@tI\axisheight\advance\CD@tI#1\relax
\advance\CD@tI-.5\CD@oI\CD@zH\CD@tI\CD@sI-\CD@tI\advance\CD@tI\CD@LF}%
\newdimen\CD@DC\CD@DC\z@\newdimen\CD@eJ\CD@eJ\z@\def\CD@CJ#1{\CD@sI#1\relax
\CD@tI\CD@sI\advance\CD@tI\CD@LF\relax}\def\horizhtdp{height\CD@tI depth%
\CD@sI}\def\axisheight{\fontdimen22\the\textfont\tw@}\def\script@axisheight{%
\fontdimen22\the\scriptfont\tw@}\def\ss@axisheight{\fontdimen22\the
\scriptscriptfont\tw@}\def\CD@NC{0.4pt}\def\CD@VK{\fontdimen3\textfont\z@}%
\def\CD@UK{\fontdimen3\textfont\z@}\newdimen\PileSpacing\newdimen\CD@nA\CD@nA
\def\CD@RG{\ifincommdiag1.3em\else2em\fi}\newdimen\CD@YB\def\CellSize{%
\afterassignment\CD@kB\DiagramCellHeight}\newdimen\DiagramCellHeight
\newdimen\DiagramCellWidth\DiagramCellWidth-%
\def\CD@kB{\DiagramCellWidth\DiagramCellHeight}\def\CD@QC{3em}%
\newdimen\MapShortFall\def\MapsAbut{\MapShortFall\z@\objectheight\z@
\objectwidth\z@}\newdimen\CD@iA\CD@iA\z@\CD@tG\CD@vE\CD@aB\CD@ZB\expandafter
\fi\CD@nF{ifUglyObsoleteDiagrams}{relax}\newif\ifUglyObsoleteDiagrams
\def\CD@nK{\CD@aB\UglyObsoleteDiagramsfalse}\def\CD@oK{\CD@ZB
\UglyObsoleteDiagramstrue}\CD@vE\CD@nK\else\CD@oK\fi\CD@tG\CD@hK\CD@dK\CD@cK
\def\CD@sK{\ifx\pdfoutput\CD@qK\else\ifx\pdfoutput\relax\else\ifnum
\pdfoutput>\z@\CD@pK\fi\fi\fi} \def\CD@pK{\global\CD@dK\global\CD@aB\global
\UglyObsoleteDiagramsfalse\global\let\CD@n\empty\global\let\CD@oK\relax
\global\let\CD@pK\relax\global\let\CD@sK\relax}\def\CD@tK#1{}\ifx\pdfliteral\CD@qK\else\ifx\pdfliteral\relax\else\let\CD@tK
\def\newarrowhead{\CD@mG h\CD@BG\CD@GG>}\def\newarrowtail{\CD@mG t%
\CD@BG\CD@GG>}\def\newarrowmiddle{\CD@mG m\CD@BG\hbox@maths\empty}\def
\def\CD@mG#1#2#3#4#5#6#7#8#9{\CD@RC{r#1%
:#5}{#2{#6}}\CD@RC{l#1:#5}{#2{#7}}\CD@RC{d#1:#5}{#3{#8}}\CD@RC{u#1:#5}{#3{#9}%
}\CD@vC{-#1:#5}{\expandafter\noexpand\csname-#1:#4\endcsname\noexpand\CD@MC}%
\CD@vC{+#1:#5}{\expandafter\noexpand\csname+#1:#4\endcsname\noexpand\CD@MC}}%
\CD@ZA\CD@MC{\CD@eF\space diagonals are used unless PostScript is set}\def
\def\CD@@J{\CD@IJ\CD@sJ<>ht\CD@IJ
\CD@sJ<>th}\def\CD@IJ#1#2#3#4#5{\CD@HJ{r#4}{#3}{l#5}{#2}{r#4:#1}\CD@HJ{r#5}{#%
2}{l#4}{#3}{l#4:#1}\CD@HJ{d#4}{#3}{u#5}{#2}{d#4:#1}\CD@HJ{d#5}{#2}{u#4}{#3}{u%
#4:#1}}\def\CD@HJ#1#2#3#4#5{\begingroup\aftergroup\CD@GJ\CD@L{#1+:#2}\CD@L{#1%
:#2}\CD@L{#3:#4}\CD@L{#5}\endgroup}\def\CD@GJ#1#2#3#4{\csname newbox%
\endcsname#1\def#2{\copy#1}\def#3{\copy#1}\setbox#1=\box\voidb@x}\def\CD@sJ{}%
\def\CD@GJ#1#2#3#4{\setbox#1=#4}\ifx\tenln\nullfont\def\CD@sJ{vee}\else
\let\CD@sJ\CD@eF\fi\def\CD@xF#1#2#3{\begingroup\aftergroup\CD@wF\CD@L{#1#2:#3%
#3}\CD@L{#1#2:#3}\aftergroup\CD@yF\CD@L{#1#2:#3-#3}\CD@L{#1#2:#3}\endgroup}%
\def\CD@wF#1#2{\def#1{\hbox{\rlap{#2}\kern.4\CD@zC#2}}}\def\CD@yF#1#2{\def#1{%
\hbox{\rlap{#2}\kern.4\CD@zC#2\kern-.4\CD@zC}}}\CD@xF lh>\CD@xF rt>\CD@xF rh<%
\def\CD@yF#1#2{\def#1{\hbox{\kern-.4\CD@zC\rlap{#2}\kern.4\CD@zC#2}%
}}\CD@xF rh>\CD@xF lh<\CD@xF lt>\CD@xF lt<\def\CD@wF#1#2{\def#1{\vbox{\vbox to%
\z@{#2\vss}\nointerlineskip\kern.4\CD@zC#2}}}\def\CD@yF#1#2{\def#1{\vbox{%
\vbox to\z@{#2\vss}\nointerlineskip\kern.4\CD@zC#2\kern-.4\CD@zC}}}\CD@xF uh>%
\def\CD@yF#1#2{\def#1{\vbox{\kern-.4\CD@zC\vbox
to\z@{#2\vss}\nointerlineskip\kern.4\CD@zC#2}}}\CD@xF dh>\CD@xF ut>\CD@xF uh<%
\def\CD@BG#1{\hbox{\mathsurround\z@\offinterlineskip\CD@k\mkern-1.5%
mu{#1}\mkern-1.5mu\CD@ND}}\def\hbox@maths#1{\hbox{\CD@k#1\CD@ND}}\def\CD@GG#1%
\CD@oI\CD@zH{\dimen0}\kern-\dimen0%
\def\CD@sB#1{\hbox to2\CD@LF{\hss\offinterlineskip\mathsurround
\z@\CD@k{#1}\CD@ND\hss}}\def\CD@vF#1{\hbox{\mathsurround\z@\CD@k{#1}\CD@ND}}%
\def\CD@bE#1{\hbox{\kern-.15\CD@zC\CD@k{#1}\CD@ND\kern-.15\CD@zC}}\def\CD@MK#%
\def\@fillh{%
\xleaders\vrule\horizhtdp}\def\@fillv{\xleaders\hrule width\CD@LF}\CD@nF{rf:-%
\def
\def\CD@BD{\CD@U\null\CD@@D\null\CD@@D\null}\edef\CD@lG{\string\newarrow}\def
\edef\CD@QG{#4}\edef\CD@jD{#5}\edef\CD@LE{#6}\let\CD@HE\CD@sG\let\CD@FK
\let\@x\CD@AH\ifx\CD@oJ\CD@iD\let\CD@oJ\empty\fi\ifx\CD@LE\CD@jD\let
\def\CD@LI{r}\def\CD@SF{l}\def\CD@IC{d}\def\CD@yJ{u}\def\CD@gH
\def\@m{-}\ifx\CD@iD\CD@jD\ifx\CD@QG\CD@iD\let\CD@QG\empty\fi\ifx\CD@LE
\let\@x\CD@yG\else\let\@x\CD@zG\fi\fi\else\edef\CD@a{%
\CD@iD\CD@oJ}\ifx\CD@a\empty\ifx\CD@QG\CD@jD\let\CD@QG\empty\fi\fi\fi\ifmmode
\CD@b\CD@L{r\@name}\fi\fi\endgroup}\def
\def\CD@BH{\CD@vG\CD@IC\CD@yJ du%
\Vertical@Map}\def\CD@AH{\CD@vG\CD@gH\@m+-\Vector@Map}\def\CD@yG{\CD@vG\CD@gH
\@m+-\Slant@Map}\def\CD@zG{\CD@vG\CD@gH\@m+-\Slope@Map}\catcode`\/=\active
\def\CD@vG#1#2#3#4#5{\CD@jG#1#3#5t:\CD@oJ/f:\CD@iD/m:\CD@QG/f:\CD@jD/h:\CD@LE
//\CD@jG#2#4#5h:\CD@LE/f:\CD@jD/m:\CD@QG/f:\CD@iD/t:\CD@oJ//}\def\CD@jG#1#2#3%
\def\CD@M#1/{\edef\CD@EH{#1}\ifx\CD@EH\empty\else\CD@L{%
\CD@fG#1}\expandafter\CD@M\fi}\catcode`\/=12 \def\CD@nG#1#2#3#4#5#6#7#8{%
\aftergroup\sdef\CD@L{#6\@name}\aftergroup{\CD@L{#2\@name}\if#2#4\aftergroup
\CD@CI\else\aftergroup\CD@BI\fi\CD@L{#1\@name}%
%% ASCII round brackets and comma (,) appear on the next line
\aftergroup(\aftergroup#3\aftergroup,\aftergroup#5\aftergroup)\aftergroup}}%
\def\CD@oB#1#2#3#4{\expandafter\ifx\csname#1#2:#4\endcsname\relax\CD@y\CD@gB{%
arrow#3 "#4" undefined}\fi}\CD@rG\CD@VE{All five components must be defined
before an arrow.}\CD@rG\CD@SE{\CD@lG, unlike \string\HorizontalMap, is a
declaration.}\def\CD@b#1{\CD@YA{Arrows \string#1 etc could not be defined}%
\CD@VE}\def\CD@kG{\CD@YA{misplaced \CD@lG}\CD@SE}\def\newdiagramgrid#1#2#3{%
\CD@RC{cdgh@#1}{#2,],}%% ASCII close square bracket
\CD@RC{cdgv@#1}{#3,],}}%% ASCII close square bracket
\def\CD@yH{\CD@VA6 }\def\CD@OB{\CD@VA1 \global\CD@yA1
\CD@DE\CD@YF\empty}\def\CD@YF{}\def\CD@nB#1{\relax\CD@MD\edef\CD@vJ{#1}%
\begingroup\CD@rE\else\ifcase\CD@VA\ifmmode\else\CD@YG\CD@E0\fi\or\CD@cE5\or
\CD@YG\CD@F5\or\CD@YG\CD@B5\or\CD@YG\CD@B5\or\CD@YG\CD@C5\or\CD@cE7\or\CD@YG
\CD@D7\fi\fi\endgroup\xdef\CD@YF{#1}}\def\CD@pB#1#2#3#4#5{\relax\CD@MD\xdef
\CD@vJ{#4}\begingroup\ifnum\CD@VA<#1 \expandafter\CD@cE\ifcase\CD@VA0\or#2\or
#3\else#2\fi\else\ifnum\CD@VA<6 \CD@tJ\CD@YG\CD@B#2\else\CD@YG\CD@G#2\fi\fi
\endgroup\CD@DE\CD@YF\CD@vJ\ifincommdiag\let\CD@ZD#5\else\let\CD@ZD\CD@LK\fi}%
\def\CD@yI{\global\CD@yA=\ifnum\CD@VA<5 1\else2\fi\relax}\def\CD@OI{\CD@VA
\CD@yA}\def\CD@cE#1{\aftergroup\CD@VA\aftergroup#1\aftergroup\relax}\def
\let\CD@yI\relax\let\CD@OI\relax}\def\CD@FH#1#2#3#4#5{\ifincommdiag\let\CD@ZD
#5\else\xdef\CD@vJ{#4}\let\CD@ZD\CD@LK\fi}\def\CD@YG#1{\aftergroup#1%
\aftergroup\relax\CD@cE}\def\CD@B{\CD@YE\CD@S\CD@ME\CD@Q}\def\CD@G{\CD@YE{%
\CD@yC\CD@S}\CD@XE\CD@QD\CD@Q}\def\CD@F{\CD@YE{*\CD@S}\CD@RE\clubsuit\CD@Q}%
\def\CD@C{\CD@YE{\CD@S*\CD@S}\CD@RE\CD@Q\clubsuit\CD@Q}\def\CD@D{\CD@YE\CD@EC
\CD@TE\\}\def\CD@E{\CD@YE\CD@nC\CD@QE\CD@k}\def\CD@LK{\CD@YA{\CD@vJ\space
ignored \CD@dH}\CD@WE}\def\CD@FE{}\def\CD@d{\CD@YA{maps must never be enclosed
in braces}\CD@OE}\def\CD@dH{outside diagram}\def\CD@FC{\string\HonV, \string
\VonH\space and \string\HmeetV}\CD@rG\CD@ME{The way that horizontal and
vertical arrows are terminated implicitly means\CD@uG that they cannot be
mixed with each other or with \CD@FC.}\CD@rG\CD@XE{\string\pile\space is for
parallel horizontal arrows; verticals can just be put together in\CD@uG a cell%
. \CD@FC\space are not meaningful in a \string\pile.}\CD@rG\CD@RE{The
horizontal maps must point to an object, not each other (I've put in\CD@uG one
which you're unlikely to want). Use \string\pile\space if you want them
parallel.}\CD@rG\CD@TE{Parallel horizontal arrows must be in separate layers
of a \string\pile.}\CD@rG\CD@QE{Horizontal arrows may be used \CD@dH s, but
must still be in maths.}\CD@rG\CD@WE{Vertical arrows, \CD@FC\space\CD@dH s don%
't know where\CD@uG where to terminate.}\CD@rG\CD@OE{This prevents them from
stretching correctly.}\def\CD@YE#1{\CD@YA{"#1" inserted \ifx\CD@YF\empty
before \CD@vJ\else between \CD@YF\ifx\CD@YF\CD@vJ s\else\space and \CD@vJ\fi
\fi}}\count@=\year\multiply\count@12 \advance\count@\month\ifnum\count@>24391
\def
\def\CD@TJ{\CD@GB-%
9999 \let\CD@ZD\CD@XD\ifincommdiag\else\CD@cJ\ifinpile\else\skip2\z@ plus 1.5%
\CD@VK minus .5\CD@UK\skip4\skip2 \fi\fi\let\CD@kD\@fillh\CD@nF{fill@dot}{rf:%
.}}\def\Vector@Map{\CD@HK4}\def\Slant@Map{\CD@HK{\CD@EF255\else6\fi}}\def
\def\CD@HK#1#2#3#4#5#6{\CD@LC\def\CD@WK{2}\def\CD@aK{%
2}\def\CD@ZK{1}\def\CD@bK{1}\let\Horizontal@Map\CD@nI\def\CD@OG{#1}\def\CD@NI
{\CD@U#2#3#4#5#6}}\def\CD@nI{\CD@TJ\CD@JB\let\CD@ZD\CD@TD\CD@qD}\CD@tG\CD@pE
\def\cds@missives{\CD@rA}\def\CD@TD{\CD@vE\let\CD@OG\CD@OC
\CD@x\CD@zE\CD@WF\fi\setbox0\hbox{\incommdiagfalse\CD@HI}\CD@pE\CD@aD\else
\global\CD@YC\CD@bD\fi\ifvoid6 \ifvoid7 \CD@eE\fi\fi\CD@zE\else\CD@BD\global
\CD@YC\let\CD@CG\CD@IH\CD@YD\fi\else\CD@NI\CD@MI\global\CD@YC\CD@YD\fi}\def
\def\CD@U#1#2#3#4#5{\let\CD@oJ#1\let\CD@iD#2\let\CD@QG#3%
\let\CD@jD#4\let\CD@LE#5\CD@TB\ifx\CD@iD\CD@jD\CD@UB\fi}\def\CD@qD#1#2#3#4#5{%
\CD@U#1#2#3#4#5\CD@tD}\def\Vertical@Map{\CD@pB433{vertical map}\CD@cD\CD@LC
\CD@GB-9995 \let\CD@kD\@fillv\CD@nF{fill@dot}{df:.}\CD@qD}\def\break@args{%
\def\CD@tD{\CD@ZD}\CD@ZD\endgroup\aftergroup\CD@FE}\def\CD@MJ{\setbox1=\CD@oJ
\setbox5=\CD@LE\ifvoid3 \ifx\CD@QG\null\else\setbox3=\CD@QG\fi\fi\CD@@G2%
\CD@iD\CD@@G4\CD@jD}\def\CD@pF#1{\ifvoid1\else\CD@oF1#1\fi\ifvoid2\else\CD@oF
2#1\fi\ifvoid3\else\CD@oF3#1\fi\ifvoid4\else\CD@oF4#1\fi\ifvoid5\else\CD@oF5#%
1\fi} \def\CD@oF#1#2{\setbox#1\vbox{\offinterlineskip\box#1\dimen@\prevdepth
\advance\dimen@-#2\relax\setbox0\null\dp0\dimen@\ht0-\dimen@\box0}}\def\CD@@G
\CD@ZA\CD@BK{\string\HorizontalMap, \string\VerticalMap\space and
\string\DiagonalMap\CD@uG are obsolete - use \CD@lG\space to pre-define maps}%
\def\HorizontalMap#1#2#3#4#5{\CD@BK\CD@nB{old horizontal map}\CD@LC\CD@TJ\def
\CD@oJ{\CD@UH{#1}}\CD@SH\CD@iD{#2}\def\CD@QG{\CD@UH{#3}}\CD@SH\CD@jD{#4}\def
\CD@LE{\CD@UH{#5}}\CD@tD}\def\VerticalMap#1#2#3#4#5{\CD@BK\CD@pB433{vertical
map}\CD@cD\CD@LC\CD@GB-9995 \let\CD@kD\@fillv\def\CD@oJ{\CD@GG{#1}}\CD@VH
\CD@iD{#2}\def\CD@QG{\CD@GG{#3}}\CD@VH\CD@jD{#4}\def\CD@LE{\CD@GG{#5}}\CD@tD}%
\def\DiagonalMap#1#2#3#4#5{\CD@BK\CD@LC\def\CD@OG{4}\let\CD@kD\CD@qK\let
\CD@ZD\CD@YD\def\CD@WK{2}\def\CD@aK{2}\def\CD@ZK{1}\def\CD@bK{1}\def\CD@QG{%
\CD@vF{#3}}\ifPositiveGradient\let\mv\raise\def\CD@oJ{\CD@vF{#5}}\def\CD@iD{%
\CD@vF{#4}}\def\CD@jD{\CD@vF{#2}}\def\CD@LE{\CD@vF{#1}}\else\let\mv\lower\def
\CD@oJ{\CD@vF{#1}}\def\CD@iD{\CD@vF{#2}}\def\CD@jD{\CD@vF{#4}}\def\CD@LE{%
\CD@vF{#5}}\fi\CD@tD}\def\CD@aE{-}\def\CD@AD{\empty}\def\CD@SH{\CD@EG\CD@bE
\CD@aE\@fillh}\def\CD@VH{\CD@EG\CD@MK\CD@KK\@fillv}\def\CD@EG#1#2#3#4#5{\def
\CD@CH{#5}\ifx\CD@CH#2\let#4#3\else\let#4\null\ifx\CD@CH\empty\else\ifx\CD@CH
\CD@AD\else\let#4\CD@CH\fi\fi\fi}\def\CD@UH#1{\hbox{\mathsurround\z@
\offinterlineskip\def\CD@CH{#1}\ifx\CD@CH\empty\else\ifx\CD@CH\CD@AD\else
\CD@k\mkern-1.5mu{\CD@CH}\mkern-1.5mu\CD@ND\fi\fi}}\def\CD@yD#1#2{\setbox#1=%
\hbox\bgroup\setbox0=\hbox{\CD@k\labelstyle()\CD@ND}%% ASCII round brackets
\setbox1=\null\ht1\ht0\dp1\dp0\box1 \kern.1\CD@zC\CD@k\bgroup\labelstyle
\aftergroup\CD@LD\CD@xD}\def\CD@LD{\CD@ND\kern.1\CD@zC\egroup\CD@tD}\def
\def\CD@mJ{%% qualifiers on label arguments
\catcase\bgroup:\CD@v;\catcase\egroup:\missing@label;\catcase\space:\CD@TF;%
\tokcase[:\CD@XF;%%]%ascii close square bracket 
\default:\CD@zJ;\endswitch}\def\CD@v{\let\CD@MD\CD@c\let\CD@CH}\def\CD@zJ#1{%
\let\CD@UF\egroup{\let\actually@braces@missing@around@macro@in@label\CD@ZH
\let\CD@MD\CD@xC\let\CD@UF\CD@VF#1%
\actually@braces@missing@around@macro@in@label}\CD@UF}\def
\def\missing@label
\egroup\CD@YA{missing label}\CD@PE}\def\CD@xC{\egroup\missing@label}\outer
\def\CD@ZH{}\def\CD@UF{}\def\CD@VF{\CD@wC\CD@UF}\def\CD@MD{}\def\CD@XF{\let
\CD@N\CD@xD\get@square@arg\CD@AE}\CD@rG\CD@PE{The text which has just been
read is not allowed within map labels.}\def\CD@c{\egroup\CD@YA{missing \CD@yC
\space inserted after label}\CD@PE}\def\upper@label{\CD@oD\CD@yD6}\def
\def\middle@label{%
\CD@yD3}\CD@tG\CD@yE\CD@pD\CD@oD\def\CD@iF{\ifPositiveGradient\CD@tJ
\expandafter\upper@label\else\expandafter\lower@label\fi}\def\CD@iI{%
\ifPositiveGradient\CD@tJ\expandafter\lower@label\else\expandafter
\upper@label\fi}\def\positional@{\CD@gB{labels as positional arguments are
obsolete}\CD@yE\CD@tJ\expandafter\upper@label\else\expandafter\lower@label\fi
-}\def\CD@tD{\futurelet\CD@EH\switch@arg}\def\eat@space{\afterassignment
\CD@tD\let\CD@EH= }\def\CD@TF{\afterassignment\CD@xD\let\CD@EH= }\def\CD@BC{%
\get@round@pair\CD@uD}\def\CD@uD#1#2{\def\CD@WK{#1}\def\CD@aK{#2}\CD@tD}\def
\def\CD@JJ.{\CD@sC\CD@tD}\def
\def\CD@MI{}\def\CD@@E#1,{\CD@nH#1,\begingroup\ifx\@name\CD@RD
\CD@FF\aftergroup\CD@e\fi\aftergroup\CD@jC\else\expandafter\def\expandafter
\CD@RF\expandafter{\csname\@name\endcsname}\expandafter\CD@vD\CD@RF\CD@KD\ifx
\CD@RF\empty\aftergroup\CD@pC\expandafter\aftergroup\csname\CD@FB\@name
\endcsname\expandafter\aftergroup\csname\CD@FB @\@name\endcsname\else\gdef
\CD@GE{#1}\CD@gB{\string\relax\space inserted before `[\CD@GE'}\message{(I was
trying to read this as a \CD@tA\ option.)}\aftergroup\CD@H\fi\fi\endgroup}%
\def\CD@vD#1#2\CD@KD{\def\CD@RF{#2}}\def\CD@jC{\let\CD@CH\CD@N\let\CD@N\relax
\CD@CH}\def\CD@H#1],{%% ASCII close square bracket
\CD@jC\relax\def\CD@RF{#1}\ifx\CD@RF\empty\def\CD@RF{[\CD@GE]}%
%% ASCII open and close square bracket
\else\def\CD@RF{[\CD@GE,#1]}%% ASCII open and close square bracket
\fi\CD@RF}\def\CD@pC#1#2{\ifx#2\CD@qK\ifx#1\CD@qK\CD@gB{option `\@name'
undefined}\else#1\fi\else\CD@FF\expandafter#2\CD@GK\CD@PK\else\CD@QK\fi\fi
\CD@DH}\CD@tG\CD@FF\CD@QK\CD@PK\def\CD@nH#1,{\CD@FF\ifx\CD@GK\CD@qK\CD@e\else
\expandafter\CD@oH\CD@GK,#1,(,),(,)[]%
%%ASCII 5commas two pairs round, pair square
\fi\fi\CD@FF\else\CD@mH#1==,\fi}\def\CD@e{\CD@gB{option `\@name' needs (x,y)
value}\CD@PK\let\@name\empty}\def\CD@mH#1=#2=#3,{\def\@name{#1}\def\CD@GK{#2}%
\def\CD@RF{#3}\ifx\CD@RF\empty\let\CD@GK\CD@qK\fi}%
\def\CD@oH#1(#2,#3)#4,(#5,#6)#7[]{\def\CD@GK{{#2}{#3}}\def\CD@RF{#1#4#5#6}%
\ifx\CD@RF\empty\def\CD@RF{#7}\ifx\CD@RF\empty\CD@e\fi\else\CD@e\fi}\def
\let\CD@N\relax\def\CD@zD#1{\ifx\CD@GK\CD@qK\CD@gB{option `\@name
' needs a value}\else#1\CD@GK\relax\fi}\def\CD@BE#1#2{\ifx\CD@GK\CD@qK#1#2%
\relax\else#1\CD@GK\relax\fi}\def\cds@@showpair#1#2{\message{x=#1,y=#2}}\def
\def\CD@DI#1{\def\CD@CH
{#1}\CD@nF{@x}{cdps@#1}\ifx\CD@CH\empty\CD@f\CD@CH{cannot be used}\else\ifx
\CD@CH\relax\CD@f\CD@CH{unknown}\else\let\CD@IK\@x\fi\fi}\def\CD@f#1#2{\CD@gB
{PostScript translator `#1' #2}}\def\CD@PH{}\def\CD@PJ{\CD@fA\edef\CD@PH{%
\noexpand\CD@KB{\@name\space ignored within maths}}}\def\diagramstyle{\CD@cJ
\let\CD@N\relax\CD@CF\CD@AE\CD@AE}\CD@tG\CD@sE
\CD@hG\CD@RC{cds@ }{}\CD@RC{cds@}{}\CD@RC
\def\cds@abut{\MapsAbut\dimen1\z@
\dimen5\z@}\def\cds@alignlabels{\CD@IA\CD@KA}\def\cds@amstex{\ifincommdiag
\CD@O\else\def\CD{\diagram[amstex]}%%ascii square brackets []
\fi\CD@T\catcode`\@\active}\def\cds@b{\let\CD@dB\CD@bB}\def\cds@balance{\let
\CD@hA\CD@AA}\let\cds@bottom\cds@b\def\cds@center{\cds@vcentre\cds@nobalance}%
\let\cds@centre\cds@center\def\cds@centerdisplay{\CD@HA\CD@PJ\cds@balance}%
\let\cds@centredisplay\cds@centerdisplay\def\cds@crab{\CD@BE\CD@DC{.5%
\PileSpacing}}\CD@RC{cds@crab-}{\CD@DC-.5\PileSpacing}\CD@RC{cds@crab+}{%
\def\cds@defaultsize{\CD@BE{\let\CD@QC}{3em}\CD@NJ
}\def\cds@displayoneliner{\CD@DB}\let\cds@dotted\CD@sC\def\cds@dpi{\CD@RJ{1%
truein}}\def\cds@dpm{\CD@RJ{100truecm}}\let\CD@XA\CD@qK\def\cds@eqno{\let
\CD@XA\CD@GK\let\CD@EJ\empty}\def\cds@fixed{\CD@qA}\CD@tG\CD@fE\CD@J\CD@I\def
\def\cds@gap
\CD@sI\CD@BE{\wd3=}\MapShortFall} \def
\relax\CD@gB{%
unknown grid `\CD@GK'}\else\CD@WB\fi\fi}\let\h@grid\relax\let\v@grid\relax
\def\cds@gridx{\ifx\CD@GK\CD@qK\else\cds@grid\fi\let\CD@CH\h@grid\let\h@grid
\v@grid\let\v@grid\CD@CH}\def\cds@h{\CD@zD\DiagramCellHeight}\def\cds@hcenter
\let\CD@hA\CD@aA}\let\cds@hcentre\cds@hcenter\def\cds@heads{\CD@BE{\let
\CD@sJ}\CD@sJ\CD@@J\CD@vE\else\ifx\CD@sJ\CD@eF\else\CD@MC\fi\fi}\let
\let\cds@hmiddle\cds@balance\def\cds@htriangleheight{\CD@BE
\DiagramCellHeight\DiagramCellHeight\DiagramCellWidth1.73205%
\DiagramCellHeight}\def\cds@htrianglewidth{\CD@BE\DiagramCellWidth
\DiagramCellWidth\DiagramCellHeight.57735\DiagramCellWidth}\CD@tG\CD@zE\CD@eE
\def\cds@hug{\CD@eE} \def\cds@inline{\CD@gA\let\CD@PH\empty}\def
\def\cds@labelstyle{\CD@zD{\let\labelstyle}}\def\cds@landscape{\CD@kA}\def
\let\CD@EJ\empty\def\CD@FJ{\refstepcounter{%
equation}\def\CD@XA{\hbox{\@eqnnum}}}\def\cds@LaTeXeqno{\let\CD@EJ\CD@FJ}\def
\def\cds@leftflush{\cds@flushleft\CD@J}\def
\def\cds@lowershortfall{%
\ifPositiveGradient\cds@leftshortfall\else\cds@rightshortfall\fi}\def
\def\cds@midhshaft{\CD@JA}\def\cds@midshaft{\CD@JA}\def
\def\cds@moreoptions{\CD@@A}\let\cds@nobalance
\def\cds@nohcheck{\CD@HH}\def\cds@nohug{\CD@dE} \def
\let\cds@noorigin\cds@nobalance\def
\def\cds@UO{\CD@oK\global\let\CD@n\empty}%
\def\cds@UglyObsolete{\cds@UO\let\cds@PS\empty}\def\CD@rK#1{\CD@gB{option `#1%
' renamed as `UglyObsolete'}}\def\cds@noPostScript{\CD@rK{noPostScript}}\def
\def\cds@notextflow{\CD@RB}\def\cds@noTPIC{%
\CD@CK}\def\cds@objectstyle{\CD@zD{\let\objectstyle}}\def\cds@origin{\let
\CD@hA\CD@iB}\def\cds@p{\CD@zD\PileSpacing}\let\cds@pilespacing\cds@p\def
\def\cds@portrait{\CD@jA}\def
\def\cds@PS{%
\CD@nK\global\let\CD@n\empty}\CD@GF\CD@n{\typeout{\CD@tA: try the PostScript
option for better results}}\def\cds@repositionpullbacks{\let\make@pbk\CD@fH
\let\CD@qH\CD@pH}\def\cds@righteqno{\CD@oA}\def\cds@rightshortfall{\CD@zD{%
\dimen5 }}\def\cds@ruleaxis{\CD@zD{\let\axisheight}}\def\cds@cmex{\let\CD@GG
\CD@sB\let\CD@QJ\CD@CJ}\def\cds@s{\cds@height\DiagramCellWidth
\DiagramCellHeight}\def\cds@scriptlabels{\let\labelstyle\scriptstyle}\def
\def\cds@showfirstpass{\CD@BE{\let\CD@nD}\z@}\def\cds@silent{\def\CD@KB##1{}%
\def\CD@gB##1{}}\let\cds@size\cds@s\def\cds@small{\CellSize2\CD@zC}\def
\def\cds@t{\let\CD@dB\CD@fB}\def\cds@textflow{%
\CD@SB\CD@PJ}\def\cds@thick{\let\CD@rF\tenlnw\CD@LF\CD@NC\CD@BE\MapBreadth{2%
\CD@LF}\CD@@J}\def\cds@thin{\let\CD@rF\tenln\CD@BE\MapBreadth{\CD@NC}\CD@@J}%
\def\cds@tight{\CD@WB}\let\cds@top\cds@t\def\cds@TPIC{\CD@DK}\def
\def\cds@vcenter{\let\CD@dB\CD@cB}\let\cds@vcentre
\def\cds@vtriangleheight{\CD@BE\DiagramCellHeight
\DiagramCellHeight\DiagramCellWidth.577035\DiagramCellHeight}\def
\def\cds@vmiddle{\let\CD@dB\CD@eB}%
\def\cds@w{\CD@zD\DiagramCellWidth}\let\cds@width\cds@w\def\diagram{\relax
\protect\CD@bC}\def\enddiagram{\protect\CD@SG}\def\CD@bC{\CD@g\CD@uI
\incommdiagtrue\edef\CD@wI{\the\CD@NB}\global\CD@NB\z@\boxmaxdepth\maxdimen
\everycr{}\CD@sK\everymath{}\everyhbox{}\ifx\pdfsyncstop\CD@qK\else
\pdfsyncstop\fi\CD@aC}\def\CD@aC{\CD@y\let\CD@N\CD@ZC\CD@CF\CD@AE\CD@WD}\def
\def\CD@WD{\let
\CD@EH\relax\CD@nE\CD@vE\else\CD@hK\else\CD@KB{landscape ignored without
PostScript}\CD@jA\fi\fi\fi\CD@EJ\setbox2=\vbox\bgroup\CD@JF\CD@VD}\def\CD@cH{%
\CD@nE\CD@fB\else\CD@dB\fi\CD@hA\nointerlineskip\setbox0=\null\ht0-\CD@pI\dp0%
\CD@pI\wd0\CD@kI\box0 \global\CD@QA\CD@kF\global\CD@yA\CD@XB\ifx\CD@NK\CD@qK
\global\CD@RA\CD@kF\else\global\CD@RA\CD@NK\fi\egroup\CD@zF\CD@nE\setbox2=%
\hbox to\dp2{\vrule height\wd2 depth\CD@QA width\z@\global\CD@QA\ht2\ht2\z@
\dp2\z@\wd2\z@\CD@hK\CD@tK{q 0 1 -1 0 0 0 cm}\else\global\CD@iG\CD@IK{0 1
bturn}\fi\box2\CD@gK\hss}\CD@DB\fi\ifnum\CD@yA=1 \else\CD@DB\fi\global
\@ignorefalse\CD@mE\leavevmode\fi\ifvmode\CD@TA\else\ifmmode\CD@PH\CD@GI\else
\CD@qE\CD@gA\fi\ifinner\CD@gA\fi\CD@mE\CD@GI\else\CD@sE\CD@QB\else\CD@TA\fi
\fi\fi\fi\CD@dD}\def\CD@dD{\global\CD@NB\CD@wI\relax\CD@xE\global\CD@ID\else
\aftergroup\CD@mC\fi\if@ignore\aftergroup\ignorespaces\fi\CD@wC\ignorespaces}%
\def\CD@fB{\advance\CD@pI\dimen1\relax}\def\CD@eB{\advance\CD@pI.5\dimen1%
\relax}\def\CD@bB{}\def\CD@cB{\CD@fB\advance\CD@pI\CD@YB\divide\CD@pI2
\advance\CD@pI-\axisheight\relax}\def\CD@aA{}\def\CD@iB{\CD@kF\z@}\def\CD@AA{%
\ifdim\dimen2>\CD@kF\CD@kF\dimen2 \else\dimen2\CD@kF\CD@kI\dimen0 \advance
\CD@kI\dimen2 \fi}\def\CD@QB{\skip0\z@\relax\loop\skip1\lastskip\ifdim\skip1>%
\z@\unskip\advance\skip0\skip1 \repeat\vadjust{\prevdepth\dp\strutbox\penalty
\predisplaypenalty\vskip\abovedisplayskip\CD@UA\penalty\postdisplaypenalty
\vskip\belowdisplayskip}\ifdim\skip0=\z@\else\hskip\skip0 \global\@ignoretrue
\fi}\def\CD@TA{\CD@LG\kern-\displayindent\CD@UA\CD@LG\global\@ignoretrue}\def
\z@\CD@KB{wider than the page by \the
\dimen0 }\CD@HA\fi\CD@iE\hss\else\CD@V\CD@QA\CD@nA\fi\CD@GI\hss\kern-\wd1\box
\def\CD@GI{\CD@AF\CD@@F\else\CD@SC\global\CD@hG\fi\fi\kern\CD@QA\box2 }%
\def\CD@JF{\CD@cJ\ifdim\DiagramCellHeight=-\maxdimen
\DiagramCellHeight\CD@QC\fi\ifdim\DiagramCellWidth=-\maxdimen
\DiagramCellWidth\CD@QC\fi\global\CD@XC\CD@IF\let\CD@FE\empty\let\CD@z\CD@Q
\let\overprint\CD@eH\let\CD@s\CD@rJ\let\enddiagram\CD@ED\let\\\CD@cC\let\par
\CD@jH\let\CD@MD\empty\let\switch@arg\CD@PB\let\shift\CD@iA\baselineskip
\DiagramCellHeight\lineskip\z@\lineskiplimit\z@\mathsurround\z@\tabskip\z@
\CD@OB}\def\CD@VD{\penalty-123 \begingroup\CD@jA\aftergroup\CD@K\halign
\bgroup\global\advance\CD@NB1 \vadjust{\penalty1}\global\CD@FA\z@\CD@OB\CD@j#%
#\CD@DD\CD@Q\CD@Q\CD@OI\CD@j##\CD@DD\cr}\def\CD@ED{\CD@MD\CD@GD\crcr\egroup
\global\CD@JD\endgroup}\def\CD@j{\global\advance\CD@FA1 \futurelet\CD@EH\CD@i
}\def\CD@i{\ifx\CD@EH\CD@DD\CD@tJ\hskip1sp plus 1fil \relax\let\CD@DD\relax
\CD@vI\else\hfil\CD@k\objectstyle\let\CD@FE\CD@d\fi}\def\CD@DD{\CD@MD\relax
\CD@yI\CD@vI\global\CD@QA\CD@iA\penalty-9993 \CD@ND\hfil\null\kern-2\CD@QA
\null}\def\CD@cC{\cr}\def\across#1{\span\omit\mscount=#1 \global\advance
\CD@FA\mscount\global\advance\CD@FA\m@ne\CD@sF\ifnum\mscount>2 \CD@fJ\repeat
\ignorespaces}\def\CD@fJ{\relax\span\omit\advance\mscount\m@ne}\def\CD@qJ{%
\ifincommdiag\ifx\CD@iD\@fillh\ifx\CD@jD\@fillh\ifdim\dimen3>\z@\else\ifdim
\dimen2>93\CD@@I\ifdim\dimen2>18\p@\ifdim\CD@LF>\z@\count@\CD@bJ\advance
\count@\m@ne\ifnum\count@<\z@\count@20\let\CD@aJ\CD@uJ\fi\xdef\CD@bJ{\the
\count@}\fi\fi\fi\fi\fi\fi\fi}\def\CD@cG#1{\vrule\horizhtdp width#1\dimen@
\kern2\dimen@}\def\CD@uJ{\rlap{\dimen@\CD@@I\CD@V\dimen@{.182\p@}\CD@zH
\dimen@\advance\CD@tI\dimen@\CD@cG0\CD@cG0\CD@cG2\CD@cG6\CD@cG6\CD@cG2\CD@cG0%
\CD@cG0\CD@cG2\CD@cG6\CD@cG0\CD@cG0\CD@cG2\CD@cG2\CD@cG6\CD@cG0\CD@cG0\CD@cG2%
\CD@cG6\CD@cG2\CD@cG2\CD@cG0\CD@cG0}}\def\CD@bJ{10}\def\CD@aJ{}\def\CD@XD{%
\CD@gE\CD@TB\fi\CD@x\CD@WF\CD@HI}\def\CD@x{\CD@QJ\CD@DC\CD@MJ\ifdim\CD@DC=\z@
\else\CD@pF\CD@DC\fi\ifvoid3 \setbox3=\null\ht3\CD@tI\dp3\CD@sI\else\CD@V{\ht
3}\CD@tI\CD@V{\dp3}\CD@sI\fi\dimen3=.5\wd3 \ifdim\dimen3=\z@\CD@tE\else\dimen
3-\CD@XH\fi\else\CD@TB\fi\CD@V{\dimen2}{\wd7}\CD@V{\dimen2}{\wd6}\CD@qJ
\advance\dimen2-2\dimen3 \dimen4.5\dimen2 \dimen2\dimen4 \advance\dimen2%
\CD@eJ\advance\dimen4-\CD@eJ\advance\dimen2-\wd1 \advance\dimen4-\wd5 \ifvoid
2 \else\CD@V{\ht3}{\ht2}\CD@V{\dp3}{\dp2}\CD@V{\dimen2}{\wd2}\fi\ifvoid4 \else
\CD@V{\ht3}{\ht4}\CD@V{\dp3}{\dp4}\CD@V{\dimen4}{\wd4}\fi\advance\skip2\dimen
2 \advance\skip4\dimen4 \CD@tE\advance\skip2\skip4 \dimen0\dimen5 \advance
\dimen0\wd5 \skip3-\skip4 \advance\skip3-\dimen0 \let\CD@jD\empty\else\skip3%
\z@\relax\dimen0\z@\fi}\def\CD@WF{\offinterlineskip\lineskip.2\CD@zC\ifvoid6
\else\setbox3=\vbox{\hbox to2\dimen3{\hss\box6\hss}\box3}\fi\ifvoid7 \else
\setbox3=\vtop{\box3 \hbox to2\dimen3{\hss\box7\hss}}\fi}\def\CD@HI{\kern
\dimen1 \box1 \CD@aJ\CD@iD\hskip\skip2 \kern\dimen0 \ifincommdiag\CD@jE
\penalty1\fi\kern\dimen3 \penalty\CD@GB\hskip\skip3 \null\kern-\dimen3 \else
\hskip\skip3 \fi\box3 \CD@jD\hskip\skip4 \box5 \kern\dimen5}\def\CD@MF{\ifnum
\CD@LH>\CD@TC\CD@V{\dimen1}\objectheight\CD@V{\dimen5}\objectheight\else\CD@V
{\dimen1}\objectwidth\CD@V{\dimen5}\objectwidth\fi}\def\CD@Y{\begingroup
\ifdim\dimen7=\z@\kern\dimen8 \else\ifdim\dimen6=\z@\kern\dimen9 \else\dimen5%
\dimen6 \dimen6\dimen9 \CD@KJ\dimen4\dimen2 \CD@dG{\dimen4}\dimen6\dimen5
\dimen7\dimen8 \CD@KJ\CD@iC{\dimen2}\ifdim\dimen2<\dimen4 \kern\dimen2 \else
\kern\dimen4 \fi\fi\fi\endgroup}\def\CD@jJ{\CD@JI\setbox\z@\hbox{\lower
\axisheight\hbox to\dimen2{\CD@DF\ifPositiveGradient\dimen8\ht\CD@MH\dimen9%
\CD@mI\else\dimen8\dp3 \dimen9\dimen1 \fi\else\dimen8 \ifPositiveGradient
\objectheight\else\z@\fi\dimen9\objectwidth\fi\advance\dimen8
\ifPositiveGradient-\fi\axisheight\CD@Y\unhbox\z@\CD@DF\ifPositiveGradient
\dimen8\dp3 \dimen9\dimen0 \else\dimen8\ht\CD@MH\dimen9\CD@mF\fi\else\dimen8
\ifPositiveGradient\z@\else\objectheight\fi\dimen9\objectwidth\fi\advance
\dimen8 \ifPositiveGradient\else-\fi\axisheight\CD@Y}}}\def\CD@bD{\dimen6
\CD@aK\DiagramCellHeight\dimen7 \CD@WK\DiagramCellWidth\CD@jJ
\ifPositiveGradient\advance\dimen7-\CD@ZK\DiagramCellWidth\else\dimen7 \CD@ZK
\DiagramCellWidth\dimen6\z@\fi\advance\dimen6-\CD@bK\DiagramCellHeight\CD@mK
\setbox0=\rlap{\kern-\dimen7 \lower\dimen6\box\z@}\ht0\z@\dp0\z@\raise
\axisheight\box0 }\def\CD@mK{\setbox0\hbox{\ht\z@\z@\dp\z@\z@\wd\z@\z@\CD@hK
\expandafter\CD@tK{q \CD@eK\space\CD@lK\space\CD@kK\space\CD@eK\space0 0 cm}%
\else\global\CD@iG\CD@eD{\the\CD@TC\space\ifPositiveGradient\else-\fi\the
\CD@LH\space bturn}\fi\box\z@\CD@gK}}\def\CD@vB{\advance\CD@hF-\CD@mI\CD@wJ
\CD@hF\advance\CD@wJ\CD@hI\ifvoid\CD@sH\ifdim\CD@wJ<.1em\ifnum\CD@gD=\@m\else
\CD@aG h\CD@wJ<.1em:objects overprint:\CD@FA\CD@gD\fi\fi\else\ifhbox\CD@sH
\CD@SK\else\CD@TK\fi\advance\CD@wJ\CD@mI\CD@bH{-\CD@mI}{\box\CD@sH}{\CD@wJ}%
\z@\fi\CD@hF-\CD@mF\CD@gD\CD@FA\CD@hI\z@}\def\CD@SK{\setbox\CD@sH=\hbox{%
\unhbox\CD@sH\unskip\unpenalty}\setbox\CD@tH=\hbox{\unhbox\CD@tH\unskip
\unpenalty}\setbox\CD@sH=\hbox to\CD@wJ{\CD@OA\wd\CD@sH\unhbox\CD@sH\CD@PA
\lastkern\unkern\ifdim\CD@PA=\z@\CD@UB\advance\CD@OA-\wd\CD@tH\else\CD@TB\fi
\ifnum\lastpenalty=\z@\else\CD@JA\unpenalty\fi\kern\CD@PA\ifdim\CD@hF<\CD@OA
\CD@JA\fi\ifdim\CD@hI<\wd\CD@tH\CD@JA\fi\CD@jE\CD@hI\CD@wJ\advance\CD@hI-%
\CD@OA\advance\CD@hI\wd\CD@tH\ifdim\CD@hI<2\wd\CD@tH\CD@aG h\CD@hI<2\wd\CD@tH
:arrow too short:\CD@FA\CD@gD\fi\divide\CD@hI\tw@\CD@hF\CD@wJ\advance\CD@hF-%
\CD@hI\fi\CD@tE\kern-\CD@hI\fi\hbox to\CD@hI{\unhbox\CD@tH}\CD@HG}}\CD@tG
\def\pile{\protect\CD@UJ\protect
\CD@uH}\def\CD@uH#1{\CD@l#1\CD@QD}\def\CD@UJ{\CD@nB{pile}\setbox0=\vtop
\bgroup\aftergroup\CD@lD\inpiletrue\let\CD@FE\empty\let\pile\CD@KF\let\CD@QD
\CD@PD\let\CD@GD\CD@FD\CD@yH\baselineskip.5\PileSpacing\lineskip.1\CD@zC
\relax\lineskiplimit\lineskip\mathsurround\z@\tabskip\z@\let\\\CD@wH}\def
\CD@rG\CD@NE{pile only allows one column.}%
\CD@rG\CD@UE{you left it out!}\def\CD@R{\CD@QD\CD@Q\relax\CD@YA{missing \CD@yC
\space inserted after \string\pile}\CD@NE}\def\CD@PD{\CD@MD\crcr\egroup
\egroup}\def\CD@GD{\CD@MD}\def\CD@FD{\CD@MD\relax\CD@QD\CD@YA{missing \CD@yC
\space inserted between \string\pile\space and \CD@HD}\CD@UE}\def\CD@QD{%
\CD@MD}\def\CD@lD{\vbox{\dimen1\dp0 \unvbox0 \setbox0=\lastbox\advance\dimen1%
\dp0 \nointerlineskip\box0 \nointerlineskip\setbox0=\null\dp0.5\dimen1\ht0-%
\dp0 \box0}\ifincommdiag\CD@tJ\penalty-9998 \fi\xdef\CD@YF{pile}}\def\CD@vH{%
\cr}\def\CD@wH{\noalign{\skip@\prevdepth\advance\skip@-\baselineskip
\prevdepth\skip@}}\def\CD@KF#1{#1}\def\CD@TK{\setbox\CD@sH=\vbox{\unvbox
\CD@sH\setbox1=\lastbox\setbox0=\box\voidb@x\CD@tF\setbox\CD@sH=\lastbox
\ifhbox\CD@sH\CD@rC\repeat\unvbox0 \global\CD@QA\CD@ZE}\CD@ZE\CD@QA}\def
\def\CD@gJ{\penalty7
\noindent\unhbox\CD@sH\unskip\setbox\CD@sH=\lastbox\unskip\unhbox\CD@sH
\endgraf\setbox\CD@tH=\lastbox\unskip\setbox\CD@tH=\hbox{\CD@JG\unhbox\CD@tH
\unskip\unskip\unpenalty}\ifcase\prevgraf\cd@shouldnt P\or\ifdim\CD@wJ<\wd
\CD@tH\CD@aG h\CD@wJ<\wd\CD@sH:object in pile too wide:\CD@FA\CD@gD\setbox
\CD@sH=\hbox to\CD@wJ{\hss\unhbox\CD@tH\hss}\else\setbox\CD@sH=\hbox to\CD@wJ
{\hss\kern\CD@hF\unhbox\CD@tH\kern\CD@hI\hss}\fi\or\setbox\CD@sH=\lastbox
\unskip\CD@SK\else\cd@shouldnt Q\fi\unskip\unpenalty}\def\CD@cD{\CD@MJ\ifvoid
3 \setbox3=\null\ht3\axisheight\dp3-\ht3 \dimen3.5\CD@LF\else\dimen4\dp3
\dimen3.5\wd3 \setbox3=\CD@GG{\box3}\dp3\dimen4 \ifdim\ht3=-\dp3 \else\CD@TB
\fi\fi\dimen0\dimen3 \advance\dimen0-.5\CD@LF\setbox0\null\ht0\ht3\dp0\dp3\wd
0\wd3 \ifvoid6\else\setbox6\hbox{\unhbox6\kern\dimen0\kern2pt}\dimen0\wd6 \fi
\ifvoid7\else\setbox7\hbox{\kern2pt\kern\dimen3\unhbox7}\dimen3\wd7 \fi
\setbox3\hbox{\ifvoid6\else\kern-\dimen0\unhbox6\fi\unhbox3 \ifvoid7\else
\unhbox7\kern-\dimen3\fi}\ht3\ht0\dp3\dp0\wd3\wd0 \CD@tE\dimen4=\ht\CD@MH
\advance\dimen4\dp5 \advance\dimen4\dimen1 \let\CD@jD\empty\else\dimen4\ht3
\fi\setbox0\null\ht0\dimen4 \offinterlineskip\setbox8=\vbox spread2ex{\kern
\dimen5 \box1 \CD@iD\vfill\CD@tE\else\kern\CD@eJ\fi\box0}\ht8=\z@\setbox9=%
\vtop spread2ex{\kern-\ht3 \kern-\CD@eJ\box3 \CD@jD\vfill\box5 \kern\dimen1}%
\dp9=\z@\hskip\dimen0plus.0001fil \box9 \kern-\CD@LF\box8 \CD@kE\penalty2 \fi
\CD@tE\penalty1 \fi\kern\PileSpacing\kern-\PileSpacing\kern-.5\CD@LF\penalty
\CD@GB\null\kern\dimen3}\def\CD@cI{\ifhbox\CD@VA\CD@KB{clashing verticals}\ht
\CD@MH.5\dp\CD@VA\dp\CD@MH-\ht5 \CD@yB\ht\CD@MH\z@\dp\CD@MH\z@\fi\dimen1\dp
\CD@VA\CD@xA\prevgraf\unvbox\CD@VA\CD@wA\lastpenalty\unpenalty\setbox\CD@VA=%
\null\setbox\CD@lI=\hbox{\CD@JG\unhbox\CD@lI\unskip\unpenalty\dimen0\lastkern
\unkern\unkern\unkern\kern\dimen0 \CD@HG}\setbox\CD@lF=\hbox{\unhbox\CD@lF
\dimen0\lastkern\unkern\unkern\global\CD@QA\lastkern\unkern\kern\dimen0 }%
\CD@tF\ifnum\CD@xA>4 \CD@zI\repeat\unskip\unskip\advance\CD@mF.5\wd\CD@VA
\advance\CD@mF\wd\CD@lF\advance\CD@mI.5\wd\CD@VA\advance\CD@mI\wd\CD@lI\ifnum
\CD@FA=\CD@lA\CD@OA.5\wd\CD@VA\edef\CD@NK{\the\CD@OA}\fi\setbox\CD@VA=\hbox{%
\kern-\CD@mF\box\CD@lF\unhbox\CD@VA\box\CD@lI\kern-\CD@mI\penalty\CD@wA
\penalty\CD@NB}\ht\CD@VA\dimen1 \dp\CD@VA\z@\wd\CD@VA\CD@tB\CD@vB}\def\CD@zI{%
\ifdim\wd\CD@lF<\CD@QA\setbox\CD@lF=\hbox to\CD@QA{\CD@JG\unhbox\CD@lF}\fi
\advance\CD@xA\m@ne\setbox\CD@VA=\hbox{\box\CD@lF\unhbox\CD@VA}\unskip\setbox
\CD@lF=\lastbox\setbox\CD@lF=\hbox{\unhbox\CD@lF\unskip\unpenalty\dimen0%
\lastkern\unkern\unkern\global\CD@QA\lastkern\unkern\kern\dimen0 }}\def\CD@yB
\def\CD@zB{\unvbox\CD@VA
\CD@wA\lastpenalty\unpenalty\ifdim\dimen1<\ht\CD@MH\CD@aG v\dimen1<\ht\CD@MH:%
rows overprint:\CD@NB\CD@wA\fi}\def\CD@xB{\dimen0=\ht\CD@VA\setbox\CD@VA=%
\hbox\bgroup\advance\dimen1-\ht\CD@MH\unhbox\CD@VA\CD@xA\lastpenalty
\unpenalty\CD@wA\lastpenalty\unpenalty\global\CD@RA-\lastkern\unkern\setbox0=%
\lastbox\CD@tF\setbox\CD@VA=\hbox{\box0\unhbox\CD@VA}\setbox0=\lastbox\ifhbox
0 \CD@kJ\repeat\global\CD@SA-\lastkern\unkern\global\CD@QA\CD@JK\unhbox\CD@VA
\egroup\CD@JK\CD@QA\CD@bH{\CD@SA}{\box\CD@VA}{\CD@RA}{\dimen1}}\def\CD@kJ{%
\setbox0=\hbox to\wd0\bgroup\unhbox0 \unskip\unpenalty\dimen7\lastkern\unkern
\ifnum\lastpenalty=1 \unpenalty\CD@UB\else\CD@TB\fi\ifnum\lastpenalty=2
\unpenalty\dimen2.5\dimen0\advance\dimen2-.5\dimen1\advance\dimen2-%
\axisheight\else\dimen2\z@\fi\setbox0=\lastbox\dimen6\lastkern\unkern\setbox1%
=\lastbox\setbox0=\vbox{\unvbox0 \CD@tE\kern-\dimen1 \else\ifdim\dimen2=\z@
\else\kern\dimen2 \fi\fi}\ifdim\dimen0<\ht0 \CD@aG v\dimen0<\ht0:upper part of
vertical too short:{\CD@tE\CD@NB\else\CD@wA\fi}\CD@xA\else\setbox0=\vbox to%
\dimen0{\unvbox0}\fi\setbox1=\vtop{\unvbox1}\ifdim\dimen1<\dp1 \CD@aG v\dimen
1<\dp1:lower part of vertical too short:\CD@NB\CD@wA\else\setbox1=\vtop to%
\dimen1{\ifdim\dimen2=\z@\else\kern-\dimen2 \fi\unvbox1 }\fi\box1 \kern\dimen
6 \box0 \kern\dimen7 \CD@HG\global\CD@QA\CD@JK\egroup\CD@JK\CD@QA\relax}%
\let\CD@LB
\let\CD@mA\CD@XB\newcount\CD@MB\CD@tG
\def\CD@nD{-1}\def\CD@K{\CD@t\ifnum\CD@nD<\z@\else
\begingroup\scrollmode\showboxdepth\CD@nD\showboxbreadth\maxdimen\showlists
\endgroup\fi\CD@bI\CD@zF\CD@CA=\CD@u\advance\CD@CA1 \CD@XB=\CD@CA\ifnum\CD@NB
=1 \CD@JA\fi\advance\CD@XB\CD@NB\dimen1\z@\skip0\z@\count@=\insc@unt\advance
\count@\CD@u\divide\count@2 \ifnum\CD@XB>\count@\CD@KB{The diagram has too
many rows! It can't be reformatted.}\else\CD@NG\CD@WI\fi\CD@cH}\def\CD@NG{%
\CD@NB\CD@CA\CD@uF\ifnum\CD@NB<\CD@XB\setbox\CD@NB\box\voidb@x\advance\CD@NB1%
\relax\repeat\CD@NB\CD@CA\skip\z@\z@\CD@uF\CD@GB\lastpenalty\unpenalty\ifnum
\CD@GB>\z@\CD@KE\repeat\ifnum\CD@GB=-123 \CD@tJ\unpenalty\else\cd@shouldnt D%
\fi\ifx\v@grid\relax\else\CD@NB\CD@XB\advance\CD@NB\m@ne\expandafter\CD@VJ
\v@grid\fi\CD@MB\CD@mA\CD@tB\z@\CD@XG\ifx\h@grid\relax\else\expandafter\CD@LJ
\h@grid\fi\count@\CD@XB\advance\count@\m@ne\CD@YB\ht\count@}\def\CD@KE{%
\ifcase\CD@GB\or\CD@MG\else\CD@uA-\lastpenalty\unpenalty\CD@vA\lastpenalty
\unpenalty\setbox0=\lastbox\CD@WG\fi\CD@wD}\def\CD@wD{\skip1\lastskip\unskip
\advance\skip0\skip1 \ifdim\skip1=\z@\else\expandafter\CD@wD\fi}\def\CD@MG{%
\setbox0=\lastbox\CD@pI\dp0 \advance\CD@pI\skip\z@\skip\z@\z@\advance\CD@NF
\CD@pI\CD@uE\ifnum\CD@NB>\CD@CA\CD@NF\DiagramCellHeight\CD@pI\CD@NF\advance
\CD@pI-\CD@qI\fi\fi\CD@qI\ht0 \CD@NF\CD@qI\setbox\CD@NB\hbox{\unhbox\CD@NB
\unhbox0}\dp\CD@NB\CD@pI\ht\CD@NB\CD@qI\advance\CD@NB1 }\def\CD@WG{\ifnum
\CD@uA<\z@\advance\CD@uA\CD@XB\ifnum\CD@uA<\CD@CA\CD@UG\else\CD@OA\dp\CD@uA
\CD@PA\ht\CD@uA\setbox\CD@uA\hbox{\box\z@\penalty\CD@vA\penalty\CD@GB\unhbox
\CD@uA}\dp\CD@uA\CD@OA\ht\CD@uA\CD@PA\fi\else\CD@UG\fi}\def\CD@UG{\CD@KB{%
diagonal goes outside diagram (lost)}}\def\CD@fI{\advance\CD@uA\CD@XB\ifnum
\CD@uA<\CD@CA\CD@UG\else\ifnum\CD@uA=\CD@NB\CD@VG\else\ifnum\CD@uA>\CD@NB
\cd@shouldnt M\else\CD@OA\dp\CD@uA\CD@PA\ht\CD@uA\setbox\CD@uA\hbox{\box\z@
\penalty\CD@vA\penalty\CD@GB\unhbox\CD@uA}\dp\CD@uA\CD@OA\ht\CD@uA\CD@PA\fi
\fi\fi}\def\CD@WI{\CD@AJ\setbox\CD@PC=\hbox{\CD@k A\@super f\CD@lJ f\CD@ND}%
\CD@ZE\z@\CD@JK\z@\CD@kI\z@\CD@kF\z@\CD@NB=\CD@XB\CD@NF\z@\CD@uB\z@\CD@uF
\ifnum\CD@NB>\CD@CA\advance\CD@NB\m@ne\CD@qI\ht\CD@NB\CD@pI\dp\CD@NB\advance
\CD@NF\CD@qI\CD@rI\advance\CD@uB\CD@NF\CD@KC\CD@ZI\CD@w\ht\CD@NB\CD@qI\dp
\CD@NB\CD@pI\nointerlineskip\box\CD@NB\CD@NF\CD@pI\setbox\CD@NB\null\ht\CD@NB
\CD@uB\repeat\CD@wB\nointerlineskip\box\CD@NB\CD@gG\CD@ZE\DiagramCellWidth{%
width}\CD@gG\CD@JK\DiagramCellHeight{height}\CD@VA\CD@LB\advance\CD@VA-\CD@lA
\advance\CD@VA\m@ne\advance\CD@VA\CD@mA\dimen0\wd\CD@VA\CD@tI\axisheight
\dimen1\CD@uB\advance\dimen1-\CD@YB\dimen2\CD@kI\advance\dimen2-\dimen0
\advance\CD@XB-\CD@CA\advance\CD@LB-\CD@lA}\count@\year\multiply\count@12
\iftrue\message{gone February
2031!}\repeat\fi\def\CD@wB{\CD@qI-\CD@NF\CD@pI\CD@NF\setbox\CD@MH=\null\dp
\CD@MH\CD@NF\ht\CD@MH-\CD@NF\CD@mF\z@\CD@mI\z@\CD@lA\CD@LB\advance\CD@lA-%
\CD@MB\advance\CD@lA\CD@mA\CD@FA\CD@LB\CD@VA\CD@MB\CD@sF\ifnum\CD@FA>\CD@lA
\advance\CD@FA\m@ne\advance\CD@VA\m@ne\CD@tB\wd\CD@VA\setbox\CD@FA=\box
\voidb@x\CD@yB\repeat\CD@w\ht\CD@NB\CD@qI\dp\CD@NB\CD@pI}\def\CD@gG#1#2#3{%
\ifdim#1>.01\CD@zC\CD@PA#2\relax\advance\CD@PA#1\relax\advance\CD@PA.99\CD@zC
\count@\CD@PA\divide\count@\CD@zC\CD@KB{increase cell #3 to \the\count@ em}%
\fi}\def\CD@rI{\CD@FA=\CD@LB\penalty4 \noindent\unhbox\CD@NB\CD@sF\unskip
\setbox0=\lastbox\ifhbox0 \advance\CD@FA\m@ne\setbox\CD@FA\hbox to\wd0{\null
\penalty-9990\null\unhbox0}\repeat\CD@lA\CD@FA\advance\CD@FA\CD@MB\advance
\CD@FA-\CD@mA\ifnum\CD@FA<\CD@LB\count@\CD@FA\advance\count@\m@ne\dimen0=\wd
\count@\count@\CD@MB\advance\count@\m@ne\CD@tB\wd\count@\CD@sF\ifnum\CD@FA<%
\CD@LB\CD@DJ\CD@XG\dimen0\wd\CD@FA\advance\CD@FA1 \repeat\fi\CD@sF\CD@GB
\lastpenalty\unpenalty\ifnum\CD@GB>\z@\CD@vA\lastpenalty\unpenalty\CD@VG
\repeat\endgraf\unskip\ifnum\lastpenalty=4 \unpenalty\else\cd@shouldnt S\fi}%
\def\CD@VG{\advance\CD@vA\CD@lA\advance\CD@vA\m@ne\setbox0=\lastbox\ifnum
\CD@vA<\CD@LB\setbox\CD@vA\hbox{\box0\penalty\CD@GB\unhbox\CD@vA}\else\CD@UG
\fi}\def\CD@bG{}\CD@tG\CD@uE\CD@WB\CD@VB\def\CD@DJ{\advance\dimen0\wd\CD@FA
\divide\dimen0\tw@\CD@uE\dimen0\DiagramCellWidth\else\CD@V{\dimen0}%
\DiagramCellWidth\CD@pJ\fi\advance\CD@tB\dimen0 }\def\CD@XG{\setbox\CD@MB=%
\vbox{}\dp\CD@MB=\CD@uB\wd\CD@MB\CD@tB\advance\CD@MB1 }\def\CD@LJ#1,{\def
\CD@GK{#1}\ifx\CD@GK\CD@RD\else\advance\CD@tB\CD@GK\DiagramCellWidth\CD@XG
\expandafter\CD@LJ\fi}\def\CD@VJ#1,{\def\CD@GK{#1}\ifx\CD@GK\CD@RD\else\ifnum
\CD@NB>\CD@CA\CD@NF\CD@GK\DiagramCellHeight\advance\CD@NF-\dp\CD@NB\advance
\CD@NB\m@ne\ht\CD@NB\CD@NF\fi\expandafter\CD@VJ\fi}\def\CD@pJ{\CD@wE\CD@OA
\dimen0 \advance\CD@OA-\DiagramCellWidth\ifdim\CD@OA>2\MapShortFall\CD@KB{%
badly drawn diagonals (see manual)}\let\CD@pJ\empty\fi\else\let\CD@pJ\empty
\fi}\def\CD@KC{\CD@VA\CD@mA\CD@sF\ifnum\CD@VA<\CD@MB\dimen0\dp\CD@VA\advance
\dimen0\CD@NF\dp\CD@VA\dimen0 \advance\CD@VA1 \repeat}\def\CD@bH#1#2#3#4{%
\ifnum\CD@FA<\CD@LB\CD@OA=#1\relax\setbox\CD@FA=\hbox{\setbox0=#2\dimen7=#4%
\relax\dimen8=#3\relax\ifhbox\CD@FA\unhbox\CD@FA\advance\CD@OA-\lastkern
\unkern\fi\ifdim\CD@OA=\z@\else\kern-\CD@OA\fi\raise\dimen7\box0 \kern-\dimen
8 }\ifnum\CD@FA=\CD@lA\CD@V\CD@kF\CD@OA\fi\else\cd@shouldnt O\fi}\def\CD@w{%
\setbox\CD@NB=\hbox{\CD@FA\CD@lA\CD@VA\CD@mA\CD@PA\z@\relax\CD@sF\ifnum\CD@FA
<\CD@LB\CD@tB\wd\CD@VA\relax\CD@eI\advance\CD@FA1 \advance\CD@VA1 \repeat}%
\CD@V\CD@kI{\wd\CD@NB}\wd\CD@NB\z@}\def\CD@eI{\ifhbox\CD@FA\CD@OA\CD@tB\relax
\advance\CD@OA-\CD@PA\relax\ifdim\CD@OA=\z@\else\kern\CD@OA\fi\CD@PA\CD@tB
\advance\CD@PA\wd\CD@FA\relax\unhbox\CD@FA\advance\CD@PA-\lastkern\unkern\fi}%
\def\CD@ZI{\setbox\CD@sH=\box\voidb@x\CD@VA=\CD@MB\CD@FA\CD@LB\CD@VA\CD@mA
\advance\CD@VA\CD@FA\advance\CD@VA-\CD@lA\advance\CD@VA\m@ne\CD@tB\wd\CD@VA
\count@\CD@LB\advance\count@\m@ne\CD@hF.5\wd\count@\advance\CD@hF\CD@tB\CD@A
\m@ne\CD@gD\@m\CD@sF\ifnum\CD@FA>\CD@lA\advance\CD@FA\m@ne\advance\CD@hF-%
\CD@tB\CD@PI\wd\CD@VA\CD@tB\advance\CD@hF\CD@tB\advance\CD@VA\m@ne\CD@tB\wd
\CD@VA\repeat\CD@mF\CD@kF\CD@mI-\CD@mF\CD@vB}\newcount\CD@GB\def\CD@s{}\def
\def\parskip{\cd@shouldnt{PS}}\prevdepth\z@}\newbox\CD@KG\newbox\CD@IG
\def\CD@JG{\unhcopy\CD@KG}\def\CD@HG{\unhcopy\CD@IG}\def\CD@iJ{\hbox{}%
\penalty1\nointerlineskip}\def\CD@PI{\penalty5 \noindent\setbox\CD@MH=\null
\CD@mF\z@\CD@mI\z@\ifnum\CD@FA<\CD@LB\ht\CD@MH\ht\CD@FA\dp\CD@MH\dp\CD@FA
\unhbox\CD@FA\skip0=\lastskip\unskip\else\CD@OK\skip0=\z@\fi\endgraf\ifcase
\prevgraf\cd@shouldnt Y \or\cd@shouldnt Z \or\CD@RI\or\CD@XI\else\CD@QI\fi
\unskip\setbox0=\lastbox\unskip\unskip\unpenalty\noindent\unhbox0\setbox0%
\lastbox\unpenalty\unskip\unskip\unpenalty\setbox0\lastbox\CD@tF\CD@GB
\lastpenalty\unpenalty\ifnum\CD@GB>\z@\setbox\z@\lastbox\CD@lB\repeat\endgraf
\unskip\unskip\unpenalty}\def\CD@YJ{\CD@uA\CD@XB\advance\CD@uA-\CD@NB\CD@vA
\CD@FA\advance\CD@vA-\CD@lA\advance\CD@vA1 \expandafter\message{prevgraf=\the
\prevgraf at (\the\CD@uA,\the\CD@vA)}}\def\CD@XI{\CD@CE\setbox\CD@lI=\lastbox
\setbox\CD@lI=\hbox{\unhbox\CD@lI\unskip\unpenalty}\unskip\ifdim\ht\CD@lI>\ht
\CD@PC\setbox\CD@MH=\copy\CD@lI\else\ifdim\dp\CD@lI>\dp\CD@PC\setbox\CD@MH=%
\copy\CD@lI\else\CD@FG\CD@lI\fi\fi\advance\CD@mF.5\wd\CD@lI\advance\CD@mI.5%
\wd\CD@lI\setbox\CD@lI=\hbox{\unhbox\CD@lI\CD@HG}\CD@bH\CD@mF{\box\CD@lI}%
\CD@mI\z@\CD@yB\CD@vB}\def\CD@CE{\ifnum\CD@A>0 \advance\dimen0-\CD@tB\CD@iA-.%
5\dimen0 \CD@A-\CD@A\else\CD@A0 \CD@iA\z@\fi\setbox\CD@MH=\lastbox\setbox
\CD@MH=\hbox{\unhbox\CD@MH\unskip\unskip\unpenalty\setbox0=\lastbox\global
\CD@QA\lastkern\unkern}\advance\CD@iA-.5\CD@QA\unskip\setbox\CD@MH=\null
\CD@mI\CD@iA\CD@mF-\CD@iA}\def\CD@Z{\ht\CD@MH\CD@tI\dp\CD@MH\CD@sI}\def\CD@FG
\def
\def\CD@VI{\CD@FG\CD@lI\CD@UI\setbox\CD@sH=\box
\CD@lF\setbox\CD@tH=\box\CD@lI}\def\CD@YI{\CD@FG\CD@lF\setbox\CD@lI\hbox{%
\penalty8 \unhbox\CD@lI\unskip\unpenalty\ifnum\lastpenalty=8 \else\CD@xH\fi}%
\CD@UI\setbox\CD@lF=\hbox{\unhbox\CD@lF\unskip\unpenalty\global\setbox\CD@DA=%
\lastbox}\ifdim\wd\CD@lF=\z@\else\CD@xH\fi\setbox\CD@sH=\box\CD@DA}\def\CD@xH
\def\CD@UI{\CD@yB
\ifvoid\CD@sH\else\CD@KB{Clashing horizontal arrows}\CD@mI.5\CD@hF\CD@mF-%
\CD@mI\CD@vB\CD@mI\z@\CD@mF\z@\fi\CD@hI\CD@hF\advance\CD@hI-\CD@mI\CD@hF-%
\CD@mF\CD@JC\CD@FA}\def\CD@RI{\setbox0\lastbox\unskip\CD@iA\z@\CD@Z\ifdim
\skip0>\z@\CD@tJ\CD@A0 \else\ifnum\CD@A<1 \CD@A0 \dimen0\CD@tB\fi\advance
\CD@A1 \fi}\def\VonH{\CD@MA46\VonH{.5\CD@LF}}\def\HonV{\CD@MA57\HonV{.5\CD@LF
}}\def\HmeetV{\CD@MA44\HmeetV{-\MapShortFall}}\def\CD@MA#1#2#3#4{\CD@pB34#1{%
\string#3}\CD@SD\CD@GB-999#2 \dimen0=#4\CD@tI\dimen0\advance\CD@tI\axisheight
\CD@sI\dimen0\advance\CD@sI-\axisheight\CD@CF\CD@HC\CD@ZD}\def\CD@HC#1{%
\setbox0=\hbox{\CD@k#1\CD@ND}\dimen0.5\wd0 \CD@tI\ht0 \CD@sI\dp0 \CD@ZD}\def
\def\CD@TI{\CD@GC\CD@yB}\def\CD@dI{\CD@GC\CD@vB}\def\CD@SI{\CD@GC
\CD@yB\CD@vB}\def\CD@GC{\setbox\CD@lI=\hbox{\unhbox\CD@lI}\setbox\CD@lF=\hbox
{\unhbox\CD@lF\global\setbox\CD@DA=\lastbox}\ht\CD@MH\ht\CD@DA\dp\CD@MH\dp
\CD@DA\advance\CD@mF\wd\CD@DA\advance\CD@mI\wd\CD@lI}\CD@tG
\def
\def
\let\CD@rF\tenln\def\Use@line@char#1{\hbox{#1%
\CD@rF\ifPositiveGradient\else\advance\count@64 \fi\char\count@}}\def\CD@cF{%
\Use@line@char{\count@\CD@TC\multiply\count@8\advance\count@-9\advance\count@
\CD@LH}}\def\CD@ZF{\Use@line@char{\ifcase\DiagonalChoice\CD@gF\or\CD@fF\or
\CD@fF\else\CD@gF\fi}}\def\CD@gF{\ifnum\CD@TC=\z@\count@'33 \else\count@
\CD@TC\multiply\count@\sixt@@n\advance\count@-9\advance\count@\CD@LH\advance
\count@\CD@LH\fi}\def\CD@fF{\count@'\ifcase\CD@LH55\or\ifcase\CD@TC66\or22\or
52\or61\or72\fi\or\ifcase\CD@TC66\or25\or22\or63\or52\fi\or\ifcase\CD@TC66\or
16\or36\or22\or76\fi\or\ifcase\CD@TC66\or27\or25\or67\or22\fi\fi\relax}\def
\def
\def\CD@jI#1{\hbox{\setbox0=#1%
\dimen0=\wd0 \vbox to.25\ht0{\ifPositiveGradient\vss\fi\box0
\ifPositiveGradient\else\vss\fi}\kern-.75\dimen0 }}\CD@RC{+h:>}{%
\def\CD@tC#1#2{\vbox to#1{\vss\hbox to#%
2{\hss.\hss}\vss}}\def\hfdot{\CD@tC{2\axisheight}{.5em}}%
\def\vfdot{\CD@tC{1ex}\z@}%% % 1.46ex until 29.7.98
\def\CD@bF{\hbox{\dimen0=.3\CD@zC\dimen1\dimen0 \ifnum\CD@LH>\CD@TC\CD@iC{%
\dimen1}\else\CD@dG{\dimen0}\fi\CD@tC{\dimen0}{\dimen1}}}\newarrowfiller{.}%
\def\dfdot{\CD@bF\CD@CK}\CD@RC{+f:.}{\dfdot}\CD@RC{-f%
\def\CD@@K#1{\hbox\bgroup\def\CD@CH{#1\egroup}\afterassignment
\CD@CH%%
\count@='}\def\lnchar{\CD@@K\CD@qF}\def\CD@dF#1{\setbox#1=\hbox{\dimen5\dimen
#1 \setbox8=\box#1 \dimen1\wd8 \count@\dimen5 \divide\count@\dimen1 \ifnum
\count@=0 \box8 \ifdim\dimen5<.95\dimen1 \CD@gB{diagonal line too short}\fi
\else\dimen3=\dimen5 \advance\dimen3-\dimen1 \divide\dimen3\count@\dimen4%
\dimen3 \CD@dG{\dimen4}\ifPositiveGradient\multiply\dimen4\m@ne\fi\dimen6%
\dimen1 \advance\dimen6-\dimen3 \loop\raise\count@\dimen4\copy8 \ifnum\count@
>0 \kern-\dimen6 \advance\count@\m@ne\repeat\fi}}\def\CD@CG#1{\CD@EF\CD@xJ{#1%
}\else\CD@dF{#1}\fi}\def\CD@IH#1{}\newdimen\objectheight\objectheight1.8ex
\newdimen\objectwidth\objectwidth1em \def\CD@YD{\dimen6=\CD@aK
\DiagramCellHeight\dimen7=\CD@WK\DiagramCellWidth\CD@KJ\ifnum\CD@LH>0 \ifnum
\CD@TC>0 \CD@aF\else\aftergroup\CD@VC\fi\else\aftergroup\CD@UC\fi}\def\CD@VC{%
\CD@YA{diagonal map is nearly vertical}\CD@NA}\def\CD@UC{\CD@YA{diagonal map
is nearly horizontal}\CD@NA}\CD@rG\CD@NA{Use an orthogonal map instead}\def
\axisheight\CD@iC{\dimen8%
}\CD@X{\dimen8}{.5\wd3}\dimen9\dp3\advance\dimen9\axisheight\CD@iC{\dimen9}%
\else\CD@CG{2}\CD@CG{4}\ifPositiveGradient\dimen2-\dimen0%
\fi\rlap{\unhbox1}\fi\raise
\def\NorthWest{\CD@BI
\CD@rB\DiagonalChoice0 }\def\NorthEast{\CD@CI\CD@rB\DiagonalChoice1 }\def
\def\SouthEast{\CD@BI\CD@qB
\DiagonalChoice2 }\def\CD@aD{\vadjust{\CD@uA\CD@FA\advance\CD@uA
\ifPositiveGradient\else-\fi\CD@ZK\relax\CD@vA\CD@NB\advance\CD@vA-\CD@bK
\relax\hbox{\advance\CD@uA\ifPositiveGradient-\fi\CD@WK\advance\CD@vA\CD@aK
\hbox{\box6 \kern\CD@DC\kern\CD@eJ\penalty1 \box7 \box\z@}\penalty\CD@uA
\penalty\CD@vA}\penalty\CD@uA\penalty\CD@vA\penalty104}}\def\CD@eH#1{\relax
\vadjust{\hbox@maths{#1}\penalty\CD@FA\penalty\CD@NB\penalty\tw@}}\def\CD@lB{%
\ifcase\CD@GB\or\or\CD@bH{.5\wd0}{\box0}{.5\wd0}\z@\or\unhbox\z@\setbox\z@
\lastbox\CD@bH{.5\wd0}{\box0}{.5\wd0}\z@\unpenalty\unpenalty\setbox\z@
\lastbox\or\CD@TG\else\advance\CD@GB-100 \ifnum\CD@GB<\z@\cd@shouldnt B\fi
\setbox\z@\hbox{\kern\CD@mF\copy\CD@MH\kern\CD@mI\CD@uA\CD@XB\advance\CD@uA-%
\CD@NB\penalty\CD@uA\CD@uA\CD@FA\advance\CD@uA-\CD@lA\penalty\CD@uA\unhbox\z@
\global\CD@yA\lastpenalty\unpenalty\global\CD@zA\lastpenalty\unpenalty}\CD@uA
-\CD@yA\CD@vA\CD@zA\CD@fI\fi}\def\CD@TG{\unhbox\z@\setbox\z@\lastbox\CD@uA
\lastpenalty\unpenalty\advance\CD@uA\CD@mA\CD@vA\CD@XB\advance\CD@vA-%
\lastpenalty\unpenalty\dimen1\lastkern\unkern\setbox3\lastbox\dimen0\lastkern
\unkern\setbox0=\hbox to\z@{\unhbox0\setbox0\lastbox\setbox7\lastbox
\unpenalty\CD@eJ\lastkern\unkern\CD@DC\lastkern\unkern\setbox6\lastbox\dimen7%
\CD@tB\advance\dimen7-\wd\CD@uA\ifdim\dimen7<\z@\CD@CI\multiply\dimen7\m@ne
\let\mv\empty\else\CD@BI\def\mv{\raise\ht1}\kern-\dimen7 \fi\ifnum\CD@vA>%
\CD@NB\dimen6\CD@uB\advance\dimen6-\ht\CD@vA\else\dimen6\z@\fi\CD@jJ\CD@mK
\setbox1\null\ht1\dimen6\wd1\dimen7 \dimen7\dimen2 \dimen6\wd1 \CD@KJ\CD@uA
\CD@LH\CD@vA\CD@TC\dimen6\ht1 \CD@KJ\setbox2\null\divide\dimen2\tw@\advance
\dimen2\CD@eJ\CD@eG{\dimen2}\wd2\dimen2 \dimen0.5\dimen7 \advance\dimen0%
\ifPositiveGradient\else-\fi\CD@eJ\CD@dG{\dimen0}\advance\dimen0-\axisheight
\ht2\dimen0 \dimen0\CD@DC\CD@eG{\dimen0}\advance\dimen0\ht2\ht2\dimen0 \dimen
0\ifPositiveGradient-\fi\CD@DC\CD@dG{\dimen0}\advance\dimen0\wd2\wd2\dimen0
\setbox4\null\dimen0 .6\CD@zC\CD@eG{\dimen0}\ht4\dimen0 \dimen0 .2\CD@zC
\CD@dG{\dimen0}\wd4\dimen0 \dimen0\wd2 \ifvoid6\else\dimen1\ht4 \advance
\dimen1\ht2 \CD@CC6+-\raise\dimen1\rlap{\ifPositiveGradient\advance\dimen0-%
\wd6\advance\dimen0-\wd4 \else\advance\dimen0\wd4 \fi\kern\dimen0\box6}\fi
\dimen0\wd2 \ifvoid7\else\dimen1\ht4 \advance\dimen1-\ht2 \CD@CC7-+\lower
\dimen1\rlap{\ifPositiveGradient\advance\dimen0\wd4 \else\advance\dimen0-\wd7%
\advance\dimen0-\wd4 \fi\kern\dimen0\box7}\fi\mv\box0\hss}\ht0\z@\dp0\z@
\CD@bH{\z@}{\box\z@}{\z@}{\axisheight}}\def\CD@CC#1#2#3{\dimen4.5\wd#1 \ifdim
\dimen4>.25\dimen7\dimen4=.25\dimen7\fi\ifdim\dimen4>\CD@zC\dimen4.4\dimen4
\advance\dimen4.6\CD@zC\fi\CD@eG{\dimen4}\dimen5\axisheight\CD@dG{\dimen5}%
\advance\dimen4-\dimen5 \dimen5\dimen4\CD@eG{\dimen5}\advance\dimen0%
\ifPositiveGradient#2\else#3\fi\dimen5 \CD@dG{\dimen4}\advance\dimen1\dimen4 }
\def\CD@eD#1{\expandafter\CD@IK{#1}}\CD@ZA\CD@EK{output is PostScript
dependent}\def\CD@SC{\CD@IK{/bturn {gsave currentpoint currentpoint translate
4 2 roll neg exch atan rotate neg exch neg exch translate } def /eturn {%
currentpoint grestore moveto} def}}\def\CD@gK{\relax\CD@hK\CD@tK{Q}\else
\CD@IK{eturn}\fi} \def\CD@OJ#1{\count@#1\relax\multiply\count@7\advance
\count@16577\divide\count@33154 }\def\CD@fD#1{\expandafter\special{#1}} \def
\CD@LF\CD@fD{pn \the\count@}\CD@fD{pa 0 0}\CD@OJ{\dimen#%
\def\CD@JI{%
\CD@KJ\begingroup\ifdim\dimen7<\dimen6 \dimen2=\dimen6 \dimen6=\dimen7 \dimen
7=\dimen2 \count@\CD@LH\CD@LH\CD@TC\CD@TC\count@\else\dimen2=\dimen7 \fi
\ifdim\dimen6>.01\p@\CD@KI\global\CD@QA\dimen0 \else\global\CD@QA\dimen7 \fi
\endgroup\dimen2\CD@QA\CD@iK\CD@lK{\ifPositiveGradient\else-\fi\dimen6}\CD@iK
\CD@kK{\ifPositiveGradient-\fi\dimen6}\CD@iK\CD@eK{\dimen7}}\def\CD@KI{\CD@hJ
\ifdim\dimen7>1.73\dimen6 \divide\dimen2 4 \multiply\CD@TC2 \else\dimen2=0.%
353553\dimen2 \advance\CD@LH-\CD@TC\multiply\CD@TC4 \fi\dimen0=4\dimen2 \CD@ZG
4\CD@ZG{-2}\CD@ZG2\CD@ZG{-2.5}}\def\CD@AI{\begingroup\count@\dimen0 \dimen2 45%
pt \divide\count@\dimen2 \ifdim\dimen0<\z@\advance\count@\m@ne\fi\ifodd
\count@\advance\count@1\CD@@A\else\CD@y\fi\advance\dimen0-\count@\dimen2
\CD@gE\multiply\dimen0\m@ne\fi\ifnum\count@<0 \multiply\count@-7 \fi\dimen3%
\dimen1 \dimen6\dimen0 \dimen7 3754936sp \ifdim\dimen0<6\p@\def\CD@OG{4000}%
\fi\CD@KJ\dimen2\dimen3\CD@dG{\dimen2}\CD@hJ\multiply\CD@TC-6 \dimen0\dimen2
\CD@ZG1\CD@ZG{0.3}\dimen1\dimen0 \dimen2\dimen3 \dimen0\dimen3 \CD@ZG3\CD@ZG{%
1.5}\CD@ZG{0.3}\divide\count@2 \CD@gE\multiply\dimen1\m@ne\fi\ifodd\count@
\dimen2\dimen1\dimen1\dimen0\dimen0-\dimen2 \fi\divide\count@2 \ifodd\count@
\multiply\dimen0\m@ne\multiply\dimen1\m@ne\fi\global\CD@QA\dimen0\global
\CD@RA\dimen1\endgroup\dimen6\CD@QA\dimen7\CD@RA}\def\CD@OC{255}\let\CD@OG
\def\CD@KJ{\begingroup\ifdim\dimen7<\dimen6 \dimen9\dimen7\dimen7\dimen
6\dimen6\dimen9\CD@@A\else\CD@y\fi\dimen2\z@\dimen3\CD@XH\dimen4\CD@XH\dimen0%
\z@\dimen8=\CD@OG\CD@XH\CD@lC\global\CD@yA\dimen\CD@gE0\else3\fi\global\CD@zA
\dimen\CD@gE3\else0\fi\endgroup\CD@LH\CD@yA\CD@TC\CD@zA}\def\CD@lC{\count@
\dimen6 \divide\count@\dimen7 \advance\dimen6-\count@\dimen7 \dimen9\dimen4
\advance\dimen9\count@\dimen0 \ifdim\dimen9>\dimen8 \CD@@C\else\CD@AC\ifdim
\dimen6>\z@\dimen9\dimen6 \dimen6\dimen7 \dimen7\dimen9 \expandafter
\expandafter\expandafter\CD@lC\fi\fi}\def\CD@@C{\ifdim\dimen0=\z@\ifdim\dimen
9<2\dimen8 \dimen0\dimen8 \fi\else\advance\dimen8-\dimen4 \divide\dimen8%
\dimen0 \ifdim\count@\CD@XH<2\dimen8 \count@\dimen8 \dimen9\dimen4 \advance
\dimen9\count@\dimen0 \CD@AC\fi\fi}\def\CD@AC{\dimen4\dimen0 \dimen0\dimen9
\advance\dimen2\count@\dimen3 \dimen9\dimen2 \dimen2\dimen3 \dimen3\dimen9 }%
\def\CD@ZG#1{\CD@dG{\dimen2}\advance\dimen0 #1\dimen2 }\def\CD@dG#1{\divide#1%
\CD@TC\multiply#1\CD@LH}\def\CD@eG#1{\divide#1\CD@vA\multiply#1\CD@uA}\def
\def\CD@hJ{\dimen6\CD@LH\CD@XH
\multiply\dimen6\CD@LH\dimen7\CD@TC\CD@XH\multiply\dimen7\CD@TC\CD@KJ}\def
\let\CD@GH
\def\CD@GH{\errorcontextlines\m@ne}\fi\ifnum\inputlineno<0 \let
\let\CD@W\empty\let\CD@mD\relax\let\CD@uI\relax\let\CD@vI\relax
\let\CD@zF\relax\message{! Why not upgrade to TeX version 3? (available since
1990)}\else\def\CD@W{ at line \number\inputlineno}\def\CD@mD{ - first occurred%
}\def\CD@uI{\edef\CD@h{\the\inputlineno}\global\let\CD@jB\CD@h}\def\CD@h{9999%
}\def\CD@vI{\xdef\CD@jB{\the\inputlineno}}\def\CD@jB{\CD@h}\def\CD@zF{\ifnum
\CD@h<\inputlineno\edef\CD@CD{\space at lines \CD@h--\the\inputlineno}\else
\edef\CD@CD{\CD@W}\fi}\fi\let\CD@CD\empty\def\CD@YA#1#2{\CD@GH\errhelp=#2%
\expandafter\errmessage{\CD@tA: #1}}\def\CD@KB#1{\begingroup\expandafter
\message{! \CD@tA: #1\CD@CD}\ifnum\CD@XB>\CD@NB\ifnum\CD@CA>\CD@NB\else\ifnum
\CD@lA>\CD@FA\else\ifnum\CD@LB>\CD@FA\advance\CD@XB-\CD@NB\advance\CD@FA-%
\CD@lA\advance\CD@FA1\relax\expandafter\message{! (error detected at row \the
\CD@XB, column \the\CD@FA, but probably caused elsewhere)}\fi\fi\fi\fi
\endgroup}\def\CD@gB#1{{\expandafter\message{\CD@tA\space Warning: #1\CD@W}}}%
\def\CD@CB#1#2{\CD@gB{#1 \string#2 is obsolete\CD@mD}}\def\CD@AB#1{\CD@CB{%
Dimension}{#1}\CD@DE#1\CD@BB\CD@BB}\def\CD@BB{\CD@OA=}\def\CD@@B#1{\CD@CB{%
Count}{#1}\CD@DE#1\CD@OH\CD@OH}\def\CD@OH{\count@=}\def\HorizontalMapLength{%
\CD@AB\HorizontalMapLength}\def\VerticalMapHeight{\CD@AB\VerticalMapHeight}%
\def\VerticalMapDepth{\CD@AB\VerticalMapDepth}\def\VerticalMapExtraHeight{%
\CD@AB\VerticalMapExtraHeight}\def\VerticalMapExtraDepth{\CD@AB
\VerticalMapExtraDepth}\def\DiagonalLineSegments{\CD@@B\DiagonalLineSegments}%
\CD@ZA\CD@KH{\CD@eF\space diagonal line and arrow font not
available}\else\let\CD@KH\relax\fi\def\CD@aG#1#2<#3:#4:#5#6{\begingroup\CD@PA
#3\relax\advance\CD@PA-#2\relax\ifdim.1em<\CD@PA\CD@uA#5\relax\CD@vA#6\relax
\ifnum\CD@uA<\CD@vA\count@\CD@vA\advance\count@-\CD@uA\CD@KB{#4 by \the\CD@PA
}\if#1v\let\CD@CH\CD@JK\edef\tmp{\the\CD@uA--\the\CD@vA,\the\CD@FA}\else
\advance\count@\count@\if#1l\advance\count@-\CD@A\else\if#1r\advance\count@
\CD@A\fi\fi\advance\CD@PA\CD@PA\let\CD@CH\CD@ZE\edef\tmp{\the\CD@NB,\the
\CD@uA--\the\CD@vA}\fi\divide\CD@PA\count@\ifdim\CD@CH<\CD@PA\global\CD@CH
\CD@PA\fi\fi\fi\endgroup}\CD@tG\CD@xE\CD@JD\CD@ID\CD@rG\CD@xI{See the message
above.}\CD@rG\CD@lH{Perhaps you've forgotten to end the diagram before
resuming the text, in\CD@uG which case some garbage may be added to the
diagram, but we should be ok now.\CD@uG Alternatively you've left a blank line
in the middle - TeX will now complain\CD@uG that the remaining \CD@S s are
misplaced - so please use comments for layout.}\CD@rG\CD@hD{You have already
closed too many brace pairs or environments; an \CD@HD\CD@uG command was (%
over)due.}\CD@rG\CD@hH{\CD@dC\space and \CD@HD\space commands must match.}%
\def\CD@jH{\ifnum\inputlineno=0 \else\expandafter\CD@iH\fi}\def\CD@iH{\CD@MD
\CD@GD\crcr\CD@YA{missing \CD@HD\space inserted before \CD@kH- type "h"}%
\CD@lH\enddiagram\CD@AG\CD@kH\par}\def\CD@AG#1{\edef\enddiagram{\noexpand
\CD@rD{#1\CD@W}}}\def\CD@rD#1{\CD@YA{\CD@HD\space(anticipated by #1) ignored}%
\CD@xI\let\enddiagram\CD@SG}\def\CD@SG{\CD@YA{misplaced \CD@HD\space ignored}%
\CD@hH}\def\CD@mC{\CD@YA{missing \CD@HD\space inserted.}\CD@hD\CD@AG{closing
group}}\ifx\DeclareOption\CD@qK\else\ifx\DeclareOption\@notprerr\else
\def\vboxtoz{\vbox to\z@}%% \z@ is in plain TeX and means 0pt
\def\scriptaxis#1{\@scriptaxis{$\scriptstyle#1$}}%%
\def\ssaxis#1{\@ssaxis{$\scriptscriptstyle#1$}}%%
\def\@scriptaxis#1{\dimen0\axisheight\advance\dimen0-\ss@axisheight\raise
\dimen0\hbox{#1}}\def\@ssaxis#1{\dimen0\axisheight\advance\dimen0-%
\ss@axisheight\raise\dimen0\hbox{#1}}
\let\boldscriptaxis\scriptaxis%%
\def\boldscript#1{\hbox{$\scriptstyle#1$}}%%
\def\boldscriptaxis#1{\@scriptaxis{\boldmath$\scriptstyle#1$}}%%
\def\boldscript#1{\hbox{\boldmath$\scriptstyle#1$}}%%
\def\raisehook#1#2#3{\hbox{\setbox3=\hbox{#1$\scriptscriptstyle#3$}%
%% the character to use
\dimen0\ss@axisheight%% \scriptscriptstyle axis height
\dimen1\axisheight\advance\dimen1-\dimen0%% difference in axis heights
\dimen2\ht3\advance\dimen2-\dimen0%
%%  height of char above axis (half spread)
\advance\dimen2-0.021em\advance\dimen1 #2\dimen2%
%% shift = axis_difference +/- half_spread
\raise\dimen1\box3}}%% print the character
\def\shifthook#1#2#3{\setbox1=\hbox{#1$\scriptscriptstyle#3$}\dimen0\wd1%
\divide\dimen0 12\CD@zH{\dimen0}%%  "u"
\dimen1\wd1\advance\dimen1-2\dimen0 \advance\dimen1-2\CD@oI\CD@zH{\dimen1}%
\kern#2\dimen1\box1}%% print
\def\@cmex{\mathchar"03}%%ascii double quote
\def\make@pbk#1{\setbox\tw@\hbox to\z@{#1}\ht\tw@\z@\dp\tw@\z@\box\tw@}\def
\def\CD@qH{\kern0.11em}\def\CD@pH{\kern0%
.35em}
\def\dblvert{\def\CD@rH{\kern.5\PileSpacing}}\def\CD@rH{}
\def\SEpbk{\make@pbk{\CD@qH\CD@rH\vrule depth 2.87ex height -2.75ex width 0.%
95em \vrule height -0.66ex depth 2.87ex width 0.05em \hss}}
\def\SWpbk{\make@pbk{\hss\vrule height -0.66ex depth 2.87ex width 0.05em
\vrule depth 2.87ex height -2.75ex width 0.95em \CD@qH\CD@rH}}
\def\NEpbk{\make@pbk{\CD@qH\CD@rH\vrule depth -3.81ex height 4.00ex width 0.%
95em \vrule height 4.00ex depth -1.72ex width 0.05em \hss}}
\def\NWpbk{\make@pbk{\hss\vrule height 4.00ex depth -1.72ex width 0.05em
\vrule depth -3.81ex height 4.00ex width 0.95em \CD@qH\CD@rH}}
\def\puncture{{\setbox0\hbox{A}\vrule height.53\ht0 depth-.47\ht0 width.35\ht
0 \kern.12\ht0 \vrule height\ht0 depth-.65\ht0 width.06\ht0 \kern-.06\ht0
\vrule height.35\ht0 depth0pt width.06\ht0 \kern.12\ht0 \vrule height.53\ht0
depth-.47\ht0 width.35\ht0 }}
\def\NEclck{\overprint{\raise2.5ex\rlap{ \CD@rH$\scriptstyle\searrow$}}}%%
\def\NEanti{\overprint{\raise2.5ex\rlap{ \CD@rH$\scriptstyle\nwarrow$}}}%%
\def\NWclck{\overprint{\raise2.5ex\llap{$\scriptstyle\nearrow$ \CD@rH}}}%%
\def\NWanti{\overprint{\raise2.5ex\llap{$\scriptstyle\swarrow$ \CD@rH}}}%%
\def\SEclck{\overprint{\lower1ex\rlap{ \CD@rH$\scriptstyle\swarrow$}}}%%
\def\SEanti{\overprint{\lower1ex\rlap{ \CD@rH$\scriptstyle\nearrow$}}}%%
\def\SWclck{\overprint{\lower1ex\llap{$\scriptstyle\nwarrow$ \CD@rH}}}%%
\def\SWanti{\overprint{\lower1ex\llap{$\scriptstyle\searrow$ \CD@rH}}}
\def\rhvee{\mkern-10mu\greaterthan}%%
\def\lhvee{\lessthan\mkern-10mu}%%
\def\dhvee{\vboxtoz{\vss\hbox{$\vee$}\kern0pt}}%%
\def\uhvee{\vboxtoz{\hbox{$\wedge$}\vss}}%%
\def\dhlvee{\vboxtoz{\vss\hbox{$\scriptstyle\vee$}\kern0pt}}%%
\def\uhlvee{\vboxtoz{\hbox{$\scriptstyle\wedge$}\vss}}%%
\def\dhblvee{\vboxtoz{\vss\boldscript\vee\kern0pt}}%%
\def\uhblvee{\vboxtoz{\boldscript\wedge\vss}}%%
\def\rhcvee{\mkern-10mu\succ}%%
\def\lhcvee{\prec\mkern-10mu}%%
\def\dhcvee{\vboxtoz{\vss\hbox{$\curlyvee$}\kern0pt}}%%
\def\uhcvee{\vboxtoz{\hbox{$\curlywedge$}\vss}}%%
\def\rhvvee{\mkern-13mu\gg}%% 24.8.92 changed 10mu to 13mu
\def\lhvvee{\ll\mkern-13mu}%% to make rule go through
\def\dhvvee{\vboxtoz{\vss\hbox{$\vee$}\kern-.6ex\hbox{$\vee$}\kern0pt}}%%
\def\uhvvee{\vboxtoz{\hbox{$\wedge$}\kern-.6ex \hbox{$\wedge$}\vss}}%%
\def\rhtriangle{\triangleright\mkern1.2mu}%% 29.1.93
\def\lhtriangle{\triangleleft\mkern.8mu}%%
\def\uhtriangle{\vbox{\kern-.2ex \hbox{$\scriptscriptstyle\bigtriangleup$}%
\kern-.25ex}}%%
\def\dhtriangle{\vbox{\kern-.28ex \hbox{$\scriptscriptstyle\bigtriangledown$}%
\kern-.1ex}}%% 15.1.93 from -.25ex
\def\dhblack{\vbox{\kern-.25ex\nointerlineskip\hbox{$\blacktriangledown$}}}%
\def\uhblack{\vbox{\kern-.25ex\nointerlineskip\hbox{$\blacktriangle$}}}%
\def\dhlblack{\vbox{\kern-.25ex\nointerlineskip\hbox{$\scriptstyle
\blacktriangledown$}}}%% AMS
\def\uhlblack{\vbox{\kern-.25ex\nointerlineskip\hbox{$\scriptstyle
\blacktriangle$}}}%% AMS
\uhblack\newarrowhead{littleblack}{\mkern-1mu%
\scriptaxis\blacktriangleright}{\scriptaxis\blacktriangleleft\mkern-2mu}%
\def\rhla{\hbox{\setbox0=\lnchar55\dimen0=\wd0\kern-.6\dimen0\ht0\z@\raise
\axisheight\box0\kern.1\dimen0}}%%
\def\lhla{\hbox{\setbox0=\lnchar33\dimen0=\wd0\kern.05\dimen0\ht0\z@\raise
\axisheight\box0\kern-.5\dimen0}}%%
\def\dhla{\vboxtoz{\vss\rlap{\lnchar77}}}%%
\def\uhla{\vboxtoz{\setbox0=\lnchar66 \wd0\z@\kern-.15\ht0\box0\vss}}%% 1/93
\def\lhlala{\lhla\kern.3em\lhla}%%
\def\rhlala{\rhla\kern.3em\rhla}%%
\def\uhlala{\hbox{\uhla\raise-.6ex\uhla}}%%
\def\dhlala{\hbox{\dhla\lower-.6ex\dhla}}%%
\def\hhO{\scriptaxis\bigcirc\mkern.4mu} \def\hho{{\circ}\mkern1.2mu}%
\hhO\hhO{\scriptstyle\bigcirc}{\scriptstyle\bigcirc}%%
\def\rhtimes{\mkern-5mu{\times}\mkern-.8mu}\def\lhtimes{\mkern-.8mu{\times}%
\mkern-5mu}\def\uhtimes{\setbox0=\hbox{$\times$}\ht0\axisheight\dp0-\ht0%
\lower\ht0\box0 }\def\dhtimes{\setbox0=\hbox{$\times$}\ht0\axisheight\box0 }%
\Rightarrow\Leftarrow{\@cmex7F}{\@cmex7E}
\def\twoheaddownarrow{\rlap{$\downarrow$}\raise-.5ex\hbox{$\downarrow$}}%%
\def\twoheaduparrow{\rlap{$\uparrow$}\raise.5ex\hbox{$\uparrow$}}%%
\def\ltvee{\mkern-1mu{\lessthan}\mkern.4mu}%% \mkern added 15.1.93
\else\newarrowtail{%
boldlittlevee}{\boldscriptaxis\greaterthan}{\mkern-1mu\boldscriptaxis
\lessthan}{\boldscript\vee}{\boldscript\wedge}\fi
\def\rttriangle{\mkern1.2mu\triangleright}%% 29.1.93
\uhblack\newarrowtail{littleblack}{\scriptaxis
\blacktriangleright\mkern-2mu}{\mkern-1mu\scriptaxis\blacktriangleleft}%
\def\rtla{\hbox{\setbox0=\lnchar55\dimen0=\wd0\kern-.5\dimen0\ht0\z@\raise
\axisheight\box0\kern-.2\dimen0}}%%
\def\ltla{\hbox{\setbox0=\lnchar33\dimen0=\wd0\kern-.15\dimen0\ht0\z@\raise
\axisheight\box0\kern-.5\dimen0}}%%
\def\dtla{\vbox{\setbox0=\rlap{\lnchar77}\dimen0=\ht0\kern-.7\dimen0\box0%
\kern-.1\dimen0}}%% 15.1.93 from -.6
\def\utla{\vbox{\setbox0=\rlap{\lnchar66}\dimen0=\ht0\kern-.1\dimen0\box0%
\kern-.6\dimen0}}%%
\def\rtvvee{\gg\mkern-3mu}%%
\def\ltvvee{\mkern-3mu\ll}%%
\def\dtvvee{\vbox{\hbox{$\vee$}\kern-.6ex \hbox{$\vee$}\vss}}%%
\def\utvvee{\vbox{\vss\hbox{$\wedge$}\kern-.6ex \hbox{$\wedge$}\kern\z@}}%%
\def\ltlala{\ltla\kern.3em\ltla}%%
\def\rtlala{\rtla\kern.3em\rtla}%%
\def\utlala{\hbox{\utla\raise-.6ex\utla}}%%
\def\dtlala{\hbox{\dtla\lower-.6ex\dtla}}%%
\def\utbar{\vrule height 0.093ex depth0pt width 0.4em}%%
\let\dtbar\utbar%%
\def\rtbar{\mkern1.5mu\vrule height 1.1ex depth.06ex width .04em\mkern1.5mu}%
\let\ltbar\rtbar%%
\def\rthooka{\raisehook{}+\subset\mkern-1mu}%%
\def\lthooka{\mkern-1mu\raisehook{}+\supset}%%
\def\rthookb{\raisehook{}-\subset\mkern-2mu}%%
\def\lthookb{\mkern-1mu\raisehook{}-\supset}%%
\def\dthooka{\shifthook{}+\cap}%%
\def\dthookb{\shifthook{}-\cap}%%
\def\uthooka{\shifthook{}+\cup}%%
\def\uthookb{\shifthook{}-\cup}%%
\uthooka\newarrowtail{hookb}%
\CD@qK\newarrowtail{boldhooka}\rthooka\lthooka\dthooka\uthooka
\uthookb\newarrowtail{%
boldhook}\rthooka\lthooka\dthookb\uthooka\else\def\rtbhooka{\raisehook
\boldmath+\subset\mkern-1mu}%%
\def\ltbhooka{\mkern-1mu\raisehook\boldmath+\supset}%%
\def\rtbhookb{\raisehook\boldmath-\subset\mkern-2mu}%%
\def\ltbhookb{\mkern-1mu\raisehook\boldmath-\supset}%%
\def\dtbhooka{\shifthook\boldmath+\cap}%%
\def\dtbhookb{\shifthook\boldmath-\cap}%%
\def\utbhooka{\shifthook\boldmath+\cup}%%
\def\utbhookb{\shifthook\boldmath-\cup}%%
\utbhooka\newarrowtail{%
boldhookb}\rtbhookb\ltbhookb\dtbhookb\utbhookb\newarrowtail{boldhook}%
\def\dtsqhooka{\shifthook{}+\sqcap}%%
\def\ltsqhooka{\mkern-1mu\raisehook{}+\sqsupset}%%
\def\rtsqhooka{\raisehook{}+\sqsubset\mkern-1mu}%%
\def\utsqhooka{\shifthook{}+\sqcup}%%
\uthooka\newarrowtail{C}\rthooka
\hhO\hhO{\scriptstyle\bigcirc}{\scriptstyle\bigcirc}%%
\Leftarrow\Rightarrow{\@cmex7E}{\@cmex7F}
\def\vfthree{\mid\!\!\!\mid\!\!\!\mid}%%ascii
\def\vfdashstrut{\vrule width0pt height1.3ex depth0.7ex}%%
\def\vfthedash{\vrule width\CD@LF height0.6ex depth 0pt}%%
\def\hfthedash{\CD@AJ\vrule\horizhtdp width 0.26em}%%
\def\hfdash{\mkern5.5mu\hfthedash\mkern5.5mu}%%
\def\vfdash{\vfdashstrut\vfthedash}%%
\def\rightBrace{\d@brace[thick,cmex]}%%ASCII square brackets []
\def\leftBrace{\u@brace[thick,cmex]}%%ASCII square brackets []
\def\upperBrace{\r@brace[thick,cmex]}%%ASCII square brackets []
\def\lowerBrace{\l@brace[thick,cmex]}%%ASCII square brackets []
\def\rightParenth{\d@parenth[thick,cmex]}%%ASCII square brackets []
\def\leftParenth{\u@parenth[thick,cmex]}%%ASCII square brackets []
\def\upperParenth{\r@parenth[thick,cmex]}%%ASCII square brackets []
\def\lowerParenth{\l@parenth[thick,cmex]}%%ASCII square brackets []
\let\hEq\rEq%%
\let\vEq\uEq%%
\def\labelstyle{%%
\ifincommdiag%%
\textstyle%%
\else%%
\scriptstyle%%
\fi}%%
\let\objectstyle\displaystyle
\CD@hK\message{| running in pdf mode -- diagonal arrows will work
automatically |}\else\message{| >>>>>>>> POSTSCRIPT MODE (DVIPS) IS NOW THE
DEFAULT <<<<<<<<<<<<|}\message{|(DVI mode has not been supported since 1992
\else\message{| >>>>>>>> USING UGLY
OBSOLETE DVI CODE - PLEASE STOP <<<<<<<<<<<<|}\message{|(DVI mode has not been
\DeclareMathOperator\C{\mathbb C}
\DeclareMathOperator\R{\mathbb R}
\DeclareMathOperator\Z{\mathbb Z}
\newtheorem{theorem}{Theorem}[section]
\newtheorem{lemma}[theorem]{Lemma}
\newtheorem{cor}[theorem]{Corollary}
\newtheorem{prop}[theorem]{Proposition}
\theoremstyle{definition}
\theoremstyle{remark}
\newtheorem{remark}[theorem]{Remark}
\numberwithin{equation}{section}
\newcommand{\dontprint}[1]\relax
\newcommand{\SO}{\operatorname{SO}}
\newcommand{\rL}{\operatorname{L}}
\newcommand{\Ind}{\operatorname{Ind}}
\newcommand{\forg}{\operatorname{for}}
\newcommand{\fg}{{\frak g}}
\newcommand{\Ad}{\operatorname{Ad}}
\newcommand{\diag}{\operatorname{diag}}
\renewcommand{\div}{\operatorname{div}}
\newcommand{\Ga}{\Gamma}
\newcommand{\und}{\underline}
\newcommand{\Pic}{\operatorname{Pic}}
\newcommand{\hra}{\hookrightarrow}
\newcommand{\we}{\wedge}
\renewcommand{\P}{{\mathbb P}}
\newcommand{\A}{{\mathbb A}}
\newcommand{\wt}{\widetilde}
\newcommand{\ot}{\otimes}
\newcommand{\fu}{{\mathfrak u}}
\newcommand{\Hom}{\operatorname{Hom}}
\newcommand{\Om}{\Omega}
\newcommand{\NN}{{\mathcal N}}
\renewcommand{\SS}{{\mathcal S}}
\newcommand{\FF}{{\mathcal F}}
\newcommand{\GG}{{\mathcal G}}
\newcommand{\LL}{{\mathcal L}}
\newcommand{\MM}{{\mathcal M}}
\newcommand{\OO}{{\mathcal O}}
\newcommand{\PP}{{\mathcal P}}
\newcommand{\si}{\sigma}
\newcommand{\de}{\delta}
\newcommand{\sub}{\subset}
\newcommand{\Spec}{\operatorname{Spec}}
\newcommand{\Res}{\operatorname{Res}}
\newcommand{\PGL}{\operatorname{PGL}}
\newcommand{\ov}{\overline}
\newcommand{\om}{\omega}
\newcommand{\la}{\lambda}
\renewcommand{\a}{\alpha}
\renewcommand{\b}{\beta}
\newcommand{\tr}{\operatorname{tr}}
\newcommand{\GL}{\operatorname{GL}}
\newcommand{\tot}{\operatorname{tot}}
\renewcommand{\th}{\theta}
\newcommand{\ga}{\gamma}
\newcommand{\lan}{\langle}
\newcommand{\ran}{\rangle}
\newcommand{\SL}{{\operatorname{SL}}}
\newcommand{\End}{{\operatorname{End}}}
\newcommand{\Bun}{{\operatorname{Bun}}}
\newcommand{\fm}{{\mathfrak m}}
\newcommand{\Nm}{{\operatorname{Nm}}}
\newcommand{\Tr}{{\operatorname{Tr}}}
\newcommand{\eps}{\epsilon}
\newcommand{\Gal}{{\operatorname{Gal}}}
\newcommand{\vol}{{\operatorname{vol}}}
\newcommand{\bSt}{{\bf St}}
\newcommand{\St}{{\operatorname{St}}}
\title{Analog of theta-lifting for a curve over dual numbers over a finite field}
\author{David Kazhdan}
\author{Alexander Polishchuk}
\thanks{D.K. is partially supported by the ERC grant No 101142781.
A.P. is partially supported by the NSF grant DMS-2349388, by the Simons Travel grant MPS-TSM-00002745,
and within the framework of the HSE University Basic Research Program}
\address{Einstein Institute of Mathematics,
The Hebrew University of Jerusalem,
Jerusalem 91904, Israel}
\email{kazhdan@math.huji.ac.il}
\address{
    Department of Mathematics, 
    University of Oregon, 
    Eugene, OR 97403, USA; National Research University Higher School of Economics, Moscow, Russia
  }
  \email{apolish@uoregon.edu}
\begin{document}

\maketitle
\begin{abstract}
We continue the study of automorphic functions associated with a curve $C$ over the ring $k[\eps]/(\eps^2)$, where $k$ is a finite field, begun in \cite{BKP-aut}.
Namely, we study an example of theta-lifting in this framework and show that it can be understood in terms of the orbit decomposition of the space of automorphic functions
$\SS(\SL_2(F)\backslash \SL_2(\A_C))$ introduced in \cite{BKP-aut}.
We prove that all strongly cuspidal functions in $\SS(\SL_2(F)\backslash \SL_2(\A_C))$ can be constructed using theta-lifting for an appropriate double covering
$\wt{C}\to C$. 
\end{abstract}

\section{Introduction}

The classical theta-lifting is a correspondence relating automorphic functions for a pair of groups forming a Howe dual pair
(see \cite{Prasad}). In the case of the pair $(\SO_2,\SL_2)$ for the function field of a curve $C_0$ over a finite field $k$, this gives 
a way to produce cuspidal functions on $\SL_2$-bundles on $C_0$ from data on a double covering $\wt{C}_0\to C_0$.

In this paper we study an analog of this picture, with the curve $C_0$ replaced by a curve $C$ over the ring of dual numbers $A=k[\eps]/(\eps^2)$ (we denote
by $C_0$ the reduction of $C$ from $A$ to $k$).
The study of automorphic functions for such $C$ was initiated in \cite{BKP-aut}, and we refer to loc.\ cit. for details on this setup.

%We work with a nilpotent extension $C$ of a curve $C_0$ over a finite field $k$, so $\OO_C$ is a square zero extension of $\OO_{C_0}$,
%such that the corresponding ideal $\NN\sub \OO_C$ is a line bundle on $C_0$.

We set $F=k(C)$, the local ring of the general point of $C$. This is a square-zero extension of the function field $F_0=k(C_0)$. 
As in the case of a finite field, we have a naturally defined ring of adeles
$\A_C$ which is a square-zero extension of the usual adeles $\A_{C_0}$.
We denote by $\hat{\OO}\sub \A_C$ the subring of integer adeles. 

Let $G$ be a split connected reductive group over $\Z$, and let $\fg$ denote its Lie algebra.
%we work with adelic points of $G$, for the 
As in the classical case, we are interested in the representation of
$G(\A_C)$ on the space $\SS(G(F)\backslash G(\A_C))$ (locally constant functions with compact support), 
and more specifically, in the subspace of $G(\hat{\OO})$-invariant vectors
and the action of the Hecke operators on it. As in the classical case, this subspace can be interpreted as functions on isomorphism
classes of $G$-bundles over $C$.
%(resp., $\hat{\OO}_0\sub \A_{C_0}$) the subring of integer adeles.

The algebra of Hecke operators in this case is not commutative, but one can define a natural commuting family of Hecke operators
associated with dominant coweights and relative Cartier divisors in $C\to \Spec(A)$, extending closed points in $C_0$ (see \cite{BKP-Hecke}).
Furthermore, if our curve $C$ over dual numbers is the reduction of a curve $C_O$ over the ring of integers $O$ in a local field $K$,
then our picture should be related to a conjectural picture with the Hecke operators acting on the Schwartz space of $G$-bundles over $C_K$, discussed
in \cite{BK}.

In the present paper we develop an analog of theta-lifting for automorphic functions over $C$. This involves constructing
a representation $\FF_{\LL}$ of $\SL_2(\A_C)$ associated with a double covering $\pi:\wt{C}\to C$ and a line bundle $\LL$ on $\wt{C}$ equipped
with an isomorphism $\Nm(\LL)\simeq \om_{C/A}$ (also depending on a choice of an additive character $\psi$ of the finite field $k$), and then
using a natural $\SL_2(F)$-invariant functional on $\FF_{\LL}$ to construct and study the {\it theta-lifting operator} 
$$\kappa_{\LL}:\FF_{\LL}\to \SS(\SL_2(F)\backslash \SL_2(\A_C)).$$

Our main result is the interpretation of this construction in terms of the orbit decomposition of $\SS(G(F)\backslash G(\A_C))$ for $G=\SL_2$,
defined in \cite{BKP-Hecke}. Namely, we consider orbits $\Om$ of the adjoint action of $G(F_0)$ on $\fg\ot \om_{C_0}(F_0)$. Using $\psi$
and residues of differentials, we view $\Om$ as a $G(F_0)$-orbit of characters of $\fg\ot \A_{C_0}/F_0$. Furthermore, we view $\fg\ot \A_{C_0}$ 
as a normal subgroup in $G(\A_C)$, and define the subspace $\SS(G(F)\backslash G(\A_C))_\Om\sub \SS(G(F)\backslash G(\A_C))$ 
associated with the corresponding
$G(F_0)$-orbit of characters of this subgroup (see \cite[Sec.\ 3]{BKP-aut} and Sec.\ \ref{recoll-sec} below). 

The $\SL_2(\A_C)$-action on $\FF_\LL$ is centralized by a natural action of the group $\A^*_{\wt{C},1}$ of ideles with the norm $1$. Restricting the latter action to
the subgroup of ideles of $\wt{C}$ trivial modulo $\eps$, we study the spaces of twisted coinvariants $\FF_{\LL,c_0}$, where $c_0$ runs through rational antiinvariant differentials on 
$\wt{C}_0$, the reduction of $\wt{C}$.
%We show that the theta-lifting operator on $\FF_{\LL}$ factors through certain 
We show that for $c_0\neq 0$ the theta-lifting operator from $\FF_{\LL,c_0}$ lands in the subspace 
$\SS(\SL_2(F)\backslash \SL_2(\A_C))_{\Omega_{\LL,c_0}}$, where $\Omega_{\LL,c_0}$ is a regular elliptic orbit associated with $\LL$ and $c_0$ (see Theorem \ref{two-reps-two-funct-thm}).
Furthermore, we show that for every regular elliptic orbit $\Om$, all functions in $\SS(\SL_2(F)\backslash \SL_2(\A_C))_{\Omega}$ 
(which are known to be cuspidal) arise from theta-lifting for appropriate data $(\wt{C}\to C,\LL,c_0)$ (see Theorem \ref{main-thm1}).

Note that in this setup the space of cuspidal functions in $\SS(G(F)\backslash G(\A_C))$
contains a subspace of strongly cuspidal functions (see \cite{BKP-aut} and Sec.\ \ref{recoll-sec} below). 
Our main result implies that all strongly cuspidal functions arise from theta-lifting (see Remark \ref{reg-ell-rem}).

In the case when the double covering $\wt{C}_0\to C_0$ is associated with a quadratic differential on $C_0$ with simple zeros, we give a more precise information on the
theta-lifting of spherical (i.e., $\SL_2(\hat{\OO})$-invariant) vectors in $\FF_{\LL}$. Namely, in \cite[Sec.\ 3]{BKP-aut} we proved that spherical vectors in $\SS(G(F)\backslash G(\A_C))_\Om$ have a natural
interpretation as the space of functions on a certain Hitchin fiber (see also Sec.\ \ref{Higgs-sec} below). We show that a natural spherical vector in $\FF_{\LL,c_0}$ (corresponding to the characteristic function of integer points),
where $c_0$ is the canonical antiinvariant differential on $\wt{C}_0$, maps to the delta-function of a certain point in the Hitchin fiber (see Theorem \ref{main-thm2}).
In addition, we compute the action of some Hecke operators 
(a more general local computation is done in Sec.\ \ref{modt2-sph-sec}) on these spherical vectors, and in this
way produce explicit eigenfunctions for these Hecke operators in $\SS(G(F)\backslash G(\A_C))_\Om$ (see Theorem \ref{main-thm2}).

%Recall picture from \cite{BKP-aut}: cuspidal functions, strongly cuspidal functions, Hecke algebras.
%The goals of the present paper: 1) complement the local picture of \cite{K-YD} by the corresponding global picture;
%2) in the case $G=\SL_2$ relate theta lifting associated with double coverings of $C$ to the automorphic representations obtained in 1).

The paper is organized as follows. In Sec.\ \ref{nilp-setup-sec} we give some background on automorphic functions for a nilpotent extension $C$ of $C_0$ and for arbitrary $G$,
recalling relevant results from \cite{BKP-aut}. In Sections \ref{Higgs-sec} and \ref{Four-sec} we give an interpretation of the relation between 
$\SS(G(F)\backslash G(\A_C))_\Om$ and functions on Higgs $G$-bundles as a certain Fourier transform. 
In Sec.\ \ref{fin-field-sec} we review the classical picture for a curve over a finite field, including computation of the Hecke action in Sec.\ \ref{fin-field-Hecke-sec} and
the interpretation in terms of vector bundles in Sec.\ \ref{fin-field-vec-sec}. Then in Sec. \ref{nilp-theta-sec} we study the theta-lifting for a curve $C$ over $A=k[\eps]/(\eps^2)$.
First, we study the corresponding local picture in Sec.\ \ref{nilp-local-sec}. Then we compute spherical vectors and some Hecke operators in Sec.\ \ref{modt2-sph-sec}.
Finally, we study the global picture in Sec.\ \ref{nilp-global-sec} proving our main results.

\medskip

\noindent{\it Convention.}
When considering quadratic extensions (starting from Section \ref{fin-field-sec}), we always assume the characteristic of the finite field $k$ to be different from $2$.

\section{Cuspidal functions for nilpotent extensions}\label{nilp-setup-sec}

In this section we work with a nilpotent extension $C$ of a curve $C_0$ over a finite field $k$, so $\OO_C$ is a square zero extension of $\OO_{C_0}$,
such that the corresponding ideal $\NN\sub \OO_C$ is a line bundle on $C_0$.

\subsection{Recollections from \cite{BKP-aut}}\label{recoll-sec}

We fix a nontrivial additive character $\psi$ of the finite field $k$, and denote by
$$\psi_{C_0}:\om_{C_0}(\A_{C_0})\to U(1)$$
the induced additive character $\a\mapsto \psi(\sum_p \Res_p \a)$.

Let $G$ be a split reductive group over $\Z$. Let $N_{\A}\sub G(\A_C)$ (resp., $N_F\sub G(F)$) denote the kernel of the reduction homomorphism
to $G(\A_{C_0})$ (resp., to $G(F_0)$). We have a natural identification of groups $N_\A\simeq \fg\ot \NN(\A_{C_0})$ (resp., $N_F\simeq \fg\ot \NN(F_0)$).
With each $\eta\in \fg^\vee\ot (\NN^{-1}\om_{C_0})(F_0)$ we associate a character 
$$\psi_{\eta}=\psi_{C_0}(\lan\eta,?\ran)$$ 
of $N_{\A}$, trivial on $N_F$.
Let $\Om$ denote the $G(F_0)$-orbit of $\eta_0$. Then we have a well defined projector operator $\Pi_{\Om}$ on $\SS(G(F)\backslash G(\A_C))$, given by
$$\Pi_{\Om}(f)=\sum_{\eta\in\Om} \Pi_{\eta}(f), \ \ \Pi_{\eta}f(g)=\vol(N_\A/N_F)^{-1}\cdot \int_{N_\A/N_F}\psi_{\eta}(u)^{-1}f(ug)du.$$
We have a direct sum decomposition
$$\SS(G(F)\backslash G(\A_C))=\bigoplus_{\Om} \SS(G(F)\backslash G(\A_C))_{\Om},$$
where the summation is over $G(F_0)$-orbits on $\fg^\vee\ot (\NN^{-1}\om_{C_0})(F_0)$, and $\SS(G(F)\backslash G(\A_C))_{\Om}$ denote the image of $\Pi_{\Om}$
(see \cite[Eq.\ (3.1)]{BKP-aut}). 

Given an element $\eta\in \fg^\vee\ot (\NN^{-1}\om_{C_0})(F_0)$, we denote by $\bSt_{\eta}\sub G$ its centralizer
viewed as an algebraic subgroup defined over $F_0$. We denote by $\St_{\eta}\sub G(\A_C)$ the preimage of
$\bSt_{\eta}(\A_{C_0})$ under the projection $G(\A_C)\to G(\A_{C_0})$.

Furthermore, for each $\eta\in \fg^\vee\ot (\NN^{-1}\om_{C_0})(F_0)$, there is an identification of $G(\A_C)$-representations, 
$$\SS(G(F)\backslash G(\A_C))_{\Om_{\eta}}\simeq \wt{\SS}_\eta\sub \C_{lc}(N_F\backslash G(\A_C)),$$
where $\Om_\eta$ is the $G(F_0)$-orbit of $\eta$, and
$\wt{\SS}_\eta$ consists of functions $f$ such that $f(ug)=\psi_{\eta}(u)f(g)$ for $u\in N_\A$ and $f(\ga g)=f(g)$ for $\ga\in \St_\eta(F)$ (where 
$\St_\eta(F)=\St_\eta\cap G(F)$), and the support of $f$ modulo $N_\A\cdot \St_\eta(F)$ is compact.
The isomorphism
\begin{equation}\label{kappa-eta-eq}
\kappa_\eta:\wt{\SS}_{\eta}\rTo{\sim}\SS(G(F)\backslash G(\A_C))_{\Om_{\eta}} 
\end{equation}
is associated with the $G(F)$-invariant functional $\Theta_\eta$
on $\wt{\SS}_\eta$ given by 
\begin{equation}\label{Theta-eta-def}
\Theta_\eta(f)=\sum_{\ga\in \St_\eta(F)\backslash G(F)} f(\ga),
\end{equation}
so that $\kappa_\eta$ is given by
\begin{equation}\label{kappa-eta-def-eq}
\kappa_\eta(f)(g)=\Theta_\eta(g\cdot f)
\end{equation} 
(see \cite[Lem.\ 3.13]{BKP-aut}).

Recall that a function $f$ on $G(F)\backslash G(\A_C)$ is called {\it cuspidal} (resp., {\it strongly cuspidal}) if
$$\int_{u\in U_P(\A_C)/U_P(F)}f(ug)du=0 \ \ (\text{resp.,} \ \int_{u\in U_P(\A_C)\cap N_\A/U_P(F)\cap N_F)}f(ug)du=0$$
where $U_P$ is the unipotent radical of a standard parabolic subgroup in $G$.
The following result is a generalization of \cite[Prop.\ 6.5]{BKP-aut} (that deals with the case $G=\PGL_2$).

\begin{lemma}\label{reg-ell-str-cusp-lem}
Assume that $\eta$ is regular semisimple and elliptic (i.e., the torus $\bSt_{\eta}$ is anisotropic over $F_0$).
Then any $f\in \SS(G(F)\backslash G(\A_C))_{\Om_{\eta}}$ is strongly cuspidal.
\end{lemma}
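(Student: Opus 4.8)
The plan is to test strong cuspidality one standard parabolic at a time and to reduce the required vanishing first to the nontriviality of the characters $\psi_{\eta'}$ on the relevant unipotent subgroup, and then to a purely group-theoretic statement about regular semisimple elliptic elements.

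First I would fix a proper standard parabolic $P\subset G$ with unipotent radical $U_P$ and Lie algebra $\mathfrak{u}_P=\operatorname{Lie}(U_P)$. Under the identifications $N_\A\simeq\fg\ot\NN(\A_{C_0})$ and $N_F\simeq\fg\ot\NN(F_0)$, the subgroup $U_P(\A_C)\cap N_\A$ corresponds to $\mathfrak{u}_P\ot\NN(\A_{C_0})$ and $U_P(F)\cap N_F$ to $\mathfrak{u}_P\ot\NN(F_0)$. Writing $f=\sum_{\eta'\in\Om}\Pi_{\eta'}f$ and setting $h_{\eta'}=\Pi_{\eta'}f$, each $h_{\eta'}$ satisfies $h_{\eta'}(ug)=\psi_{\eta'}(u)h_{\eta'}(g)$ for $u\in N_\A$. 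Integrating over the compact quotient $V=(\mathfrak{u}_P\ot\NN(\A_{C_0}))/(\mathfrak{u}_P\ot\NN(F_0))$ then gives
$$\int_{V}f(ug)\,du=\vol(V)\sum_{\eta':\ \psi_{\eta'}|_V\equiv 1}h_{\eta'}(g),$$
so the strong cuspidality integral attached to $P$ vanishes provided no $\eta'\in\Om$ has $\psi_{\eta'}$ trivial on $V$.

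Next I would identify when $\psi_{\eta'}|_V$ is trivial. The restriction depends only on the image $\eta'|_{\mathfrak{u}_P}\in\mathfrak{u}_P^\vee\ot(\NN^{-1}\om_{C_0})(F_0)$, and by the residue (Serre) duality on $C_0$ built into $\psi_{C_0}$ the induced character $u\mapsto\psi_{C_0}(\langle\eta'|_{\mathfrak{u}_P},u\rangle)$ on $V$ is nontrivial exactly when $\eta'|_{\mathfrak{u}_P}\neq 0$. Thus the whole statement reduces to the claim that no element of $\Om$ annihilates $\mathfrak{u}_P$ for any proper standard $P$. Fixing a nondegenerate invariant form on $\fg$ to identify $\fg^\vee\cong\fg$ and $\eta$ with a regular semisimple $X_\eta\in\fg\ot(\NN^{-1}\om_{C_0})(F_0)$, one has $\mathfrak{u}_P^{\perp}=\mathfrak{p}$, so, since every $F_0$-parabolic of the split group $G$ is $G(F_0)$-conjugate to a standard one, the condition becomes: $X_\eta$ is not $G(F_0)$-conjugate into any proper parabolic subalgebra $\mathfrak{p}=\mathfrak{l}\oplus\mathfrak{u}_P$.

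Finally I would deduce this last claim from the hypothesis that $\bSt_\eta=Z_G(X_\eta)$ is anisotropic. If $X_\eta$ were conjugate into a proper $\mathfrak{p}$, then, being semisimple, it would be $U_P(F_0)$-conjugate to a regular semisimple element $s\in\mathfrak{l}$; the connected center $Z(L)^\circ$ would then lie in $Z_G(s)$, which is a $G(F_0)$-conjugate of $\bSt_\eta$. Since $L$ is a proper Levi of the split group $G$, the torus $Z(L)^\circ$ is a nontrivial split subtorus, contradicting anisotropy of $\bSt_\eta$. The main obstacle is precisely this group-theoretic input, namely the conjugation of a semisimple element of $\mathfrak{p}$ into the Levi $\mathfrak{l}$ over the base field $F_0$ (together with the identification $\mathfrak{u}_P^\perp=\mathfrak{p}$); I would isolate it as a separate lemma for split reductive $G$ in the relevant characteristic, noting that in the case of actual interest $G=\SL_2$ it amounts to the elementary diagonalization of an upper-triangular regular semisimple element, which reproduces the $\PGL_2$ argument of \cite[Prop.\ 6.5]{BKP-aut}.
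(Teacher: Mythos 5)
Your proposal is correct and follows essentially the same route as the paper's proof: averaging $\Pi_{\eta'}f$ over $\fu_P\ot\NN(\A_{C_0})/\fu_P\ot\NN(F_0)$ kills everything unless $\psi_{\eta'}$ is trivial there, triviality means $\eta'$ is Killing-orthogonal to $\fu_P$, hence lies in $\operatorname{Lie}(P)$, which is impossible since a regular semisimple element of a proper parabolic subalgebra is centralized by a nontrivial split torus, contradicting ellipticity. The only difference is presentational: you spell out the final group-theoretic step (conjugation into the Levi and the split central torus of $L$) that the paper compresses into the parenthetical ``otherwise $\eta$ would be centralized by a split torus.''
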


\begin{proof}
This is proved analogously to the case $G=\PGL_2$ considered in \cite[Prop.\ 6.5]{BKP-aut}.
It is enough to check that if $\fu_P$ is the Lie algebra of the unipotent radical in a standard parabolic subgroup $P$ of $G$,
then for any $f\in \SS(G(F)\backslash G(\A_C))$, one has 
$$\int_{a\in \fu_P\ot \NN(\A_{C_0})/\fu_P\ot \NN(F_0)}\Pi_{\eta}f((1+a)g)=0$$
for any $\eta$ which is regular semisimple and elliptic.
We can rewrite the integral as
\begin{align*}
&\int_{X\in\fg\ot\NN(\A_{C_0})/\fg\ot \NN(F_0)}\int_{a\in \fu_P\ot\NN(\A_{C_0})/\fu_P\ot \NN(F_0)} \psi_\eta(-X)f((1+X+a)g)dadX=\\
&\int_{X\in\fg\ot \NN(\A_{C_0})/\fg\ot \NN(F_0)} \int_{a\in \fu_P\ot\NN(\A_{C_0})/\fu_P\ot\NN(F_0)}\psi_{\eta}(a)\psi_\eta(-X)f((1+X)g)dadX.
\end{align*}
Now we observe that $\int_{a\in \fu_P\ot\NN(\A_{C_0})/\fu_P\ot \NN(F_0)}\psi_{\eta}(a)da=0$ unless the restriction $\psi_{\eta}|_{\fu_P\ot\NN(\A_{C_0})}$ is trivial,
i.e., $\eta$ is orthogonal to $\fu_P$ with respect to the Killing form. But the latter orthogonal is exactly the Lie algebra of $P$, so
this cannot happen (otherwise $\eta$ would be centralized by a split torus).
%Based on the fact that $\eta$ cannot belong to any standard parabolic subalgebra. 
\end{proof}

\begin{remark}\label{reg-ell-rem}
Conversely, the argument of \cite[Prop.\ 6.5]{BKP-aut} shows that if $\eta$ belongs to the Lie algebra of a standard parabolic subgroup then
the space $\SS(G(F)\backslash G(\A_C))_{\Om_{\eta}}$ does not contain any nonzero strongly cuspidal functions. In the case of $G=\SL_2$ 
(and characteristic $\neq 2$) this means that all strongly cuspidal functions come from regular elliptic orbits.
\end{remark}

\subsection{Functions on $\Bun_G(C)$ and Higgs bundles}\label{Higgs-sec}

Next, we recall the relation of the space of $G(\hat{\OO})$-invariant vectors in $\wt{\SS}_\eta$ with Higgs bundles on $C_0$ (see \cite[Sec.\ 3.6]{BKP-aut}).
Let $\MM^{Higgs,\NN^{-1}}(C_0)$ denote the groupoid of $\NN^{-1}$-twisted Higgs $G$-bundles on $C_0$, i.e., pairs
$(P_0,\phi)$, where $P_0$ is a $G$-bundle on $C_0$ (trivial at the general point), and $\phi$ is a global section of the vector bundle $\fg^\vee_{P_0}\ot \NN^{-1}\om_{C_0}$,
and let $\MM_{\eta}^{Higgs,\NN^{-1}}(C_0)\sub \MM^{Higgs,\NN^{-1}}(C_0)$ denote the subgroupoid of $(P_0,\phi)$ such that the orbit of $\phi$ at the general point is $\Om_{\eta}$.

The starting point is 
a natural identification of groupoids
\begin{equation}\label{Higgs-groupoids-eq}
\MM_{\eta}^{Higgs,\NN^{-1}}(C_0)\simeq \bSt_\eta(F_0)\backslash G(\A_{C_0})_\eta/G(\hat{\OO}_0) \sub \bSt_\eta(F_0)\backslash G(\A_{C_0})/G(\hat{\OO}_0),
\end{equation}
where $\hat{\OO}_0\sub \A_{C_0}$ are integer adeles, and
$G(\A_{C_0})_\eta\sub G(\A_{C_0})$ is the subset of $g\in G(\A_{C_0})$ such that 
$$\Ad(g^{-1})(\eta)\in \fg^\vee\ot \NN^{-1}\om_{C_0}(\hat{\OO}_0)\sub\fg^\vee\ot \NN^{-1}\om_{C_0}(\A_{C_0}).$$

Recall that for $g=(g_p)\in G(\A_{C_0})$, we have the $G$-bundle $P(g)$ on $C_0$, equipped with trivializations $t_{gen}$ and $(t_p)$ at the general point and at
all closed points, such that $t_p=t_{gen}g_p$ (we think of $P(g)$ as a right $G$-torsor). 
Then $\Ad(g^{-1})(\eta)$ defines a global section of $\fg^\vee_{P(g)}\ot \NN^{-1}\om_{C_0}$ if and only if $g\in G(\A_{C_0})_\eta$.
Under the isomorphism \eqref{Higgs-groupoids-eq} the double coset of $g\in G(\A_{C_0})\eta$ corresponds to the Higgs bundle $(P(g),\Ad(g^{-1})(\eta))$.

Let $G(\A_C)_\eta\sub G(\A_C)$ denote the preimage of $G(\A_{C_0})_\eta\sub G(\A_{C_0})$. Then the natural projection
$$\pi:\St_\eta(F)\backslash G(\A_C)_\eta/G(\hat{\OO})\to \bSt_\eta(F_0)\backslash G(\A_{C_0})_\eta/G(\hat{\OO}_0)$$
can be identified with the morphism of groupoids
$$\pi^{Higgs}:\MM_\eta^{Higgs,\NN^{-1}}(C)\to \MM_\eta^{Higgs,\NN^{-1}}(C_0),$$
where $\MM_\eta^{Higgs,\NN^{-1}}(C)$ is the groupoid of pairs $(P,\phi)$, where $P$ is a $G$-bundle on $C$ (trivial at the general point) and $\phi$ is an $\NN^{-1}$-twisted
Higgs field on $P_0$, the induced $G$-bundle on $C_0$.

The left multiplication by $N_\A\simeq \fg\ot \NN(\A_{C_0})$ acts transitively on each fiber of $\pi$, and identifies each fiber with a torsor over a finite quotient of $N_\A$.
This can be also seen in terms of the map $\pi^{Higgs}$: the fiber over $(P_0,\phi)$ is identified with the $H^1(C_0,\fg_{P_0}\ot\NN)$-torsor of liftings of $P_0$ to a $G$-bundle
on $C$. The pairing with the $\NN^{-1}$-twisted Higgs field $\phi\in H^0(C_0,\fg^\vee_{P_0}\ot \NN^{-1}\om_{C_0})$ gives a homomorphism
\begin{equation}\label{phi-pair-H1-hom}
\lan \phi,?\ran: H^1(C_0,\fg_{P_0}\ot\NN)\to H^1(C_0,\om_{C_0})\simeq k.
\end{equation}
Under the identification of $H^1(C_0,\fg_{P_0}\ot\NN)$ with a quotient of $\fg\ot \NN(\A_{C_0})$, the latter map is induced by the homomorphism
$$\fg\ot \NN(\A_{C_0})\rTo{\lan \eta, \cdot\ran} \om_{C_0}(\A_{C_0})\rTo{\sum_p \Res_p} k.$$
%pairing with $\eta$.
Composing \eqref{phi-pair-H1-hom} with the additive character $\psi:k\to \C^*$, we get 
a homomorphism 
$$H^1(C_0,\fg_{P_0}\ot\NN)\to\C^*.$$ 
Taking the push out of each fiber of $\pi^{Higgs}$ (or equivalently $\pi$) with respect to this homomorphism,
we get a $\C^*$-torsor $\rL_\psi$ over $\MM_\eta^{Higgs,\NN^{-1}}(C_0)$.

We can think of the space $\SS(\MM_\eta^{Higgs,\NN^{-1}}(C_0),\rL_\psi)$ (of finitely supported sections of $\rL_\psi$ over $\MM_\eta^{Higgs,\NN^{-1}}(C_0)$)
as the space of finitely supported functions $f$ on $\MM_\eta^{Higgs,\NN^{-1}}(C)$ such that
\begin{equation}\label{f-psi-phi-eq}
f(e+x)=\psi(\lan \phi,e\ran)f(x),
\end{equation}
where $x=(P,\phi)$ and $e\in H^1(C_0,\fg_{P_0}\ot\NN)$.
%on $\MM_\eta^{Higgs,\NN^{-1}}(C)\simeq\St_\eta(F)\backslash G(\A)_\eta/G(\hat{\OO})$ transforming by the character $\psi_\eta$ with respect to the left multiplication by $N_\A$,
This induces an identification
$$\SS(\MM_\eta^{Higgs,\NN^{-1}}(C_0),\rL_\psi)\simeq \wt{\SS}_{\eta}^{G(\hat{\OO})}.$$

\begin{lemma}\label{kappa-modular-lem} 
The above map 
\begin{equation}\label{kappa-eta-Higgs-eq}
\kappa_\eta:\SS(\MM_\eta^{Higgs,\NN^{-1}}(C_0),\rL_\psi)\simeq \wt{\SS}_{\eta}^{G(\hat{\OO})}\to \SS(G(F)\backslash G(\A_C))^{G(\hat{\OO})}\simeq
\SS(\Bun_G(C))
\end{equation}
is given by the push-forward with respect to the natural map of groupoids 
\begin{equation}\label{forg-map}
\forg: \MM_\eta^{Higgs,\NN^{-1}}(C)\to \Bun_G(C): (P,\phi)\mapsto P.
\end{equation}
\end{lemma}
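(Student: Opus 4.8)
The plan is to make everything explicit at the level of adelic double cosets and then match the naive coset sum defining $\kappa_\eta$ with the groupoid push-forward along $\forg$. First I would unwind the two identifications bracketing the map in \eqref{kappa-eta-Higgs-eq}. On the source side, an element of $\SS(\MM_\eta^{Higgs,\NN^{-1}}(C_0),\rL_\psi)$ is the same datum as a function $f\in\wt{\SS}_\eta^{G(\hat{\OO})}$, and I would observe that the combined $N_\A$-equivariance $f(ug)=\psi_\eta(u)f(g)$ and $G(\hat{\OO})$-invariance force $f$ to be supported on $G(\A_C)_\eta$: at a point $g$ the two conditions are compatible only when $\psi_\eta$ is trivial on $N_\A\cap gG(\hat{\OO})g^{-1}$, and since $\psi_\eta=\psi_{C_0}(\langle\eta,\cdot\rangle)$ this is exactly the integrality of $\Ad(g^{-1})(\eta)$, i.e.\ $g\in G(\A_C)_\eta$. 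Thus $f$ descends to a function on $\St_\eta(F)\backslash G(\A_C)_\eta/G(\hat{\OO})=\MM_\eta^{Higgs,\NN^{-1}}(C)$, whose value on a double coset is $f$ of any representative; the residual $\psi$-equivariance \eqref{f-psi-phi-eq} records the $N_\A$-action along the fibres of $\pi^{Higgs}$ (changing the lift of $P_0$ to $C$). On the target side, $\SS(G(F)\backslash G(\A_C))^{G(\hat{\OO})}=\SS(\Bun_G(C))$ via the standard adelic presentation $\Bun_G(C)=G(F)\backslash G(\A_C)/G(\hat{\OO})$.

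Next I would compute $\kappa_\eta(f)$ straight from \eqref{Theta-eta-def}--\eqref{kappa-eta-def-eq}: for $g$ representing $P(g)\in\Bun_G(C)$,
$$\kappa_\eta(f)(g)=\sum_{\gamma\in\St_\eta(F)\backslash G(F)}f(\gamma g)=\sum_{\gamma\in\St_\eta(F)\backslash S}f(\gamma g),\qquad S=\{\gamma\in G(F):\gamma g\in G(\A_C)_\eta\},$$
the second equality because $f$ vanishes off $G(\A_C)_\eta$ and $S$ is left $\St_\eta(F)$-stable. Each term $f(\gamma g)$ is the value of $f$ at the point $[\gamma g]=(P(\gamma g),\Ad((\gamma g)^{-1})\eta)\in\MM_\eta^{Higgs,\NN^{-1}}(C)$; since $\gamma\in G(F)$ we have $P(\gamma g)=P(g)$, so $[\gamma g]\in\forg^{-1}(P(g))$, and conversely every point of this fibre is so represented. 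The crux is to count the fibres of the resulting surjection $\St_\eta(F)\backslash S\to\forg^{-1}(P(g))$. I would check that $\gamma,\gamma'$ define the same Higgs bundle iff $\gamma'\in\St_\eta(F)\gamma\,\Gamma_y$, where $\Gamma_y=G(F)\cap gG(\hat{\OO})g^{-1}=\Aut(P(g))$, and that the number of $\St_\eta(F)$-cosets over a point $x$ is $[\Gamma_y:\gamma^{-1}\Aut(x)\gamma]=|\Aut(P(g))|/|\Aut(x)|$, using $\Aut(x)=\St_\eta(F)\cap(\gamma g)G(\hat{\OO})(\gamma g)^{-1}$ and the conjugation identity $\gamma^{-1}\Aut(x)\gamma=(\gamma^{-1}\St_\eta(F)\gamma)\cap\Gamma_y$ (all groups finite).

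Substituting this count gives
$$\kappa_\eta(f)(g)=\sum_{x\in\forg^{-1}(P(g))}\frac{|\Aut(P(g))|}{|\Aut(x)|}\,f(x),$$
which is precisely the groupoid push-forward of $f$ along $\forg$ (integration along the fibres, in the normalization weighting functions on a groupoid by $1/|\Aut|$), proving the claim. I would finally remark that no cancellation from the $\psi$-twist \eqref{f-psi-phi-eq} intervenes: that twist lives along the $\pi^{Higgs}$-fibres ($H^1(C_0,\fg_{P_0}\ot\NN)$-translates of the lift), whereas a $\forg$-fibre holds $P$ over $C$ fixed and varies only the Higgs field $\phi$, so the two directions are transverse and $\forg_*f$ is a genuine function on $\Bun_G(C)$. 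The main obstacle is exactly this stabilizer book-keeping --- ensuring that automorphism groups of bundles and of Higgs bundles enter with the right multiplicities so that the plain coset sum reproduces the groupoid-weighted push-forward; once the index identity $[\Gamma_y:\gamma^{-1}\Aut(x)\gamma]=|\Aut(P(g))|/|\Aut(x)|$ is established, the conclusion is immediate.
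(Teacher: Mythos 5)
Your proof is correct and follows essentially the same route as the paper's: both identify $\forg$ with the natural map of double-coset groupoids $\St_\eta(F)\backslash G(\A_C)_\eta/G(\hat{\OO})\to G(F)\backslash G(\A_C)/G(\hat{\OO})$ and reduce the lemma to the identity $\kappa_\eta(f)(g)=\sum_{\gamma\in\St_\eta(F)\backslash G(F)}f(\gamma g)=\forg_*(f)(g)$. The only difference is one of detail: the paper treats the coset-sum formula for the groupoid push-forward as ``easy to see,'' while you verify it explicitly via the stabilizer count $[\Gamma_y:\gamma^{-1}\Aut(x)\gamma]=|\Aut(P(g))|/|\Aut(x)|$, which correctly converts the coset sum into the $1/|\Aut|$-weighted sum over the naive fibre.
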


\begin{proof}
The map \eqref{forg-map} corresponds to the natural map of double coset groupoids
$$\forg:\St_\eta(F)\backslash G(\A_C)_\eta/G(\hat{\OO})\to G(F)\backslash G(\A_C)/G(\hat{\OO}).$$
It is easy to see that taking the push-forward with respect to this map takes a function $f$ on $\St_\eta(F)\backslash G(\A_C)_\eta/G(\hat{\OO})$ to
$$\forg_*(f)(g)=\sum_{\ga\in \St_\eta(F)\backslash G(F)}f(\ga g)=\sum_{\ga\in \St_\eta(F)\backslash G(F)}(g\cdot f)(\ga),$$
where $g\in G(F)$. 
This proves that $\forg_*(f)=\kappa_\eta(f)$.
\end{proof}

\begin{cor} Assume that $\eta$ is regular semisimple and elliptic.
Let $f$ be a finitely supported function on $\MM_\eta^{Higgs,\NN^{-1}}(C)$ satisfying \eqref{f-psi-phi-eq}.
Then the function $\forg_*(f)$ on $\Bun_G(C)$ is strongly cuspidal.
\end{cor}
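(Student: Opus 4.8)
The plan is to obtain this as a direct consequence of the two preceding lemmas, so the only real task is to assemble the identifications in the right order. First I would observe that, by the discussion preceding Lemma \ref{kappa-modular-lem}, a finitely supported function $f$ on $\MM_\eta^{Higgs,\NN^{-1}}(C)$ satisfying the twisting relation \eqref{f-psi-phi-eq} is exactly an element of $\SS(\MM_\eta^{Higgs,\NN^{-1}}(C_0),\rL_\psi)$, which under the established isomorphism $\SS(\MM_\eta^{Higgs,\NN^{-1}}(C_0),\rL_\psi)\simeq \wt{\SS}_{\eta}^{G(\hat{\OO})}$ corresponds to a $G(\hat{\OO})$-invariant vector in $\wt{\SS}_\eta$.

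Next, Lemma \ref{kappa-modular-lem} identifies the push-forward $\forg_*(f)$ with $\kappa_\eta(f)$, the image of $f$ under the map \eqref{kappa-eta-Higgs-eq}. Since the underlying map \eqref{kappa-eta-eq} is an isomorphism of $\wt{\SS}_\eta$ onto the summand $\SS(G(F)\backslash G(\A_C))_{\Om_\eta}$, taking $G(\hat{\OO})$-invariants shows that $\forg_*(f)=\kappa_\eta(f)$ lies in this summand. Finally, because $\eta$ is regular semisimple and elliptic, Lemma \ref{reg-ell-str-cusp-lem} asserts that every element of $\SS(G(F)\backslash G(\A_C))_{\Om_\eta}$ is strongly cuspidal; applying this to $\forg_*(f)$ finishes the argument.

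The substantive content having already been carried out in Lemmas \ref{reg-ell-str-cusp-lem} and \ref{kappa-modular-lem}, there is no genuine obstacle here; the only point needing a moment's attention is the bookkeeping, namely checking that the orbit decomposition restricts compatibly to $G(\hat{\OO})$-invariants so that the image of $\kappa_\eta$ lands in the $\Om_\eta$-component of $\SS(\Bun_G(C))$ rather than merely in $\SS(\Bun_G(C))$ itself. Once this compatibility is noted, the strong cuspidality of $\forg_*(f)$ is immediate.
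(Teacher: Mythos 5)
Your proposal is correct and follows exactly the route the paper intends: the paper's own proof is the one-line citation of Lemma \ref{reg-ell-str-cusp-lem}, with the identifications you spell out (the twisted functions on $\MM_\eta^{Higgs,\NN^{-1}}(C)$ as $\wt{\SS}_\eta^{G(\hat{\OO})}$, the equality $\forg_*(f)=\kappa_\eta(f)$ from Lemma \ref{kappa-modular-lem}, and the fact that $\kappa_\eta$ lands in $\SS(G(F)\backslash G(\A_C))_{\Om_\eta}$ via \eqref{kappa-eta-eq}) left implicit. Your unpacking of that bookkeeping is accurate, so there is nothing to correct.
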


\begin{proof}
This follows from Lemma \ref{reg-ell-str-cusp-lem}.
%\cite[Prop.\ 6.5]{BKP-aut}.
\end{proof}

In the case $C=C_0\times \Spec(k[\eps]/(\eps^2))$, for every $G$-bundle $P_0$ on $C_0$ we have a canonical lifting $P_0\times \Spec(k[\eps]/(\eps^2))$ to a $G$-bundle on $C$.
This gives a trivialization of the $\C^*$-torsor $\rL_\psi$, so we can view the map $\kappa_\eta$ as a map from $\SS(\MM_\eta^{Higgs,\NN^{-1}}(C_0))$ to $\SS(\Bun_G(C))$.
%The latter map looks as some kind of Fourier transform.

\begin{cor}\label{trivial-ext-FT-cor}
In the case $C=C_0\times \Spec(k[\eps]/(\eps^2))$, the map
$$\kappa_\eta:\SS(\MM_\eta^{Higgs,\NN^{-1}}(C_0))\to \SS(\Bun_G(C))$$
is given by
$$\kappa_\eta(f)(P_0+e)=\sum_{\phi\in H^0(C_0,\fg_{P_0}\ot \NN^{-1}\om_{C_0})}\psi(\lan\phi,e\ran)f(P_0,\phi),$$
where $P_0$ is a $G$-bundle, $e\in H^1(C_0,\fg_{P_0}\ot\NN)$, and $P_0+e$ denote the $G$-bundle on $C$ obtained from the canonical lift of $P_0$ to a $G$-bundle by
applying the action of $e$.
\end{cor}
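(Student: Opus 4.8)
The plan is to read off the statement directly from Lemma \ref{kappa-modular-lem}, which already identifies $\kappa_\eta$ with the push-forward $\forg_*$ along the map \eqref{forg-map}. The only work is to make this push-forward explicit in the split case, using the canonical trivialization of $\rL_\psi$ to convert the twisted sections on $\MM_\eta^{Higgs,\NN^{-1}}(C_0)$ into ordinary functions and to see how the character $\psi$ enters.

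First I would record the push-forward formula. For a function $\wt{f}$ on $\MM_\eta^{Higgs,\NN^{-1}}(C)$, the value $\forg_*(\wt{f})(P)$ is the sum of $\wt{f}(P,\phi)$ over the fiber of $\forg$ over $P$, i.e.\ over the $\NN^{-1}$-twisted Higgs fields $\phi$ on the reduction $P_0$ whose orbit at the general point is $\Om_\eta$. Extending $\wt{f}$ by zero, this sum may be taken over all of $H^0(C_0,\fg_{P_0}\ot\NN^{-1}\om_{C_0})$. The automorphism bookkeeping inherent in a groupoid push-forward is already handled by the double-coset computation in the proof of Lemma \ref{kappa-modular-lem}, where $\forg_*(f)(g)=\sum_{\ga\in\St_\eta(F)\backslash G(F)}f(\ga g)$, so I may treat $\forg_*$ as this naive fiberwise sum.

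Next I would spell out the trivialization. In the split case the assignment $P_0\mapsto P_0\times\Spec(k[\eps]/(\eps^2))$ gives a distinguished lift of every $G$-bundle $P_0$ on $C_0$; writing it as $P_0+0$, an arbitrary lift is $P_0+e$ with $e\in H^1(C_0,\fg_{P_0}\ot\NN)$. Under the induced trivialization, a function $f$ on $\MM_\eta^{Higgs,\NN^{-1}}(C_0)$ corresponds to the function $\wt{f}$ normalized so that $\wt{f}(P_0+0,\phi)=f(P_0,\phi)$ on the canonical lifts. Feeding $x=(P_0+0,\phi)$ into the defining transformation rule \eqref{f-psi-phi-eq}, $\wt{f}(e+x)=\psi(\lan\phi,e\ran)\wt{f}(x)$, yields $\wt{f}(P_0+e,\phi)=\psi(\lan\phi,e\ran)f(P_0,\phi)$. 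Substituting this into the fiberwise sum evaluated at $P=P_0+e$ produces the asserted formula, with the pairing $\lan\phi,e\ran\in H^1(C_0,\om_{C_0})\simeq k$ being the one from \eqref{phi-pair-H1-hom}.

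There is no serious obstacle here; the corollary is a direct unwinding of Lemma \ref{kappa-modular-lem} and the argument is essentially bookkeeping. The only points demanding care are to confirm that the trivialization is normalized by the value on the canonical lift, so that the character $\psi(\lan\phi,e\ran)$ enters with the correct sign dictated by \eqref{f-psi-phi-eq}, and to check that the fiber of $\forg$ is exactly the space of Higgs fields on the reduction $P_0$ (not on $P$), matching the description of $\MM_\eta^{Higgs,\NN^{-1}}(C)$ given before the lemma.
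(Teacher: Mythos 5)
Your proposal is correct and is essentially the paper's (implicit) argument: the corollary is stated without a separate proof precisely because it is the direct unwinding of Lemma \ref{kappa-modular-lem} via the canonical-lift trivialization of $\rL_\psi$, with the transformation rule \eqref{f-psi-phi-eq} producing the factor $\psi(\lan\phi,e\ran)$ exactly as you derive it. Your two points of care (normalizing the trivialization on the canonical lift, and identifying the fiber of $\forg$ with Higgs fields on the reduction $P_0$) are the right ones and are handled correctly.
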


The formula of Corollary \ref{trivial-ext-FT-cor} resembles the Fourier transform.
Below we will show that even when the extension $C_0\sub C$ is non-trivial the map $\kappa_\eta$ is given by a certain kind of the Fourier transform.

\subsection{Fourier transform for torsors}\label{Four-sec}

%Here is the general context for this.
Let $\Ga$ be a (discrete) groupoid, $V$ a $k$-vector bundle over $\Ga$, where $k$ is a finite field. In other words, $V$ is a functor from $\Ga$ to the category of finite-dimensional
vector spaces over $k$, so it is a collection of vector spaces $V_\ga$ indexed by objects, together with linear isomorphisms $\a:V_\ga\to V_{\ga'}$ corresponding to each arrow
$\a:\ga\to \ga'$ in $\Ga$, such that the compositions are respected. 

The total space $\tot(V)$ is a groupoid of pairs $(\ga,v)$, where $\ga\in \Ga$, $v\in V_\ga$, with maps $(\ga,v)\to (\ga',v')$ given by arrows $\a:\ga\to \ga'$ in $\Ga$
such that $\a(v)=v'$. Let $V^\vee$ denote the dual vector bundle to $V$ over $\Ga$. We denote by
$p:\tot(V)\to \Ga$ and $p^\vee:\tot(V^\vee)\to \Ga$ the natural projections.

Let $\psi:k\to \C^*$ be a nontrivial additive character. There is a natural Fourier transform
$$F:\SS(\tot(V^\vee))\to \SS(\tot(V))$$
given by $(Sf)(\ga,v)=\sum_{v^*\in V^\vee_\ga}\psi(\lan v^*,v\ran)f(\ga,v^*)$.

Now let $T$ be a $V$-torsor over $\Ga$, i.e., a collection of $V_\ga$-torsors $T_\ga$, such that for every arrow $\a:\ga\to \ga'$,
one has an associated isomorphism $\a:T_{\ga}\to T_{\ga'}$ compatible with the isomorphism $\a:V_{\ga}\to V_{\ga'}$ and with the torsor structures (and the compositions are
 respected). We can also consider the total space $\tot(T)$, which is a groupoid equipped with a representable map to $\Ga$, with the fiber $T_\ga$ over $\ga$.
We want to describe an analog of the above Fourier transform where $\tot(V)$ is replaced by $\tot(T)$.
 
 %We will think of the total space $\tot(V^\vee)$ 
 %as a groupoid equipped with a representable $p:\tot(V^\vee)\to \Ga$, with the fiber $V_\ga^*$ over $\ga$.

Let us define a $\C^*$-torsor $\LL_{T,\psi}$ over $\tot(V^\vee)$ as follows.
Let $p^*T$ denote the $p^*(V)$-torsor over $\tot(V^\vee)$, obtained by taking the pullback of $T$.
We have a natural homomorphism of vector bundles $p^*(V)\to k$ (where $k$ is the trivial rank $1$ bundle), so taking the push-out of $p^*(T)$ we
get a $k$-torsor. The $\C^*$-torsor $\LL_{T,\psi}$ is obtained by further taking the push-out with respect to $\psi$.  
In down-to-earth terms, for every object $(\ga,v^*)$ of $\tot(V^\vee)$, where $v^*\in V^*_\ga$, the $\C^*$-torsor 
$\LL_{T,\psi;(\ga,v^*)}$ is defined as the push-out of the $V_\ga$-torsor $T_\ga$ with respect to the homomorphism $V\to \C^*:v\mapsto \psi(\lan v^*,v\ran)$.
%Note that since $\psi$ takes values in $U(1)$, the torsor $\LL_{T,\psi}$ has a natural hermitian

Let us consider the fibered product $\tot(V^\vee)\times_{\Ga} \tot(T)$, with the projections $p_1,p_2$ to the factors.
We define the Fourier transform
$$F:\SS(\tot(V^\vee),\LL_{T,\psi})\to \SS(\tot(T)),$$
by 
$$F(\phi)=p_{2*}p_1^*(\phi),$$
%\sum_{v^*\in V_\ga} 
where we use a canonical trivialization of $p_1^*\LL_{T,\psi}$ over $\tot(V^\vee)\times_{\Ga} \tot(T)$ (coming from the trivialization of the pullback of $T$ to $\tot(T)$).

More explicitly, by construction for every $(\ga,v^*)$ we have the push-out map 
$$\psi_{\ga,v^*}:T_\ga\to \LL_{T,\psi;(\ga,v^*)},$$
satisfying 
$$\psi_{\ga,v^*}(v+t)=\psi(\lan v^*,v\ran)\cdot \psi_{\ga,v^*}(t),$$
for any $v\in V_\ga$.
The elements $\phi\in\SS(\tot(V^\vee),\LL_{T,\psi})$ can be thought of as functions on $\tot(\LL_{T,\psi})$ of weight $1$ with respect to the $\C^*$-action.
Now for $t\in T_\ga$, we have
$$F(\phi)(t)=\sum_{v^*\in V_\ga^\vee} \phi(\psi_{\ga,v^*}(t)).$$
For functions on the fibers over fixed $\ga\in \Ga$, we can trivialize $T_\ga$, and this operation reduces to the usual Fourier transform from $\SS(V^*)$ to $\SS(V)$. 
%This shows that $F$ is an isomorphism.

\begin{prop}\label{Fourier-prop}
Let us consider $\Bun_G(C)$ as the torsor over $\Bun_G(C_0)$ over the vector bundle $V$ associating $V_{P_0}:=H^1(C_0,\fg_{P_0}\ot\NN)$ to a $G$-bundle $P_0$ over $C_0$.
Then we have an identification $\tot(V^\vee)\simeq \MM^{Higgs,\NN^{-1}}(C_0)$, and the map 
\eqref{kappa-eta-Higgs-eq} is given by the restriction of the Fourier transform
$$F:\SS(\MM^{Higgs,\NN^{-1}}(C_0),\rL_\psi)\to \SS(\Bun_G(C))$$
to the subspace of sections supported on $\MM_\eta^{Higgs,\NN^{-1}}(C_0)$
\end{prop}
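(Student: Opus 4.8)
The plan is to unwind the abstract Fourier transform of Section \ref{Four-sec} in the present geometric situation and match it, term by term, with the description of $\kappa_\eta$ as a push-forward obtained in Lemma \ref{kappa-modular-lem}. First I would identify the objects involved. By Serre duality on $C_0$ the dual of the fiber $V_{P_0}=H^1(C_0,\fg_{P_0}\ot\NN)$ is $H^0(C_0,\fg^\vee_{P_0}\ot\NN^{-1}\om_{C_0})$, the space of $\NN^{-1}$-twisted Higgs fields on $P_0$; this yields the asserted identification $\tot(V^\vee)\simeq \MM^{Higgs,\NN^{-1}}(C_0)$, sending an object $(P_0,v^*)$ to the Higgs bundle $(P_0,\phi)$. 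Taking $\Ga=\Bun_G(C_0)$ and $T=\Bun_G(C)$ viewed as the $V$-torsor whose fiber $T_{P_0}$ is the torsor of liftings of $P_0$ to $C$, I would then check that the fibered product is
$$\tot(V^\vee)\times_{\Ga}\tot(T)\simeq \MM^{Higgs,\NN^{-1}}(C),$$
a point over $P_0$ being a Higgs field $\phi$ on $P_0$ together with a lift $P$ of $P_0$. Under this identification $p_1$ becomes $\pi^{Higgs}$ and $p_2$ becomes the forgetful map $\forg$ of \eqref{forg-map}.

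Next I would show that the $\C^*$-torsor $\LL_{T,\psi}$ of Section \ref{Four-sec} coincides with the torsor $\rL_\psi$ of Section \ref{Higgs-sec}. By construction $\LL_{T,\psi}$ over $(P_0,\phi)$ is the push-out of the lifting-torsor $T_{P_0}$ along $v\mapsto\psi(\lan\phi,v\ran)$, where $\lan\phi,v\ran\in H^1(C_0,\om_{C_0})\simeq k$ is exactly the Serre pairing \eqref{phi-pair-H1-hom}; this is verbatim the recipe defining $\rL_\psi$. I would also verify that the canonical trivialization of $p_1^*\LL_{T,\psi}$ over the fibered product (coming from the tautological point of the pullback of $T$ to $\tot(T)$) agrees with the identification used to present sections of $\rL_\psi$ as functions on $\MM^{Higgs,\NN^{-1}}(C)$ obeying the equivariance \eqref{f-psi-phi-eq}.

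Granting these identifications, the Fourier transform $F=p_{2*}p_1^*$ becomes $\forg_*\circ(\pi^{Higgs})^*$: pullback along $p_1$ turns a section $s$ of $\rL_\psi$ into its associated equivariant function $\tilde s$ on $\MM^{Higgs,\NN^{-1}}(C)$, and $p_{2*}$ is the groupoid push-forward along $\forg$. By Lemma \ref{kappa-modular-lem} this composite is precisely $\kappa_\eta$, and the support condition on $\MM_\eta^{Higgs,\NN^{-1}}(C_0)$ restricts the Higgs fields appearing in the Fourier sum to those whose orbit at the general point is $\Om_\eta$, i.e.\ to $\MM_\eta^{Higgs,\NN^{-1}}(C)$, reproducing \eqref{kappa-eta-Higgs-eq}. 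Concretely one compares the explicit formula $F(s)(P)=\sum_\phi s(\psi_{P_0,\phi}(P))$ with $\kappa_\eta(s)(P)=\forg_*(\tilde s)(P)$ and finds they agree.

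I expect the main obstacle to be the bookkeeping of the previous paragraph: verifying that the canonical trivialization of $p_1^*\LL_{T,\psi}$ is the same trivialization implicit in the isomorphism $\SS(\MM_\eta^{Higgs,\NN^{-1}}(C_0),\rL_\psi)\simeq\wt{\SS}_\eta^{G(\hat{\OO})}$, since every numerical factor in both the Fourier sum and the push-forward sum is dictated by that choice. Once the two trivializations are matched, no genuine computation remains: the coincidence of $F$ with $\kappa_\eta$ is then forced by Lemma \ref{kappa-modular-lem}, and the proposition follows.
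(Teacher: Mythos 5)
Your proposal is correct and follows the paper's own route: the paper's proof is exactly the observation that the statement follows from Lemma \ref{kappa-modular-lem} together with the (Serre) duality between $V$ and the fibration $\MM^{Higgs,\NN^{-1}}(C_0)\to\Bun_G(C_0)$, which is precisely the bookkeeping you spell out. Your more detailed verification that $\LL_{T,\psi}$ coincides with $\rL_\psi$ and that the canonical trivialization matches the equivariance condition \eqref{f-psi-phi-eq} is just an expansion of what the paper treats as immediate.
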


\begin{proof}
This follows immediately from Lemma \ref{kappa-modular-lem} using the duality between $V$ and the fibration $\MM^{Higgs,\NN^{-1}(C_0)}\to \Bun_G(C_0)$.
\end{proof}

%Something about Hecke action???

\subsection{Relation to induced representations}

Let $\chi$ be a character (smooth $1$-dimensional representation) of $\St_\eta$, trivial on $\St_\eta(F)$ and such that $\chi|_{N_\A}=\psi_\eta$.
%denote a character of $\bSt_{\eta}(\A_{C_0})$,
%trivial on $\bSt_{\eta}(F_0)$. We can view $\chi$ as a character of $\St_{\eta}$ via the projection
%$\St_{\eta}\to \bSt_{\eta}(\A_{C_0})$. 
We denote by $V_{\eta,\chi}$ the representation of $G(\A_C)$ compactly induced from the character $\chi$ of $\St_{\eta}$.
Note that in \cite{K-YD} the authors study similar induced representations in the local case for $G=\PGL_2$.

\begin{lemma}\label{ind-char-ell-lem}
Assume that $\eta$ is regular semisimple and elliptic. Then we have a natural inclusion of $G(\A_C)$-representations
$V_{\eta,\chi}\sub \wt{\SS}_{\eta}$.
\end{lemma}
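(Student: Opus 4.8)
The plan is to realize the asserted inclusion as the tautological map of function spaces and to verify that no extra content is needed. A vector in $V_{\eta,\chi}$ is a locally constant function $f$ on $G(\A_C)$ satisfying $f(hg)=\chi(h)f(g)$ for all $h\in\St_\eta$ and having $\supp(f)$ compact modulo $\St_\eta$. Since the $G(\A_C)$-action on both $V_{\eta,\chi}$ and $\wt{\SS}_\eta$ is right translation, the tautological assignment $f\mapsto f$ is automatically $G(\A_C)$-equivariant and injective, so the whole lemma reduces to showing that every such $f$ satisfies the two defining conditions of $\wt{\SS}_\eta$: the prescribed left equivariance and the support condition.

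First I would dispose of the equivariance, which is purely formal. Since $N_\A$ lies in the kernel of the reduction $G(\A_C)\to G(\A_{C_0})$ it is contained in $\St_\eta$, while $\St_\eta(F)=\St_\eta\cap G(F)$ is contained in $\St_\eta$ by definition, and $N_F=N_\A\cap G(F)\subset\St_\eta(F)$. Hence, for $f\in V_{\eta,\chi}$, the hypotheses $\chi|_{N_\A}=\psi_\eta$ and $\chi|_{\St_\eta(F)}=1$ yield at once $f(ug)=\psi_\eta(u)f(g)$ for $u\in N_\A$ and $f(\ga g)=f(g)$ for $\ga\in\St_\eta(F)$; the two prescriptions are compatible on the overlap $N_F$ precisely because $\psi_\eta$ is trivial on $N_F$. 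In particular $f$ is left $N_F$-invariant, so it lies in $\C_{lc}(N_F\backslash G(\A_C))$ with the correct transformation behaviour.

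The heart of the argument is the support condition, and this is the only place where ellipticity is used. Since $N_\A$ is normal in $\St_\eta$ with $\St_\eta/N_\A\simeq\bSt_\eta(\A_{C_0})$, and the image of $\St_\eta(F)$ in this quotient is $\bSt_\eta(F_0)$, I would identify
$$\St_\eta/(N_\A\cdot\St_\eta(F))\simeq\bSt_\eta(\A_{C_0})/\bSt_\eta(F_0).$$
Because $\eta$ is regular semisimple, $\bSt_\eta$ is a maximal torus of $G$ over $F_0$, and because $\eta$ is elliptic this torus is anisotropic; for an anisotropic torus over the global field $F_0$ the adelic quotient $\bSt_\eta(\A_{C_0})/\bSt_\eta(F_0)$ is compact. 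Consequently the $G(\A_C)$-equivariant projection of homogeneous spaces
$$(N_\A\cdot\St_\eta(F))\backslash G(\A_C)\to\St_\eta\backslash G(\A_C)$$
is a fibre bundle with compact fibre $\bSt_\eta(\A_{C_0})/\bSt_\eta(F_0)$, and is therefore proper. Thus the preimage of the compact image of $\supp(f)$ in $\St_\eta\backslash G(\A_C)$ is compact, and the image of $\supp(f)$ in $(N_\A\cdot\St_\eta(F))\backslash G(\A_C)$, being a closed subset of it, is compact as well. This is exactly the support condition defining $\wt{\SS}_\eta$.

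The main obstacle is precisely this compactness of $\bSt_\eta(\A_{C_0})/\bSt_\eta(F_0)$: it is what forces a function compactly supported modulo the larger group $\St_\eta$ to remain compactly supported modulo the smaller group $N_\A\cdot\St_\eta(F)$, and it fails once $\eta$ is non-elliptic, consistently with Remark \ref{reg-ell-rem}. Everything else in the proof is bookkeeping about the chain of subgroups $N_F\subset N_\A,\ \St_\eta(F)\subset\St_\eta$ and the matching of $\chi$ with $\psi_\eta$.
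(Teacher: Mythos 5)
Your proof is correct and follows essentially the same route as the paper: both reduce the lemma to the support condition and derive it from compactness of $\bSt_\eta(\A_{C_0})/\bSt_\eta(F_0)$, which holds because the regular elliptic hypothesis makes $\bSt_\eta$ an anisotropic torus. The only (harmless) difference is that you quotient directly by $N_\A\cdot\St_\eta(F)$, so you never need compactness of $N_\A/N_F$, whereas the paper proves compactness of the smaller quotient $\St_\eta/\St_\eta(F)$ using both that fact and the compactness of the adelic torus quotient.
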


\begin{proof}
By definition, $V_{\eta,\chi}$ is the space of locally constant functions $f:G(\A_C)\to \C$ such that $f(hg)=\chi(h)f(g)$ for $h\in \St_\eta$, such that the support of $f$
modulo $\St_\eta$ is compact. We need to check that in fact the support of $f$ modulo $N_\A\cdot \St_\eta(F)$ is compact. It is enough to check that
$\St_\eta/\St_\eta(F)$ is compact. But this follows from the exact sequence
$$1\to N_\A\to \St_\eta\to \bSt_\eta(\A_{C_0})\to 1$$  
and from compactness of $N_\A/N_F$ and $\bSt_\eta(\A_{C_0})/\bSt(F_0)$.
\end{proof}

The restriction of $\Theta_{\eta}$ to $V_{\eta,\chi}$ is the natural $G(F)$-invariant functional still
given by \eqref{Theta-eta-def}. Combining Lemma \ref{ind-char-ell-lem} with Lemma \ref{reg-ell-str-cusp-lem}
we get the following result.

\begin{cor}
For any $\phi\in V_{\eta,\chi}$ the function 
$$g\mapsto \Theta_\eta(g\cdot \phi)=\sum_{\ga\in \St_\eta(F)\backslash G(F)}\phi(\ga\cdot g)$$
on $G(F)\backslash G(\A_C)$ is strongly cuspidal.
\end{cor}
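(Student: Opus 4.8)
The plan is to observe that the function appearing in the statement is nothing but $\kappa_\eta(\phi)$, and then to combine the two preceding lemmas. By the defining formula \eqref{kappa-eta-def-eq}, the assignment $g\mapsto\Theta_\eta(g\cdot\phi)=\sum_{\ga\in\St_\eta(F)\backslash G(F)}\phi(\ga\cdot g)$ coincides with $\kappa_\eta(\phi)$, provided $\phi$ is viewed as an element of $\wt{\SS}_\eta$ on which the functional $\Theta_\eta$ of \eqref{Theta-eta-def} is defined. So the task reduces to placing $\phi$ inside $\wt{\SS}_\eta$ and then locating the image of $\kappa_\eta$.

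The required passage from $V_{\eta,\chi}$ to $\wt{\SS}_\eta$ is supplied by Lemma \ref{ind-char-ell-lem}: under the assumption that $\eta$ is regular semisimple and elliptic (which is what both lemmas we invoke require), one has an inclusion $V_{\eta,\chi}\subset\wt{\SS}_\eta$ of $G(\A_C)$-representations. In particular each $\phi\in V_{\eta,\chi}$ has support compact modulo $N_\A\cdot\St_\eta(F)$, so the sum defining $\Theta_\eta(\phi)$ is finite and $\Theta_\eta$ restricts to the $G(F)$-invariant functional \eqref{Theta-eta-def}. Hence $\kappa_\eta(\phi)$ is a well-defined element of the image of the isomorphism \eqref{kappa-eta-eq}, i.e.\ of $\SS(G(F)\backslash G(\A_C))_{\Om_\eta}$. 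Applying Lemma \ref{reg-ell-str-cusp-lem}, again in the regular semisimple elliptic case, every function in this subspace is strongly cuspidal, and the corollary follows at once.

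This argument is a purely formal concatenation of the two lemmas, so I do not expect any genuine obstacle. The only point deserving attention is that the finiteness of the sum and the $G(F)$-invariance of $\Theta_\eta$ must be verified on the subspace $V_{\eta,\chi}$ and not merely on all of $\wt{\SS}_\eta$; this is exactly the content of Lemma \ref{ind-char-ell-lem}, which deduces the compactness of $\St_\eta/\St_\eta(F)$ from the exact sequence $1\to N_\A\to\St_\eta\to\bSt_\eta(\A_{C_0})\to 1$ together with the compactness of $N_\A/N_F$ and $\bSt_\eta(\A_{C_0})/\bSt_\eta(F_0)$ in the elliptic case.
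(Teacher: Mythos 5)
Your proof is correct and is essentially identical to the paper's: the authors likewise observe that $\Theta_\eta$ restricts to $V_{\eta,\chi}\subset\wt{\SS}_\eta$ via Lemma \ref{ind-char-ell-lem} and then conclude by combining this with Lemma \ref{reg-ell-str-cusp-lem} through the isomorphism \eqref{kappa-eta-eq}. Your added remark about where the finiteness of the sum and the $G(F)$-invariance are actually verified matches the role Lemma \ref{ind-char-ell-lem} plays in the paper.
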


%\subsection{Spherical vectors and Hecke action}

%Discuss relation to Hitchin fibers from \cite{BKP-aut}, plus computations from \cite{K-YD} in the case of $\PGL_2$.

\section{Theta correspondence for $\SL_2$ and a curve over a finite field}\label{fin-field-sec}

\subsection{Local and global representations of $\SL_2$}

\subsubsection{Local picture}\label{local-setup-fin-sec}

Let $E$ be a local nonarchimedean field, $K/E$ either an unramified quadratic field extension, or the algebra $K=E\oplus E$.
\footnote{The picture below can also be extended to the case of a ramified quadratic extension (see
 \cite[Prop.\ 1.3]{JL}). Since we only use this as a motivation, we restrict ourselves to the unramified case.}
%We assume that the residue characteristic of $F$ is $\neq 2$.???
Let $\vartheta=\vartheta_{K/E}:E^*\to \{\pm 1\}$ denote the corresponding character (trivial for $K=E\oplus E$), whose kernel is the subgroup of norms from $K$. 

Let $\psi_E$ be an additive character of $E$, trivial on $\OO_E$ but nontrivial on $t^{-1}\OO_E$, where $t\in \fm_E$ is a uniformizer.
We set $\psi_K:=\psi_E\circ \Tr_{K/E}$, so that $\psi_K$ is trivial on $\OO_K$ but nontrivial on $t^{-1}\OO_K$ (note that $t$ is still a uniformizer for $K$,
when $K$ is a field). We normalize the Haar measure on $K$ so that $\vol(\OO_K)=1$.
The corresponding absolute value on $K$ satisfies $|t|_K=q^{-2}$, where $q=|\OO_E/\fm_E|$.
The Fourier transform of $\Phi\in \SS(K)$ is given by
$$\widehat{\Phi}(y)=\int_K \Phi(x)\psi_K(xy) dx.$$
It satisfies 
$$\widehat{\de_{\OO_K}}=\de_{\OO_K},$$
where $\de_{\OO_K}$ is the characteristic function of $\OO_K\sub K$.

Let $\iota:K\to K$ denote the nontrivial $E$-automorphism of $K$.
We can consider the $4$-dimensional symplectic space $K\ot_E (E^2)$ over $E$, where we use the quadratic form $\Tr_{K/E}(x\iota(y))$ on $K$.
Then the corresponding metaplectic extension splits over $\SL_2(E)$, so we get the representation $r$ of $\SL_2(E)$ on $\SS(K)$ (see \cite[Prop.\ 1.3]{JL}),
so that
$$r\left(\begin{matrix} \a & 0\\ 0 & \a^{-1}\end{matrix}\right)\Phi(x)=\vartheta(\a)|\a|_E \Phi(\a x),$$
$$r\left(\begin{matrix} 1 & z\\ 0 & 1\end{matrix}\right)\Phi(x)=\psi_E(z\Nm_{K/E}(x)) \Phi(x),$$
$$r\left(\begin{matrix} 0 & 1\\ -1 & 0\end{matrix}\right)\Phi(x)=\widehat{\Phi}(\iota(x)),$$
where  $|\a|_E=q^{-v_E(\a)}$.

It is clear from these formulas that the function $\de_{\OO_K}$ is invariant under the subgroup $\SL_2(\OO_E)$.

One also has a twisted version of the above picture. Let $L$ be a free $\OO_K$-module of rank $1$, and let $L_K:=L\ot_{\OO_K} K$.
%, isomorphic to $\OO_K$, 
%equipped with an isomorphism of $\OO_F$-modules
%$$\phi:\Nm_{\OO_K/\OO_F}(L)\rTo{\sim} \OO_F.$$
%Note that by changing the scalars to $\OO_K$, this induces an isomorphism
%$$\phi':L\ot_{\OO_K} \iota^*L\rTo{\sim} \OO_K.$$
%Then we have the induced isomorphisms $\Nm_{K/F}(L_K)\simeq F$ and $L_K\ot_K \iota^*L_K\simeq K$.
We have the corresponding $E$-module $N(L_K):=\Nm_{K/E}(L_K)$ and an $\OO_E$-submodule $N(L):=\Nm_{\OO_K/\OO_E}(L)$, and a 
natural quadratic map
$$N:L_K\to N(L_K):x\mapsto \Nm_{L/K}(x),$$
sending $L$ to $N(L)$. 
Assume $\psi$ is a character of $N(L_K)$, trivial on $N(L)$, but not on $t^{-1}N(L)$. We
have an induced $E$-linear pairing
$$T:L_K\times L_K\to E: T(x,y)=N(x+y)-N(x)-N(y),$$
which we can use to define the Fourier transform on $\SS(L_K)$ (an analog of $\Phi\mapsto \widehat{\Phi}(\iota(?))$).
 
Now we observe that the above formulas giving a representation of $\SL_2(E)$ on $\SS(K)$ also make sense if we replace $\SS(K)$ by $\SS(L_K)$ and
they still define a representation of $\SL_2(E)$ (since we can use a trivialization $L\simeq \OO_K$).
%(this follows from the surjectivity of $\Nm:\OO^*_K\to \OO^*_F$).

\subsubsection{Global picture for curves over finite field}

Now, let $\pi:\wt{C}\to C$ be a double \'etale covering
%$k(C)\sub k(\wt{C})$ is an unramified quadratic extension 
of a curve $C$ over a finite field $k$, with $\wt{C}$ connected.
Let us denote by $\A_C$ and $\A_{\wt{C}}$ the corresponding rings of adeles, and by $\hat{\OO}_C\sub \A_C$, $\hat{\OO}_{\wt{C}}\sub\A_{\wt{C}}$, the subgroups of integer adeles.
We fix a nontrivial additive character $\psi$ of $k$, and consider
the character
$$\psi_{C}:\om_{{C}}(\A_{{C}})/(\hat{\om}_{{C}}+\om_C(k({C})))\to U(1):\a\mapsto \psi(\sum_p \Res_p\a),$$
where we consider the twisted adeles $\om_{{C}}(\A_{{C}})$, and the corresponding integer adeles $\hat{\om}_{C}$.
Note that for each $p\in C$, we have the corresponding local additive character of $\om_{C,p}\ot F_p$, trivial on $\om_{C,p}$ but not on $t^{-1}\om_{C,p}$.

Let us fix a line bundle $\LL$ on $\wt{C}$ together with an isomorphism of line bundles on $C$, 
$$\Nm(\LL)\rTo{\sim} \om_{C},$$ 
and consider the corresponding twisted adeles
$\LL(\A_{\wt{C}})$ with its subgroup $\hat{\LL}$ of twisted integer adeles.

Using the local picture discussed above
we associate with the character $\psi_C$ a representation $r$ of the adelic group $\SL_2(\A_C)$ on $\SS(\LL(\A_{\wt{C}}))$, 
so that the characteristic function $\de_{\hat{\LL}}$ is invariant under $\SL_2(\hat{\OO}_C)$.
On the other hand, we have an $\SL_2(k(C))$-invariant functional
$$\th_{\LL}:\Phi\mapsto \sum_{x\in \LL(k(\wt{C}))} \Phi(x).$$
Hence, $g\mapsto \th_\LL(g\de_{\hat{\LL}})$ descends to a function $f_{\LL}$ on $\SL_2(k(C))\backslash \SL_2(\A_C)/\SL_2(\OO_C)=\Bun_{\SL_2}(C)$.

Note that $\SL_2(k(C))$-invariance of $\th_{\LL}$ follows from the following facts:
\begin{itemize}
\item for $\a\in k(C)^*$, $\prod_p \vartheta_p(\a)=1$, $\prod_p |\a|_p=1$;
\item for $\eta\in \om(k(C))$, $\psi_C(\eta)=1$;
\item $\sum_{x\in \LL(k(\wt{C}))} \hat{\Phi}(x)=\sum_{x\in \LL(k(\wt{C}))} \Phi(x)$,
\end{itemize}
where $\vartheta_p$ is the local multiplicative character associated with the local quadratic extension at a point $p\in C$.
Note that the last property follows from the fact that $\LL(k(\wt{C}))$ is its own orthogonal with respect to
the pairing $\psi_C(\Tr(x\iota(y)))$ on $\LL(\A_{\wt{C}})$. 
%Discuss???

%Assume that $W=\pi_*\eta$, for 
%Let $\pi:\wt{C}\to C$ be an \'etale double covering, where $\wt{C}$ is connected. 
%Assume that there exists a line bundle $\LL$ on $\wt{C}$ such that $\Nm(\LL)\simeq \om_C$ (IS THIS ALWAYS TRUE?).
%We have an associated line bundle $\xi$ on $C$ equipped with an isomorphism
%$$\a:\xi^2\rTo{\sim} \OO_C.$$ 
Note also that the local characters $\vartheta_p$ are local components of the Artin homomorphism 
$$\vartheta_{\wt{C}/C}:\Pic(C)\to \Gal(k(\wt{C})/k(C))\simeq \{\pm 1\},$$
whose kernel is exactly the image of the norm homomorphism $\Pic(\wt{C})\to \Pic(C)$, and for every point $p\in C$, we have
%$$\eps_p(\wt{C}\to C):=\begin{cases} 1, & \a|_p=t^2 \text{ for some trivialization } t:\xi|_p\rTo{\sim} k(p),\\ -1, & \text{ otherwise}.\end{cases}$$
$$\vartheta_{\wt{C}/C}(\OO_C(p)):=\begin{cases} 1, & |\pi^{-1}(p)|=2,\\ -1, & |\pi^{-1}(p)|=1.\end{cases}$$
%, and let $k(C)\sub k(\wt{C})$ be the corresponding field extension.

\subsection{Cuspidal functions}\label{fin-cusp-sec}

Let $\PP(\wt{C}/C)$ denote the set of isomorphism classes of pairs $(\LL,\a:\Nm(\LL)\rTo{\sim} \om_{C})$.
Clearly, this is a principal homogeneous space over the group $\PP_0(\wt{C}/C)$
of isomorphism classes of pairs $(\LL,\a:\Nm(\LL)\rTo{\sim} \OO_{C})$. We have an exact sequence
$$1\to k^*/(k^*)^2\to \PP_0(\wt{C}/C)\to \ker(\Nm:\Pic(\wt{C})\to \Pic(C))\to 0$$

Let $\A_{\wt{C},1}^*\sub \A^*_{\wt{C}}$ denote the subgroup of ideles of $\wt{C}$ that have norm $1$. This group acts on $\SS(\LL(\A_{\wt{C}}))$ by
$x\Phi(y)=\Phi(xy)$ and this action commutes with that of $\SL_2(\A_C)$. Hence, for any $\a\in \A_{\wt{C},1}^*$, the function $\a\de_{\hat{\LL}}$
is still $\SL_2(\OO_C)$-invariant, and we can consider the function
$$
%\Phi_{\chi}(g):= \sum_{\a} \chi(\a) 
f_{\LL,\a}:=\th_{\LL}(g \a\de_{\hat{\LL}})$$
on $\Bun_{\SL_2}(C)$.

Consider the group
$$K(\wt{C}/C):=\A^*_{\wt{C},1}/(k(\wt{C})_1\cdot (\A^*_{\wt{C},1}\cap \OO_{\wt{C}}).$$
Note that we have a natural homomorphism
\begin{equation}\label{K-wtC-C-hom}
K(\wt{C}/C)\to \PP_0(\wt{C}/C):\a\mapsto \OO(\a),
\end{equation}
where $\OO(\a)$ is the line bundle associated with the corresponding idele class.

\begin{lemma}
The homomorphism \eqref{K-wtC-C-hom} is an isomorphism. For each $\a\in K(\wt{C}/C)$, one has an equality of functions on $\Bun_{\OO}(C)$,
\begin{equation}\label{f-L-a-eq}
f_{\LL,\a}=f_{\LL\ot \OO(\a)}.
\end{equation}
\end{lemma}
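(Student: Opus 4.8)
The plan is to prove the two assertions separately: the fact that \eqref{K-wtC-C-hom} is an isomorphism by comparing two exact sequences and invoking the Hasse norm principle, and the equality \eqref{f-L-a-eq} by unwinding the definitions once the two Schwartz models are identified. For the first assertion I would begin by checking that \eqref{K-wtC-C-hom} is well defined: a norm-one idele $\a$ produces a line bundle $\OO(\a)$ together with a trivialization of $\Nm(\OO(\a))=\OO_C$ read off from $\Nm(\a)=1$, and both the norm-one rational functions $k(\wt{C})_1$ and the norm-one unit ideles $\A^*_{\wt{C},1}\cap\hat{\OO}^*_{\wt{C}}$ map to the trivial pair. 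I would then fit \eqref{K-wtC-C-hom} into a morphism from the exact sequence
$$\ker(\bar\rho)\to K(\wt{C}/C)\to \ker(\Nm:\Pic(\wt{C})\to\Pic(C))\to 0$$
to the displayed exact sequence for $\PP_0(\wt{C}/C)$, where $\bar\rho(\a)=\OO(\a)$ forgets the trivialization. Since the right-hand terms are literally identified (both are $\ker\Nm$, via the identity map), the five lemma reduces everything to matching the two kernels compatibly and to checking that $\bar\rho$ is surjective.

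Surjectivity of $\bar\rho$ is the crux. Given $\LL'\in\ker(\Nm:\Pic(\wt{C})\to\Pic(C))$, I would lift it to an idele $\b$ with $\OO(\b)=\LL'$; then $\Nm(\b)=c_0 u_0$ with $c_0\in k(C)^*$ and $u_0\in\hat{\OO}^*_C$. Because the local norm maps are surjective on units for an \'etale quadratic extension, one has $\hat{\OO}^*_C=\Nm(\hat{\OO}^*_{\wt{C}})$, so $c_0=\Nm(\b)u_0^{-1}$ lies in $\Nm(\A^*_{\wt{C}})$, i.e.\ $c_0$ is a local norm at every place. The Hasse norm principle for the cyclic extension $k(\wt{C})/k(C)$ then gives $c_0=\Nm(f)$ with $f\in k(\wt{C})^*$, and correcting $\b$ by $f^{-1}$ and by a unit idele of norm $u_0^{-1}$ yields a norm-one idele with the same associated bundle $\LL'$. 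This Hasse-norm step is the one place where a genuine arithmetic input (rather than idelic bookkeeping) is needed, and I expect it to be the main obstacle.

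The same two inputs compute $\ker(\bar\rho)$: an element with trivial $\OO(\a)$ can be written $\a=fu$ with $f\in k(\wt{C})^*$, $u\in\hat{\OO}^*_{\wt{C}}$ and $\Nm(f)\Nm(u)=1$; as $\Nm(f)$ is a global rational function equal to the unit idele $\Nm(u)^{-1}$, it is a constant $c\in k^*$. The assignment $\a\mapsto c$ is well defined modulo $\Nm\big(H^0(\wt{C},\OO^*)\big)$, which reproduces the subgroup $k^*/(k^*)^2$ of the $\PP_0(\wt{C}/C)$-sequence; surjectivity of this invariant again follows since every nonzero constant is a local norm everywhere (hence a global norm), and injectivity follows because a square constant can be absorbed into $k(\wt{C})_1\cdot\big(\A^*_{\wt{C},1}\cap\hat{\OO}^*_{\wt{C}}\big)$. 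As this identification of kernels is compatible with the inclusions $\iota$ in the two sequences, the five lemma shows \eqref{K-wtC-C-hom} is an isomorphism.

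For the equality \eqref{f-L-a-eq} I would use that $\LL$ and $\LL':=\LL\ot\OO(\a)$ have the same generic fibre, so that $\SS(\LL(\A_{\wt{C}}))=\SS(\LL'(\A_{\wt{C}}))$ is a single space (the two integral structures $\hat{\LL}$ and $\widehat{\LL'}$ are commensurable), the functional $\th_\LL=\th_{\LL'}$ is the same sum over $\LL(k(\wt{C}))=\LL'(k(\wt{C}))$, and, since the norm-one condition makes the two composite isomorphisms $\Nm(\LL)\simeq\om_C$ and $\Nm(\LL')\simeq\om_C$ agree on the common generic fibre, the norm forms and hence the representations $r$ of $\SL_2(\A_C)$ coincide. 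The only difference between the models is the distinguished spherical vector: from $x\Phi(y)=\Phi(xy)$ one gets $\a\de_{\hat{\LL}}=\de_{\a^{-1}\hat{\LL}}$, and $\a^{-1}\hat{\LL}=\widehat{\LL\ot\OO(\a)}$ with the standard sign convention for $\OO(\a)$. Therefore $\a\de_{\hat{\LL}}=\de_{\widehat{\LL'}}$, and
$$f_{\LL,\a}(g)=\th_\LL(g\cdot\a\de_{\hat{\LL}})=\th_{\LL'}\big(g\cdot\de_{\widehat{\LL'}}\big)=f_{\LL\ot\OO(\a)}(g),$$
which is \eqref{f-L-a-eq}. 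This part is essentially formal once the identification of the two models is in place.
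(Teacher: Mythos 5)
Your proof is correct, and it rests on exactly the same arithmetic ingredients as the paper's: surjectivity of the local norm maps on units (since $\pi$ is \'etale, every unit idele of $C$ is the norm of a unit idele of $\wt{C}$) and the Hasse norm theorem for the quadratic extension $k(\wt{C})/k(C)$ --- the latter is precisely the paper's step ``$f$ is a norm locally, so it is a norm globally''. The difference is in the bookkeeping for the first assertion. The paper argues directly: injectivity by comparing a norm-one global trivialization of $\OO(\a)$ with the defining trivializations $\varphi_{\eta_{\wt{C}}}$, $\varphi_p$, which exhibits $\a$ as an element of $k(\wt{C})_1\cdot(\A^*_{\wt{C},1}\cap\hat{\OO}^*_{\wt{C}})$; and surjectivity by choosing trivializations of a given pair $(\LL,\Nm(\LL)\simeq\OO_C)$, producing an idele, and correcting it by a global norm (Hasse) and by unit ideles until its norm equals $1$. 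Your five-lemma d\'evissage over $\ker(\Nm:\Pic(\wt{C})\to\Pic(C))$ reaches the same conclusion, at the cost of a second invocation of Hasse (surjectivity of the constant invariant $\ker(\bar\rho)\to k^*/(k^*)^2$) and of the kernel matching, and it leans on the displayed exact sequence for $\PP_0(\wt{C}/C)$; note that this sequence, and your identification of the kernel with $k^*/(k^*)^2$, tacitly assume that $k$ is the full field of constants of $\wt{C}$ (in general the kernel on both sides is $k^*/\Nm(H^0(\wt{C},\OO^*_{\wt{C}}))$, and your matching goes through verbatim), a convention the paper shares. What your route buys is an explicit separation of the degree-zero part from the constants; the paper's direct check is shorter and does not rely on the $\PP_0$-sequence. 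For \eqref{f-L-a-eq} the two arguments coincide: your identity $\a\de_{\hat{\LL}}=\de_{\a^{-1}\hat{\LL}}=\de_{\widehat{\LL\ot\OO(\a)}}$, combined with the agreement of the norm forms and of the functionals on the common generic fibre, is exactly the paper's observation that $x\mapsto \a x$ intertwines the $\SL_2(\A_C)\times\A^*_{\wt{C},1}$-representations and carries $\th_{\LL\ot\OO(\a)}$ to $x\mapsto\th_{\LL}(\a x)$.
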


\begin{proof}
Let us first check injectivity of \eqref{K-wtC-C-hom}.
By definition, $\OO(\a)$ is equipped with trivializations $\varphi_p$ and $\varphi_{\eta_{\wt{C}}}$ such that $\varphi_{\eta_{\wt{C}}}=\a_p\cdot \varphi_p$.
We have the natural isomorphism $\Nm(\OO(\a))\simeq \LL(\Nm(\a))=\LL(1)=\OO_C$.
Suppose there exists a trivialization of $\OO(\a)$ with norm $1$. Then the corresponding trivialization at the general point  (resp., at $p$) has form $f\cdot\varphi_{\eta_{\wt{C}}}$ with $f\in k(C)^*$
(resp. $f_p\cdot \varphi_p$ with $f_p\in \hat{\OO}_{\wt{C},p}^*$), such that $f\a_p=f_p$ and $\Nm(f)=1=\Nm(f_p)$. But this means that $\a_p$ is trivial in $K(\wt{C}/C)$.

Now, let $\LL$ be a line bundle on $\wt{C}$, together with an isomorphism $\Nm(\LL)\rTo{\sim} \OO_C$.
Let us choose trivializations $\varphi_p$ of $\LL_p$ and $\varphi_{\eta_{\wt{C}}}$ of $L_{\eta_{\wt{C}}}$.
Then we have the idele $\a\in \A^*_{\wt{C}}$ such that $\varphi_{\eta_{\wt{C}}}=\a_p\cdot \varphi_p$, so we have
an isomorphism $\OO(\a)\rTo{\sim}\LL$. Furthermore, the isomorphism $\Nm(\LL)\to \OO_C$ sends $\Nm(\varphi_{\eta_{\wt{C}}})$
to some $f\in k(C)^*\cap (\Nm(\a)\cdot \hat{\OO}^*_C)$. This implies that $f$ is a norm locally, so it is a norm globally, i.e., $f=\Nm(\wt{f})$
for some $\wt{f}\in k(\wt{C})^*$. Thus, modifying $\eta_{\wt{C}}$ and $\a$, we can achieve that $f=1$. Similarly, modifying $\varphi_p$ and $\a$,
we can achieve that the isomorphism $\Nm(\LL)\to \OO_C$ sends $\Nm(\varphi_p)$ to $1$. It follows then that $\Nm(\a)=1$, so we proved
surjectivity of \eqref{K-wtC-C-hom}.

The identity \eqref{f-L-a-eq} follows from the observation that the map 
$$\LL(\A_{\wt{C}})\to (\LL\ot\OO(\a))(\A_{\wt{C}}): x\mapsto \a x$$
induces an isomorphism of $\SL_2(\A_C)\times \A^*_{\wt{C},1}$-representations $\SS(\LL(\A_{\wt{C}}))$ and $\SS((\LL\ot\a)(\A_{\wt{C}}))$,
so that the functional $\th_{\LL\ot\OO(\a)}$ corresponds to the functional $x\mapsto \th_{\LL}(\a x)$.
\end{proof}

\begin{prop}
Assume that $\LL=\pi^*\LL_0$, where $\LL_0$ is a theta-characteristic on $C$, defined over $k$. 
If $\chi$ is a nontrivial character of $K(\wt{C}/C)$ then the function 
\begin{equation}\label{Phi-L-chi-eq}
\Phi_{\LL,\chi}=\sum_{\a\in K(\wt{C}/C)}\chi(\a) f_{\LL,\a}
\end{equation}
on $\Bun_{\SL_2}(C)$ is cuspidal and nonzero.
\end{prop}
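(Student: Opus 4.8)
The plan is to establish the two assertions — cuspidality and nonvanishing — separately, both by analysing the Fourier expansion of $f_{\LL,\a}$ along the unipotent radical $U$ of the Borel of $\SL_2$. Since $U(\A_C)/U(k(C))\simeq \A_C/F$ with $F=k(C)$, and since by residue duality the additive characters of the compact group $\A_C/F$ are exactly $z\mapsto \psi_C(z\eta)$ for $\eta\in \om_C(F)$, I would introduce the $\eta$-Fourier coefficient
$$W_\eta(f_{\LL,\a})(g)=\int_{\A_C/F} f_{\LL,\a}(u(z)g)\,\psi_C(z\eta)^{-1}\,dz,\qquad u(z)=\bigl(\begin{smallmatrix}1&z\\0&1\end{smallmatrix}\bigr).$$
Writing $f_{\LL,\a}(g)=\sum_{x\in \LL(k(\wt C))}(r(g)\a\de_{\hat\LL})(x)$ and applying the Weil-representation formula $r(u(z))\Phi(x)=\psi_C(z\Nm(x))\Phi(x)$, the $z$-integral collapses by orthogonality of characters on $\A_C/F$, leaving
$$W_\eta(f_{\LL,\a})(g)=\vol(\A_C/F)\sum_{x\in \LL(k(\wt C)):\,\Nm(x)=\eta}(r(g)\a\de_{\hat\LL})(x).$$

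For cuspidality I take $\eta=0$, i.e. the constant term. As $\pi:\wt C\to C$ is \'etale, the norm map on the one-dimensional $k(\wt C)$-line $\LL(k(\wt C))$ is injective, so $\Nm(x)=0$ forces $x=0$; the constant term is thus $\vol(\A_C/F)\,(r(g)\a\de_{\hat\LL})(0)$. Because a norm-one idele $\a$ acts on $\LL(\A_{\wt C})$ by scaling and fixes $0$, and commutes with $r(g)$, this value equals $\vol(\A_C/F)\,(r(g)\de_{\hat\LL})(0)$, independent of $\a$. Summing against $\chi$ gives $\big(\sum_{\a\in K(\wt C/C)}\chi(\a)\big)(r(g)\de_{\hat\LL})(0)$, and the first factor vanishes since $\chi$ is a nontrivial character of the finite abelian group $K(\wt C/C)$. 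As $B$ is the only proper standard parabolic of $\SL_2$, this proves that $\Phi_{\LL,\chi}$ is cuspidal (note the theta-characteristic hypothesis is not needed here).

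For nonvanishing I would evaluate $W_\eta(\Phi_{\LL,\chi})$ at a suitable point for a nonzero $\eta$ in the image of $\Nm$, and this is where the hypothesis $\LL=\pi^*\LL_0$ with $\LL_0$ a theta-characteristic is used: a rational section $s_0$ of $\LL_0$ yields the distinguished section $x_0=\pi^* s_0$ of $\LL$ with $\Nm(x_0)=s_0^{\ot 2}=:\eta$, a nonzero rational differential with invariant divisor $2\operatorname{div}(s_0)$. Over $k(\wt C)$ the fibre $\{x:\Nm(x)=\eta\}$ is the single orbit $k(\wt C)_1\cdot x_0$; absorbing it into the idele sum (and using that $\chi$ is trivial on $k(\wt C)_1$ and on $\A^*_{\wt C,1}\cap \OO^*_{\wt C}$, together with $(r(g)\a\de_{\hat\LL})(x)=(r(g)\de_{\hat\LL})(\a x)$), I would rewrite, at $g=1$,
$$W_\eta(\Phi_{\LL,\chi})(1)=\vol(\A_C/F)\sum_{b:\, b x_0\in \hat\LL}\chi(b),$$
the sum running over $b\in\A^*_{\wt C,1}/(\A^*_{\wt C,1}\cap \OO^*_{\wt C})$, i.e. over antiinvariant divisor classes $\operatorname{div}(b)\ge -\pi^*\operatorname{div}(s_0)$. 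The goal is then to choose the data so that this $\chi$-twisted toric sum does not cancel.

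The hard part is exactly this final step. One must show that the integrality constraint $bx_0\in\hat\LL$ cuts the set of antiinvariant classes down either to a single $\chi$-relevant representative $b_0$ — giving the nonzero value $\vol(\A_C/F)\,\chi(b_0)$ — or to a coset of a subgroup on which $\chi$ restricts trivially; perturbing $\eta$ or moving to a non-spherical $g$ gives the extra freedom to isolate such a contribution. The symmetric normalisation $\LL=\pi^*\LL_0$ and the degree count $\deg\LL=2g_C-2=g_{\wt C}-1$ (so that $\LL$ sits exactly at theta-divisor level on $\wt C$) are what make a clean choice available, mirroring the classical fact that dihedral lifts of a nontrivial character are nonzero. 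An alternative that bypasses this delicate support analysis is to compute the inner product $\langle \Phi_{\LL,\chi},\Phi_{\LL,\chi}\rangle$ on $\Bun_{\SL_2}(C)$ by unfolding one copy of $\th_\LL$ against the $\SL_2(F)$-invariance; character orthogonality on $K(\wt C/C)$ then collapses the result to a manifestly positive sum, yielding $\|\Phi_{\LL,\chi}\|^2>0$ and hence nonvanishing.
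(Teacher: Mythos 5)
Your cuspidality argument is correct and is essentially the paper's own proof: compute the constant term of $\th_{\LL}(r(g)\alpha\de_{\hat{\LL}})$, observe that only $x=0$ survives the $z$-integration (because $\Nm(x)=x\iota(x)\neq 0$ for $x\neq 0$; what matters here is that $k(\wt{C})$ is a field, i.e.\ connectedness of $\wt{C}$, rather than \'etaleness), note that the value at $0$ is independent of $\alpha$ since the $\A^*_{\wt{C},1}$-action commutes with $r(g)$ and fixes $0$, and conclude from $\sum_{\alpha}\chi(\alpha)=0$.

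The nonvanishing half has a genuine gap, which you flag yourself. After reducing $W_\eta(\Phi_{\LL,\chi})(1)$ to the twisted sum $\sum_{b}\chi(b)$ over those $b\in\A^*_{\wt{C},1}/(\A^*_{\wt{C},1}\cap\hat{\OO}^*_{\wt{C}})$ with $b\,\pi^*s_0\in\hat{\LL}$, you never prove that this sum is nonzero; you only suggest that one should be able to ``choose the data'' or perturb. This is exactly where the argument can fail. The conditions $\div(b)+\iota^*\div(b)=0$ and $\div(b)\ge -\pi^*\div(s_0)$ force $\div(b)$ to vanish at ramified and inert points and confine it to a finite box supported on split pairs $(P,\iota P)$ in the support of $\pi^*\div(s_0)$, with coefficients $-m_P\le n_P\le m_P$; since such divisors are realized by norm-one ideles $(t^{n},t^{-n})$, the sum factors as $\prod_P\bigl(\sum_{n=-m_P}^{m_P}z_P^{\,n}\bigr)$ with $z_P=\chi(\OO(P-\iota P))$, and a factor vanishes whenever $z_P\neq 1$ but $z_P^{2m_P+1}=1$ (e.g.\ $z_P$ a primitive cube root of unity and $m_P=1$). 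So for particular $(s_0,\chi)$ the coefficient you compute is genuinely zero, and showing that some admissible choice of $\eta$ (or of $g$) avoids all cancellations requires a real argument about how $\chi$ sees the classes $\OO(P-\iota P)$ — this is missing. Your fallback (computing $\|\Phi_{\LL,\chi}\|^2$ by unfolding) is likewise only named, not carried out.

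For contrast, the paper's nonvanishing argument bypasses Fourier coefficients entirely: evaluate $\Phi_{\LL,\chi}$ at $t=\diag(\beta,\beta^{-1})$, where $\beta\in\A_C^*$ is chosen so that its associated line bundle is $\LL_0^{-1}$. Then $f_{\LL,\alpha}(t)=c(t)\,|H^0(\wt{C},\LL\ot\OO(\alpha\beta))|=c(t)\,|H^0(\wt{C},\OO(\alpha))|$ for a nonzero constant $c(t)$ — here the hypothesis $\LL=\pi^*\LL_0$ is what makes every twist a degree-zero bundle — so $H^0$ consists of the constants for $\alpha=0$ and only the zero section for $\alpha\neq 0$, giving $\Phi_{\LL,\chi}(t)=c(t)\bigl(|H^0(\wt{C},\OO_{\wt{C}})|-1\bigr)\neq 0$. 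The trick worth internalizing is the twist that puts all relevant line bundles in degree zero, which turns the delicate non-cancellation question into the trivial statement that a nontrivial degree-zero line bundle has no nonzero sections.
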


\begin{proof}
We have for $\Phi\in \SS(\LL(\A_{\wt{C}}))$,
$$\int_{u\in U(F)\backslash U(\A_C)}\th_\LL(u\Phi) du=\sum_{x\in \LL(k(\wt{C}))}\int_{z\in \A_C/F} \psi(z\Nm(x))dz \Phi(x).$$
Since $\psi(zy)$ is a nontrivial character of $\A_C/F$ for any $y\in F^*$, the integral in the right-hand side is zero for $x\neq 0$.
Hence, we get
$$\int_{u\in U(F)\backslash U(\A_C)}\th_\LL(u\Phi)du=\vol(\A_C/F)\cdot \Phi(0).$$

Therefore, for $t\in T(\A_C)$, we have
$$Cf_{\LL,\a}(t)=\int_{U(F)\backslash U(\A_C)}\th_\LL(ut\a\cdot\de_{\hat{\LL}}) du=\vol(\A_C/F) (t\a\cdot \de_{\hat{\LL}})(0),$$
which does not depend on $\a$. Hence, summing over $\a$ with weights $\chi(\a)$, we get zero.

Now let us show that $\Phi_{\LL,\chi}$ is nonzero. For this, let us first calculate $f_{\LL,\a}(t)$ for $t=\diag(\b,\b^{-1})$, for some $\b\in \A_C^*$.
By definition, we have
$$f_{\LL,\a}(t)=c(t)\cdot \sum_{x\in \LL(k(\wt{C}))}\de_{\hat{\LL}}(\a\b x)=c(t)\cdot |H^0(\wt{C},\LL\ot \OO(\a\b)|,$$
where $c(t)=\prod_{p}\vartheta_p(\b_p)|\b|_{F_p}$.

Now let $\b$ be such that $\OO(\b)\simeq \LL_0^{-1}$. Then for any $\a\in K(\wt{C}/C)$, the line bundle $\pi^*\LL_0\ot \OO(\a)\ot \OO(\b)$ will have degree $0$,
and it will be trivial only when $\a$ is trivial. Thus, we get
$$\Phi_{\LL,\chi}(t)=\sum_{\a\in K(\wt{C}/C)}\chi(\a)f_{\LL,\a}(t)=c(t)\cdot (|\OO(\wt{C})|+\sum_{\a\in K(\wt{C}/C)\setminus \{0\}}\chi(\a))=c(t)\cdot (|\OO(\wt{C})|-1)\neq 0,$$
which shows that $\Phi_{\LL,\chi}\neq 0$.
\end{proof}

\subsection{Spherical vectors and the Hecke action}\label{fin-field-Hecke-sec}

Let us return to the local picture associated with a degree $2$ extension $K/E$.
Let $K^*_1\sub K^*$ denote the group of elements $x$ such that $\Nm_{K/E}(x)=1$.
We have an action of $K^*_1$ on $\SS(K)$ given by $r(x)\Phi(y)=\Phi(xy)$. It is easy to see that this
action commutes with $\SL_2(E)$.

Now let us consider the space of spherical vectors $\SS(K)^{G(\OO_E)}$, where $G=\SL_2$. 
It has the induced action of $K^*_1$
and also the action of the Hecke algebra $\SS(G(\OO_E)\backslash G(E)/G(\OO_E))$.
Let us set $\OO^*_{K,1}:=\OO_K^*\cap K^*_1$.
Then the subspace $\SS(K)^{G(\OO_E)\times \OO^*_{K,1}}$ is preserved by the Hecke operators, and has
the induced action of $K^*_1/\OO^*_{K,1}$.
Note that $K^*_1/\OO^*_{K,1}$ is trivial if $K$ is a field and is isomorphic to $\Z$ if $K=E\oplus E$.
%Note that for a spherical vector $v$
%and for $\a\in \OO_F^*$ we have $\Phi(\a x)=\Phi(x)$ (since $\om(\a)=|\a|_F=1$). 

\begin{lemma}
One has an isomorphism of $K^*_1/\OO^*_{K,1}$-representations
$$\SS(K^*_1/\OO^*_{K,1})\hra \SS(K)^{G(\OO_E)\times \OO^*_{K,1}}:
\varphi\mapsto \sum_{x\in K^*_1/\OO^*_{K,1}}\varphi(x)\de_{x\cdot \OO_K}.$$
%which is an isomorphism in the case when $K$ is a field. 
\end{lemma}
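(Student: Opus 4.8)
The plan is to treat the statement in three stages: show the map is a well-defined equivariant injection, reinterpret the target $\SS(K)^{G(\OO_E)\times\OO^*_{K,1}}$ as the full spherical space with a basis indexed by self-dual lattices, and finally prove that every spherical vector is a finite combination of the $\de_{x\OO_K}$, which is the surjectivity.

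First I would record the elementary properties. Since $\de_{\OO_K}$ is $\SL_2(\OO_E)$-invariant and the $K^*_1$-action $r(x)\Phi(y)=\Phi(xy)$ commutes with $\SL_2(E)$, each $r(x)\de_{\OO_K}=\de_{x^{-1}\OO_K}$ is again spherical; as $\OO^*_{K,1}\sub\OO_K^*$ fixes $\OO_K$, the lattice $x\OO_K$ and hence $\de_{x\OO_K}$ depends only on the class of $x$ in $K^*_1/\OO^*_{K,1}$ and is manifestly $\OO^*_{K,1}$-invariant, so the map is well defined into the target. A substitution $x\mapsto y^{-1}x$ gives $r(y)\bigl(\sum_x\varphi(x)\de_{x\OO_K}\bigr)=\sum_x\varphi(yx)\de_{x\OO_K}$, which is exactly equivariance for the regular action on $\SS(K^*_1/\OO^*_{K,1})$. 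Injectivity is immediate, since distinct classes give distinct lattices $x\OO_K$ (in the split case $x_n\OO_K=t^n\OO_E\oplus t^{-n}\OO_E$ are pairwise distinct), so the $\de_{x\OO_K}$ are linearly independent.

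Next I would reinterpret sphericity. Expressing $\SL_2(\OO_E)$ through its generators — the unipotents with upper entry in $\OO_E$, the torus elements $\diag(\a,\a^{-1})$ with $\a\in\OO_E^*$, and the Weyl element $w$ — the three local formulas for $r$ show that $\de_L$ is spherical exactly when $L$ is self-dual: unipotent invariance forces $\Nm(L)\sub\OO_E$ (using that $\psi_E$ has conductor $\OO_E$), and the element $w$ forces $\widehat{\de_L}=\de_L$, i.e. $L=L^*$. One then checks that the self-dual lattices are precisely the $K^*_1$-translates $x\OO_K$, with stabilizer of $\OO_K$ in $K^*_1$ equal to $\OO^*_{K,1}$, so they are parametrized by $K^*_1/\OO^*_{K,1}$ (in the field case $\OO_K$ is the unique one, matching $K^*_1=\OO^*_{K,1}$; in the split case they are the $t^{-n}\OO_E\oplus t^n\OO_E$). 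Since $\OO^*_{K,1}$ acts trivially on each $\de_{x\OO_K}$, the subspace $\SS(K)^{G(\OO_E)\times\OO^*_{K,1}}$ coincides with the full spherical space, and it remains to show every spherical vector lies in $\sspan\{\de_{x\OO_K}\}$.

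I would prove this by the Fourier self-duality argument, and this is where the main obstacle lies. If $\Phi$ is spherical, unipotent invariance forces $\Nm(x)\in\OO_E$ on $\supp\Phi$, while $r(w)\Phi=\Phi$ gives $\widehat\Phi\circ\iota=\Phi$, so $\widehat\Phi$ obeys the same support constraint; with local constancy and torus invariance this confines $\Phi$ to a neighborhood of the self-dual lattices. In the field case this already closes the argument: $\supp\Phi\sub\OO_K$ makes $\widehat\Phi$ supported on and constant along $\OO_K$-cosets, hence $\widehat\Phi=c\,\de_{\OO_K}$ and $\Phi=c\,\de_{\OO_K}$. The split case is the hard part, because the condition $v(x_1)+v(x_2)\ge0$ is unbounded below; the plan is to run an induction on the lattice level, peeling off one self-dual lattice $t^{-n}\OO_E\oplus t^n\OO_E$ at a time, and to use the Weyl condition to eliminate both the off-diagonal cells and the residual two-torsion (the torus together with $\OO^*_{K,1}$ only yield invariance under an index-two subgroup $H\sub\OO_K^*$, so a priori each cell carries a nontrivial residue-field character component). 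The decisive point is that Fourier transform shifts such a character component to a different level — the relevant Gauss sum vanishes — so these components cannot be $w$-invariant and must cancel. Carrying out this finite Fourier computation cell by cell and summing yields $\Phi\in\sspan\{\de_{x\OO_K}\}$, establishing surjectivity and hence the asserted isomorphism of $K^*_1/\OO^*_{K,1}$-representations.
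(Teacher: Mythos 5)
Your proposal is correct in substance, and in the unramified field case it is literally the paper's argument: unipotent invariance forces $\supp\Phi\sub\OO_K$, and then $w$-invariance makes $\Phi$ invariant under translations by $\OO_K$, hence proportional to $\de_{\OO_K}$. Where you genuinely diverge is the split case $K=E\oplus E$. The paper disposes of it by citing \cite[Prop.\ 1.6]{JL}, i.e.\ by the partial Fourier transform in one variable, which converts the Weil representation into the right-translation action of $\SL_2(E)$ on $\SS(E^2)$; there the $\SL_2(\OO_E)$-invariants are visibly spanned by the $\de_{t^n\OO_E^2}$, $n\in\Z$, these correspond exactly to multiples of the $\de_{t^{-n}\OO_E\oplus t^n\OO_E}$, and the $\OO^*_{K,1}$-invariance comes for free because $\OO^*_{K,1}$ becomes diagonal unit scaling in the transformed model. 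Your alternative is a direct cell-by-cell analysis, and its key mechanism is sound: torus-plus-$\OO^*_{K,1}$ invariance only gives invariance under the index-two subgroup $H=\{(u_1,u_2):u_1u_2\in(\OO_E^*)^2\}$, the complementary component is $\sigma\ot\sigma$-isotypic ($\sigma$ the quadratic character of $k^*$), and since the Fourier transform of a $\sigma$-isotypic function on $t^a\OO_E^*$ is supported exactly on the shell $\{v=-a-1\}$, $w$-invariance pushes that component onto cells with $v_1+v_2\le -2$, outside the region allowed by unipotent invariance, so it vanishes — this Gauss-sum argument is correct. However, your ``peeling'' induction is left as a plan; to make it run you need one further concrete input, for instance: if $\supp\Phi\sub t^{-M}\OO_K$, then writing $\Phi(x_1,x_2)=\widehat\Phi(x_2,x_1)$ and noting that $\psi_E(u_1x_2)\equiv 1$ on the support whenever $v(x_2)\ge M$ shows that $\Phi(x_1,x_2)$ is independent of $x_2$ on $v(x_2)\ge M$ (and symmetrically in $x_1$), which is what identifies $\Phi$ on the extreme shells with a multiple of $\de_{t^{-M}\OO_E\oplus t^M\OO_E}$ and lets the induction terminate at $\de_{\OO_K}$. (A small logical slip worth fixing: you assert early on that the $\OO^*_{K,1}$-invariant spherical space equals the full spherical space; that is true but is a consequence of the final classification, not something available beforehand — fortunately your split-case argument correctly uses the $\OO^*_{K,1}$-invariance hypothesis, which is all the lemma requires.) In short, your route is viable and more self-contained at the price of this bookkeeping, while the paper's route via the one-variable partial Fourier transform gets the conclusion at once — and is worth internalizing, since the same partial-Fourier-transform device ($S_1$) is the main structural tool of Section 4 of the paper.
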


\begin{proof}
%The condition of $G(\OO_F)\times \OO_1$-invariance amounts to the following conditions on $\Phi\in \SS(K)$: 
%\begin{itemize}
%\item
%the support of $\Phi$ is contained in the set of $x\in K$ such that $\Nm_{K/F}(x)\in \OO_F$ (this corresponds to
%invariance under the upper triangular matrices in $G(\OO_F)$);
%\item
%$\Phi(\a x)=\Phi(x)$ for $\a\in O_1\cdot \OO_F^*$ (note that $\om$ is trivial on $\OO_F^*$).
%\item
%$\hat{\Phi}(\iota(x))=\Phi(x)$.
%\end{itemize}
We know that $\de_{\OO_K}$ is $G(\OO_E)\times \OO^*_{K,1}$-invariant. Hence, the same is true for $\de_{x\OO_K}$ for any $x\in K^*_1$.

Assume first that the extension $K/E$ is non-trivial, i.e., $K$ is a field. Then invariance with respect to upper-triangular integer matrices implies that any 
$\Phi\in \SS(K)^{G(\OO_E)}$ is supported on $\OO_K$.
By the Fourier invariance of $\Phi$, this implies that $\Phi$ is invariant under translations by elements of $\OO_K$, hence,
$\Phi$ is proportional to $\de_{\OO_K}$.

%Now assume $K=F\oplus F$. Then $\Phi$ is supported on the set of $(x,y)\in F\oplus F$ such that $xy\in \OO_F$, and is invariant
%under rescaling by $(uv,uv^{-1})$, where $u,v\in \OO^*_F$. Thus, if $t\in \OO_F$ is a uniformizing and $u_1,u_2\in\OO^*_F$, then
%$\Phi(t^m u_1,t^n u_2)$ depends only on $u_1u_2$ modulo $(\OO^*_F)^2$,
%and this is nonzero only if $m+n\ge 0$.
In the case of the trivial extension $K=E\oplus E$, the proof is obtained using the Fourier transform in one variable
(see \cite[Prop.\ 1.6]{JL}).
%Should get as a basis $\de_{t^n\OO,t^{-n}\OO}$, $n\in\Z$.
\end{proof}

Thus, in the case when $K/E$ is nontrivial, the space $\SS(K)^{G(\OO_E)\times \OO^*_{K,1}}$ is spanned
by $\de_{\OO_K}$, whereas in the case $K=E\oplus E$, it has as a basis 
$$(t^n,t^{-n})\cdot \de_{\OO_K}=\de_{t^{-n}\OO_E\oplus t^n\OO_E}$$
numbered by $n\in \Z$, where $t\in \OO_E$ is the generator of the maximal ideal.

Now let us consider the action of the Hecke operators $(T_m)_{m\ge 1}$ corresponding to the double cosets
$$G(\OO_E)\cdot \diag(t^m,t^{-m})\cdot G(\OO_E)$$
on the space $\SS(K)^{G(\OO_E)\times \OO^*_{K,1}}$.

Let $A_{G(\OO)}$ denote the $G(\OO_E)$-averaging operator.
Then the Hecke operator $T_{g_0}$ corresponding to the double coset $G(\OO_E)g_0G(\OO_E)$ can be computed as
\begin{equation}\label{T-g0-eq}
T_{g_0}f(g)=|G(\OO)/G(\OO)\cap g_0G(\OO)g_0^{-1}|\cdot A_{G(\OO)}g_0f
\end{equation}
(see \cite[Sec.\ 2]{BKP-aut}). 
In case of the operator $T_m$, i.e., for $g_0=\diag(t^m,t^{-m})$, $G(\OO)\cap g_0G(\OO)g_0^{-1}\sub G(\OO)$ is the subgroup preserving a line in
$(\OO/(t^{2m}))^2$, so the constant factor in \eqref{T-g0-eq} becomes $|\P^1(\OO/(t^{2m}))|=q^{2m}+q^{2m-1}$.

Let $\Pi:\SS(K)\to \SS(K)$ denote the operator of averaging with respect to the group $U(\OO_E)$ (strictly upper triangular integer matrices).
Due to our assumptions on $\psi$, we have
$$\Pi(\Phi)=\de_{\Nm^{-1}(\OO_E)}\cdot \Phi.$$
We also have $A_{G(\OO)}=A_{G(\OO)}\Pi$. 

\begin{lemma} (i) If $K/E$ is nontrivial then
$$T_m\de_{\OO_K}=(-1)^m(q^m+q^{m-1})\de_{\OO_K}.$$
%(-1)^mq^{-m}\de_{\OO_K}.$$

\noindent
(ii) If $K=E\oplus E$ then
$$T_1\de_{\OO_E\oplus \OO_E}=(q-1)\de_{\OO_E\oplus \OO_E}+q\de_{t\OO_E\oplus t^{-1}\OO_E}+q\de_{t^{-1}\OO_E\oplus t\OO_E}.$$
%\frac{1}{q+1}((1-q^{-1})\de_{\OO_E\oplus \OO_E}+\de_{t\OO_E\oplus t^{-1}\OO_E}+\de_{t^{-1}\OO_E\oplus t\OO_E}).$$
\end{lemma}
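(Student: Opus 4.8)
The plan is to reduce everything to the formula \eqref{T-g0-eq} for $T_m$ in terms of the $G(\OO_E)$-averaging operator together with the explicit torus action on the Weil representation. Since the double coset $G(\OO_E)\diag(t^m,t^{-m})G(\OO_E)$ splits into $|\P^1(\OO_E/(t^{2m}))|=q^{2m}+q^{2m-1}$ right cosets (as recorded above), \eqref{T-g0-eq} reads
$$T_m\Phi=(q^{2m}+q^{2m-1})\,A_{G(\OO)}\,r(\diag(t^m,t^{-m}))\Phi.$$
Applying the torus formula to $\Phi=\de_{\OO_K}$, using $|t^m|_E=q^{-m}$ and $\vartheta(t^m)=(-1)^m$ in the field case (resp. $\vartheta\equiv1$ in the split case), and the identity $\de_{\OO_K}(t^mx)=\de_{t^{-m}\OO_K}(x)$, one gets
$$r(\diag(t^m,t^{-m}))\de_{\OO_K}=(-1)^m q^{-m}\de_{t^{-m}\OO_K}\qquad(\text{resp. } q^{-m}\de_{t^{-m}\OO_K}).$$
Thus in both cases the problem is reduced to evaluating $A_{G(\OO)}\de_{t^{-m}\OO_K}$, and I would handle this through the identity $A_{G(\OO)}=A_{G(\OO)}\Pi$, where $\Pi$ is multiplication by $\de_{\Nm^{-1}(\OO_E)}$.

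For part (i) this is immediate. When $K/E$ is the unramified field extension one has $v_E(\Nm(x))=2v_K(x)$, hence $\Nm^{-1}(\OO_E)=\OO_K$, so $\Pi\de_{t^{-m}\OO_K}=\de_{\OO_K}$. Since $\de_{\OO_K}$ is already $G(\OO_E)$-invariant, $A_{G(\OO)}$ fixes it, and combining with the scalars above gives $T_m\de_{\OO_K}=(q^{2m}+q^{2m-1})(-1)^mq^{-m}\de_{\OO_K}=(-1)^m(q^m+q^{m-1})\de_{\OO_K}$.

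For part (ii), take $m=1$ and $K=E\oplus E$; the same steps yield $T_1\de_{\OO_K}=(q+1)\,A_{G(\OO)}\de_S$, where $S=(t^{-1}\OO_K)\cap\Nm^{-1}(\OO_E)=\{(x_1,x_2):v_E(x_1)\ge-1,\ v_E(x_2)\ge-1,\ v_E(x_1)+v_E(x_2)\ge0\}$. The key point, and the main obstacle, is that the Weil action of $G(\OO_E)$ is \emph{not} by linear substitution (it mixes in Fourier transforms), so $A_{G(\OO)}\de_S$ cannot be computed naively pointwise. I would instead use that the Weil representation is unitary for the standard inner product on $\SS(K)$ (its generators act by Fourier transform, unitary characters, and measure-preserving scalings), so $A_{G(\OO)}$ is the orthogonal projector onto $G(\OO_E)$-invariants, in particular self-adjoint, and preserves $\SS(K)$. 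Moreover $\de_S$ is $\OO^*_{K,1}$-invariant and $\OO^*_{K,1}$ commutes with $\SL_2$, so $A_{G(\OO)}\de_S$ is $G(\OO_E)\times\OO^*_{K,1}$-invariant and compactly supported; by the preceding Lemma it is therefore a finite combination $A_{G(\OO)}\de_S=\sum_n c_n\de_{L_n}$, with $L_n:=t^{-n}\OO_E\oplus t^n\OO_E$. Self-adjointness then pins down the coefficients through
$$\sum_m c_m\,\vol(L_m\cap L_n)=\langle A_{G(\OO)}\de_S,\de_{L_n}\rangle=\langle\de_S,\de_{L_n}\rangle=\vol(S\cap L_n),$$
where both sides are elementary: $\vol(L_m\cap L_n)=q^{-|m-n|}$, $\vol(S\cap L_0)=1$, and $\vol(S\cap L_n)=q^{1-|n|}$ for $|n|\ge1$. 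By symmetry $c_n=c_{-n}$, and solving this convolution system gives $c_0=(q-1)/(q+1)$, $c_{\pm1}=q/(q+1)$, $c_n=0$ for $|n|\ge2$; multiplying by $(q+1)$ produces $T_1\de_{\OO_K}=(q-1)\de_{L_0}+q\de_{L_1}+q\de_{L_{-1}}$, which is the asserted formula since $L_0=\OO_E\oplus\OO_E$, $L_1=t^{-1}\OO_E\oplus t\OO_E$, $L_{-1}=t\OO_E\oplus t^{-1}\OO_E$.

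The effort in part (ii) concentrates on three checks: that $A_{G(\OO)}\de_S$ genuinely lies in the linear span of the $\de_{L_n}$ (via $\OO^*_{K,1}$-invariance and the structure Lemma), that the unitarity of $A_{G(\OO)}$ is enough to license the pairing identity, and that the finitely supported solution of the volume equations is unique and consistent for every $n$ — the equations for $|n|\ge1$ all collapse to a single relation, and this collapse is exactly the consistency check that forces the support to be $\{-1,0,1\}$. By contrast, the field case (i) is a one-line computation once the torus action and $\Nm^{-1}(\OO_E)=\OO_K$ are in hand.
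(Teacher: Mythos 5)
Your proof is correct. Part (i) coincides with the paper's argument: both reduce via \eqref{T-g0-eq} to $r(\diag(t^m,t^{-m}))\de_{\OO_K}=(-1)^mq^{-m}\de_{t^{-m}\OO_K}$ and then use $\Pi\de_{t^{-m}\OO_K}=\de_{\OO_K}$, which is exactly the statement $\Nm^{-1}(\OO_E)=\OO_K$ in the unramified field case.

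Part (ii), however, takes a genuinely different route. The paper works entirely with the two identities $A_{G(\OO)}=A_{G(\OO)}\Pi$ and $A_{G(\OO)}=A_{G(\OO)}S$ (where $S=r(w)$ and $w\in G(\OO_E)$ acts by Fourier transform): it decomposes $\phi=\Pi\de_{t^{-1}\OO_K}$ into characteristic functions of products of lattices, uses the explicit formula $S\de_{\OO_E\times t\OO_E}=q^{-1}\de_{t^{-1}\OO_E\times\OO_E}$ to obtain a self-referential linear equation for $A_{G(\OO)}\de_{\OO_E\times t\OO_E}$, solves it, and assembles the answer. You instead use unitarity of the Weil representation (valid here: the torus acts by measure-preserving scalings since $|a|_E^2=|\Nm(a)|_E$, the unipotent part by unimodular multipliers, and $w$ by Fourier transform with respect to a self-dual measure), so that $A_{G(\OO)}$ is a self-adjoint projector; the coefficients of $A_{G(\OO)}\de_S$ in the basis $\{\de_{L_n}\}$ of $G(\OO_E)\times\OO^*_{K,1}$-invariants are then pinned down by the Gram system $\sum_m c_m\vol(L_m\cap L_n)=\vol(S\cap L_n)$. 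I checked the numerics: $\vol(L_m\cap L_n)=q^{-|m-n|}$, $\vol(S\cap L_0)=1$, $\vol(S\cap L_n)=q^{1-|n|}$ for $|n|\ge1$, and $c_0=(q-1)/(q+1)$, $c_{\pm1}=q/(q+1)$ solve all equations (those with $|n|\ge1$ collapse to the $n=1$ case after rescaling), with uniqueness following since a finite combination of the linearly independent $\de_{L_n}$ orthogonal to every $\de_{L_m}$ is orthogonal to itself. What your method buys: no Fourier transform of any non-invariant function is ever computed, and the argument generalizes mechanically (e.g.\ to $T_m$ with $m\ge2$ in the split case, or to other spherical vectors) once a basis of invariants and the lattice volumes are known. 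What it costs: you must invoke unitarity and the surjectivity half of the preceding structure Lemma (that $\SS(K)^{G(\OO_E)\times\OO^*_{K,1}}$ is \emph{exactly} the span of the $\de_{L_n}$), whereas the paper's computation only needs the much weaker fact that the terms in its final expression are $G(\OO_E)$-invariant.
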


\begin{proof}
(i) We have $\diag(t^m,t^{-m})\de_{\OO_K}=(-1)^mq^{-m}\de_{t^{-m}\OO_K}$. Now we observe that for $m\ge 1$, $\Pi \de_{t^{-m}\OO_K}=\de_{\OO_K}$,
and the assertion follows.

\noindent
(ii) We have 
$$T_1\de_{\OO_E\oplus \OO_E}=(q^2+q)A_{G(\OO)}\diag(t,t^{-1})\de_{\OO_E\oplus \OO_E}=(q+1)A_{G(\OO)}\de_{t^{-1}\OO_E\oplus t^{-1}\OO_E}.$$
Applying the operator $\Pi$, we can replace $\de_{t^{-1}\OO_E\oplus t^{-1}\OO_E}$
by 
$$\phi:=\Pi\de_{t^{-1}\OO_E\oplus t^{-1}\OO_E}=\de_{(a,b)\in t^{-1}\OO_E^2: ab\in \OO_E}.$$
We can write 
$$\phi-\de_{\OO_E\oplus \OO_E}=\de_{(a,b)\in t^{-1}\OO_E^2\setminus \OO_E^2: ab\in \OO_E}=\de_{t^{-1}\OO_E\oplus t\OO_E}-\de_{\OO_E\times t\OO_E}+
\de_{t\OO_E\oplus t^{-1}\OO_E}-\de_{t\OO_E\times \OO_E}.$$
Note that
$$S\de_{\OO_E\times t\OO_E}=q^{-1}\de_{t^{-1}\OO_E\times \OO_E}.$$
%, \ \ S\de_{t\OO_E\times \OO_E}=q^{-1}\de_{\OO_E\times t^{-1}\OO_E}.$$
Hence
\begin{align*}
&A_{G(\OO)}\de_{\OO_E\times t\OO_E}=q^{-1}A_{G(\OO)}\de_{t^{-1}\OO_E\times \OO_E}=q^{-1}A_{G(\OO)}\Pi\de_{t^{-1}\OO_E\times \OO_E}=\\
&q^{-1}A_{G(\OO)}(\de_{\OO_E^2}+\de_{t^{-1}\OO_E\times t\OO_E}-\de_{\OO_E\times t\OO_E}),
\end{align*}
which gives
$$A_{G(\OO)}\de_{\OO_E\times t\OO_E} =\frac{1}{q+1}(\de_{\OO_E^2}+\de_{t^{-1}\OO_E\times t\OO_E}).$$
Similarly,
$$A_{G(\OO)}\de_{t\OO_E\times \OO_E} =\frac{1}{q+1}(\de_{\OO_E^2}+\de_{t\OO_E\times t^{-1}\OO_E}).$$ 
Hence,
$$A_{G(\OO)}\phi=\frac{1}{q+1}((q-1)\de_{\OO_E^2}+q\de_{t^{-1}\OO_E\times t\OO_E}+q\de_{t\OO_E\times t^{-1}\OO_E}),$$
which leads to the claimed formula.
\end{proof}

Now, let us consider the global picture with the double covering $\pi:\wt{C}\to C$.

\begin{prop}
The function $\Phi_{L,\chi}$, given by \eqref{Phi-L-chi-eq}, 
satisfies
\begin{equation}\label{Hecke-fin-field-eq}
T_{1,p}\Phi_{L,\chi}=\begin{cases} -(q_p+1)\Phi_{L,\chi}, & |\pi^{-1}(p)|=1, \\ (q_p-1+q_p\chi(p_1-p_2)+q_p\chi(p_2-p_1))\Phi_{L,\chi}, 
&\pi^{-1}(p)=p_1+p_2, \end{cases}
\end{equation}
where $T_{1,p}$ is the Hecke operator associated with the point $p\in C$, and $q_p=|k(p)|$.
\end{prop}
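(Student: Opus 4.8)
The plan is to reduce the global identity to the local Hecke eigenvalue computation of the Lemma above, using that $T_{1,p}$ acts only through the local factor at $p$ and commutes with the norm-one idele action. Grouping the twisted adeles $\LL(\A_{\wt C})$ by the places $p$ of $C$ and using a local trivialization of $\LL$, write the spherical vector as a restricted tensor product $\de_{\hat\LL}=\bigotimes_p\de_{\OO_{K_p}}$, where $K_p$ is the \'etale quadratic $F_p$-algebra cut out by $\pi$ over $p$ (a field when $p$ is inert, $F_p\oplus F_p$ when $p$ splits) and $\OO_{K_p}$ its maximal order. Because $f_{\LL,\a}(g)=\th_\LL(g\cdot \a\de_{\hat\LL})$ and $\th_\LL$ is $\SL_2(F)$-invariant, the Hecke operator acts by $T_{1,p}f_{\LL,\a}(g)=\th_\LL(g\cdot r(T_{1,p})\a\de_{\hat\LL})$, so everything comes down to expanding $r(T_{1,p})\de_{\hat\LL}=(r(T_{1,p})\de_{\OO_{K_p}})\ot\bigotimes_{q\neq p}\de_{\OO_{K_q}}$ via the local Lemma and reassembling. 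Throughout I use that the $\A^*_{\wt C,1}$-action is built from the multiplication operators $x\Phi(y)=\Phi(xy)$, which commute with $\SL_2(\A_C)$ and hence with $r(T_{1,p})$.

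First I would treat the inert case $|\pi^{-1}(p)|=1$, where $K_p/F_p$ is the unramified quadratic field extension. Part (i) of the local Lemma with $m=1$ gives $r(T_{1,p})\de_{\OO_{K_p}}=-(q_p+1)\de_{\OO_{K_p}}$, hence $r(T_{1,p})\de_{\hat\LL}=-(q_p+1)\de_{\hat\LL}$ and, by the commutation just noted, $r(T_{1,p})\a\de_{\hat\LL}=-(q_p+1)\a\de_{\hat\LL}$ for every $\a$. Therefore $T_{1,p}f_{\LL,\a}=-(q_p+1)f_{\LL,\a}$, and summing against $\chi(\a)$ yields $T_{1,p}\Phi_{\LL,\chi}=-(q_p+1)\Phi_{\LL,\chi}$, the first case of the formula.

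In the split case $\pi^{-1}(p)=p_1+p_2$ we have $K_p=F_p\oplus F_p$, and part (ii) of the local Lemma reads
$$r(T_{1,p})\de_{\OO_{K_p}}=(q_p-1)\de_{\OO_{K_p}}+q_p\,\de_{t\OO_{F_p}\oplus t^{-1}\OO_{F_p}}+q_p\,\de_{t^{-1}\OO_{F_p}\oplus t\OO_{F_p}}.$$
The crucial point is that the two shifted functions are the translates of $\de_{\OO_{K_p}}$ by the norm-one ideles $s_p=(t,t^{-1})$ and $s_p^{-1}=(t^{-1},t)$, and that under the isomorphism $K(\wt C/C)\simeq\PP_0(\wt C/C)$ of the Lemma above these local generators of $K^*_{1,p}/\OO^*_{K_p,1}\simeq\Z$ map to the divisor classes $\OO(p_2-p_1)$ and $\OO(p_1-p_2)$ on $\wt C$, both of trivial norm. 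Feeding this into $f_{\LL,\a}(g)=\th_\LL(g\cdot \a\de_{\hat\LL})$ and using abelianness of the idele action gives
$$T_{1,p}f_{\LL,\a}=(q_p-1)f_{\LL,\a}+q_p f_{\LL,s_p\a}+q_p f_{\LL,s_p^{-1}\a}.$$
I would then multiply by $\chi(\a)$, sum over $\a\in K(\wt C/C)$, and re-index: $\sum_\a\chi(\a)f_{\LL,s_p\a}=\chi(s_p^{-1})\Phi_{\LL,\chi}=\chi(p_1-p_2)\Phi_{\LL,\chi}$ and $\sum_\a\chi(\a)f_{\LL,s_p^{-1}\a}=\chi(s_p)\Phi_{\LL,\chi}=\chi(p_2-p_1)\Phi_{\LL,\chi}$. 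Collecting terms gives the eigenvalue $q_p-1+q_p\chi(p_1-p_2)+q_p\chi(p_2-p_1)$, as claimed.

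The main obstacle is the bookkeeping of the split case: correctly matching each local shift $\de_{t\OO_{F_p}\oplus t^{-1}\OO_{F_p}}$ to its global idele class, and hence to $\chi(p_1-p_2)$ versus $\chi(p_2-p_1)$. The labeling of $p_1,p_2$ (equivalently, the choice of which factor of $F_p\oplus F_p$ is ``first'') is a genuine convention, but since the stated eigenvalue is symmetric in $\chi(p_1-p_2)$ and $\chi(p_2-p_1)$ this ambiguity does not affect the result; the only real input is the isomorphism $K(\wt C/C)\simeq\PP_0(\wt C/C)$ together with the compatibility of the local translation action with it. The remaining ingredients — the tensor factorization of $\de_{\hat\LL}$ and the commutation of $r(T_{1,p})$ with the norm-one idele action — are formal.
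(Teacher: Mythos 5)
Your proof is correct and takes essentially the same approach the paper intends: the paper states this Proposition immediately after the local Lemma without writing out a proof, and your argument---factoring $\de_{\hat{\LL}}$ into local spherical vectors, applying the local Lemma at $p$ (part (i) with $m=1$ in the inert case, part (ii) in the split case), identifying the local translates by $(t,t^{-1})$ and $(t^{-1},t)$ with the classes of $\OO(p_1-p_2)$ and $\OO(p_2-p_1)$ under $K(\wt{C}/C)\simeq \PP_0(\wt{C}/C)$, and re-indexing the finite sum over $\a$---is exactly the intended local-to-global deduction. Your remark that the resulting eigenvalue is symmetric in $p_1,p_2$, so the labeling convention cannot affect the answer, correctly handles the only delicate point.
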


\begin{remark}
The right-hand side of the formula \eqref{Hecke-fin-field-eq} has the following interpretation in terms of the Satake isomorphism $S$
between the local Hecke algebra of $\SL_2$ and the representation ring $R(\PGL_2)$ of $\PGL_2$. Namely, if $c_{n\a^\vee_1}$ is the element of the Hecke algebra of $\SL_2$
associated with the coroot $n\a^\vee_1$, where $n\ge 0$, then 
$$S(c_{n\a^\vee_1})=q^nV_{n\a^\vee_1}-q^{n-1}V_{(n-1)\a^\vee_1},$$
where $V_\la$ is the irreducible representation of $\PGL_2$ with the highest weight $\la$ (see \cite[Sec.\ 4]{Gross}).
Using this it is easy to see that the eigenvalue in \eqref{Hecke-fin-field-eq} is equal
to the trace of the Frobenius at $p$ in the Galois group $\Gal(k(C))$ on the virtual representation of the Galois group $\Gal(k(C))$ associated with a
$\PGL_2$ local system corresponding to $\chi$ and the element $S(c_{\a^\vee_1})$ of $R(\PGL_2)$.
\end{remark}

\subsection{Interpretation in terms of vector bundles}\label{fin-field-vec-sec}

We can compute the function $f_\LL$
%$\th(g\de_{\OO_{\wt{C}}})$ 
on a vector bundle $V$ over $C$ with the trivial determinant, which is presented as an extension 
\begin{equation}\label{V-L-ext}
0\to L^{-1}\to V\to L\to 0
\end{equation}
where $L$ is a line bundle on $C$.

Let us choose a splitting 
$$\si_{gen}:L_{gen}\to V_{gen}$$ 
over the general point $\Spec k(C)$, and also for each closed  point  $p\in C$, a splitting
$$\si_p:L_p\to V_p$$
of the corresponding $\OO_{C,p}$-modules.
We have
$$\si_p-\si_{gen}=e_p\in L^{-2}_p\ot F_p$$
(these elements assemble into an element of $L^{-2}\ot \A_C$).
Also, let us choose trivializations
$$\phi_{gen}:k(C)\rTo{\sim} L_{gen}, \ \ \phi_p:\OO_{C,p}\rTo{\sim} L_p.$$
We have
$$\phi_p=\a_p\cdot \phi_{gen},$$
for some idele $(\a_p)$. 
We have the induced trivializations $\phi^{-1}_x$ of $L^{-1}_x$, and hence, the induced trivializations of $V$,
$$t_x:\OO_x\oplus \OO_x\rTo{(\phi^{-1}_x,\si_x\phi_x)} V_x.$$

For each closed point $p\in C$, we have
$$t_p=t_{gen}\circ g_p,$$
where $g_p\in \SL_2(F_p)$. Changing trivializations leads to changing $g_p$ in the corresponding double coset.
Thus, the value of our function on $V$ is given by $\th_{\LL}((g_p)\cdot \de_{\hat{\LL}})$.

It is easy to see that
$$g_p=\left(\begin{matrix} \a_p^{-1} & \a_pe_p\phi_{gen}^2\\ 0 & \a_p\end{matrix}\right)=
\left(\begin{matrix} 1 & e_p\phi_{gen}^2\\ 0 & 1\end{matrix}\right)\cdot \left(\begin{matrix} \a_p^{-1} & 0 \\ 0 & \a_p\end{matrix}\right).$$
Hence, 
\begin{equation}\label{gp-ext-action-formula}
((g_p)\cdot \de_{\hat{\LL}})((x_p))=
\psi(\sum_{p\in C}\Res_p(e_p\phi_{gen}^2\cdot \Nm(x_p)))\cdot \left(\begin{matrix} \a_p^{-1} & 0 \\ 0 & \a_p\end{matrix}\right)\cdot 
\prod_p \vartheta_p(\a_p)\cdot q_p^{v_p(\a_p)}\de_{\a_p\cdot \hat{\LL}_{\wt{C},p}}((x_p)).
\end{equation}
%\de_{\hat{\LL}})((x_p)).$$

%Note that 
%$$\left(\begin{matrix} \a_p^{-1} & 0 \\ 0 & \a_p\end{matrix}\right)\cdot \de_{\hat{\LL}}=\prod_p \vartheta_p(\a_p)\cdot q_p^{v_p(\a_p)}\de_{\a_p\cdot \hat{\LL}_{\wt{C},p}},$$
%where $\OO_{\wt{C},p}=\OO_{C,p}\ot \OO_{\wt{C}}$. 

An element $f\in k(\wt{C})$ is in the support of $g_p\cdot  \de_{\OO_{\wt{C}}}$ if and only if for every $p$, $f\in \a_p\OO_{\wt{C},p}$.
This is equivalent to the condition that the rational section $s:=\phi_{gen} f$ of the line bundle $\pi^*L$ on $\wt{C}$ is in fact regular.
Also,
$$\vartheta_{\wt{C}/C}(L)=\prod_p \vartheta_p(\a_p), \ \ \sum_p [k(p):k]\cdot v_p(\a_p)=-\deg(L).$$

Thus, we can rewrite \eqref{gp-ext-action-formula} as
$$\th_{\LL}((g_p)\cdot \de_{\hat{\LL}})=\vartheta_{\wt{C}/C}(L)\cdot q^{-\deg(L)}\cdot \sum_{f: \phi_{gen}\cdot f\in H^0(\pi^*L\ot\LL)} \psi(\sum_{p\in C}\Res_p(e_p\phi_{gen}^2\cdot \Nm(f))).$$
%The identification of the latter with the one obtained from the quadratic bundle $\pi_*\OO_{\wt{C}}$ is immediately obtained from the identification of
Using Serre duality between $H^0(C,L^2\ot \om_C)$ and $H^1(C,L^{-2})$, which is induced by the pairing
$$\lan s, (e_p)\ran=\sum_{p\in C}\Res_p(e_p\cdot s),$$
where $e_p\in L^{-2}\ot \A_C$ and $s\in H^0(C,L^2\ot\om_C)$, we derive the following formula for $f_\LL$.

\begin{prop}\label{fin-field-bun-prop} 
For an $\SL_2$-bundle $V$ on $C$, given as an extension \eqref{V-L-ext}, one has
$$f_\LL(V)=\vartheta_{\wt{C}/C}(L)\cdot q^{-\deg(L)}\cdot\sum_{x\in H^0(\wt{C},\pi^* L\ot \wt{\LL})}\psi(q_{\LL,L}(x)),$$
where $q_{\LL,L}$ is a quadratic form on $H^0(\wt{C},\pi^* L\ot \wt{\LL})$ given by the composition
$$q_{\LL,L}:S^2H^0(\wt{C},\pi^* L\ot \wt{\LL})\rTo{\Nm} H^0(C,L^2\ot\om_C)\rTo{e} k,$$
where $e\in H^1(C,L^{-1})\simeq H^0(C,L^2\ot\om_C)^*$ is the class of the extension \eqref{V-L-ext}.
\end{prop}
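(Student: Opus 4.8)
The plan is to start from the explicit formula for $\th_{\LL}((g_p)\cdot\de_{\hat\LL})$ derived immediately above the statement, namely
$$\th_{\LL}((g_p)\cdot \de_{\hat{\LL}})=\vartheta_{\wt{C}/C}(L)\cdot q^{-\deg(L)}\cdot \sum_{f:\ \phi_{gen}\cdot f\in H^0(\pi^*L\ot\LL)} \psi\Big(\sum_{p\in C}\Res_p(e_p\phi_{gen}^2\cdot \Nm(f))\Big),$$
and to rewrite its two ingredients---the index set of the sum and the exponent---in the intrinsic language of the double cover. First I would identify the set of rational functions $f\in k(\wt{C})$ with $\phi_{gen}\cdot f\in H^0(\pi^*L\ot\LL)$ with the space $H^0(\wt{C},\pi^*L\ot\LL)$ of global sections. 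The pullback $\pi^*\phi_{gen}$ together with the chosen generic trivialization $\varphi_{\eta_{\wt{C}}}$ of $\LL$ furnishes a generic trivialization of $\pi^*L\ot\LL$ on $\wt{C}$, and under it the assignment $f\mapsto x:=\phi_{gen}\cdot f$ is exactly the bijection with $H^0(\wt{C},\pi^*L\ot\LL)$, the local regularity of $x$ at each point of $\wt{C}$ matching the condition $\phi_{gen}\cdot f\in \a_p\OO_{\wt{C},p}$ already used above.

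Next I would compute the norm. Since $\Nm\circ\pi^*$ is squaring on Picard groups, $\Nm(\pi^*L)=L^2$, and combined with the fixed isomorphism $\Nm(\LL)\simeq\om_C$ this gives $\Nm(\pi^*L\ot\LL)\simeq L^2\ot\om_C$. Tracking trivializations, the norm of the generic trivialization of $\pi^*L\ot\LL$ is $\phi_{gen}^2$ times the image of $\Nm(\varphi_{\eta_{\wt{C}}})$ under $\Nm(\LL)\simeq\om_C$; hence $\Nm(x)=\phi_{gen}^2\cdot\Nm(f)\in H^0(C,L^2\ot\om_C)$, which is precisely the section appearing in the exponent.

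The last step is to recognize the exponent as the Serre-duality pairing. The local differences $e_p=\si_p-\si_{gen}\in L^{-2}\ot F_p$ assemble into an adele representing the extension class $e\in H^1(C,L^{-2})$ under the adelic description $H^1(C,\FF)=\FF(\A_C)/(\FF(k(C))+\FF(\hat{\OO}_C))$. Therefore $\sum_p\Res_p(e_p\,\phi_{gen}^2\cdot\Nm(f))=\sum_p\Res_p(e_p\cdot\Nm(x))=\lan e,\Nm(x)\ran$, which is by definition $q_{\LL,L}(x)$ (the composite of $\Nm$ with the functional $e$). Substituting the reindexed sum and this identity into $\th_\LL((g_p)\cdot\de_{\hat\LL})=f_\LL(V)$ yields the claimed formula.

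I expect the main obstacle to be purely the bookkeeping of trivializations in the middle step: one must verify that the scalar comparing $\Nm(\varphi_{\eta_{\wt{C}}})$ with the distinguished generator of $\om_C$ under $\Nm(\LL)\simeq\om_C$ is absorbed so that no spurious constant survives, and that the adelic cocycle $(e_p)$ genuinely represents $e$ with the correct sign and normalization. Once these compatibilities are pinned down, the identification of the exponent with $q_{\LL,L}$ is immediate from the definition of the Serre pairing, and the factor $\vartheta_{\wt{C}/C}(L)\cdot q^{-\deg(L)}$ carries over unchanged.
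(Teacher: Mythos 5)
Your proposal is correct and follows essentially the same route as the paper: the paper's own derivation is exactly the computation preceding the statement, completed by the same three identifications you make — the summation set with $H^0(\wt{C},\pi^*L\ot\LL)$, the norm $\Nm(x)=\phi_{gen}^2\cdot\Nm(f)$ via $\Nm(\pi^*L\ot\LL)\simeq L^2\ot\om_C$, and the exponent as the Serre-duality (residue) pairing of $\Nm(x)$ with the adelic representative $(e_p)$ of the extension class. Note that your placement of the class in $H^1(C,L^{-2})$ is the correct normalization and matches the paper's text immediately before the proposition (the $H^1(C,L^{-1})$ appearing in the proposition's statement is evidently a typo).
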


\begin{remark} Using standard properties of the Gauss sums, 
it is possible to show directly that the right-hand side of the formula for $f_\LL(V)$ does not depend on a choice of a subbundle $L^{-1}\sub V$.
\end{remark}

\begin{remark}
An analog of the formula of Proposition \ref{fin-field-bun-prop} for symplectic bundles and the corresponding perverse sheaf on the moduli stack 
is discussed in \cite{Lysenko}.
\end{remark}

\section{Theta correspondence for a nilpotent extension}\label{nilp-theta-sec}

We still work with the group $G=\SL_2$.

\subsection{Local representations}\label{nilp-local-sec}

\subsubsection{Local setup and the first definition of local representations}\label{local-setup-sec}

%Set $A=k[e]/e^2$.

Let $\ov{E}$ be a local field, $\OO_{\ov{E}}$ its ring of integers, $k$ the residue field. We assume that the characteristic of $k$ is $\neq 2$. Let
$$0\to N\OO_E\to \OO_E\to \OO_{\ov{E}}\to 0$$ 
a square zero extension (so $\OO_E$ is a commutative ring, $N\OO_E$ is a square zero ideal), 
such that $N\OO_E$ is a free $\OO_{\ov{E}}$-module of rank 1. Note that $\OO_E$ is a local Noetherian ring.
%flat $A$-algebra such that $\OO_E/e\OO_E \simeq \OO_{\ov{E}}$, where $\ov{E}$ is a local field.

Note that in the case when $\ov{E}\simeq k(\!(t)\!)$ and $\OO_E$ is a $k$-algebra, by formal smoothness of $\OO_{\ov{E}}$ over $k$, 
we necessarily have $\OO_E\simeq k[\eps]/(\eps^2)\ot_k \OO_{\ov{E}}$.

We denote by $E$ the localization of $\OO_E$ with respect to all non-zero-divisors. Then $E$ is a square zero extension of $\ov{E}$ by an ideal $NE$,
which is a free $\ov{E}$-module of rank 1.

Let $\ov{K}$ be a quadratic extension of $\ov{E}$, i.e., either $\ov{K}/\ov{E}$ is a quadratic field extension (possibly ramified), or $\ov{K}=\ov{E}\oplus\ov{E}$. 
Let $\OO_{\ov{K}}\sub \ov{K}$ integers in $\ov{K}$. 
Assume we are given a flat $\OO_E$-algebra $\OO_K$ together with an isomorphism 
$$\OO_K\ot_{\OO_E}\OO_{\ov{E}}\simeq \OO_{\ov{K}}.$$
Note that this implies that $\OO_K$ is a free $\OO_E$-module of rank $2$.
We set $N\OO_K:=\OO_K\cdot N\OO_E\simeq \OO_{\ov{K}}\ot_{\OO_{\ov{E}}}N\OO_E$. This is a square zero ideal, and $\OO_K/N\OO_K\simeq \OO_{\ov{K}}$.
Finally, we set $K=\OO_K\ot_{\OO_E} E$, which is a square zero extension of $\ov{K}$ by an ideal $NK$ (which is a free $\ov{K}$-module of rank $1$).
%Then $\OO_{\ov{E}}\sub \OO_{\ov{K}}$ is en \'etale extension,
%so it extends uniquely to an \'etale extension $\OO_E\sub \OO_K$. We denote by $E\sub K$ the corresponding \'etale extension of $E$, and by $NK\sub K$ the kernel of the projection
%$K\to \ov{K}$.
%Note that in the equicharacteristic case we have $K=k[\eps]/(\eps^2)\ot_k \ov{K}$.

Since $\OO_K$ is free of rank $2$ as $\OO_E$-module, 
we have the trace and norm maps $\Tr:\OO_K\to \OO_E$, $\Nm:\OO_K\to \OO_E$ and the corresponding maps $K\to E$.
Furthermore, the map $\iota(x)=\Tr(x)-x$ is an $\OO_E$-linear automorphism of $\OO_K$, such that $\Nm(x)=x\iota(x)$. 
\footnote{This is true for any extension of commutative rings $A\sub B$ such that $B$ is free of rank $2$ as an $A$-module, and can be deduced from
the identity $\det(X+Y)-\det(X)-\det(Y)=\tr(X)\tr(Y)-\tr(XY)$ that holds for $2\times 2$-matrices over $A$.}
We also extend $\iota$ to an $E$-linear automorphism of $K$.

We define a symmetric $E$-bilinear pairing $K\times K\to E$ by
\begin{equation}\label{Nm-pairing-eq}
\lan a,b\ran:=\Tr(a\iota(b))=\Nm(a+b)-\Nm(a)-\Nm(b).
\end{equation}
The pairing $\lan\cdot,\cdot\ran$ induces a nondegenerate $\ov{E}$-bilinear pairing
$$NK\times \ov{K}\to NE\simeq \ov{E}.$$
Indeed, let $\eps$ be a generator of $N\OO_E$ as $\OO_{\ov{E}}$-module. Then for $\ov{a},\ov{b}\in \ov{K}$,
$$\lan \ov{a}\eps,\ov{b}\ran=\Tr_{\ov{K}/\ov{E}}(\ov{a}\iota(\ov{b}))\eps,$$
and the form $\Tr_{\ov{K}/\ov{E}}(\ov{a}\iota(\ov{b}))$ is nondegenerate.

%We denote by $\Nm_{K/E}:K\to E$ the norm map.
The norm $\Nm:K\to E$ is zero on $NK$, and satisfies 
$$\Nm(x+x_1)=\Nm(x)+\lan \ov{x},x_1\ran,$$
for any $x\in K$, $x_1\in NK$, and $\ov{x}=x\mod NK$.

Let $\psi_E:E\to \C^*$ be an additive character of $E$, such that $\psi_E|_{NE}$ is nontrivial.
%trivial on $\OO_E$, 
%$\ov{x}\mapsto  \psi_E(e\ov{x})$ is nontrivial on $t^{-1}\OO_{\ov{E}}$.
%???

%One can check that it is now given by the formulas
%$$r\left(\begin{matrix} \a & 0\\ 0 & \a^{-1}\end{matrix}\right)\Phi(x)=|\a|_{F_p} \Phi(\a x),$$
%$$r\left(\begin{matrix} 1 & z\\ 0 & 1\end{matrix}\right)\Phi(x)=\psi_F(z\Nm_{K/F}(x)) \Phi(x),$$
%$$r\left(\begin{matrix} 0 & 1\\ -1 & 0\end{matrix}\right)\Phi(x)=\widehat{\Phi}(\iota(x)),$$

%We denote by $\iota:K\to K$ the nontrivial $E$-automorphism of $K$, and 
For $\a\in E$, we set
$|\a|_{E}=q^{-2v_{\ov{E}}(\ov{\a})}$, where $\ov{\a}=\a \mod(NE)\in \ov{E}$, and $q$ is the number elements in the residue field of $\OO_{\ov{E}}$.
Below in the definition of the Fourier transform on $\SS(K)$ we normalize the measure on $K$ so that $\vol(\OO_{K})=1$.

We denote by $K^*_1\sub K^*$ the subgroup of elements $\a$ with $\Nm(\a)=1$, and we set
$$NK^*_1:=(1+NK)\cap K^*_1.$$
In other words, $NK^*_1$ consists of $1+c$, where $c\in NK$ is such that $\Tr(c)=0$.
%e\Tr_{K/F}(x_0\iota(x_1)).$$

%We can consider $V=K\oplus K$ as a symplectic vector space over $\ov{F}$, via the pairing
%$$(x_0+ex_1,x'_0+ex'_1),(y_0+ey_1,y'_0+ey'_1)\mapsto \lan x_0,y'_1\ran+\lan x_1,y'_0\ran-\lan x'_0,y_1\ran-\lan x'_1,y_0\ran,$$
%where . Then $\SL_2(F)$ embeds into the symplectic group of $V$.

\begin{prop}\label{main-local-rep-prop}
There is a unique representation of $\SL_2(E)$ on $\SS(K)$ given by the formulas
$$r\left(\begin{matrix} a & 0\\ 0 & a^{-1}\end{matrix}\right)\Phi(x)=|a|_{E} \Phi(a x),$$
$$r\left(\begin{matrix} 1 & z\\ 0 & 1\end{matrix}\right)\Phi(x)=\psi_E(z\Nm_{K/E}(x)) \Phi(x),$$
$$r\left(\begin{matrix} 0 & 1\\ -1 & 0\end{matrix}\right)\Phi(x)=\widehat{\Phi}(\iota(x))=\int_{y\in K} \psi_E(\lan x,y\ran)\Phi(y)dy.$$
There is a commuting action of $K^*_1$ on $\SS(K)$ given by $(\a\cdot f)(x)=f(\a x)$ for $\a\in K^*_1$.
\end{prop}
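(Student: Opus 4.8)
The plan is to treat uniqueness and existence separately, reducing the whole statement to checking that the prescribed operators respect a presentation of $\SL_2(E)$. Since $\OO_E$, and hence $E$, is local, $\SL_2(E)$ is generated by elementary matrices; writing $u(z)=\left(\begin{matrix}1&z\\0&1\end{matrix}\right)$, $h(a)=\left(\begin{matrix}a&0\\0&a^{-1}\end{matrix}\right)$ and $w=\left(\begin{matrix}0&1\\-1&0\end{matrix}\right)$, the lower unipotent is $w\,u(-z)\,w^{-1}$, so these three families generate $\SL_2(E)$ and the representation, if it exists, is unique. For existence I would first check that all three operators are well-defined invertible endomorphisms of $\SS(K)$. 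This is clear for $h(a)$ and $u(z)$; for $w$ the operator is the Fourier transform attached to the symmetric pairing $\lan\cdot,\cdot\ran$, which is nondegenerate because its reduction modulo $NK$ is the trace form of $\ov{K}/\ov{E}$ (nondegenerate since $\operatorname{char}k\neq 2$) and its Gram determinant is therefore a unit in $E$. With the normalization $\vol(\OO_K)=1$ one obtains the self-duality $\widehat{\de_{\OO_K}}=\de_{\OO_K}$, which will pin down all measure constants.

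Next I would verify the relations. The ``Borel'' relations are routine: $r(u(z_1))r(u(z_2))=r(u(z_1+z_2))$ follows from additivity of $\psi_E(z\Nm(x))$ in $z$, $r(h(a))r(h(b))=r(h(ab))$ is immediate, and $r(h(a))r(u(z))r(h(a))^{-1}=r(u(a^2z))$ uses $\Nm(ax)=a^2\Nm(x)$. The relation $r(w)^2=r(h(-1))$ is Fourier inversion combined with $\iota^2=\id$. Defining the lower unipotent operator by $r(u'(z)):=r(w)\,r(u(-z))\,r(w)^{-1}$, the single essential relation that remains is the Weyl (braid) identity $r(u(z))\,r(u'(-z^{-1}))\,r(u(z))=r(h(z))\,r(w)$ for $z\in E^*$, corresponding to the matrix identity $u(z)u'(-z^{-1})u(z)=h(z)w$.

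I expect the main obstacle to be precisely this last relation. After applying the operators and completing the square it reduces to computing the Fourier transform of the Gaussian $y\mapsto\psi_E(z\Nm(y))$ on $K$, and the real point is to show there is no projective (Weil-index) discrepancy, so that one gets an honest representation rather than a projective one. Here the square-zero structure is decisive: since $NE^2=0$ and $\operatorname{char}k\neq 2$, the relevant quadratic Gauss integrals over $K$ evaluate exactly, and the resulting normalization is fixed by $\widehat{\de_{\OO_K}}=\de_{\OO_K}$. This is the analog over the ring $E$ of the field computation in \cite[Prop.\ 1.3]{JL}, carried out by the same pattern but with the simplification afforded by nilpotence.

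Finally, the commuting $K^*_1$-action requires only a short check on generators. For $\a\in K^*_1$ and $(\a\cdot f)(x)=f(\a x)$, commutation with $r(u(z))$ uses $\Nm(\a)=1$ to kill the change in $\psi_E(z\Nm(\a x))$, commutation with $r(h(a))$ uses that $a\in E$ is central in $K$, and commutation with $r(w)$ uses that multiplication by $\a$ preserves the Haar measure (again $\Nm(\a)=1$, since the module of multiplication by $\a$ is $|\Nm_{K/E}(\a)|_E=1$) together with $\iota(\a)=\a^{-1}$, which converts $\lan x,\a^{-1}y\ran$ into $\lan \a x,y\ran$. This establishes both assertions of the proposition.
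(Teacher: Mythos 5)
Your uniqueness argument and the commuting $K^*_1$-check are fine, but the existence argument has a genuine gap at its foundation: you implicitly assume that $\SL_2(E)$ is presented by the generators $u(z)$, $h(a)$, $w$ subject to the relations you propose to verify (additivity, the diagonal relations, $w^2=h(-1)$, and the braid identity for $z\in E^*$). That is the standard presentation of $\SL_2$ over a \emph{field}, and $E$ is not a field: it is a local ring with a nonzero square-zero ideal $NE$, and every $z\in NE$ is a non-unit, so the braid relation never constrains $u(z)$ for nilpotent $z$. Relations that genuinely hold in $\SL_2(E)$ and involve such elements --- for instance that $u(z)$ and $wu(z')w^{-1}$ commute for $z,z'\in NE$ (both lie in the abelian congruence subgroup $1+\fg(NE)$), or the explicit formula for $u(b)\,wu(z')w^{-1}\,u(b)^{-1}$ with $z'\in NE$ --- are not consequences of the relations on your list; all of your relations already hold in Steinberg-type central extensions of $\SL_2(E)$, so verifying them only produces a homomorphism from an abstractly presented group which could be a nontrivial central extension of $\SL_2(E)$. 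Whether your relations actually present $\SL_2(E)$ is a question about the unstable $K_2$ of the non-reduced ring $E$ (whether the kernel of the surjection from the rank-one Steinberg group onto $\SL_2(E)$ is generated by Steinberg symbols), and you supply no input of this kind. On top of this, the computation you yourself call ``the main obstacle'' --- the Gauss-sum evaluation showing there is no Weil-index discrepancy --- is predicted rather than performed, so the crux of existence is deferred twice.

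The paper's proof avoids both problems by never touching a presentation. Uniqueness is proved exactly as you do, but for existence the three candidate operators are conjugated through the partial Fourier transform $S_1:\SS(K)\to\FF$ of Proposition \ref{part-F-prop}: on $\FF$ the corresponding operators become $r_\FF(g)\phi(v)=\phi(vg)$, which is manifestly a group action (it preserves $\FF$ because the form $B(v,v')$ is $\SL_2(E)$-invariant), so $r(g):=S_1^{-1}r_\FF(g)S_1$ is a representation for free, and one only checks that it agrees with the prescribed formulas on the generating families --- three elementary integral identities, with no Gaussian integrals and no gamma factors anywhere (this is precisely the paper's remark that $\Nm$ vanishes on $NK$). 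If you want to salvage your route, you would need (a) an actual presentation of $\SL_2(E)$, e.g.\ via the splitting $\SL_2(E)\simeq\fg(\ov{E})\rtimes\SL_2(\ov{E})$ available in the equicharacteristic case together with the extra relations for nilpotent unipotents, and (b) the Gauss-sum computation itself; in practice (b) amounts to redoing the partial Fourier transform, so the paper's route is both shorter and more robust (it does not even require a splitting $\ov{E}\hookrightarrow E$).
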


Uniqueness follows from the fact that the corresponding matrices generate the group $\SL_2(E)$.
Existence can be proved directly. Below we will use a partial Fourier transform to give a different proof. 
Note that the fact that the corresponding local norms are always zero on $NK$ leads to the absence of the gamma-factors similar to $\om(\a)$ in the case of the finite field
(see Sec.\ \ref{local-setup-fin-sec}).
%(which corresponds to the absense of such factors in identity \eqref{Fourier-eq} below).

\begin{remark}\label{Nm-rescaling-rem}
Let $L$ be a free $K$-module of rank $1$, equipped with an isomorphism of $E$-modules,
$\Nm(L)\rTo{\sim} E$ (here we use the push-forward of torsors with respect to the norm map $\Nm:K^*\to E^*$). Then we get a norm map $\Nm:L\to E$, and the induced $E$-valued
bilinear pairing $\lan\cdot,\cdot\ran$ on $L$ given by \eqref{Nm-pairing-eq}.
Now the formulas of Proposition \ref{main-local-rep-prop} define a representation of $\SL_2(E)$ on $\SS(L)$.
Indeed, if we choose a trivialization of $L$ as $K$-module, we will get an action of $\SL_2(E)$, given
by the same formulas but with $\Nm$ and $\lan\cdot,\cdot\ran$ rescaled by an element $a\in E^*$, which is equivalent
to replacing $\psi_E$ with $x\mapsto \psi_E(a\cdot x)$. 
\end{remark}

\subsubsection{Partial Fourier transform and coinvariants}

%In the equicharacteristic case when $F=\ov{F}[e]$, $K=\ov{K}[e]$, 
We will obtain a simpler description of the representation of Proposition \ref{main-local-rep-prop}, by performing a partial Fourier transform.

%We will choose additive characters of $E$ and $K$ in a special way.
%We fix identifications $\OO_F=A\ot_k\OO_{\ov{F}}$, $\OO_K=A\ot_k\OO_{\ov{K}}$, and set
%$$\psi_F(a_0+ea_1):=\psi_{\ov{F}}(a_1),$$
%$$\psi_K(x_0+ex_1):=\psi_F(\Tr_{K/F}(x_0+ex_1))=\psi_{\ov{F}}(Tr_{\ov{K}/\ov{F}}(x_1)),$$
%where $\psi_{\ov{F}}$ is an additive character of $\ov{F}$, trivial on $\OO_{\ov{F}}$ but nontrivial on $t^{-1}\OO_{\ov{F}}$.

Let us consider the subspace
$$\wt{\FF}\sub\C_{lc}(K^2)$$
consisting of functions such that
\begin{equation}\label{F'-def-eq}
f(x_1+n_1,x_2+n_2)=\psi_E(-\lan n_1,x_2\ran)f(x_1,x_2),
\end{equation}
such that the support of $f$ in $\ov{K}^2$ is compact.

We have the partial Fourier transform
$$\wt{S}_1:\SS(K)\rTo{\sim} \wt{\FF},$$
%Now, for $f\in \SS(K)$, let us set
$$\wt{S}_1f(y_1,y_2)=\int_{n\in NK}\psi_E(\lan n,y_2\ran)f(y_1+n)dn.$$
%$$\wt{S}_1f(x,y)=\int_{n\in NK}\psi(\lan n,y\ran)f(x+n)dn.$$
%$$S_1f(x_0,y_0)=\int_{x_1\in K}\psi(\lan x_1,y_0\ran)f(x_0+ex_1)dx_1.$$

It is convenient to use an additional twist (to make the action of $\SL_2(E)$ simpler). Namely, let
us denote by 
$$\FF\sub\C_{lc}(K^2)$$
the space of functions such that
\begin{equation}\label{F-def-eq}
f(x_1+n_1,x_2+n_2)=\psi_E(\frac{1}{2}\lan x_1,n_2\ran-\frac{1}{2}\lan n_1,x_2\ran)f(x_1,x_2),
\end{equation}
and such that the support of $f$ in $\ov{K}^2$ is compact.

We have an isomorphism 
$$\wt{\FF}\rTo{\sim} \FF:f\mapsto \psi_E(\frac{1}{2}\lan x_1,x_2\ran)\cdot f,$$
and we denote by $S_1:\SS(K)\rTo{\sim} \FF$ the composition of $\wt{S}_1$ with this isomorphism.

%Note that we can identify $\FF$ with the space $\SS(\ov{K}^2,\LL)$, where $\LL$ is a $\C^*$-torsor over $\ov{K}^2$,
%obtained by the push-out of the natural $NK^2$-torsor $K^2\to \ov{K}^2$, with respect to the homomorphism of sheaves of groups over $\ov{K}^2$,
%$$(n_1,n_2)\mapsto \psi(\pm \lan n_1,x_2\ran).$$

\begin{prop}\label{part-F-prop}
(i) We have a representation $r_\FF$ of $\SL_2(E)$ on the space $\FF$ given by
\begin{equation}\label{repr-after-part-FT}
r_\FF(g)(\phi)(v)=\phi(vg),
%\wt{r}(g)(\phi)(v)=\psi(Q_g(v))\cdot\phi(vg),
\end{equation}
where we think of $v\in K^2$ as row vectors.
%, and for 
%$g=\left(\begin{matrix} a & b \\ c & d \end{matrix}\right)\in \SL_2(E)$ we set
%$$Q_g(x_1,x_2)=ab\Nm(x_1)+cd\Nm(x_2)+bc\lan x_1,x_2\ran.$$
Furthermore, for $g\in \SL_2(E)$ and $f\in\SS(K)$, one has
\begin{equation}\label{S1-r-eq}
S_1r(g)f=r_\FF(g)S_1f.
\end{equation}
Thus, $S_1$ gives an isomorphism between the representations $r$ and $r_\FF$.

\noindent
(ii) For an element $\a\in K^*_1$, one has 
$$S_1(\a f)(x_1,x_2)=S_1f(\a x_1,\a x_2).$$

\noindent
(iii) For $v=(x_1,x_2)\in K^2$ and $A=\left(\begin{matrix} a & b \\ c & -a\end{matrix}\right)\in \fg(E)$, one has
$$\det(\iota(v),vA)=\Nm(x_1)b-\Nm(x_2)c-\lan x_1,x_2\ran a,$$
which lies in $E$.
 
\noindent
(iv) Let $\phi\in \FF$. For $A\in \fg(NE)$ one has
\begin{equation}\label{A-g-NE-action-eq}
r_\FF(1+A)(\phi)(v)=\psi_E(\det(\iota(v),vA))\cdot \phi(v).
\end{equation}
For $c\in NK$ with $\Tr(c)=0$, one has
$$\phi((1+c)v)=\psi_E(c\cdot \det(\iota(v),v))\cdot \phi(v).$$
\end{prop}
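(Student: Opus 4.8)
This is Proposition \ref{part-F-prop}, and the final statement is part (iv). Since parts (i)--(iii) establish the framework, I would prove (iv) by direct computation, using the partial Fourier transform isomorphism $S_1$ and the explicit formulas for $r$ from Proposition \ref{main-local-rep-prop}. The strategy is to reduce everything to the generators of $\SL_2(E)$ and to the defining quasi-invariance property \eqref{F-def-eq} of functions in $\FF$.

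Let me think about what part (iv) is actually claiming.

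The plan is to derive both formulas in part (iv) directly from the defining quasi-invariance relation \eqref{F-def-eq} of the space $\FF$, observing that in each case the operation applied to $\phi$ replaces the argument $v=(x_1,x_2)$ by $v+w$ for some $w\in (NK)^2$, which is precisely the situation governed by \eqref{F-def-eq}. For the first formula, by part (i) (formula \eqref{repr-after-part-FT}) we have $r_\FF(1+A)(\phi)(v)=\phi(v(1+A))=\phi(v+vA)$. Writing $A=\left(\begin{smallmatrix} a & b\\ c & -a\end{smallmatrix}\right)$ with $a,b,c\in NE$, the shift is $vA=(x_1a+x_2c,\ x_1b-x_2a)$, whose components lie in $NK$; so I would apply \eqref{F-def-eq} with $n_1=x_1a+x_2c$ and $n_2=x_1b-x_2a$.

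The phase then computes by pure bookkeeping. Using $E$-bilinearity of $\lan\cdot,\cdot\ran$ (the scalars $a,b,c\in E$ pull out since $\iota$ is $E$-linear) together with $\lan x,x\ran=\Tr(x\iota(x))=2\Nm(x)$, the exponent $\tfrac12\lan x_1,n_2\ran-\tfrac12\lan n_1,x_2\ran$ collapses to $\Nm(x_1)b-\Nm(x_2)c-\lan x_1,x_2\ran a$. By part (iii) this is exactly $\det(\iota(v),vA)$, giving the first formula. For the second formula, $(1+c)v=(x_1+cx_1,\ x_2+cx_2)=v+cv$ with $cv\in (NK)^2$, so I again apply \eqref{F-def-eq}, now with $n_1=cx_1$, $n_2=cx_2$. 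The decisive input is that $\Tr(c)=0$ forces $\iota(c)=-c$; hence $\lan x_1,cx_2\ran=-\Tr(cx_1\iota(x_2))$ while $\lan cx_1,x_2\ran=\Tr(cx_1\iota(x_2))$, so the exponent equals $-\Tr(cx_1\iota(x_2))$. The antiinvariance identity $\Tr(cx_1\iota(x_2))=cx_1\iota(x_2)-c\iota(x_1)x_2$ (again from $\iota(c)=-c$) rewrites this as $c(\iota(x_1)x_2-\iota(x_2)x_1)=c\cdot\det(\iota(v),v)$, which is the claimed phase.

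The one point genuinely requiring care—the main obstacle, such as it is—is that $\psi_E$ is defined on $E$, not on $K$, so I must verify that $c\cdot\det(\iota(v),v)$ actually lands in $NE$ before the formula even parses. This follows from noting that $w:=\det(\iota(v),v)=\iota(x_1)x_2-\iota(x_2)x_1$ is $\iota$-antiinvariant (a direct check gives $\iota(w)=-w$), and $c$ is $\iota$-antiinvariant by hypothesis; their product $cw\in NK$ is therefore $\iota$-\emph{invariant}, hence lies in $(NK)^{\iota}=NE$. The same antiinvariance is what makes the trace manipulation legitimate: since $\Tr$ is only $E$-linear and not $K$-linear, the scalar $c\in NK$ cannot be factored out of $\Tr(cx_1\iota(x_2))$ naively, and the identity $\Tr(cx_1\iota(x_2))=cx_1\iota(x_2)-c\iota(x_1)x_2$ is precisely the device that circumvents this. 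Beyond these observations the argument is entirely formal, resting on parts (i) and (iii) and the relation \eqref{F-def-eq} already established for $\FF$.
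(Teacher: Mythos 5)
Your treatment of part (iv) is correct and is essentially the paper's own argument: both reduce $r_\FF(1+A)(\phi)(v)=\phi(v+vA)$ and $\phi((1+c)v)=\phi(v+cv)$ to the defining relation \eqref{F-def-eq}, and then identify the resulting phase with $\det(\iota(v),vA)$, resp.\ $c\det(\iota(v),v)$. The paper organizes this via the single identity $2B(v,v')=\det(\iota(v),v')+\det(v,\iota(v'))=\Tr(\det(\iota(v),v'))$, where $B(v,v')=\frac{1}{2}(\lan x_1,x'_2\ran-\lan x'_1,x_2\ran)$, combined with the observation that $\det(\iota(v),vA)\in E$ and $c\det(\iota(v),v)\in NE$, so the trace doubles them; you instead expand $B(v,vA)$ directly by $E$-bilinearity and match with (iii), and handle $B(v,cv)$ by the antiinvariance of $c$. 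Same substance, slightly different bookkeeping, and your check that $c\det(\iota(v),v)$ lands in $NE$ is exactly the point the paper uses implicitly.

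The gap is that the statement is the whole proposition, and you have proved only part (iv), declaring (i)--(iii) to be ``the framework.'' Parts (ii) and (iii) are indeed one-line computations (the paper dismisses them as straightforward), but part (i) is the substantive half of the proposition and cannot be assumed: one must (a) check that right translation $\phi\mapsto\phi(\cdot\,g)$ preserves the space $\FF$, which amounts to the $\SL_2(E)$-invariance of the form $B(v,v')$ above, and (b) verify the intertwining relation \eqref{S1-r-eq}, which the paper reduces to three explicit identities for $\wt{S}_1$ applied to the unipotent, diagonal, and Weyl generators. Moreover, this cannot be sidestepped by citing Proposition \ref{main-local-rep-prop}: the paper's proof of \emph{existence} of the representation $r$ there is precisely via Proposition \ref{part-F-prop}(i), so assuming $r$ is a representation while proving (i) would be circular. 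Your argument for (iv) itself uses only \eqref{repr-after-part-FT}, \eqref{F-def-eq} and (iii), so it is not circular --- it is simply incomplete as a proof of the full statement.
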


\begin{proof}
(i) For $v=(x_1,x_2), v'=(x'_1,x'_2)\in K^2$, set
$$B(v,v')=\frac{1}{2}(\lan x_1,x'_2\ran-\lan x'_1,x_2\ran).$$
The fact that \eqref{repr-after-part-FT} defines a representation follows easily from
$\SL_2(E)$-invariance of $B(v,v')$.

It is enough to check \eqref{S1-r-eq} for $g$ upper-triangular, diagonal and for 
$g=\left(\begin{matrix} 0 & 1\\ -1 & 0\end{matrix}\right)$.
This boils down to checking the following formulas:
$$\wt{S}_1(\psi_E(z\Nm(x))\Phi(x))(y_1,y_2)=\psi_E(z\Nm(y_1))\wt{S}_1(\Phi)(y_1,zy_1+z_2),$$
$$\wt{S}_1(|a|\Phi(a x))(y_1,y_2)=\wt{S}_1(\Phi)(ay_1,a^{-1}y_2),$$
$$\wt{S}_1(\hat{\Phi}(\iota(x)))(y_1,y_2)=\psi_E(-\lan y_1,y_2\ran)\wt{S}_1(-y_2,y_1).$$

\noindent
(ii), (iii) This is straightforward.

\noindent
(iv) It is easy to check that
$$2B(v,v')=\det(\iota(v),v')+\det(v,\iota(v'))=\Tr(\det(\iota(v),v')).$$
Since $\det(\iota(v),vA)\in E$ (as follows from (iii)), it follows that
for $A\in \fg(NE)$, one has
\begin{equation}\label{B-det-identity}
B(v,vA)=\frac{1}{2}\Tr(\det(\iota(v),vA))=\det(\iota(v),vA),
\end{equation}
which implies the claimed formula for the action of $1+A$.

SImilarly, for $c\in NK$ with $\Tr(c)=0$, one has
$$B(v,cv)=\frac{1}{2}\Tr(\det(\iota(v),cv))=\frac{1}{2}\Tr(c\det(\iota(v),v))=c\det(\iota(v),v),$$
which implies the formula for the action of $1+c$.
\end{proof}

Now, let us fix $c_0\in \ov{K}$ with $\Tr(c_0)=0$, and
consider the $\chi_{c_0}$-twisted coinvariants of our representation of $K^*_1\times \SL_2(E)$ with respect to the action of 
the subgroup $NK^*_1\sub K^*_1$, where for $1+c\in NK^*_1$ (with $\Tr(c)=0$), one has 
$$\chi_{c_0}(1+c)=\psi_E(cc_0).$$ 
In other words,
$$\FF_{c_0}:=\FF/(\phi((1+c)v)-\psi_E(cc_0)\phi(v)).$$

Let us consider the $\SL_2(\ov{E})$-invariant affine quadric $\ov{Q}(c_0)\sub \ov{K}^2$ given by 
\begin{equation}\label{Q-c0-eq}
\det(\iota(\ov{v}),\ov{v})=c_0.
\end{equation}

\begin{cor} There is a natural identification of $\FF_{c_0}$ with
space of locally constant functions on the set of $v\in K^2$ such that $\ov{v}\in \ov{Q}(c_0)$, satisfying \eqref{F-def-eq} and with compact support in $\ov{Q}(c_0)$,
where the action of $\SL_2(E)\times K^*_1$ is given by \eqref{repr-after-part-FT} and by $\a\phi(v)=\phi(\a v)$ for $\a\in K^*_1$.
\end{cor}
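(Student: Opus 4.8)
The plan is to exhibit $\FF_{c_0}$ as the space of functions in $\FF$ whose support in the reduced variable $\ov{v}$ lies in $\ov{Q}(c_0)$, by showing that the defining relations of the twisted coinvariants kill precisely the part supported away from $\ov{Q}(c_0)$. The whole argument rests on Proposition \ref{part-F-prop}(iv), which says that the action of $NK^*_1$ on $\FF$ is \emph{fiberwise multiplication by a character}: for $1+c\in NK^*_1$ one has $\phi((1+c)v)=\psi_E(c\det(\iota(v),v))\cdot\phi(v)$. Thus at each point $v$ the group $NK^*_1$ acts on the value $\phi(v)$ through the character $\chi_v\colon c\mapsto \psi_E(c\det(\iota(v),v))$, and forming $\chi_{c_0}$-twisted coinvariants should retain exactly the locus where $\chi_v=\chi_{c_0}$.

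First I would pin down this locus. Since $c\in NK$ and $NK$ is square-zero, the product $c\det(\iota(v),v)$ only sees the reduction $\det(\iota(\ov{v}),\ov{v})\in\ov{K}$; both this element and $c$ are trace-zero. The resulting pairing, from the trace-zero part of $\ov{K}$ against itself into $\ov{E}$ via $(\ga,\beta)\mapsto \ga\beta$, is (up to sign) the norm form on the trace-zero line, which is nondegenerate since $\operatorname{char} k\neq 2$. Combined with the nontriviality of $\psi_E|_{NE}$, this yields $\chi_v=\chi_{c_0}$ if and only if $\det(\iota(\ov{v}),\ov{v})=c_0$, i.e. $\ov{v}\in\ov{Q}(c_0)$ in the notation of \eqref{Q-c0-eq}.

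Next I would set up the restriction map $\rho\colon\FF\to \{\text{functions on }\{v:\ov{v}\in \ov{Q}(c_0)\}\}$ and prove it induces the claimed identification. That the defining relations $\phi((1+c)v)-\psi_E(cc_0)\phi(v)$ lie in $\ker\rho$ is immediate from (iv): on $\ov{Q}(c_0)$ the action of $1+c$ is exactly scaling by $\psi_E(cc_0)$. Surjectivity of $\rho$ follows by extending a locally constant, compactly supported section off the closed locus $\ov{Q}(c_0)$, using total disconnectedness together with the transformation law \eqref{F-def-eq}. The heart of the matter, and the step I expect to be the main obstacle, is the reverse inclusion $\ker\rho\subseteq \operatorname{span}\{\phi((1+c)v)-\psi_E(cc_0)\phi(v)\}$. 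Here one uses that if $\rho(\phi)=0$ then the image of $\supp(\phi)$ in $\ov{K}^2$ is compact and disjoint from the closed set $\ov{Q}(c_0)$, so $\det(\iota(\ov{v}),\ov{v})-c_0$ is bounded away from $0$ on it; hence the characters $\chi_v\chi_{c_0}^{-1}$ are nontrivial on a single compact open subgroup $H_0\subset NK^*_1$, uniformly over the support. Partitioning the locally constant $\phi$ into finitely many pieces and averaging against $H_0$, using orthogonality $\sum_{h\in H_0/H_1}\chi_v\chi_{c_0}^{-1}(h)=0$, expresses each piece as a combination of the relations. This is the only place where the profinite and measure-theoretic bookkeeping is genuinely needed.

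Finally I would check that the residual action is as stated. The subgroup $NK^*_1$ is central in the abelian group $K^*_1$ and commutes with $\SL_2(E)$, so $\chi_{c_0}$ is automatically preserved under conjugation and the $\SL_2(E)\times K^*_1$-action descends to $\FF_{c_0}$. That this action preserves the locus $\ov{Q}(c_0)$ is a one-line computation: $\det(\iota(vg),vg)=\det(\iota(v),v)\det(g)=\det(\iota(v),v)$ for $g\in\SL_2(E)$, and $\det(\iota(\a v),\a v)=\Nm(\a)\det(\iota(v),v)=\det(\iota(v),v)$ for $\a\in K^*_1$. Under $\rho$ the formulas \eqref{repr-after-part-FT} and $\a\phi(v)=\phi(\a v)$ then restrict verbatim, giving the description in the statement.
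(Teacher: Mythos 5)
Your proof is correct and follows exactly the route the paper intends: the corollary is an immediate consequence of Proposition \ref{part-F-prop}(iv), which shows that $NK^*_1$ acts on $\FF$ by pointwise multiplication through the characters $c\mapsto\psi_E(c\det(\iota(\ov{v}),\ov{v}))$, so the $\chi_{c_0}$-twisted coinvariants are exactly restriction to the locus where this character equals $\chi_{c_0}$, i.e.\ to $\ov{Q}(c_0)$ by nondegeneracy of the pairing on trace-zero elements. The paper leaves these details (the nondegeneracy argument and the averaging argument identifying the kernel of restriction with the span of the relations) implicit, and you have supplied them correctly.
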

%where the action is still given by the formulas
%\eqref{repr-after-part-FT}.

%Next, we consider the $\chi_{\xi}$-coinvariants of $\FF_{c_0}$ with respect to the action of $\fg(NE)$ and the character
%$\chi_{\xi}(A)=\psi(\tr(\xi\cdot A))$, where $\xi\in \fg^\vee(\ov{E})$.
%It is clear from the formulas \eqref{repr-after-part-FT} that these coinvariants are trivial unless
%$$\tr(\xi\cdot A)=\det(i(\ov{v}),\ov{v}A)$$
%for some point $\ov{v}\in \ov{Q}(c_0)$.

\subsubsection{$\FF_{c_0}$ as an induced representation}

Let us define a quadratic map
$$\eta:\ov{Q}(c_0)\to \fg(\ov{E})$$
by the relation
$$\tr(\eta(\ov{v})\cdot A)=\det(\iota(\ov{v}),\ov{v}A)$$
for any $A\in \fg(\ov{E})$. It follows from Proposition \ref{part-F-prop}(iii) that
\begin{equation}\label{eta-formula-eq}
\eta(x_1,x_2)=\left(\begin{matrix} -\lan x_1,x_2\ran/2 & -\Nm(x_2)\\ \Nm(x_1) & \lan x_1,x_2\ran/2\end{matrix}\right).
\end{equation}
%Then $\chi_{\xi}$-coinvariants of $\FF_{c_0}$ vanish unless $\xi$ is in $\eta(\ov{Q}(c_0))$.
Note that 
$$\eta(\ov{v}g)=g^{-1}\eta(\ov{v})g$$
for $g\in \SL_2(\ov{E})$.

Similarly, for $c\in K$ such that $\Tr(c)=0$, let us define the affine quadric  $Q(c)\sub K^2$ by the equation 
$$\det(\iota(v),v)=c,$$
and consider the quadratic map $\eta:Q(c)\to \fg(E)$ given by \eqref{eta-formula-eq} and satisfying
\begin{equation}\label{eta-main-identity}
\tr(\eta(v)\cdot A)=\det(\iota(v),vA)
\end{equation}
for $A\in \fg(E)$.

\begin{cor}\label{eta-action-cor}
For $\phi\in \FF$ and $A\in\fg(NE)$ one has
$$r_\FF((1+A))\phi(v)=\psi_E(\tr(\eta(v)A))\phi(v).$$
\end{cor}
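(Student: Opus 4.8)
The plan is to obtain this corollary as an immediate consequence of Proposition \ref{part-F-prop}(iv) combined with the defining relation \eqref{eta-main-identity} of the map $\eta$. Proposition \ref{part-F-prop}(iv) already gives the action of $1+A$ for $A\in\fg(NE)$ explicitly as multiplication by the scalar $\psi_E(\det(\iota(v),vA))$, so all that remains is to rewrite the exponent $\det(\iota(v),vA)$ in terms of $\eta$. But $\eta$ was introduced precisely so that $\tr(\eta(v)\cdot A)=\det(\iota(v),vA)$ for every $A\in\fg(E)$; substituting this identity into the formula from (iv) yields the asserted equation $r_\FF(1+A)\phi(v)=\psi_E(\tr(\eta(v)A))\phi(v)$ directly. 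In other words, this corollary is not a new computation but a restatement of (iv) through the change of notation encoded by $\eta$, which is what makes the later Higgs-field interpretation transparent.

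The single point to confirm is that the defining identity \eqref{eta-main-identity} genuinely holds for the $v$ and $A$ at hand, and this is a one-line algebraic check rather than a real obstacle. Writing $v=(x_1,x_2)$ and $A=\left(\begin{matrix} a & b \\ c & -a\end{matrix}\right)\in\fg(E)$, Proposition \ref{part-F-prop}(iii) supplies $\det(\iota(v),vA)=\Nm(x_1)b-\Nm(x_2)c-\lan x_1,x_2\ran a$, while multiplying out the explicit matrix \eqref{eta-formula-eq} against $A$ and taking the trace reproduces exactly $\Nm(x_1)b-\Nm(x_2)c-\lan x_1,x_2\ran a$. Since $\eta(v)$ is defined by the same formula \eqref{eta-formula-eq} for all $v\in K^2$, no restriction of $v$ to a quadric is needed, and the identity is valid verbatim for general $v$ and $A$. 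Thus the proof consists only of invoking Proposition \ref{part-F-prop}(iv) and then applying \eqref{eta-main-identity}, with no calculation beyond the trace identity already recorded in (iii).
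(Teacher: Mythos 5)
Your proof is correct and is essentially the paper's own argument: the paper proves this corollary in one line by combining \eqref{A-g-NE-action-eq} (Proposition \ref{part-F-prop}(iv)) with \eqref{eta-main-identity}, exactly as you do. Your additional remark that \eqref{eta-formula-eq} defines $\eta(v)$ for all $v\in K^2$ (so no restriction to a quadric is needed) and that the trace identity is just the computation already recorded in part (iii) is a worthwhile clarification of a point the paper leaves implicit, but it does not change the route.
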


\begin{proof} Combine \eqref{A-g-NE-action-eq} with \eqref{eta-main-identity}.
\end{proof}

\begin{lemma}\label{v-c-lem}
Suppose $c\in K^*$ is such that $\Tr(c)=0$
(resp., $c_0\in \ov{K}^*$ is such that $\Tr(c_0)=0$). 

\noindent
(i) Then $v_c:=(-c/2,1)\in Q(c)$ (resp., $v_{c_0}:=(-c_0/2,1)\in Q(c_0)$), and
the right action of $\SL_2(E)$ on $Q(c)$ (resp., of $\SL_2(\ov{E})$ on $Q(c_0)$) is simply transitive.
In particular, $\eta(Q(c))$ is a single $\SL_2(E)$-orbit (resp., $\eta(Q(c_0))$ is a single $\SL_2(\ov{E})$-orbit),
namely, the orbit of
$$\eta_{c}:=\eta(v_c)=\left(\begin{matrix} 0 & -1\\ -c^2/4 & 0\end{matrix}\right)$$
(resp., of $\eta_{c_0}=\eta(v_{c_0})$).

\noindent
(ii) The fibers of $\eta$ in $Q(c)$ (resp., $Q(c_0)$) are exactly the orbits of the free $K^*_1$-action on $Q(c)$ (resp., free $\ov{K}^*_1$-action on $Q(c_0)$) given by $a(x,y)=(ax,ay)$
(where $\Nm(a)=1$). Hence, we get an identification of $K^*_1$ (resp., $\ov{K}^*_1$) with the stabilizer subgroup of $\eta_{c}$ in $\SL_2(E)$
(resp., $\eta_{c_0}$ in $\SL_2(\ov{E})$), 
$$a\mapsto i_c(a)=\left(\begin{matrix} \frac{a+\iota(a)}{2} & \frac{\iota(a)-a}{c} \\ \frac{(\iota(a)-a)c}{4} & \frac{a+\iota(a)}{2} \end{matrix}\right),$$
so that $av_c=v_c\cdot i_c(a)$ (resp., $av_{c_0}=v_{c_0}\cdot i(a)$).

\noindent
(iii) The map $\phi\mapsto (g\mapsto r(g)(\phi)(v_c)=\phi(v_c g))$ gives an isomorphism of $\SL_2(E)$-representations from $\FF_{c_0}$ to the representation, compactly induced  
from the character $A\mapsto \psi_E(\tr(\eta_{c_0}A))$ of $1+\fg(NE)$.
%The space of $(\chi_{c_0},\chi_{\eta_{c_0}})$-coinvariants of $((1+NK)\cap K^*_1)\times (1+\fg(NE))$ on $\FF$ can be identified with
\end{lemma}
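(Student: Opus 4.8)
The plan is to realize the target as the space $W$ of locally constant functions $F$ on $\SL_2(E)$, compactly supported modulo $1+\fg(NE)$, satisfying $F((1+A)g)=\psi_E(\tr(\eta_{c_0}A))F(g)$ for all $A\in\fg(NE)$, and to realize $\FF_{c_0}$, via the corollary preceding this lemma, as the space of locally constant functions $\phi$ on $\{v\in K^2:\ov{v}\in\ov{Q}(c_0)\}$ satisfying \eqref{F-def-eq} with compact support in $\ov{Q}(c_0)$. I fix a lift $c\in K^*$ of $c_0$ with $\Tr(c)=0$, so that $v_c=(-c/2,1)$ lies in $Q(c)$ and reduces to $v_{c_0}$, and write $\Lambda(\phi)(g)=\phi(v_cg)$ for the proposed map. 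The argument will use repeatedly the bilinear form $B(v,v')=\tfrac12(\lan x_1,x'_2\ran-\lan x'_1,x_2\ran)$ appearing in the proof of Proposition \ref{part-F-prop}, which is $\SL_2(E)$-invariant and vanishes on $(NK)^2\times(NK)^2$, together with the identity \eqref{B-det-identity} and the defining relation \eqref{eta-main-identity} for $\eta$.

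First I would check that $\Lambda$ is a well-defined $\SL_2(E)$-equivariant map into $W$. Equivariance is immediate, since $\Lambda(r_\FF(h)\phi)(g)=\phi(v_cgh)=\Lambda(\phi)(gh)$ and the action on $W$ is right translation. For the covariance, observe that for $A\in\fg(NE)$ the vector $v_cA$ has entries in $K\cdot NE=NK$, so $v_c(1+A)g=v_cg+m$ with $m=(v_cA)g\in(NK)^2$. Applying \eqref{F-def-eq} yields $\Lambda(\phi)((1+A)g)=\psi_E(B(v_cg,m))\Lambda(\phi)(g)$, and by invariance of $B$ followed by \eqref{B-det-identity} and \eqref{eta-main-identity},
$$B(v_cg,m)=B(v_c,v_cA)=\det(\iota(v_c),v_cA)=\tr(\eta(v_c)A)=\tr(\eta_{c_0}A),$$
the last equality because $A\in\fg(NE)$ makes only the reduction $\eta_c\equiv\eta_{c_0}$ relevant. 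This is exactly the covariance defining $W$.

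Next I would establish bijectivity using the simple transitivity of part (i). By (i) the orbit map $g\mapsto v_cg$ is a bijection from $\SL_2(E)$ onto $Q(c)$, and since $\SL_2(E)\twoheadrightarrow\SL_2(\ov{E})$ acts simply transitively on $\ov{Q}(c_0)$, also by (i), the reduction $Q(c)\to\ov{Q}(c_0)$ is surjective. The cocycle \eqref{F-def-eq} shows that any $\phi\in\FF_{c_0}$ is determined on each reduction-fiber $v_0+(NK)^2$ by its single value at $v_0$, the consistency being automatic because $B$ vanishes on $(NK)^2\times(NK)^2$. Since $Q(c)$ meets every such fiber, $\phi$ is determined by $\phi|_{Q(c)}$, so $\Lambda(\phi)=0$ forces $\phi=0$, giving injectivity. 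For surjectivity, given $F\in W$ I would set $\phi(v_cg):=F(g)$ on $Q(c)$, which is unambiguous by simple transitivity, and extend by $\phi(v_0+n):=\psi_E(B(v_0,n))\phi(v_0)$ for $v_0\in Q(c)$ and $n\in(NK)^2$, then verify that the result lies in $\FF_{c_0}$ and satisfies $\Lambda(\phi)=F$.

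The main obstacle is the well-definedness of this extension. If $v_0,v_0'\in Q(c)$ have the same reduction, then $v_0'=v_c(1+A)g_0=v_0+m$ with $m=(v_cA)g_0$ and $F(g_0')=\psi_E(\tr(\eta_{c_0}A))F(g_0)$, so the two prescriptions for $\phi$ agree precisely when $B(v_0,m)=\tr(\eta_{c_0}A)$. This is the same computation as in the covariance check, namely $B(v_0,m)=B(v_c,v_cA)=\det(\iota(v_c),v_cA)=\tr(\eta_{c_0}A)$, so the defining character of the induced representation is matched exactly by the cocycle \eqref{F-def-eq}; this coincidence is the heart of the argument. The remaining points, that the extended $\phi$ satisfies \eqref{F-def-eq} globally, is locally constant, and has compact support in $\ov{Q}(c_0)$, are routine and follow from additivity of $B$ in the second variable and its vanishing on $(NK)^2\times(NK)^2$.
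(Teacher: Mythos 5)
Your proposal proves only part (iii) of the lemma; parts (i) and (ii) are never established, and this is a genuine gap. You invoke ``the simple transitivity of part (i)'' as a known fact --- both to get the bijection $g\mapsto v_cg$ from $\SL_2(E)$ onto $Q(c)$ and to get surjectivity of the reduction $Q(c)\to\ov{Q}(c_0)$ --- but (i) is part of the statement being proved, and it is the geometric input on which your entire argument for (iii) rests, so assuming it is circular. It does require an argument: the paper encodes $v=(x,y)\in Q(c)$ as the matrix $X=\left(\begin{matrix}\iota(x) & \iota(y)\\ x & y\end{matrix}\right)$, so that $Q(c)$ becomes the set of matrices with $\iota(X)=sX$ and $\det(X)=c$ (where $s$ is the nontrivial permutation matrix); since $c\in K^*$, such $X$ is invertible, hence $X=X_0g$ for a unique $g\in\SL_2(K)$, and the antiinvariance $\iota(X)=sX$ forces $\iota(g)=g$, i.e.\ $g\in\SL_2(E)$. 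Part (ii) --- that the fibers of $\eta$ are exactly the free $K^*_1$-orbits, together with the explicit isomorphism $a\mapsto i_c(a)$ onto the stabilizer of $\eta_c$ in $\SL_2(E)$ --- is not addressed anywhere in your proposal; the paper reduces it, via the commutation of the $K^*_1$-action with the right $\SL_2(E)$-action, to describing the single fiber over $\eta_c$ (the solutions of $x\iota(y)=-c/2$, $\Nm(y)=1$), and (ii) is used later in the paper (e.g.\ for \eqref{ics-isom} and in Theorem \ref{two-reps-two-funct-thm}), so it cannot be dropped.

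For what it is worth, your argument for (iii) itself is correct and is essentially the paper's argument in expanded form: the coincidence you single out as the heart of the matter, $B(v_cg,(v_cA)g)=B(v_c,v_cA)=\det(\iota(v_c),v_cA)=\tr(\eta_{c_0}A)$, is exactly the content of Corollary \ref{eta-action-cor} (that is, \eqref{B-det-identity} combined with \eqref{eta-main-identity}), which the paper invokes at the same point. Your explicit construction of the inverse map, with the well-definedness check on overlapping prescriptions, replaces the paper's terser identification of the induced representation with the space of functions on $Q(c)$ satisfying $f(v(1+A))=\psi_E(\tr(\eta_{c_0}A))f(v)$, using that $v$ and $v+n$ both lie in $Q(c)$ exactly when $n=vA$ for some $A\in\fg(NE)$; these are the same computation packaged differently, and both rest on part (i).
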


\begin{proof} 
(i) We will give a proof for $Q(c)$, where $c\in K^*$ with $\Tr(c)=0$. The case of $Q(c_0)$ is completely analogous.
Consider the $2\times 2$ matrix with coefficients in $K$,
$$X=\left(\begin{matrix} \iota(x) & \iota(y) \\ x & y\end{matrix}\right).$$
Then the condition $(x,y)\in Q(c)$ is equivalent to $\det(X)=c$. Thus,
$Q(c)$ is in bijection with the set of $X$ such that $\iota(X)=sX$ and $\det(X)=c$,
where $s$ is the nontrivial $2\times 2$ permutation matrix.
Furthermore, the right action of $\SL_2(E)$ on $Q(c)$ corresponds to the action on the set of such $X$ by
right multiplication.

Let $X_0$ denote the matrix corresponding to $(x,y)=(-c/2,1)$. Then
there is a unique element $g\in \SL_2(K)$ such that $X=X_0g$.
Furthermore,
$$\iota(X)=\iota(X_0)\iota(g)=sX_0\iota(g)$$
is equal to $sX=sX_0g$, hence $\iota(g)=g$, i.e., $g\in \SL_2(E)$.

\noindent
(ii) Since the action of $K^*_1$ commutes with the right action of $\SL_2(E)$,
it is enough to prove this for one fiber, say, the fiber over $\eta_{c}$. This fiber
consists of solutions of the equations
$$x\iota(y)=-c/2, \ \ y\iota(y)=1,$$
which are all of the form
$(x,y)=(-yc/2,y)$, with $y\iota(y)=1$.

\noindent
(iii)  For $A\in \fg(NE)$, we have by Corollary \ref{eta-action-cor},
$$r((1+A)g)(\phi)(v_c)=\psi_E(\tr(\eta_{c_0}A))\cdot r(g)(\phi)(v_c).$$
Thus, we can identify the representation of $\SL_2(E)$ compactly induced from the character $\psi_E(\tr(\eta_{c_0}A))$ of
$1+\fg(NE)$, with the space of functions $f$ on $Q(c)$ satisfying
$$f(v(1+A))=\psi_E(\tr(\eta_{c_0}A))\cdot f(v)$$ 
and with compact support modulo $NE$ (i.e., in $\ov{Q}(c_0)$).
Indeed, it is enough to check that this is equivalent to the condition
$f(v+n)=\psi_E(B(v,n))f(v)$ whenever both $v$ and $v+n$ belong to $Q(c)$.
Lemma \ref{v-c-lem}(i) easily implies that $v$ and $v+n$ both belong to $Q(c)$ if and only if $n=Av$ for some $A\in \fg(NE)$.
Now the equivalence follows from the identity \eqref{B-det-identity}.
%Also, the function $g\mapsto \phi(v_cg)$ is supported on $g$ such that $v_cg$ is in the support of $\phi$,
%so it compact modulo ???.
\end{proof}

%Let us fix $c\in K^*$ with $\Tr(c)=0$, and let $c_0=c \mod NK$.

%\begin{lemma} Let $\St(\eta_{c_0})\sub \SL_2(E)$ denote the stabilizer of $\eta_{c_0}$ 
%with respect to the adjoint action on $\fg(\ov{E})$.
%Then
%$$\St(\eta_{c_0})=(1+\fg(NE))\cdot i_c(K^*_1).$$
%Let $\chi$ be a character of $K^*_1$ such that $\chi(1+n)=\chi_{c_0}(1+n)$ for $n\in NK$ with $\Tr(n)=0$. 
%Then the map 
%$$\chi((1+A)\cdot i_c(a))\mapsto \psi(\tr(\eta_{c_0}A)\cdot \chi(a)$$
%is a well defined character of $\St(\eta_{c_0})$.
%\end{lemma}
%
%Discuss the relation with induced representations???

\subsection{Spherical vectors and Hecke operators}\label{modt2-sph-sec}

We still work with local representations defined above. Let $t\in \OO_E$ be a lifting of a uniformizer in $\OO_{\ov{E}}$.
We always assume that $\psi_E$ is trivial on $\OO_E$ and is non-trivial on $t^{-1}N\OO_E$. 
We are interested in the space of $G(\OO_E)$-invariant vectors (i.e., spherical vectors) in $\FF_{c_0}$.

\begin{lemma}\label{c0-int-lem}
Assume $\ov{K}/\ov{E}$ is non-split, i.e., $\ov{K}$ is a field. Then 
$\FF_{c_0}^{G(\OO_E)}=0$ unless $c_0\in \OO_{\ov{K}}$.
\end{lemma}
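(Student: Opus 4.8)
The plan is to extract everything from invariance under the reduction kernel $1+\fg(N\OO_E)\subset \SL_2(\OO_E)$, whose action on $\FF_{c_0}$ is given by an explicit character in Corollary \ref{eta-action-cor}. This alone pins the support of any spherical vector to integral points and yields $c_0\in\OO_{\ov K}$; compact support will play no role.

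First I would use the description of $\FF_{c_0}$ (the corollary preceding the induced-representation discussion) as the space of locally constant functions $\phi$ on $\{v=(x_1,x_2)\in K^2:\ov v\in\ov Q(c_0)\}$ satisfying \eqref{F-def-eq}, with $\SL_2(E)$ acting by the right regular representation \eqref{repr-after-part-FT}. A $G(\OO_E)$-invariant vector is such a $\phi$ with $\phi(vg)=\phi(v)$ for all $g\in\SL_2(\OO_E)$. Restricting this to $g\in 1+\fg(N\OO_E)$ and applying Corollary \ref{eta-action-cor}, I obtain that for every $v$ with $\phi(v)\neq 0$,
$$\psi_E(\tr(\eta(v)A))=1\quad\text{for all }A\in\fg(N\OO_E).$$

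Second I would unwind this condition. Since $N\OO_E$ is square-zero, $\tr(\eta(v)A)$ depends only on the reduction $\eta(\ov v)\in\fg(\ov E)$; writing $A=\ov A\,\eps$ with $\eps$ a generator of $N\OO_E$ over $\OO_{\ov E}$ and $\ov A\in\fg(\OO_{\ov E})$, the hypotheses on $\psi_E$ (trivial on $\OO_E$, nontrivial on $t^{-1}N\OO_E$) make $s\mapsto\psi_E(s\eps)$ a standard additive character of $\ov E$ of conductor $\OO_{\ov E}$. Hence the displayed condition reads $\tr(\eta(\ov v)\ov A)\in\OO_{\ov E}$ for all $\ov A\in\fg(\OO_{\ov E})$, and since $\operatorname{char}k\neq 2$ the trace form on $\fg=\mathfrak{sl}_2$ is a perfect pairing over $\OO_{\ov E}$, so this is equivalent to $\eta(\ov v)\in\fg(\OO_{\ov E})$. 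By the explicit formula \eqref{eta-formula-eq} the diagonal and off-diagonal entries are $-\lan x_1,x_2\ran/2$, $-\Nm(x_2)$, $\Nm(x_1)$, so this forces in particular $\Nm(x_1),\Nm(x_2)\in\OO_{\ov E}$.

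Third I would convert these norm conditions into integrality of the coordinates, using that $\ov K$ is a field. In both the unramified and ramified cases one has $v_K(\Nm(x))=2v_K(x)$ and checks $\Nm(x)\in\OO_{\ov E}\iff x\in\OO_{\ov K}$; thus $\Nm(x_1),\Nm(x_2)\in\OO_{\ov E}$ gives $x_1,x_2\in\OO_{\ov K}$, whence $c_0=\det(\iota(\ov v),\ov v)=\iota(x_1)x_2-\iota(x_2)x_1\in\OO_{\ov K}$. So $\phi(v)\neq 0$ can occur only if $c_0\in\OO_{\ov K}$; contrapositively, if $c_0\notin\OO_{\ov K}$ then $\phi\equiv 0$, i.e.\ $\FF_{c_0}^{G(\OO_E)}=0$. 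The only mildly delicate points are the clean identification of $s\mapsto\psi_E(s\eps)$ with a standard character and the ``norm detects integrality'' step for a possibly ramified $\ov K$; both are short valuation computations, and the rest is a direct unwinding of Corollary \ref{eta-action-cor} and \eqref{eta-formula-eq}.
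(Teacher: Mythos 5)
Your proof is correct and is essentially the paper's own argument: the paper likewise uses $1+\fg(N\OO_E)$-invariance to force $\eta(\ov v)\in\fg(\OO_{\ov E})$ on the support, then uses that $\Nm(x_i)\in\OO_{\ov E}$ implies $x_i\in\OO_{\ov K}$ when $\ov K$ is a field, whence $c_0=\det(\iota(\ov v),\ov v)$ is integral. You merely fill in details the paper leaves implicit (the identification of $s\mapsto\psi_E(s\eps)$ with a conductor-$\OO_{\ov E}$ character and the nondegeneracy of the trace pairing, which requires $\operatorname{char}k\neq 2$ as assumed), all of which are fine.
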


\begin{proof}
From $1+\fg(N\OO_E)$-invariance, we immediately see that a $G(\OO_E)$-invariant vector is supported on the set of $v$ such that 
$\ov{v}=(x_1,x_2)\in Q(c_0)$ satisfies $\eta(x_1,x_2)\in \fg(\OO_{\ov{E}})$. But in the case when $\ov{K}$ is a field, the condition $\Nm(x_i)\in \OO_{\ov{E}}$
implies that $x_i\in \OO_{\ov{K}}$ (for $i=1,2$). Hence, we necessarily should have $c_0=\det(\iota(\ov{v}),\ov{v})\in \OO_{\ov{E}}$.
\end{proof} 

Below we will consider the Hecke operator $T_1(t)$ corresponding to the double coset of $\diag(t,t^{-1})$, where $t\in E$ is a lifting of a uniformizer in $\ov{E}$.
%\begin{itemize}
%\item $T_1(t)$ is the operator corresponding to the double coset of $\diag(t,t^{-1})$, where $t\in E$ is a lifting of a uniformizer in $\ov{E}$;
%\item $T_1(t,e)$ corresponds to the double coset of $\left(\begin{matrix} 1 & et^{-1} \\ 0 & 1\end{matrix}\right)$.
%\end{itemize}

\subsubsection{Non-split unramified case}

Assume the extension $\ov{K}/\ov{E}$ is non-split (i.e., $\ov{K}$ is a field) and unramified.

We are going to study the space of spherical vectors in the $\SL_2(E)$-representation $\FF_{c_0}$, where $c_0\in \ov{K}$ is an element with
$\Tr(c_0)=0$. 

Assume $c_0\in \OO_{\ov{K}}$, and let $\FF_{c_0,\OO}\sub \FF_{c_0}$ denote the subspace of $\phi$ such that $\phi(v)$=0 for
$\ov{v}\not\in Q(c_0)(\OO_{\ov{K}})$. Then every $\phi\in \FF_{c_0,\OO}$ is determined by its restriction to points with coordinates in $\OO_K$.
Due to relation \eqref{F-def-eq}, the latter restriction descends to a function on $Q(c_0)(\OO_{\ov{K}})$. Thus, we get an identification
$$\FF_{c_0,\OO}\simeq \SS(Q(c_0)(\OO_{\ov{K}})).$$

%\begin{lemma}
%Let $K$ be a compact group, $A\sub K$ an open compact subset. Let $A_K$ denote the averaging operator on $\SS(K)$

\begin{prop}\label{Hecke-non-split-0-prop}
(i) Assume that $v(c_0)=0$. Then the space $\FF_{c_0}^{G(\OO_E)}$ is $1$-dimensional and is spanned by the function
$$\de_{\OO}:=\de_{Q(c_0)(\OO_{\ov{K}})}\in \FF_{c_0,\OO}\sub\FF_{c_0}.$$ 

\noindent
(ii)  One has 
$$T_1(t)\de_{\OO}=0.$$
%$$T_1(t,e)\de_{\OO}=-\frac{q+1}{q}\cdot \de_{\OO}.$$
\end{prop}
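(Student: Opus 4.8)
The plan is to combine part (i) with the realization of $T_1(t)$ as a $G(\OO_E)$-averaging operator and to show that the resulting orbit integral vanishes pointwise. By part (i) the space $\FF_{c_0}^{G(\OO_E)}$ is one-dimensional, so $T_1(t)\de_\OO=\lambda\,\de_\OO$ for some scalar $\lambda$, and it suffices to compute $\lambda$. Writing $g_0:=\diag(t,t^{-1})$ and using the Hecke formula \eqref{T-g0-eq}, we have $T_1(t)\de_\OO=(\text{positive constant})\cdot A_{G(\OO)}\big(r_\FF(g_0)\de_\OO\big)$, where $A_{G(\OO)}$ denotes averaging over $G(\OO_E)$ and $r_\FF$ is the action \eqref{repr-after-part-FT}. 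Since averaging produces a $G(\OO_E)$-invariant vector, $A_{G(\OO)}\big(r_\FF(g_0)\de_\OO\big)=\lambda'\de_\OO$ for some $\lambda'$, and it is enough to see that $\lambda'=0$, which I would do by evaluating at a point where $\de_\OO$ is nonzero.

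Concretely, fix a lift $c\in\OO_K$ of $c_0$ with $\Tr(c)=0$ and use the base point $v_c=(-c/2,1)$ of Lemma \ref{v-c-lem}. Since $v(c_0)=0$, we have $\ov{v_c}=(-c_0/2,1)\in Q(c_0)(\OO_{\ov K})$ and $v_c\in\OO_K^2$, so $\de_\OO(v_c)=1$. Evaluating $A_{G(\OO)}\big(r_\FF(g_0)\de_\OO\big)=\lambda'\de_\OO$ at $v_c$ and using \eqref{repr-after-part-FT} gives
\[
\lambda'=\int_{h\in G(\OO_E)}\big(r_\FF(g_0)\de_\OO\big)(v_c h)\,dh=\int_{h\in G(\OO_E)}\de_\OO(v_c h g_0)\,dh,
\]
so the crux is to show that the integrand vanishes identically.

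Fix $h\in G(\OO_E)$ and set $\ov w=(\ov w_1,\ov w_2):=\ov{v_c}\,\ov h$; since $\ov{v_c}$ is integral and $\ov h\in G(\OO_{\ov E})$, the point $\ov w$ lies in $Q(c_0)(\OO_{\ov K})$. As $\de_\OO$ is supported on points with integral reduction in $Q(c_0)(\OO_{\ov K})$, a necessary condition for $\de_\OO(v_c h g_0)\neq 0$ is that $\ov{v_c h g_0}=\ov w\,\ov g_0=(t\ov w_1,\,t^{-1}\ov w_2)$ be integral, i.e.\ $\ov w_2\in t\OO_{\ov K}$. But $\ov w$ lies on the quadric $\det(\iota(\ov w),\ov w)=\iota(\ov w_1)\ov w_2-\iota(\ov w_2)\ov w_1=c_0$, and since $\iota$ preserves $t\OO_{\ov K}$, the condition $\ov w_2\in t\OO_{\ov K}$ would force both terms into $t\OO_{\ov K}$ and hence $c_0\in t\OO_{\ov K}$, contradicting $v(c_0)=0$. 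Therefore $\de_\OO(v_c h g_0)=0$ for every $h$, whence $\lambda'=0$ and $T_1(t)\de_\OO=0$.

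The only real work is the bookkeeping: pinning down the action of $g_0$ on $\de_\OO$ and the exact reduction of the support condition. I would also note that the argument is insensitive to the left/right convention in \eqref{T-g0-eq}, since replacing $g_0$ by $g_0^{-1}=\diag(t^{-1},t)$ merely interchanges the two coordinates, and the same determinant computation with $\ov w_1\in t\OO_{\ov K}$ again contradicts $v(c_0)=0$. Once the geometry is set up, the vanishing is an immediate consequence of $c_0$ being a unit on the quadric; note this is a genuinely different mechanism from the finite-field computation, where the absence of gamma factors is replaced here by the impossibility of the two integral loci overlapping.
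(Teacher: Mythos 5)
Your argument for part (ii) is correct as far as it goes, and the mechanism is sound: for $h\in G(\OO_E)$ the reduction $\ov w=\ov{v_c}\,\ov h$ lies in $Q(c_0)(\OO_{\ov K})$, integrality of $\ov w\,\ov g_0=(t\ov w_1,t^{-1}\ov w_2)$ would force $\ov w_2\in t\OO_{\ov K}$, and then $c_0=\iota(\ov w_1)\ov w_2-\iota(\ov w_2)\ov w_1\in t\OO_{\ov K}$ contradicts $v(c_0)=0$. This is in essence the same valuation-plus-quadric contradiction the paper uses. However, your proposal proves only half the proposition: part (i) is taken as an input, and it is not a formality. Its proof requires two nontrivial steps that appear nowhere in your write-up: first, that invariance under $1+\fg(N\OO_E)$ forces any invariant vector to be supported where $\eta(\ov v)\in\fg(\OO_{\ov E})$, which in the non-split case gives $\Nm$-integrality and hence $\ov v\in Q(c_0)(\OO_{\ov K})$ (integrality of the norm implies integrality of the element only because $\ov K$ is a field); and second, that $\SL_2(\OO_{\ov E})$ acts transitively on $Q(c_0)(\OO_{\ov K})$, which is the integral analogue of Lemma \ref{v-c-lem}(i) and uses crucially that $c_0$ is a unit (one writes points as matrices $X$ with $\iota(X)=sX$, $\det X=c_0$; unit determinant makes $X$ invertible over $\OO_{\ov K}$, so $X=X_0g$ with $g\in\SL_2(\OO_{\ov E})$). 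Since your proof of (ii) genuinely depends on (i) — you need $A_{G(\OO)}(r_\FF(g_0)\de_\OO)$ to lie in the line $\C\,\de_\OO$ before a single point evaluation at $v_c$ can determine it — the omission is not cosmetic: without (i), vanishing at the one point $v_c$ says nothing about the function elsewhere.

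For comparison, the paper's proof of (ii) is independent of (i): it averages $\diag(t,t^{-1})\de_\OO$ only over the subgroup $1+\fg(N\OO_E)$, observing that this averaging shrinks the support to the locus $\eta(\ov v)\in\fg(\OO_{\ov E})$, i.e.\ $\ov x\in\OO_{\ov K}$, while the support of $\diag(t,t^{-1})\de_\OO$ already forces $\ov y\in t\OO_{\ov K}$; the two conditions together with $\iota(\ov x)\ov y-\iota(\ov y)\ov x=c_0$ give the same contradiction, so the average is identically zero everywhere, not just at one point. If you supply a proof of (i) along the lines above, your version of (ii) becomes a correct, slightly repackaged variant of the paper's argument; alternatively, replacing your point evaluation by the paper's support analysis would make (ii) self-contained and let you drop the dependence on (i) there entirely.
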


\begin{proof} (i) Using the invariance under $\fg(N\OO_E)$, we see that any element of $\FF_{c_0}^{G(\OO_E)}$ is supported
on the set of $v\in K^2$ such that $\eta(\ov{v})\in \fg(\OO_{\ov{E}})$ (and $\ov{v}\in Q(c_0)$). It follows that $\ov{v}=(x,y)$ satisfies $\Nm(x)\in \OO_{\ov{E}}$,
$\Nm(y)\in \OO_{\ov{E}}$, hence $\ov{v}\in Q(c_0)(\OO_{\ov{K}})$. Thus, to prove the first assertion, we have to check that $\SL_2(\OO_{\ov{E}})$ acts
transitively on $Q(c_0)(\OO_{\ov{K}})$. This is proved similarly to Lemma \ref{v-c-lem}(i). Namely, points in $Q(c_0)(\OO_{\ov{K}})$ correspond to
$2\times 2$ matrices $X$ with entries in $\OO_{\ov{K}}$ such that $\iota(X)=sX$ and $\det(X)=c_0$. Since $c_0$ is a unit, such $X$ is necessarily in
$\GL_2(\OO_{\ov{E}})$. Let $X_0$ be the matrix corresponding to $(-c_0/2,1)$. Then $X=X_0g$, where $g\in \GL_2(\OO_{\ov{E}})$ and $\det(g)=1$,
hence $g\in \SL_2(\OO_{\ov{E}})$.

(ii) The function $\diag(t,t^{-1})\de_{\OO}$ is supported on the set of $v\in K^2$ such that $\ov{v}=(x,y)\in \ov{K}^2$ satisfies
$$x\in t^{-1}\OO_{\ov{E}}, \ \ y\in t\OO_{\ov{E}}, \ \ \iota(x)y-\iota(y)x=c_0.$$
Averaging with respect to the action of $\fg(N\OO_E)$ results in shrinking the support to the locus where $x\in \OO_{\ov{E}}$.
But this becomes incompatible with the conditions $y\in (t)$ and $\iota(x)y-\iota(y)x=c_0$, since it would imply that $c_0\in (t)$.
Hence, the averaging results in the zero function.
\end{proof}

Let $k\sub \ell$ be the quadratic extension of residue fields associated with $\ov{E}\sub \ov{E}$, and let $q=|k|$.

\begin{lemma}\label{Q-integer-points-lem}
For any units $x\in \OO_{\ov{K}}^*$ and $u\in \OO_{\ov{K}}^*$ such that $\iota(u)=-u$, there exists $y\in \OO_{\ov{K}}$ such that $\iota(x)y-\iota(y)x=u$.
\end{lemma}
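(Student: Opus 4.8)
The plan is to reduce the equation to a single scalar linear equation over $\OO_{\ov{E}}$ by choosing a basis adapted to $\iota$. Since $\ov{K}/\ov{E}$ is unramified and $\operatorname{char}k\neq 2$, I would first fix a generator $\delta\in\OO_{\ov{K}}^*$ of the $\iota$-anti-invariants, so that $\iota(\delta)=-\delta$, $\delta^2=d\in\OO_{\ov{E}}^*$, and $\OO_{\ov{K}}=\OO_{\ov{E}}\oplus\OO_{\ov{E}}\delta$, with $d$ reducing to a nonsquare in $k$ (this is exactly what non-split unramified means). Writing $x=x_0+x_1\delta$ and the unknown $y=a+b\delta$ with $x_i,a,b\in\OO_{\ov{E}}$, a direct expansion gives $\iota(x)y-\iota(y)x=2(x_0b-x_1a)\delta$. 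In particular the left-hand side is automatically $\iota$-anti-invariant, which matches the hypothesis $\iota(u)=-u$ and shows the target is the rank-one module $\OO_{\ov{E}}\delta$. Writing $u=c\delta$ with $c\in\OO_{\ov{E}}^*$ (valid because $u$ is a unit anti-invariant element, so $c=u/\delta\in\OO_{\ov{E}}^*$), the problem becomes: solve $x_0b-x_1a=c/2$ for $a,b\in\OO_{\ov{E}}$. Here invertibility of $2$ is used.

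Next I would show this linear equation is solvable over the integers, which happens precisely when the ideal $(x_0,x_1)\subset\OO_{\ov{E}}$ is the unit ideal, i.e.\ when at least one of $x_0,x_1$ lies in $\OO_{\ov{E}}^*$. This is where the two hypotheses on $x$ combine: $\Nm(x)=x\iota(x)=x_0^2-dx_1^2$ is a unit, so its reduction $\bar x_0^2-\bar d\,\bar x_1^2$ is nonzero in $k$; since $\bar d$ is a nonsquare, $\bar x_0$ and $\bar x_1$ cannot both vanish, so one of $x_0,x_1$ is a unit. I can then solve explicitly, taking $a=0,\ b=c/(2x_0)$ when $x_0\in\OO_{\ov{E}}^*$, and $b=0,\ a=-c/(2x_1)$ when $x_1\in\OO_{\ov{E}}^*$, which produces the required $y$.

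The only real obstacle is bookkeeping: making sure each hypothesis is invoked where it is genuinely needed. The argument hinges on the norm form $x_0^2-dx_1^2$ being anisotropic modulo $\fm_{\ov{E}}$, which is guaranteed by the non-split unramified assumption (together with $\operatorname{char}k\neq 2$) and would fail for a split or ramified $\ov{K}$; it is precisely this that forces a unit coordinate of the unit $x$. As a cleaner conceptual alternative I would note that $L(y):=\iota(x)y-\iota(y)x$ is $\OO_{\ov{E}}$-linear with image in the rank-one anti-invariant submodule, and that reducing modulo $\fm_{\ov{E}}$ the induced map $\bar L$ on $\ell$ has kernel exactly $k\bar x$ (since $\bar x\neq 0$), hence surjects onto the one-dimensional anti-invariant subspace of $\ell$; Nakayama then gives surjectivity of $L$ itself, so every anti-invariant target element — not merely the units — lies in the image. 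Either route completes the proof.
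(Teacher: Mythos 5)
Your proof is correct and is essentially the paper's own argument written out in coordinates: the paper decomposes $x=c-\tfrac{u}{2}a$ with $a,c\in\OO_{\ov{E}}$ (using $\OO_{\ov{K}}=\OO_{\ov{E}}+u\OO_{\ov{E}}$, i.e.\ taking $u$ itself as your $\delta$), notes that $x$ being a unit forces $a$ or $c$ to be a unit, and completes $(a,c)$ to a matrix $g\in\SL_2(\OO_{\ov{E}})$ so that $(x,y)=v_ug\in Q(u)$ --- and completing a unimodular pair to an $\SL_2(\OO_{\ov{E}})$-matrix is precisely solving your linear equation $x_0b-x_1a=c/2$. Two harmless remarks: your closing Nakayama variant actually proves the slightly stronger statement that every anti-invariant element (not only units) is of the form $\iota(x)y-\iota(y)x$, and the nonsquareness of $\bar d$ is never really needed, since $\bar x\neq 0$ alone already forces one of $\bar x_0,\bar x_1$ to be nonzero.
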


\begin{proof}
For any unit $u\in \OO_{\ov{K}}^*$ such that $\iota(u)=-u$, one has $\OO_{\ov{K}}=\OO_{\ov{E}}+u\OO_{\ov{E}}$ (say, by Nakayama lemma).
Hence, we can write $x=-\frac{u}{2}a+c$ for $a,c\in \OO_{\ov{E}}$. Since $x$ is a unit, either $a$ or $c$ is a unit. Hence, we can find $b,c\in \OO_{\ov{E}}$,
such that $g=\left(\begin{matrix} a & b \\ c & d\end{matrix}\right)$ is in $\SL_2(\OO_{\ov{E}})$. Then $v_ug=(x,y)$ satisfies the required property.
\end{proof}

\begin{prop}\label{Hecke-non-split-1-prop}
Assume that $v(c_0)=1$. 
%and let $q=|\OO_{\ov{E}}/t\OO_{\ov{E}}|$. 

\noindent
(i) The $\SL_2(\OO_{\ov{E}})$-orbits on $Q(c_0)(\OO_{\ov{K}})$ are numbered by $\la\in \ell^*$ such that $\la^{q+1}=1$. The orbit $O(c_0,\la)$ corresponding to $\la$
consists of $v=(x,y)\in Q(c_0)(\OO_{\ov{K}})$ such that $i(x)\equiv \la x \mod(t)$, $i(y)\equiv \la y \mod(t)$.

\noindent
(ii) The space $\FF_{c_0}^{G(\OO_E)}$ is $(q+1)$-dimensional, with the basis 
$$\de_{O(c_0,\la)}\in \FF_{c_0,\OO}\sub\FF_{c_0},$$
numbered by $\la\in \ell^*$ with $\la^{q+1}=1$.
One has
$$T_1(t)\de_{O(c_0,\la)}=q^2\sum_{\mu\neq \la}\de_{O(c_0,\mu)}.$$
%\frac{1}{q^2+q}\sum_{\mu\neq \la}\de_{O(c_0,\mu)}.$$
In particular, 
$$T_1(t)\de_{\OO}=q^3\de_{\OO}.$$
%\frac{1}{q+1}\de_{\OO}.$$
\end{prop}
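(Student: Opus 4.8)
The plan is to deduce (ii) from (i), and to prove (i) by producing an $\SL_2(\OO_{\ov E})$-invariant that separates the orbits and then establishing transitivity by an adjugate computation. First I would set up the invariant. For $v=(x,y)\in Q(c_0)(\OO_{\ov K})$, reduce modulo $t$ to $(\bar x,\bar y)\in\ell^2$; since $v(c_0)=1$ the quadric relation $\iota(x)y-\iota(y)x=c_0$ reduces to $\bar x^{\,q}\bar y=\bar y^{\,q}\bar x$, and $\bar x,\bar y$ cannot both vanish (else $c_0\in(t^2)$). Hence there is a well-defined $\la=\la(v)\in\ell^*$ equal to $\bar x^{\,q-1}$ or $\bar y^{\,q-1}$, whichever coordinate is a unit (the two agree when both are units, by the reduced relation), and $\la^{q+1}=1$ because $|\ell^*|=q^2-1$. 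Since $\SL_2(\OO_{\ov E})$ acts on reductions through matrices with entries in $k=\mathbb F_q$, it commutes with Frobenius and preserves the common eigenvalue $\la$, so $\la$ is an orbit invariant; every $(q+1)$-th root of unity is realized by taking $y$ a unit with $\bar y^{\,q-1}=\la$ and solving $\iota(x)y-\iota(y)x=c_0$ with $x\in t\OO_{\ov K}$ via Lemma \ref{Q-integer-points-lem}.

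For transitivity within a level set I would use the matrix model of Lemma \ref{v-c-lem}: encode $v$ by $X=\left(\begin{smallmatrix}\iota(x)&\iota(y)\\ x&y\end{smallmatrix}\right)$ with $\det X=c_0$, so that the right $\SL_2$-action is right multiplication and $\SL_2(\ov E)$ acts simply transitively over the field. Given $v,v'\in O(c_0,\la)$ with matrices $X,X'\in M_2(\OO_{\ov K})$, the unique $g\in\SL_2(\ov E)$ with $X'=Xg$ is $g=c_0^{-1}\adj(X)X'$. The key point is that every entry of $\adj(X)X'$ reduces modulo $t$ to a combination $\alpha\gamma^{q}-\alpha^{q}\gamma$ with $\alpha,\gamma\in\{\bar x,\bar y,\bar x',\bar y'\}$; as all four reductions satisfy $\bar z^{\,q}=\la\bar z$, each such combination equals $\alpha(\la\gamma)-(\la\alpha)\gamma=0$. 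Thus $\adj(X)X'\in t\,M_2(\OO_{\ov K})$ and $g=c_0^{-1}\adj(X)X'\in M_2(\OO_{\ov K})\cap\SL_2(\ov E)=\SL_2(\OO_{\ov E})$, so each $O(c_0,\la)$ is a single orbit; this completes (i).

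For (ii) the basis statement follows as in Proposition \ref{Hecke-non-split-0-prop}(i): invariance under $1+\fg(N\OO_E)$ together with Corollary \ref{eta-action-cor} confines a spherical vector to the locus $\eta(\bar v)\in\fg(\OO_{\ov E})$, i.e.\ $\bar v\in Q(c_0)(\OO_{\ov K})$, so it lies in $\FF_{c_0,\OO}\simeq\SS(Q(c_0)(\OO_{\ov K}))$; residual $\SL_2(\OO_{\ov E})$-invariance makes it constant on orbits, giving the $(q+1)$-dimensional basis $\{\de_{O(c_0,\la)}\}$ by part (i). For the Hecke action I would apply \eqref{T-g0-eq} with $g_0=\diag(t,t^{-1})$, whose coefficient is $|\P^1(\OO_E/(t^2))|=q^3(q+1)$, so that $T_1(t)\de_{O(c_0,\la)}=q^3(q+1)\,A_{G(\OO_E)}\bigl(r(g_0)\de_{O(c_0,\la)}\bigr)$. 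Evaluating at a representative $w_\mu=(x_1,x_2)\in O(c_0,\mu)$ with $x_1$ a unit and $x_2\in t\OO_{\ov K}^*$, the requirement that $w_\mu k g_0$ meet the support of $\de_{O(c_0,\la)}$ forces the $\ell$-reduction of $(w_\mu k)_2$ to vanish, i.e.\ $\bar k_{12}=0$; this is the lower-triangular congruence subgroup, of relative measure $1/(q+1)$, and contributes total mass $q^3$.

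The main work, and the step I expect to be the genuine obstacle, is showing this mass $q^3$ splits evenly: $q^2$ to each $\la\neq\mu$ and $0$ to $\la=\mu$. Writing $x_2=tx_2''$ and reducing $\iota(x_1)x_2''-\iota(x_2'')x_1=u$ (where $c_0=tu$, $\iota(u)=-u$) gives $\xi^{q}\zeta-\zeta^{q}\xi=\beta\neq0$ with $\xi=\bar x_1$, $\zeta=\bar x_2''$, $\beta=\bar u$, and this nonvanishing is precisely what makes $\{\xi,\zeta\}$ a $k$-basis of $\ell$. The target invariant is $\la=\bar\theta^{\,q-1}$ with $\bar\theta=\xi\bar\kappa_{12}+\zeta\bar k_{22}$, where $\bar\kappa_{12}\in k$ (the leading term of $k_{12}/t$) and $\bar k_{22}\in k^*$ are free, independent, uniform residue-field parameters; as they vary $\bar\theta$ runs bijectively over $\ell\setminus k\xi$, so $\la=\mu$ (corresponding to $\bar\theta\in k^*\xi$) never occurs and each $\la\neq\mu$ is hit on a fraction $1/q$ of the subgroup. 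Hence each off-diagonal coefficient is $q^3(q+1)\cdot\frac{1}{q+1}\cdot\frac1q=q^2$ and $a_{\mu\mu}=0$; carrying this out rigorously is where the careful bookkeeping of the quasi-periodicity \eqref{F-def-eq}, the residual $\fg(N\OO_E)$-averaging, and the measure normalization all enter. Summing over $\la$ with $\de_\OO=\sum_\la\de_{O(c_0,\la)}$ then yields $T_1(t)\de_\OO=q^2\cdot q\,\de_\OO=q^3\de_\OO$.
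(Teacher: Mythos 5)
Your proposal is correct and follows essentially the same route as the paper's proof: the same mod-$t$ invariant (your Frobenius eigenvalue $\la$ is exactly the paper's $\iota$-stable line $L(X)/t\OO_{\ov{K}}^2\subset \ell^2$), the same Hecke constant $q^3(q+1)=|\P^1(\OO_E/(t^2))|$, the same Iwahori-type support condition, and the same final count $q^3(q+1)\cdot\frac{1}{q(q+1)}=q^2$ with vanishing diagonal term; your adjugate identity $g=c_0^{-1}\adj(X)X'$ and your bijection of $(\bar\kappa_{12},\bar k_{22})$ onto $\ell\setminus k\xi$ are just repackagings of the paper's lattice comparison $L(X)=L(X')$ and its fixing of a point of $\P^1(\OO_{\ov{E}}/(t^2))$. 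The one step you flag as a genuine remaining obstacle --- the quasi-periodicity and $\fg(N\OO_E)$ phase bookkeeping --- is in fact automatic: since you evaluate at an integral representative $w_\mu$ and all relevant lifts stay in $\OO_K^2$, every phase produced by \eqref{F-def-eq} and by Corollary \ref{eta-action-cor} is $\psi_E$ of an element of $\OO_E$, hence equal to $1$, so the integrand really is the indicator function you describe.
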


\begin{proof}
(i) As before, we view $Q(c_0)(\OO_{\ov{K}})$ as $2\times 2$-matrices $X$ with entries in $\OO_{\ov{K}}$ such that $\iota(X)=sX$ and $\det(X)=c_0$.
Let $L(X)\sub \OO_{\ov{K}}^2$ denote the lattice given by the span of the columns of $X$. Then $L(X)\supset t\OO_{\ov{K}}^2$, 
$\OO_{\ov{K}}^2/L(X)\simeq \OO_{\ov{K}}/(t)$, and $L(Xg)=L(X)$ for $g\in\SL_2(\OO_{\ov{E}})$. Furthermore, $L(X)$ is stable under the involution $(x,y)\mapsto (\iota(y),\iota(x))$. Conversely, if
$L(X)=L(X')$ then $X'=Xg$ where $g\in \SL_2(\ov{E})$. Thus, the orbits are in bijection with a subset of lines in $\ell^2$ preserved by the involution 
$(x_1,x_2)\mapsto (x_2^q,x_1^q)$.
These are exactly lines of the form $x_1=\la x_2$ where $\la^{q+1}=1$. It remains to check that each such line can be realized. In fact, we claim there exists
a representative $(x,y)$ with $y\in t\OO_{\ov{K}}$.
We look for $y$ in the form $y=ty'$. Then we need $\iota(x)y'-\iota(y')x=c_0/t$ (where $c_0/t$ is a $\iota$-antiinvariant unit). By Lemma \ref{Q-integer-points-lem}
such $y'$ exists.
%Indeed, we can represent $\la$ in the form $\la=\mu^{q-1}$ and choose $x\in \OO_{\ov{K}}$ such that $x\equiv \mu (t)$. Then $\iota(x)\equiv \la x \mod (t)$.

\noindent
(ii) The first assertion follows from (i). 

The function $\diag(t,t^{-1})\de_{O(c_0,\la)}$ is supported on the set of $v\in K^2$ such $\ov{v}=(x,y)\in \ov{K}^2$ satisfies 
$$x\in t^{-1}\OO_{\ov{E}}, \ \ y\in t\OO_{\ov{E}}, \ \ \iota(x)y-\iota(y)x=c_0, \ \ \iota(x)\equiv \la x \mod \OO_{\ov{E}}, \ \ 
\iota(y)\equiv \la y \mod (t^2).$$
Averaging with respect to the action of $\fg(N\OO_E)$ results in shrinking the support to the locus where in addition $x\in \OO_{\ov{E}}$.
which makes the congruence condition on $x$ vacuous. Let $S(c_0,\la)\sub Q(c_0)(\OO_{\ov{E}})$ denote the subset consisting of $(x,y)$ such that
$y\in (t)$ and $\iota(y)\equiv \la y \mod(t^2)$. Let $A$ denote the averaging operator with respect to $\SL_2(\OO_{\ov{E}})$ on $\FF_{c_0,\OO}$. Then we have
$$T_1(t)\de_{O(c_0,\la)}=(q^4+q^3)A\de_{S(c_0,\la)}=(q^4+q^3)\sum_{\mu} \vol(S(c_0,\la)\cap O(c_0,\mu))\cdot \de_{O(c_0,\mu)},$$
where the volume is taken with respect to the natural measure on $O(c_0,\mu)\simeq \SL_2(\OO_{\ov{E}})$.
Note that if $\iota(x)y-\iota(y)x=c_0$ and $y\in (t)$, $\iota(y)\equiv \la y \mod(t^2)$, then
$$(\iota(x)-\la x)y\equiv c_0 \mod (t^2),$$
which implies that $\iota(x)\not\equiv \la x \mod(t)$ (since $v(c_0)=1$). Therefore, we have $S(c_0,\la)\cap O(c_0,\la)=\emptyset$.

Next, let us calculate $\vol(S(c_0,\la)\cap O(c_0,\mu))$ for $\mu\neq \la$. Let $v_\la=(x_\mu,y_\mu)$ be a representative of $O(c_0,\mu)$ with $y_\mu\in (t)$
(we checked in part (i) that such a representative exists). For an element $g=\left(\begin{matrix} a & b \\ c & d\end{matrix}\right)\in\SL_2(\OO_{\ov{E}})$,
we have $v_{\la}g\in S(c_0,\la)$ if and only if 
$$bx_\mu+dy_\mu\equiv 0 \mod (t), \ \ b[\iota(x_\mu)-\la x_\mu]+d[\iota(y_\mu)-\la y_\mu]\equiv 0\mod (t^2).$$
Since $y_\mu\in (t)$, the first condition means that $b\in (t)$. Since $\la\neq \mu$, we see that $\iota(x_\mu)-\la x_\mu$ is invertible.
Hence, the second condition implies the first and it fixes $[b:d]$ as a point in the projective line $\P^1(\OO_{\ov{E}}/(t^2))$.
%$(\mu-\la)(bx_\mu+dy_\mu)\in (t^2)$, i.e., 
Therefore, we get
$$\vol(S(c_0,\la)\cap O(c_0,\mu))=\frac{1}{|\P^1(\OO_{\ov{E}}/(t^2))|}=\frac{1}{q^2+q}.$$
\end{proof}

\subsubsection{Non-split ramified case}

Next, we assume that the extension $\ov{K}/\ov{E}$ is ramified. In this case the residue field extension is trivial, so 
for any $x\in \OO_{\ov{K}}$ one has $\iota(x)\equiv x \mod \fm_{\ov{K}}$. Hence, if $c_0\in \OO_{\ov{K}}$ satisfies $\iota(c_0)=-c_0$,
it necessarily lies in the maximal ideal. 

%As before, we assume that $\psi|_{\OO_{\ov{E}}}=1$ and $\psi$ is nontrivial on $t^{-1}N\OO_{\ov{E}}$.

\begin{prop}\label{Hecke-ram-0-prop} 
Assume $v_{\ov{E}}(\Nm(c_0))=1$, or equivalently, $v_{\ov{K}}(c_0)=1$.  

\noindent
(i) The $\SL_2(\OO_{\ov{E}})$-action on $Q(c_0)(\OO_{\ov{K}})$ is transitive.

\noindent
(ii) The space $\FF_{c_0}^{G(\OO_E)}$ is $1$-dimensional and is spanned by the function 
$\de_{\OO}=\de_{Q(c_0)(\OO_{\ov{K}})}\in \FF_{c_0,\OO}\sub \FF_{c_0}$.
One has $T_1(t)\de_{\OO}=0$.
\end{prop}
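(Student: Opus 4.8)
The plan is to adapt the matrix-and-lattice analysis used in Propositions \ref{Hecke-non-split-0-prop} and \ref{Hecke-non-split-1-prop} to the ramified setting; the decisive new feature is that the residue field extension is now trivial, so $\iota$ reduces to the identity on $k$.

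For part (i), I would encode a point $(x,y)\in Q(c_0)(\OO_{\ov{K}})$ by the matrix $X=\left(\begin{matrix}\iota(x) & \iota(y)\\ x & y\end{matrix}\right)$ with entries in $\OO_{\ov{K}}$, satisfying $\iota(X)=sX$ (with $s$ the swap matrix) and $\det(X)=c_0$, and consider the column lattice $L(X)\subset\OO_{\ov{K}}^2$. Since $v_{\ov{K}}(c_0)=1$, the quotient $\OO_{\ov{K}}^2/L(X)$ has length one; hence $\fm_{\ov{K}}\OO_{\ov{K}}^2\subset L(X)$ and $L(X)$ is determined by the line $L(X)/\fm_{\ov{K}}\OO_{\ov{K}}^2\subset k^2$. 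The key observation is that, since $\iota\equiv\id$ on $k$, each column of $X$ reduces to a multiple of $(1,1)$, and at least one column is nonzero modulo $\fm_{\ov{K}}$ (otherwise $\det(X)$ would have valuation $\ge 2$); thus this line is always $k\cdot(1,1)$, independently of $X$. Consequently every such $X$ has the same column lattice, and the argument of Proposition \ref{Hecke-non-split-1-prop}(i) (namely, $L(X)=L(X')$ forces $X'=Xg$ with $g\in\SL_2(\OO_{\ov{E}})$ after using $\iota$-equivariance) yields a single $\SL_2(\OO_{\ov{E}})$-orbit.

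For part (ii), I would run the support argument from the proof of Lemma \ref{c0-int-lem}: invariance under $1+\fg(N\OO_E)$ together with Corollary \ref{eta-action-cor} forces any $\phi\in\FF_{c_0}^{G(\OO_E)}$ to vanish unless $\eta(\ov{v})\in\fg(\OO_{\ov{E}})$, i.e. $\Nm(x),\Nm(y)\in\OO_{\ov{E}}$, which in the non-split case means $\ov{v}\in Q(c_0)(\OO_{\ov{K}})$. Thus $\FF_{c_0}^{G(\OO_E)}\subset\FF_{c_0,\OO}\simeq\SS(Q(c_0)(\OO_{\ov{K}}))$, on which $G(\OO_E)$-invariance reduces to $\SL_2(\OO_{\ov{E}})$-invariance; by part (i) this space is one-dimensional, spanned by $\de_{\OO}$, which is nonzero since $v_{c_0}=(-c_0/2,1)\in Q(c_0)(\OO_{\ov{K}})$.

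For the Hecke relation I would use \eqref{T-g0-eq} to write $T_1(t)\de_{\OO}$ as a multiple of $A_{G(\OO)}\,\diag(t,t^{-1})\de_{\OO}$ and precompose with the $\fg(N\OO_E)$-averaging, exactly as in Proposition \ref{Hecke-non-split-0-prop}(ii). The function $\diag(t,t^{-1})\de_{\OO}$ is supported where $x\in t^{-1}\OO_{\ov{K}}$ and $y\in t\OO_{\ov{K}}$, and the averaging further restricts the support to $\eta(\ov{v})\in\fg(\OO_{\ov{E}})$, hence to $x\in\OO_{\ov{K}}$. But then $v_{\ov{K}}(x)\ge 0$ and $v_{\ov{K}}(y)\ge 2$ (as $v_{\ov{K}}(t)=2$) give $v_{\ov{K}}(\iota(x)y-\iota(y)x)\ge 2$, which is incompatible with $\iota(x)y-\iota(y)x=c_0$ of valuation $1$; so the averaged function vanishes and $T_1(t)\de_{\OO}=0$. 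The main obstacle is part (i): by analogy with the unramified valuation-one case one might expect several orbits, and the point to get right is that the trivial residue extension collapses the $q+1$ Frobenius-stable lines of that case to the single line $k\cdot(1,1)$.
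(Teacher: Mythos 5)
Your proof is correct and follows essentially the same route as the paper's: part (i) is exactly the paper's column-lattice argument, where the trivial residue extension in the ramified case forces the line $L(X)/\fm_{\ov{K}}\OO_{\ov{K}}^2$ to be $k\cdot(1,1)$ independently of $X$, and part (ii) combines the support argument (invariance under $1+\fg(N\OO_E)$ forcing $\eta(\ov{v})\in\fg(\OO_{\ov{E}})$) with the same valuation count, $v_{\ov{K}}(y)\ge v_{\ov{K}}(t)=2>1=v_{\ov{K}}(c_0)$, that the paper invokes for the vanishing of $T_1(t)\de_{\OO}$. The only additions (nonemptiness via $v_{c_0}=(-c_0/2,1)$ and spelling out why at least one column of $X$ is a unit) are harmless elaborations of steps the paper leaves implicit.
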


\begin{proof}
(i) As before, we view $Q(c_0)(\OO_{\ov{K}})$ as $2\times 2$-matrices $X$ with entries in $\OO_{\ov{K}}$ such that $\iota(X)=sX$ and $\det(X)=c_0$.
Let $L(X)\sub \OO_{\ov{K}}^2$ denote the span of the columns of $X$. Let $z$ denote the uniformizer of $\ov{K}$. Then $L(X)\supset z\OO_{\ov{K}}^2$
and $\OO_{\ov{K}}^2/L(X)\simeq \OO_{\ov{K}}/(z)$. It is enough to check that $L(X)$ does not depend on $X$, or equivalently, the line
$$\ell(X):=L(X)/z\OO_{\ov{K}}^2\sub (\OO_{\ov{K}}/(z))^2=k^2$$
does not depend on $X$. But we know that $\iota(x)\equiv x \mod (z)$ for any $x\in \OO_{\ov{K}}$. This implies that $\ell(X)$ coincides with the line spanned by 
$\left[\begin{matrix} 1 \\ 1\end{matrix}\right]\in k^2$.

\noindent
(ii) The first assertion follows from (i). The second is proved in exactly the same way as in Proposition \ref{Hecke-non-split-0-prop}(ii), using the fact that $v_{\ov{K}}(t)=2$.
\end{proof}

\subsubsection{Split case}

In the case $\ov{K}=\ov{E}^2$,
we can identify $Q(c_0)$, for $c_0=(-b_0,b_0)\in \ov{K}=\ov{E}^2$, with the set $M(b_0)$ of pairs of vectors $x,y\in \ov{E}^2$ such that $\det(x,y)=b_0$
(where we think of $x$ and $y$ as column vectors).
In other words, $M(b_0)$ is just the set of $2\times 2$-matrices with determinant $b_0$.

Recall that we have an action of $\ov{K}^*_1\simeq \ov{E}^*$ on $\FF_{c_0}$. In particular, we will use the action of the elements $(t^n,t^{-n})\in \ov{K}^*_1$. 

Let $X(b_0)\sub M(b_0)(\ov{E})$ denote the subset consisting of $(x,y)$ such that $\eta(x,y)\in \fg(\OO_{\ov{E}})$, i.e.,
$x_1x_2\in \OO_{\ov{E}}$, $y_1y_2\in \OO_{\ov{E}}$ and $x_1y_2+x_2y_1\in \OO_{\ov{E}}$.

\begin{lemma}\label{X-b0-lem} 
Assume $b_0\neq 0$. Then 
$$X(b_0)=\cup_{n\in\Z} (t^n,t^{-n})M(b_0)(\OO_{\ov{E}}).$$
In particular, if $b_0\not\in \OO_{\ov{E}}$ then $X(b_0)=\emptyset$.
\end{lemma}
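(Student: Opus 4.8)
The plan is to prove the two inclusions separately after translating the conditions into valuations of the four coordinates. Write a point of $M(b_0)$ as the matrix with columns $x=(x_1,x_2)^T$ and $y=(y_1,y_2)^T$, so $\det(x,y)=x_1y_2-x_2y_1=b_0$. Under the identification $\ov{K}=\ov{E}^2$ the element $(t^n,t^{-n})\in\ov{K}^*_1$ acts by $x\mapsto(t^nx_1,t^{-n}x_2)^T$ and $y\mapsto(t^ny_1,t^{-n}y_2)^T$, i.e.\ by left multiplication of the matrix by $\diag(t^n,t^{-n})$; this preserves $\det=b_0$ as well as each of the three quantities $x_1x_2$, $y_1y_2$, $x_1y_2+x_2y_1$ that the condition $\eta(x,y)\in\fg(\OO_{\ov{E}})$ requires to be integral. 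The inclusion $\bigcup_n(t^n,t^{-n})M(b_0)(\OO_{\ov{E}})\subseteq X(b_0)$ is then immediate: for an integral matrix the three quantities lie in $\OO_{\ov{E}}$, and being $\diag(t^n,t^{-n})$-invariant they stay integral after the action.

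For the reverse inclusion, fix $(x,y)\in X(b_0)$ and set $\alpha_i=v_{\ov{E}}(x_i)$, $\beta_i=v_{\ov{E}}(y_i)$ (with $v_{\ov{E}}(0)=\infty$). I would look for $n\in\Z$ with $\diag(t^{-n},t^n)(x,y)\in M(b_0)(\OO_{\ov{E}})$, which amounts to $-\min(\alpha_2,\beta_2)\le n\le\min(\alpha_1,\beta_1)$; such an integer exists exactly when
\[
\min(\alpha_1,\beta_1)+\min(\alpha_2,\beta_2)\ge 0. \qquad(\star)
\]
The hypothesis $b_0\neq 0$ rules out $x_1=y_1=0$ and $x_2=y_2=0$, so both minima are finite and $(\star)$ is the only thing to check. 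Granting it, $M:=\diag(t^{-n},t^n)(x,y)$ is integral with $\det M=b_0$, and $(x,y)=(t^n,t^{-n})M$, which completes the inclusion.

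The crux, and the main (if brief) obstacle, is $(\star)$. Rewriting its left-hand side as $\min(\alpha_1+\alpha_2,\ \alpha_1+\beta_2,\ \beta_1+\alpha_2,\ \beta_1+\beta_2)$, the integrality of $x_1x_2$ and $y_1y_2$ gives $\alpha_1+\alpha_2\ge 0$ and $\beta_1+\beta_2\ge 0$, so only the cross terms $\alpha_1+\beta_2=v_{\ov{E}}(x_1y_2)$ and $\beta_1+\alpha_2=v_{\ov{E}}(x_2y_1)$ remain, where I invoke the integrality of $x_1y_2+x_2y_1$. If these two valuations differ, then $v_{\ov{E}}(x_1y_2+x_2y_1)=\min(\alpha_1+\beta_2,\beta_1+\alpha_2)\ge 0$ forces both to be $\ge 0$; if they coincide, then $\alpha_1+\beta_2=\frac12\,((\alpha_1+\alpha_2)+(\beta_1+\beta_2))\ge 0$ directly. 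Either way all four terms are nonnegative, yielding $(\star)$. The final "in particular" assertion is then automatic: every element of $X(b_0)$ has the form $(t^n,t^{-n})M$ with $M$ integral, whence $b_0=\det M\in\OO_{\ov{E}}$, so $X(b_0)=\emptyset$ when $b_0\notin\OO_{\ov{E}}$.
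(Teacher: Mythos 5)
Your proof is correct and is essentially the same elementary valuation argument as the paper's: both come down to showing that the three integrality conditions force all four products $x_1x_2$, $y_1y_2$, $x_1y_2$, $x_2y_1$ to lie in $\OO_{\ov{E}}$, with the cross terms handled by the ultrametric property applied to $x_1y_2+x_2y_1$. The only difference is organizational: the paper first normalizes $x_1$ to a unit using the $(t^n,t^{-n})$-action and then splits into cases on whether $y_1$ is integral, whereas you encode the existence of a suitable $n$ in the single symmetric inequality $(\star)$ and verify it directly.
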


\begin{proof}
Let $(x_1,x_2,y_1,y_2)\in X(b_0)$. 
Since $b_0\neq 0$, we have either $x_1\neq 0$ or $y_1\neq 0$.
Without loss of generality we can assume that $x_1\neq 0$.
Applying some $(t^n,t^{-n})$ we can assume that $x_1\in \OO^*_{\ov{E}}$. Then the condition $x_1x_2\in \OO_{\ov{E}}$ implies that $x_2\in \OO_{\ov{E}}$. 
If $y_1\in \OO_{\ov{E}}$ then $x_2y_1\in \OO_{\ov{E}}$, so we get that $x_1y_2\in \OO_{\ov{E}}$, which implies that $y_2\in \OO_{\ov{E}}$, and we are done.
Otherwise, let $n>0$ be minimal such that $t^ny_1\in \OO_{\ov{E}}$. Then from the condition $y_1y_2\in \OO_{\ov{E}}$ we get $y_2\in t^n\OO_{\ov{E}}$. Hence, 
$x_1y_2\in \OO_{\ov{E}}$, so $x_2y_1\in \OO_{\ov{E}}$, which implies that $x_2\in t^n\OO_{\ov{E}}$. But then
$(t^n,t^{-n})(x,y)=(t^nx_1,t^{-n}x_2,t^ny_1,t^{-n}y_2)$ has integer coefficients.
\end{proof}

\begin{lemma}\label{c0-int-lem-bis}
For $\ov{K}/\ov{E}$ split, we have $\FF_{c_0}^{G(\OO_E)}=0$ unless $c_0\in \OO_{\ov{E}}$.
\end{lemma}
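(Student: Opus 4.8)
The plan is to mimic the proofs of Lemma \ref{c0-int-lem} and Proposition \ref{Hecke-non-split-0-prop}(i): I would first show that $G(\OO_E)$-invariance forces any spherical vector to be supported on the preimage of $X(b_0)$, and then invoke Lemma \ref{X-b0-lem}, which says that $X(b_0)$ is empty as soon as $b_0\notin\OO_{\ov{E}}$.

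First I would record the support condition. Let $\phi\in \FF_{c_0}^{G(\OO_E)}$. In particular $\phi$ is invariant under the subgroup $1+\fg(N\OO_E)\subset G(\OO_E)$, so by Corollary \ref{eta-action-cor} we have $\phi(v)=\psi_E(\tr(\eta(v)A))\,\phi(v)$ for every $A\in\fg(N\OO_E)$. Hence $\phi(v)\neq 0$ forces $\psi_E(\tr(\eta(v)A))=1$ for all such $A$. The key point is that, since $A$ has entries in the square-zero ideal $N\OO_E$, the quantity $\tr(\eta(v)A)$ depends only on the reduction $\eta(\ov{v})\in\fg(\ov{E})$: writing $A=\eps\bar{A}$ with $\bar{A}\in\fg(\OO_{\ov{E}})$ and using $\eps^2=0$, one gets $\tr(\eta(v)A)=\eps\,\tr(\eta(\ov{v})\bar{A})$. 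The character $s\mapsto\psi_E(\eps s)$ on $\ov{E}$ is trivial on $\OO_{\ov{E}}$ but nontrivial on $t^{-1}\OO_{\ov{E}}$ (as $\psi_E$ is trivial on $N\OO_E$ and nontrivial on $t^{-1}N\OO_E$), so it has conductor exactly $\OO_{\ov{E}}$. Since the trace form on $\fg=\mathfrak{sl}_2$ is a perfect pairing over $\OO_{\ov{E}}$, the vanishing of $\psi_E(\eps\,\tr(\eta(\ov{v})\bar{A}))$ for all $\bar{A}\in\fg(\OO_{\ov{E}})$ is equivalent to $\eta(\ov{v})\in\fg(\OO_{\ov{E}})$. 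Thus $\phi$ is supported on the set of $v$ with $\ov{v}\in\ov{Q}(c_0)$ and $\eta(\ov{v})\in\fg(\OO_{\ov{E}})$, which under the identification $Q(c_0)\simeq M(b_0)$ is precisely the preimage of $X(b_0)$ (the defining conditions $x_1x_2,\ y_1y_2,\ x_1y_2+x_2y_1\in\OO_{\ov{E}}$ being exactly the entrywise integrality of $\eta$).

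Finally, when $c_0=(-b_0,b_0)\notin\OO_{\ov{E}}$ we have $b_0\notin\OO_{\ov{E}}$, and in particular $b_0\neq 0$, so Lemma \ref{X-b0-lem} gives $X(b_0)=\emptyset$. Therefore the support of $\phi$ is empty, whence $\phi=0$ and $\FF_{c_0}^{G(\OO_E)}=0$, as claimed. I expect no genuine obstacle here: the argument is a verbatim adaptation of the non-split cases already treated, and the only steps requiring care are the reduction identity $\tr(\eta(v)A)=\eps\,\tr(\eta(\ov{v})\bar{A})$ together with the matching of the resulting integrality condition $\eta(\ov{v})\in\fg(\OO_{\ov{E}})$ with the defining inequalities of $X(b_0)$, both of which are routine.
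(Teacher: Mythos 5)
Your proof is correct and follows essentially the same route as the paper's: invariance under $1+\fg(N\OO_E)$ forces any spherical vector to be supported over $X(b_0)$, and Lemma \ref{X-b0-lem} then shows this set is empty once $b_0\notin\OO_{\ov{E}}$ (hence $b_0\neq 0$). The only difference is that you spell out the duality argument (conductor of $s\mapsto\psi_E(\eps s)$ plus perfectness of the trace form on $\mathfrak{sl}_2$ in characteristic $\neq 2$) behind the support condition, which the paper leaves implicit with ``we immediately see.''
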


\begin{proof} This is proved as in Lemma \ref{c0-int-lem-bis}, using the fact that $\eta(x,y)\in \fg(\OO_{\ov{E}})$ implies that $\det(x,y)\in \OO_{\ov{E}}$,
by Lemma \ref{X-b0-lem}.
\end{proof}

As before, assuming that $b_0\in \OO_{\ov{E}}$, we denote by $\FF_{c_0,\OO}\sub \FF_{c_0}$ the subspace of $\phi$ such that $\phi(x,y)=0$ for
$(\ov{x},\ov{y})\not\in\OO_{\ov{E}}^4$, and use the natural identification of $\FF_{c_0,\OO}$ with $\SS(M(b_0)(\OO_{\ov{E}}))$.

\begin{prop}\label{Hecke-split-0-prop}
Assume $v(b_0)=0$.
Consider the function $\de_{\OO}=\de_{M(b_0)(\OO_{\ov{E}})}\in \FF_{c_0,\OO}\sub \FF_{c_0}$.
Then the space $\FF_{c_0}^{G(\OO_E)}$ has as a basis $((t^n,t^{-n})\de_{\OO})$, where $n\in\Z$.
One has 
$$T_1(t)\de_{\OO}=q^2[(t,t^{-1})\de_{\OO}+(t^{-1},t)\de_{\OO}].
%\frac{1}{q^2+q}[(t,t^{-1})\de_{\OO}+(t^{-1},t)\de_{\OO}].
$$
\end{prop}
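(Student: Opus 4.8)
The plan is to treat the two assertions separately, following closely the pattern of Proposition \ref{Hecke-non-split-1-prop}. For the basis statement I would first argue, exactly as in the proof of Proposition \ref{Hecke-non-split-0-prop}(i), that invariance under $1+\fg(N\OO_E)$ together with Corollary \ref{eta-action-cor} forces any vector in $\FF_{c_0}^{G(\OO_E)}$ to be supported on the locus $\{\ov v:\eta(\ov v)\in\fg(\OO_{\ov E})\}$, which by \eqref{eta-formula-eq} is precisely $X(b_0)$. Lemma \ref{X-b0-lem} then writes $X(b_0)=\bigcup_{n\in\Z}(t^n,t^{-n})M(b_0)(\OO_{\ov E})$, and I would check that this is a disjoint decomposition by noting that $n$ is recovered from a point, viewed as a matrix $N$ with columns $x,y$, as the unique integer with $\diag(t^{-n},t^{n})N\in\GL_2(\OO_{\ov E})$. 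Since $b_0$ is a unit, $M(b_0)(\OO_{\ov E})$ is a free right $\SL_2(\OO_{\ov E})$-torsor (if $\det M=\det M'=b_0$ with integral entries then $M^{-1}M'\in\SL_2(\OO_{\ov E})$), and the $\ov K_1^*$-action commutes with $\SL_2(E)$, so each shell $(t^n,t^{-n})M(b_0)(\OO_{\ov E})$ is a single $\SL_2(\OO_{\ov E})$-orbit. Hence a spherical vector is constant on each shell by transitivity and, having compact support in $\ov Q(c_0)$, is a finite combination of the indicators $(t^n,t^{-n})\de_{\OO}$; these lie in $\FF_{c_0}^{G(\OO_E)}$ and are linearly independent by disjointness, so they form the claimed basis.

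For the Hecke computation I would use \eqref{T-g0-eq} with $g_0=\diag(t,t^{-1})$, giving $T_1(t)\de_{\OO}=|\P^1(\OO_E/(t^2))|\cdot A_{G(\OO_E)}(g_0\de_{\OO})$. As in Section \ref{fin-field-Hecke-sec} the relevant index is the number of lines in $(\OO_E/(t^2))^2$; writing $R:=\OO_E/(t^2)$, which is local with $|R|=q^4$ and maximal ideal of size $q^3$, one gets $|\P^1(R)|=|R|+|\fm_R|=q^4+q^3$. Next I would factor $A_{G(\OO_E)}=A_{\SL_2(\OO_{\ov E})}\circ P$, where $P$ averages over $1+\fg(N\OO_E)$; by Corollary \ref{eta-action-cor}, applying $P$ to $g_0\de_{\OO}$ retains only the part of its support lying over $\{\eta(\ov v)\in\fg(\OO_{\ov E})\}$. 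A short valuation bookkeeping then identifies this support $S$ as the set of $\ov v=(x,y)$ with $x\in t^{-1}\OO_{\ov E}^2$, $x_1x_2\in\OO_{\ov E}$, $y\in t\OO_{\ov E}^2$ and $\det(x,y)=b_0$. Because $b_0$ is a unit while $y\in t\OO_{\ov E}^2$, exactly one of $x_1y_2$, $x_2y_1$ is a unit, and this dichotomy places $S$ inside the disjoint union of the two shells $O_{\mp1}=(t^{\mp1},t^{\pm1})M(b_0)(\OO_{\ov E})$, which are the supports of $(t^{\pm1},t^{\mp1})\de_{\OO}$.

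Finally, as in Proposition \ref{Hecke-non-split-1-prop}, each shell $O_n$ is an $\SL_2(\OO_{\ov E})$-torsor of normalized volume $1$, so $A_{\SL_2(\OO_{\ov E})}\de_{S}=\sum_{n=\pm1}\vol(S\cap O_n)\,\de_{O_n}$. I expect the two volume computations to be the main work. Choosing a convenient base point of each torsor (first row $(1,0)$ for $O_{-1}$, second row $(0,1)$ for $O_{1}$), the extra condition defining $S$ becomes a single entry $g_{12}$ (resp.\ $g_{22}$) of $g\in\SL_2(\OO_{\ov E})$ lying in $t^2\OO_{\ov E}$; reducing modulo $t^2$ and counting in $\SL_2(\OO_{\ov E}/(t^2))$ yields in each case the fraction $(q^2-q)q^2/\big((q^3-q)q^3\big)=1/(q^2+q)$. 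Hence each coefficient equals $(q^4+q^3)/(q^2+q)=q^2$, and by the symmetry between the two shells we obtain $T_1(t)\de_{\OO}=q^2\big[(t,t^{-1})\de_{\OO}+(t^{-1},t)\de_{\OO}\big]$. The most delicate point, precisely as in the nonsplit ramified and unramified cases, is the bookkeeping that pins down $S$ and reduces the volume of $S\cap O_{\pm1}$ to a single congruence modulo $t^2$.
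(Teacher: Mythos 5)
Your proof is correct and takes essentially the same route as the paper's: forcing the support into $X(b_0)$ via $\eta$-integrality and Lemma \ref{X-b0-lem} for the basis claim, then computing $T_1(t)\de_{\OO}$ with the index $q^4+q^3$, averaging over $1+\fg(N\OO_E)$ to cut the support down to the two pieces lying in the shells $(t^{\mp1},t^{\pm1})M(b_0)(\OO_{\ov{E}})$, and evaluating each volume as $1/(q^2+q)$. The only differences are cosmetic: your dichotomy (exactly one of $x_1y_2$, $x_2y_1$ a unit) is equivalent to the paper's (exactly one of $x_1$, $x_2$ of valuation $-1$), and you spell out the torsor base points and the count in $\SL_2(\OO_{\ov{E}}/(t^2))$ where the paper simply records the volume.
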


\begin{proof} First, we observe that any matrix in $M(b_0)(\OO_{\ov{E}})$ is in $\GL_2(\OO_{\ov{E}})$ (since its determinant is invertible). 
Hence, the action of $\SL_2(\OO_{\ov{E}})$ on $M(b_0)(\OO_{\ov{E}})$ is transitive. 
Therefore, any $\SL_2(\OO_{\ov{E}})$-invariant function in $\FF_{c_0,\OO}$, supported on $M(b_0)(\OO_{\ov{E}})$ is proportional to $\de_\OO$.
Since any $\phi\in \FF_{c_0}^{G(\OO_E)}$ is supported on the set of $v$ such that $\ov{v}$ is in $X(b_0)$, the first assertion follows from Lemma \ref{X-b0-lem}.

The function $\diag(t,t^{-1})\de_{\OO}$ is supported on the set of $v\in K^2$ such $\ov{v}=(x_1,x_2,y_1,y_2)\in \ov{E}^2$ satisfies
$$x_i\in t^{-1}\OO_{\ov{E}}, \ \ y_i\in t\OO_{\ov{E}}, \ \ x_1y_2-x_2y_1=b_0.$$
Averaging with respect to the action of $\fg(N\OO_E)$ results in shrinking the support to the locus where $x_1x_2\in \OO_{\ov{E}}$.
Note that we cannot have $x_1,x_2\in \OO_{\ov{E}}$ since this would imply $b_0\in t\OO_{\ov{E}}$.
Hence, our locus is the union of two subsets: 
$$S_1: (x,y)\in M(b_0): x_1\in t^{-1}\OO^*_{\ov{E}}, \ \ x_2\in t\OO_{\ov{E}}, \ \ y_i\in t\OO_{\ov{E}}$$
$$S_2: (x,y)\in M(b_0): x_2\in t^{-1}\OO^*_{\ov{E}}, \ \ x_1\in t\OO_{\ov{E}}, \ \ y_i\in t\OO_{\ov{E}}.$$
We have $(t,t^{-1})S_1\in M(b_0)(\OO_{\ov{E}})$, $(t,t^{-1})S_2\in M(b_0)(\OO_{\ov{E}})$. Hence,
$$T_1(t)\de_{\OO}=(q^4+q^3)\cdot[\vol(S_1)(t^{-1},t)\de_{\OO}+\vol(S_2)(t,t^{-1})\de_\OO].$$
Finally, we have 
$$\vol(S_1)=\vol(S_2)=\vol\{g=\left(\begin{matrix} a & b \\ c & d\end{matrix}\right)\in\SL_2(\OO_{\ov{E}}):\ b\in (t^2)\}=\frac{1}{q^2+q}.$$
\end{proof}

Given a character $\chi:K^*_1\to \C^*$ such that $\chi|_{NK^*_1}(1+c)=\psi_E(cc_0)$, 
let $\FF_\chi$ denote the corresponding space of twisted coinvariants of $\FF$ (the maximal quotient on which $K^*_1$ acts through $\chi$).
Note that $\FF_\chi$ is the quotient of $\FF_{c_0}$.
%For $z\in \C^*$, let $\FF_{c_0,z}^{G(\OO_E)}$ denote the twisted coinvariants space with respect to the action of $(t,t^{-1})$ on $\FF_{c_0}^{G(\OO_E)}$:
%\begin{equation}\label{F-c0-z-def}
%\FF_{c_0,z}^{G(\OO_E)}:=\FF_{c_0}^{G(\OO_E)}/((t,t^{-1})f-z\cdot f).
%\end{equation}

\begin{cor}\label{Hecke-split-0-cor}
Assume $v(b_0)=0$.
Then the space $\FF_\chi^{G(\OO_E)}$ is spanned by the image of $\de_{\OO}$ and
$$T_1(t)\de_{\OO}\equiv q^2(\chi(t,t^{-1})+\chi(t,t^{-1})^{-1})\de_{\OO}$$
%\frac{z+z^{-1}}{q^2+q}\de_{\OO}$$
in $\FF_{\chi}^{G(\OO_E)}$.
\end{cor}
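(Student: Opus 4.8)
The plan is to deduce everything from Proposition \ref{Hecke-split-0-prop} by pushing its conclusions through the quotient map $\FF_{c_0}\to\FF_\chi$. First I would recall that in the split case the residual action used in Proposition \ref{Hecke-split-0-prop} is the action of $\ov{K}^*_1\simeq\ov{E}^*$, and that $\FF_\chi$ is by definition the maximal quotient of $\FF_{c_0}$ on which $K^*_1$ (hence $\ov{K}^*_1$) acts through $\chi$. Consequently, under the quotient map the element $(t,t^{-1})$ acts by the scalar $\chi(t,t^{-1})$, and $(t^{-1},t)=(t,t^{-1})^{-1}$ acts by $\chi(t,t^{-1})^{-1}$.

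For the spanning statement: Proposition \ref{Hecke-split-0-prop} gives that $\FF_{c_0}^{G(\OO_E)}$ has basis $\{(t^n,t^{-n})\de_\OO\}_{n\in\Z}$. Since the $K^*_1$-action commutes with $G(\OO_E)$ and $G(\OO_E)$ is compact (so that passing to $G(\OO_E)$-invariants is exact, realized by the averaging projector), the quotient map is $G(\OO_E)$-equivariant and $\FF_\chi^{G(\OO_E)}$ is precisely the image of $\FF_{c_0}^{G(\OO_E)}$. Each basis vector $(t^n,t^{-n})\de_\OO$ maps to $\chi(t,t^{-1})^n$ times the image of $\de_\OO$, so that image is spanned by the single vector $\de_\OO$, which proves the first assertion.

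For the Hecke formula I would note that $T_1(t)$ is assembled purely from the $\SL_2(E)$-action (left translation by $\diag(t,t^{-1})$ followed by averaging over $G(\OO_E)$), while by Proposition \ref{main-local-rep-prop} the $K^*_1$-action commutes with the whole action of $\SL_2(E)$; hence $T_1(t)$ commutes with $K^*_1$ and descends to $\FF_\chi$. Applying the quotient map to the identity $T_1(t)\de_\OO=q^2[(t,t^{-1})\de_\OO+(t^{-1},t)\de_\OO]$ of Proposition \ref{Hecke-split-0-prop}, and substituting the scalars computed above, yields $T_1(t)\de_\OO\equiv q^2(\chi(t,t^{-1})+\chi(t,t^{-1})^{-1})\de_\OO$ in $\FF_\chi^{G(\OO_E)}$.

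The main (and rather modest) obstacle is the bookkeeping around well-definedness: verifying that the $\chi$-coinvariant quotient is compatible both with taking $G(\OO_E)$-invariants and with the operator $T_1(t)$. Both points rest on the single fact that the $K^*_1$-action commutes with the entire $\SL_2(E)$-action and with the compact group $G(\OO_E)$; once that commutation is invoked, the corollary is a direct specialization of Proposition \ref{Hecke-split-0-prop}, requiring no new computation.
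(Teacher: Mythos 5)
Your proposal is correct and matches the paper's (implicit) argument: the corollary is stated without proof precisely because it follows from Proposition \ref{Hecke-split-0-prop} by passing to the quotient $\FF_{c_0}\to\FF_\chi$, where $(t^n,t^{-n})$ acts by the scalar $\chi(t,t^{-1})^n$. Your care about well-definedness (the $K^*_1$-action commuting with the $\SL_2(E)$-action, hence with $G(\OO_E)$-averaging and with $T_1(t)$, and surjectivity on invariants via the averaging idempotent) is exactly the bookkeeping the paper leaves to the reader.
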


\begin{prop}\label{Hecke-split-1-prop}
Assume that $v(b_0)=1$. 

\noindent
(i) The $\SL_2(\OO_{\ov{E}})$-orbits on $X(b_0)$ are in bijection with $\P^1(k)\times \Z$. Namely, for $\la\in \P^1(k)\setminus \{\infty\}$ we have 
the orbit $O(b_0,\la)\sub M(b_0)(\OO_{\ov{E}})\sub X(b_0)$ consisting
of integer $(x_1,x_2,y_1,y_2)$ such that $x_1\equiv \la x_2 (t)$, $y_1\equiv \la y_2$, while the orbit $O(b_0,\infty)$ is given by $x_2\equiv y_2\equiv 0 (t)$ for $i=1,2$.
%$(x_1,y_1)\mod (t)$ and $(x_2,y_2)\mod (t)$ belong to $L$, and 
All the orbits in $X(b_0)$ are obtained by additionally
applying the action of $(t^n,t^{-n})$ for $n\in\Z$.

\noindent
(ii) The space $\FF_{c_0}^{G(\OO_E)}$ has as a basis $((t^n,t^{-n})\de_{O(b_0,\la)})$, where $n\in \Z$, $\la\in \P^1(k)$. 
%For each $\la\in k$ we consider the line $L_{\la}\sub k^2$ given by the equation $x_1=\la x_2$. We also denote by $L_\infty\sub k^2$ the line given by $x_2=0$.
One has
$$T_1(t)\de_{O(b_0,\la)}=
%\frac{1}{q^2+q}\cdot 
%[(t,t^{-1})\de_{O(b_0,\infty)}+(t^{-1},t)\de_{O(b_0,\infty)}],
q^2\cdot\sum_{\mu\in \P^1(k)\setminus \la}\de_{O(b_0,\mu)},  \ \la\neq 0,\infty,
$$
$$T_1(t)\de_{O(b_0,0)}=
%\frac{1}{q^2+q}
q^2\cdot [\sum_{\mu\in \P^1(k)\setminus 0}\de_{O(b_0,\mu)}+\sum_{\mu\in\P^1(k)\setminus \infty}(t,t^{-1})\de_{O(b_0,\mu)}],
$$
$$T_1(t)\de_{O(b_0,\infty)}=
%\frac{1}{q^2+q}
q^2\cdot [\sum_{\mu\in \P^1(k)\setminus \infty}\de_{O(b_0,\mu)}+
\sum_{\mu\in\P^1(k)\setminus 0}(t^{-1},t)\de_{O(b_0,\mu)}].
$$
\end{prop}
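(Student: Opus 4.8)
The plan is to follow closely the proof of the non-split case, Proposition \ref{Hecke-non-split-1-prop}, adapting the lattice bookkeeping to the split situation. The two ingredients are the classification of the $\SL_2(\OO_{\ov E})$-orbits on $X(b_0)$ in part (i), and an averaging computation for the Hecke operator $T_1(t)$ in part (ii). Throughout I identify $Q(c_0)$ with $M(b_0)$, so that a point $v=(x,y)$ with $\ov v=(x_1,x_2,y_1,y_2)$ corresponds to the matrix $M=\left(\begin{smallmatrix} x_1 & y_1\\ x_2 & y_2\end{smallmatrix}\right)$ of determinant $b_0$; the right $\SL_2(\OO_{\ov E})$-action is right multiplication, while the central action of $(t^n,t^{-n})\in\ov K^*_1$ is left multiplication by $\diag(t^n,t^{-n})$.

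For part (i), I would first use Lemma \ref{X-b0-lem} to reduce to $M(b_0)(\OO_{\ov E})$: every point of $X(b_0)$ is $(t^n,t^{-n})$ applied to an integral matrix, so it suffices to classify integral orbits and then translate. For an integral $M$ with $v_{\ov E}(\det M)=1$, the complete invariant of its right $\SL_2(\OO_{\ov E})$-orbit is the column-span lattice $L(M)\sub\OO_{\ov E}^2$; this is the same argument as in Lemma \ref{v-c-lem}(i) and Proposition \ref{Hecke-non-split-1-prop}(i), namely two matrices with the same column-span lattice and the same determinant differ by right multiplication by an element of $\SL_2(\OO_{\ov E})$. Since $v_{\ov E}(\det M)=1$, we have $L(M)\supset t\OO_{\ov E}^2$ with $\OO_{\ov E}^2/L(M)\simeq \OO_{\ov E}/(t)$, so $L(M)$ is recorded by the line $\ov{L(M)}\sub k^2$, i.e.\ by a slope $\la\in \P^1(k)$; writing out the line gives exactly the congruence description of $O(b_0,\la)$ and $O(b_0,\infty)$. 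Applying the translates $(t^n,t^{-n})$ and invoking Lemma \ref{X-b0-lem} yields the description of all orbits in $X(b_0)$.

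For part (ii), the spherical space is handled as before: by Corollary \ref{eta-action-cor}, invariance under $1+\fg(N\OO_E)$ forces any $\phi\in\FF_{c_0}^{G(\OO_E)}$ to be supported on $\{v:\eta(\ov v)\in\fg(\OO_{\ov E})\}=X(b_0)$, and $\SL_2(\OO_{\ov E})$-invariance forces it to be constant on orbits, so the orbit-indicators $(t^n,t^{-n})\de_{O(b_0,\la)}$ span $\FF_{c_0}^{G(\OO_E)}$. For the Hecke action I would use, as in Proposition \ref{Hecke-non-split-1-prop} (and \eqref{T-g0-eq}), the identity $T_1(t)\de_{O(b_0,\la)}=(q^4+q^3)\,A\,\diag(t,t^{-1})\de_{O(b_0,\la)}$, where $A$ is the $\SL_2(\OO_{\ov E})$-averaging operator. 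Applying $\diag(t,t^{-1})$ spreads the support to $x_i\in t^{-1}\OO_{\ov E}$, $y_i\in t\OO_{\ov E}$, and then averaging over $\fg(N\OO_E)$ (the operator $\Pi$) shrinks it to the locus $x_1x_2\in\OO_{\ov E}$. One then decomposes this locus into $\SL_2(\OO_{\ov E})$-orbits and evaluates each volume $\vol(S\cap O(b_0,\mu))=\tfrac{1}{|\P^1(\OO_{\ov E}/(t^2))|}=\tfrac{1}{q^2+q}$, so that the global constant collapses to $(q^4+q^3)\cdot\tfrac{1}{q^2+q}=q^2$, matching the stated eigenvalue coefficient.

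The main obstacle is the combinatorics of this last decomposition, in particular the asymmetry of the slopes $\la=0$ and $\la=\infty$. For generic $\la$ the modified column-span lattice stays integral (torus-level $n=0$), and a congruence computation as in Proposition \ref{Hecke-non-split-1-prop}(ii) shows $S$ meets each $O(b_0,\mu)$ with $\mu\neq\la$ once, giving $q^2\sum_{\mu\neq\la}\de_{O(b_0,\mu)}$. For $\la=0$ or $\la=\infty$, however, the column scaling by $\diag(t,t^{-1})$ aligns with the row-scaling $\ov K^*_1$-direction, so part of the averaged support lands in a neighbouring torus-level; tracking this carefully (using that $\diag(t,t^{-1})$ shifts the column-span lattice and that the resulting integral representatives acquire a factor $(t,t^{-1})$ or $(t^{-1},t)$) produces the extra sums $\sum_{\mu}(t,t^{-1})\de_{O(b_0,\mu)}$ and $\sum_{\mu}(t^{-1},t)\de_{O(b_0,\mu)}$ in the two exceptional formulas. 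This bookkeeping is delicate precisely because the same orbit admits two torus-level labels (e.g.\ $(t,t^{-1})O(b_0,\infty)=O(b_0,0)$), so one must be careful when expressing the answer in the redundant $\P^1(k)\times\Z$ indexing.
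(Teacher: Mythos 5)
Your plan follows the paper's proof essentially step for step: part (i) via the column-span lattice as the complete invariant of a right $\SL_2(\OO_{\ov{E}})$-orbit among matrices of fixed determinant, with Lemma \ref{X-b0-lem} used to reduce to integral matrices; part (ii) via $T_1(t)=(q^4+q^3)\,A\,\diag(t,t^{-1})$, the $\fg(N\OO_E)$-average shrinking the support to the locus $x_1x_2\in\OO_{\ov{E}}$, the split of that locus into an integral piece and a polar piece (the paper's $S_0(\la)$ and $S_1(\la)$, with $S_1(\la)$ nonempty only for $\la\in\{0,\infty\}$), and the volume count $\vol=|\P^1(\OO_{\ov{E}}/(t^2))|^{-1}=(q^2+q)^{-1}$ collapsing the constant to $q^2$. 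Those details you do spell out are correct (one small point: after applying $\diag(t,t^{-1})$ the determinant of the support is still exactly $b_0$; the paper's ``$x_1y_2-x_2y_1=b_0t$'' is a typo).

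The serious issue is that your closing parenthetical is not ``delicate bookkeeping'': it is a true statement that refutes the proposition as literally stated, so your proposal is internally inconsistent. Indeed $(t,t^{-1})O(b_0,\infty)=O(b_0,0)$ holds: left multiplication by $\diag(t,t^{-1})$ sends $v_\infty=\left(\begin{matrix} 1 & 0\\ 0 & b_0\end{matrix}\right)$ to $\left(\begin{matrix} t & 0\\ 0 & b_0/t\end{matrix}\right)$, which is integral of determinant $b_0$ with column-span lattice $t\OO_{\ov{E}}\oplus\OO_{\ov{E}}=L_0$; since the left $\ov{K}^*_1$-action commutes with the right $\SL_2(\OO_{\ov{E}})$-action and each $O(b_0,\la)$ is a single right orbit, the two orbits coincide. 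Hence the labeling $\P^1(k)\times\Z\to\{\text{orbits}\}$ is surjective but identifies $(0,n)$ with $(\infty,n+1)$, and correspondingly $(t^{n+1},t^{-(n+1)})\de_{O(b_0,\infty)}=(t^n,t^{-n})\de_{O(b_0,0)}$ (up to the sign convention in the $\ov{K}^*_1$-action): the claimed ``bijection'' in (i) and ``basis'' in (ii) fail; one only gets a surjection and a spanning set with exactly these repetitions. You cannot both assert the coincidence and prove the statement as written; the honest conclusion is that the statement must be amended (orbits are in bijection with, say, $(\P^1(k)\setminus\{\infty\})\times\Z$). Note this is not a defect of your argument relative to the paper: the paper's own proof has the same gap, deducing from Lemma \ref{X-b0-lem} that every orbit is some $(t^n,t^{-n})O(b_0,\la)$ but never checking that these are pairwise distinct (they are not). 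The three displayed Hecke formulas survive the correction --- all terms occurring in them remain distinct functions after the identification, and the $\la=0$ and $\la=\infty$ formulas are exchanged by the torus element, consistently with $T_1(t)$ commuting with the $\ov{K}^*_1$-action --- but when you write up the exceptional cases you must fix one convention for whether the shifted terms carry $(t,t^{-1})$ or $(t^{-1},t)$ and use it consistently, since with the declared convention $(\a f)(v)=f(\a v)$ the support of $\a f$ is $\a^{-1}\cdot\mathrm{supp}(f)$, and this is easy to get backwards.
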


\begin{proof}
(i) For $v=(x,y)\in M(b_0)(\OO_{\ov{E}})$, the column vectors $x\mod (t)$ and $y\mod (t)$ are proportional and not both zero.
Hence, they span a line $L=L_\la\sub k^2$. This line determines the sublattice in $\OO_{\ov{E}}^2$ spanned by $x$ and $y$. Hence,
$\SL_2(\OO_{\ov{E}})$ acts transitively on $O(b_0,\la)$.

Each orbit $O(b_0,\la)$ is nonempty since we have the following representatives $v_\la\in O(b_0,\la)$:
$$v_{\la}:=\left(\begin{matrix} \wt{\la} & -b_0\\ 1 & 0\end{matrix}\right), \ \la\neq\infty,$$
$$v_\infty:=\left(\begin{matrix} 1 & 0\\ 0 & b_0\end{matrix}\right),$$
where $\wt{\la}\in \OO_{\ov{E}}$ is a representative of $\la$.

%Pick $x=(x_1,x_2)\in \OO_{\ov{E}}^2$ such that $(x_1,x_2)\mod (t)$ is a generator of $L$.
%Then there exists $y=(y'_1,y'_2)\in \OO_{\ov{E}}^2$ such that $\det(x,y')=b_0/t$. Hence, setting $y=b_0y'$ we get $(x,y)\in O(b_0,L)$.

Now the assertion follows from Lemma \ref{X-b0-lem}.

\noindent
(ii) The first assertion follows from (i). 

Assume first that $\la\in k$. Then $T_1(t)\de_{O(b_0,\la)}$ is supported on the set of $v\in M(b_0)$ such that $\ov{v}=(x_1,x_2,y_1,y_2)$ satisfies
$$x_1y_2-x_2y_1=b_0t, \ \ x_i\in t^{-1}\OO_{\ov{E}}, \ \ y_i\in t\OO_{\ov{E}}, \ \ x_1x_2\in \OO_{\ov{E}}, \ \ x_1\equiv \la x_2 (\OO), \ \ y_1\equiv \la y_2 (t^2\OO).$$
%This is possible only if $x_1,x_2\in \OO_{\ov{E}}$.
This set is the disjoint union of two subsets $S_0(\la)$ and $S_1(\la)$, where for $S_0(\la)$ (resp., $S_1(\la)$) we additionally have $x_1,x_2\in \OO_{\ov{E}}$
(resp., $x_2\not\in \OO_{\ov{E}}$, $x_1\in t\OO_{\ov{E}}$).

For $\mu\neq \infty$, the intersection $S_0(\la)\cap O(b_0,\mu)$ consists of $(x,y)$ such that 
$$x_1\equiv \mu x_2 (t), \ \ y_1\equiv y_2\equiv 0 (t), \ \ y_1\equiv \la y_2 (t^2).$$
In other words, $v_\mu\cdot \left(\begin{matrix} a & b \\ c & d\end{matrix}\right)\in S_0(\la)\cap O(b_0,\mu)$ if and only if $b\equiv 0 (t)$ and
$$(\mu-\la)b\equiv b_0d (t^2).$$
Since $d\in \OO^*_{\ov{E}}$, this is possible only if $\mu\neq \la$, and the volume of this intersection is $(q^2+q)^{-1}$ (since $|\P^1(\OO_{\ov{E}}/(t^2))|=q^2+q$).
Similarly, the intersection $S_0(\la)\cap O(b_0,\infty)$ consists of $(x,y)$ such that 
$$x_2\equiv 0, \ \ y_1\equiv y_2\equiv 0 (t), \ \ y_1\equiv \la y_2 (t^2).$$
Hence, $v_\infty\cdot \left(\begin{matrix} a & b \\ c & d\end{matrix}\right)\in S_0(\la)\cap O(b_0,\infty)$ if and only if $b\equiv 0 (t)$ and
$$b\equiv \la b_0 d (t^2).$$
This intersection again has volume $(q^2+q)^{-1}$.

%$v_\infty\cdot \left(\begin{matrix} a & b \\ c & d\end{matrix}\right)$ such that
%$b\in t\OO_{\ov{E}}$ and 
%$$\frac{b}{t}\equiv \la \frac{b_0}{t}d \mod t\OO_{\ov{E}},$$
%which still has volume $(q^2+q)^{-1}$.

Next, $(t^{-1},t)S_1(\la)$ is nonempty only for $\la=0$ and consists of $(x',y')\in M(b_0)(\OO_{\ov{E}})$ such that 
$$y'_1\equiv 0 (t), \ \ y'_2\equiv 0 (t^2).$$
Hence, for $\mu\neq \infty$, one has $v_\mu\cdot \left(\begin{matrix} a & b \\ c & d\end{matrix}\right)\in (t^{-1},t)S_1(0)\cap O(b_0,\mu)$ if and only if $b\equiv 0 (t^2)$,
while the intersection $(t^{-1},t)S_1(0)\cap O(b_0,\infty)$ is empty.
% \ \ x'_1\equiv \la\frac{y'_1}{t^2} (t), \ \ 0\equiv \la y'_2 (t).$$
%Hence, for $\la\neq 0$ we have $(t,t^{-1})S_1\sub O(b_0,\infty)$ is a subset consisting of $v_\infty\cdot \left(\begin{matrix} a & b \\ c & d\end{matrix}\right)$
%such that $c\equiv (\la b_0/t)^{-1} ta (t^2)$.
%On the other hand, for $\la=0$ we have $(t,t^{-1})S_1\cap O(b_0,\infty)=\emptyset$, while for $\mu\neq \infty$,
%$(t,t^{-1})S_1\cap O(b_0,\mu)$ consists of $v_\mu\cdot \left(\begin{matrix} a & b \\ c & d\end{matrix}\right)$ such that
%$a\in t^2\OO_{\ov{E}}$.
%This gives the coefficients of $(t^{-1},t)\de_{O(b_0,\mu)}$ in $T_1\de_{O(b_0,\la)}$. The coefficients of $(t,t^{-1})\de_{O(b_0,\mu)}$ are computed similarly.
These calculations give the formula for $T_1(t)\de_{O(b_0,\la)}$ for $\la\neq \infty$.

Next, we consider the case $\la=\infty$. The function $T_1(t)\de_{O(b_0,\infty)}$ is supported on the set of $v\in M(b_0)$ such that $\ov{v}=(x,y)$ satisfies
$$x_1y_2-x_2y_1=b_0t, \ \ x_1\in t^{-1}\OO_{\ov{E}}, \ \ x_2\in \OO,  \ \ x_1x_2\in \OO_{\ov{E}}, \ \ y_1\in t\OO_{\ov{E}}, \ \ y_2\in t^2\OO_{\ov{E}}.$$
This set is the disjoint union of two subsets $S_0$ and $S_1$, where for $S_0$ (resp., $S_1$) we additionally have $x_1\in \OO_{\ov{E}}$
(resp., $x_1\not\in \OO_{\ov{E}}$, $x_2\in t\OO_{\ov{E}}$).

For $\mu\neq\infty$, we have
 $v_\mu\cdot \left(\begin{matrix} a & b \\ c & d\end{matrix}\right)\in S_0\cap O(b_0,\mu)$ if and only if $b\equiv 0 (t^2)$.
On the other hand, the intersection $S_0\cap O(b_0,\infty)$ is empty.

Finally, $(t,t^{-1})S_1$ consists of $(x',y')\in M(b_0)(\OO_{\ov{E}})$ such that
$$y'_1\equiv 0 (t^2), \ \ y'_2\equiv 0 (t).$$ 
Hence, for $\mu\neq \infty$, one has $v_\mu\cdot \left(\begin{matrix} a & b \\ c & d\end{matrix}\right)\in (t,t^{-1})S_1\cap O(b_0,\mu)$ if and only if
$b\equiv 0(t)$ and $\mu b\equiv b_0d (t^2)$. This is possible only for $\mu\neq 0$ and the corresponding subset has measure $(q^2+q)^{-1}$.
On the other hand, $v_\infty\cdot \left(\begin{matrix} a & b \\ c & d\end{matrix}\right)\in S_1\cap O(b_0,\infty)$ if and only if $b\equiv 0 (t^2)$.
This implies the formula for $T_1(t)\de_{O(b_0,\infty)}$.
\end{proof}

For $z\in \C^*$ with $|z|=1$, set
$$a=a(z,q):=z+z^{-1}+q-2\in \R.$$
Let $\la_1,\la_2$ be two distinct real roots of the equation 
$$\la^2-a\la-q-a=0.$$

Let $\chi:K_1^*\to U(1)$ be a character such that $\chi|_{NK^*_1}(1+c)=\psi_E(cc_0)$, 
In the corollary below we diagonalize the action of the Hecke operator $T_1$ on the space
$\FF_{\chi}^{G(\OO_E)}$ 
%(see \eqref{F-c0-z-def})

\begin{cor}
Assume that $v(b_0)=1$. 
Consider the operator $\wt{T}_1:=q^{-2}T_1(t)$ (which does not depend on $t$) 
%$\wt{T}_1:=(q^2+q)T_1$ 
acting on the space $\FF_{\chi}^{G(\OO_E)}$. Let $z:=\chi(t,t^{-1})$, and let $\la_1,\la_2$ be the real numbers defined above.
Set 
$$f_0=\sum_{\la\neq \infty} \de_{O(b_0,\la)}, \ \ f_\infty=\sum_{\la\neq 0} \de_{O(b_0,\la)},$$
$$v_0=(z-1)\de_\OO-zf_0+f_\infty, \ \ v_i=-\de_\OO+(1-\frac{\la_i}{z})^{-1}f_0+(1-\la_i z)^{-1}f_\infty, \ i=1,2,$$
$$\wt{\de}_{O(b_0,\la)}=\de_{O(b_0,\la)}-\frac{1}{q-1}\sum_{\mu\neq 0,\infty}\de_{O(b_0,\la)}, \ \ \la\neq 0,\infty.$$
Then the space $\FF_{\chi}^{G(\OO_E)}$ is spanned by the vectors $v_0$, $v_1$, $v_2$, $(\wt{\de}_{O(b_0,\la)})_{\la\in\P^1(k)\setminus \{0,\infty\}}$,
subject to the linear relation $\sum_{\la\neq 0,\infty} \wt{\de}_{O(b_0,\la)}=0$, and  $\wt{T}_1$ acts by
$$\wt{T}_1v_0=0, \ \ \wt{T}_1v_i=\la_i v_i, \ i=1,2,$$
$$\wt{T}_1\wt{\de}_{O(b_0,\la)}=-\wt{\de}_{O(b_0,\la)}, \ \ \la\neq 0,\infty.$$
\end{cor}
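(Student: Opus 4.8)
The plan is to reduce the corollary to an explicit eigenvalue problem on the finite-dimensional space $\FF_\chi^{G(\OO_E)}$ and then diagonalize a single $3\times 3$ matrix, after first splitting off a large "bulk" eigenspace. First I would identify the space and its spanning set. By Proposition \ref{Hecke-split-1-prop}(ii), $\FF_{c_0}^{G(\OO_E)}$ has basis $\{(t^n,t^{-n})\de_{O(b_0,\la)}: n\in\Z,\ \la\in\P^1(k)\}$. Since the $K^*_1$-action commutes with $\SL_2(E)$ (Proposition \ref{main-local-rep-prop}), taking $G(\OO_E)$-invariants and passing to the $\chi$-coinvariants $\FF_\chi$ commute, and in $\FF_\chi$ the element $(t,t^{-1})$ acts by the scalar $z=\chi(t,t^{-1})$. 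As the $\Z$-action on orbit labels is free, the images of $\{\de_{O(b_0,\la)}:\la\in\P^1(k)\}$ form a basis, so $\dim\FF_\chi^{G(\OO_E)}=q+1$. Writing $\de_\la$ for $\de_{O(b_0,\la)}$ and $\Sigma=\sum_\la\de_\la=\de_\OO$, the formulas of Proposition \ref{Hecke-split-1-prop}(ii), after the substitutions $(t,t^{-1})\mapsto z$, $(t^{-1},t)\mapsto z^{-1}$ and division by $q^2$, become $\wt{T}_1\de_\la=\Sigma-\de_\la$ for $\la\neq 0,\infty$, together with
\[
\wt{T}_1\de_0=(1+z)\Sigma-\de_0-z\de_\infty,\qquad \wt{T}_1\de_\infty=(1+z^{-1})\Sigma-\de_\infty-z^{-1}\de_0.
\]

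Next I would peel off the eigenvalue $-1$ part. Let $W$ be the subspace of $\sum_{\la\neq 0,\infty}c_\la\de_\la$ with $\sum c_\la=0$. For $w\in W$ one has $\wt{T}_1 w=(\sum c_\la)\Sigma-w=-w$, so $W$ is a $(q-2)$-dimensional eigenspace with eigenvalue $-1$. The vectors $\wt{\de}_{O(b_0,\la)}=\de_{O(b_0,\la)}-\frac{1}{q-1}\sum_{\mu\neq 0,\infty}\de_{O(b_0,\mu)}$ are precisely the projections of the $\de_\la$ ($\la\neq 0,\infty$) into $W$, they satisfy the single relation $\sum_{\la\neq 0,\infty}\wt{\de}_{O(b_0,\la)}=0$, and hence realize this eigenspace. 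This already accounts for the last family of eigenvectors in the statement.

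On the complementary invariant subspace $U=\langle \de_0,\de_\infty,s\rangle$ with $s=\sum_{\la\neq 0,\infty}\de_\la$, the operator $\wt{T}_1$ has matrix
\[
M=\begin{pmatrix} z & 1 & q-1\\ 1 & z^{-1} & q-1\\ 1+z & 1+z^{-1} & q-2\end{pmatrix}
\]
in the basis $(\de_0,\de_\infty,s)$. I would then compute the three invariants of $M$: $\det M=0$, $\tr M=z+z^{-1}+q-2=a$, and the sum of the principal $2\times 2$ minors equals $-(q+a)$. Consequently the characteristic polynomial factors as $\la\,(\la^2-a\la-(q+a))$, so the eigenvalues on $U$ are $0,\la_1,\la_2$ with $\la_1,\la_2$ the roots of $\la^2-a\la-q-a=0$, as claimed. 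The kernel is spanned by $-\de_0+z\de_\infty$, which, rewritten through $\de_\OO,f_0,f_\infty$, is exactly $v_0$ (and indeed $\wt{T}_1 v_0=0$ is a direct check).

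Finally I would produce the eigenvectors for $\la_1,\la_2$ by solving the homogeneous system $(M-\la_i I)\vec c=0$ via Cramer's rule, then re-expressing the solution in the $(\de_\OO,f_0,f_\infty)$-frame and simplifying the coefficients using the defining relation $\la_i^2=a\la_i+(q+a)$ to bring them to the stated closed form $v_i$. Assembling $v_0,v_1,v_2$ with the $(q-2)$-dimensional family $\{\wt{\de}_{O(b_0,\la)}\}$ subject to their one relation gives $3+(q-1)-1=q+1$ vectors, matching $\dim\FF_\chi^{G(\OO_E)}$, and the eigenvalues are as listed. The main obstacle is this last step: the reduction of the Cramer output to the compact resolvent-type expressions involving $(1-\la_i/z)^{-1}$ and $(1-\la_i z)^{-1}$ is where the quadratic relation must be invoked carefully, and one must also keep precise track of the coinvariant bookkeeping so that $(t^{\pm1},t^{\mp1})$ really contribute the scalars $z^{\pm1}$; the earlier steps (the $-1$-eigenspace and the characteristic polynomial) are essentially formal.
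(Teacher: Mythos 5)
Your reduction to the $\chi$-coinvariants, the resulting action formulas, the splitting off of the $(-1)$-eigenspace spanned by the $\wt{\de}_{O(b_0,\la)}$, the matrix $M$ in the frame $(\de_{O(b_0,0)},\de_{O(b_0,\infty)},s)$, its invariants ($\det M=0$, $\tr M=a$, second elementary symmetric function $-(q+a)$), the factorization of the characteristic polynomial as $\la(\la^2-a\la-(q+a))$, and the identification of the kernel with $v_0=-\de_{O(b_0,0)}+z\de_{O(b_0,\infty)}$ are all correct; since the paper states this corollary without proof, this linear-algebra reduction of Proposition \ref{Hecke-split-1-prop} is certainly the intended argument. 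The genuine gap is in the one step you defer: the assertion that solving $(M-\la_i I)\vec c=0$ and simplifying via $\la_i^2=a\la_i+(q+a)$ ``brings the coefficients to the stated closed form $v_i$.'' It does not. Work in the frame $(\de_\OO,f_0,f_\infty)$, where your formulas give $\wt{T}_1\de_\OO=q\de_\OO+zf_0+z^{-1}f_\infty$, $\wt{T}_1f_0=q\de_\OO+(z-1)f_0$, $\wt{T}_1f_\infty=q\de_\OO+(z^{-1}-1)f_\infty$. For $-\de_\OO+Af_0+Bf_\infty$ to be a $\la_i$-eigenvector, the $f_0$-component forces $-z+A(z-1)=\la_i A$, i.e.\ $A=z/(z-1-\la_i)$, and the $f_\infty$-component forces $B=z^{-1}/(z^{-1}-1-\la_i)$; only the $\de_\OO$-component invokes the quadratic relation. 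Hence the eigenvectors actually produced by your (correct) matrix are
$$v_i'=-\de_\OO+\Bigl(1-\frac{1+\la_i}{z}\Bigr)^{-1}f_0+\bigl(1-(1+\la_i)z\bigr)^{-1}f_\infty,$$
which differ from the corollary's $v_i$ by the substitution $\la_i\mapsto 1+\la_i$ inside the two inverted factors. A numerical check ($q=3$, $z=1$, so $\la_i^2=3\la_i+6$) confirms the printed $v_i$, proportional to $(\la_i,\la_i,1+\la_i)$ in the $(\de_0,\de_\infty,s)$-frame, is not an eigenvector of $M$.

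So an honest execution of your plan does not terminate at the stated formulas: either the corollary as printed is inconsistent with Proposition \ref{Hecke-split-1-prop} (the eigenvalues, $v_0$, and the $-1$-eigenspace all check out, but $v_1,v_2$ need $1+\la_i$ in place of $\la_i$), and your write-up should flag this rather than claim the simplification works, or the proposition's Hecke formulas would have to be modified, which would in turn change the characteristic polynomial you computed. As written, your proof asserts exactly the simplification that fails, so it cannot stand without correction. A secondary, smaller point: you should say a word about why taking $G(\OO_E)$-invariants commutes with passing to $\chi$-coinvariants and why the images of the $\de_{O(b_0,\la)}$, $\la\in\P^1(k)$, remain linearly independent there; your appeal to the freeness of the $\Z$-action on orbit labels is the right idea but is doing real work and deserves an explicit argument.
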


\subsection{Global picture}\label{nilp-global-sec}

\subsubsection{Global setup and global representations}\label{global-setup-sec}

Let $C$ be a smooth proper curve over $A=k[\eps]/(\eps^2)$, where $k$ is a finite field. We assume that the characteristic of $k$ is $\neq 2$. 
We denote by $F$ the stalk of the structure sheaf at the general point of $C$ (this is an $A$-algebra, which is a nilpotent extension of a global field
$F_0=k(C_0)$, where $C_0=C\times \Spec(k)$).
For every closed point $p\in C_0$, the completion $\hat{\OO}_{C,p}$ of $\OO_{C,p}$ is a square-zero extension of $\hat{\OO}_{C_0,p}$ of the kind
considered in Sec. \ref{local-setup-sec}. We denote by $F_p$ the total ring of fractions of $\hat{\OO}_{C,p}$.
We have natural maps $F\to F_p$. 

Given $\eta\in \om_{C/A}(F_p)=\om_{C/A}(F)\ot_F F_p$, we have a natural residue $\Res_p(\eta)\in A$, so that the analog of the residue theorem holds: 
for $\eta\in \om_{C/A}(F)$, the sum of residues of $\eta$ over all $p\in C_0$ is zero (see \cite[Sec.\ VII.1]{Hart-RD}).
%Namely, we can use an isomorphism $F_p\simeq A[\![t]\!]$???
%Residue theorem???

Let us fix a nontrivial additive character $\psi$ of $k$.
Let $\tau:A\to k$ be the map $a+b\eps\mapsto b$, and let us set
$$\psi_A:=\psi\circ \tau:A\to \C^*.$$ 

%\noindent
%{\bf Adelic construction of the function on $\Bun$.}
%Assume for simplicity that $C=\Spec A\times_{\Spec k} \ov{C}$, where $\ov{C}$ is a curve over $k$???
%Assume for simplicity that the genus of $C$ is $1$, so we don't have to worry about the canonical class.
For each $p$ we denote by $\psi_{F_p}$ the additive character of $\om_{C/A}(F_p)$ (trivial on $\om_{C/A}\ot \hat{\OO}_{C,p}$) given by
$$\psi_{F_p}(\eta)=\psi_A(\Res_p(\eta)).$$
By the residue theorem, we get a well defined character
$$\psi_C: \om_{C/A}(\A_C)/\om_{C/A}(F)\to \C^*: (\eta_p)\mapsto \prod_p\psi_{F_p}(\eta_p)=\psi_A(\sum_p \Res_p(\eta_p)).$$
Note that for $(\eta_{0p})\in \om_{C_0}(\A_{C_0})$, one has 
$$\psi_{C_0}((\eta_{0p})):=\psi_C((\eps\eta_{0p}))=\psi(\sum_p \Res_p(\eta_{0p})).$$

%Then by the residue theorem, for any $x\in F$ one has $\prod \psi_{F_p}(x)=1$.

Now we fix a double covering 
$$\pi:\wt{C}\to C$$ 
of smooth curves over $A$ (so $\pi$ is a flat finite morphism such that 
$\pi_*\OO_{\wt{C}}$ is locally free of rank $2$ on $C$). We denote by $\wt{C}_0\to C_0$ the corresponding map of curves over $k$.
We denote by $\iota$ the corresponding involution on $\wt{C}$, which also acts on functions and on differentials on $\wt{C}$, so that
$\iota(f)=\Tr(f)-f$.
Then the canonical trace morphism $\pi_*\om_{\wt{C}/A}\to \om_{C/A}$ induces an isomorphism 
$$\tau:(\pi_*\om_{\wt{C}/A})^+\rTo{\sim} \om_{C/A},$$
where in the left-hand side we consider $\iota$-invariant part.
Note that if $u$ is a formal parameter on $\wt{C}$ near a ramification point $p\in\wt{C}$, such that $u^2$ is a parameter on $C$,
then 
$$\tau(f(u^2)udu)=f(u^2)d(u^2).$$

We let $F\sub K$ (resp., $F_0\sub K_0$) be the corresponding extension of $k[\eps]/(\eps^2)$-algebras of stalks of $C$ and $\wt{C}$ at general points
(resp., of fields of rational functions on $C_0$ and $\wt{C}_0$).

Let us also fix a pair $(\LL,\a)$, where $\LL$ is a line bundle on $\wt{C}$ and 
$$\a:\Nm(\LL)|_U\rTo{\sim} \om_C|_U$$
is a generic isomorphism of line bundles on $C$ (i.e., isomorphism on some dense open subset $U\sub C$; we do not consider $U$ as part of the data).
For each point $p\in C$, we have the corresponding local representation of $\SL_2(F_p)$ on $\SS(\LL_p\ot_{\OO_{C,p}}K_p))$ 
given by formulas of Proposition \ref{main-local-rep-prop},
where we use the character $\psi_{F_p}$ and the norm map $\a\Nm:\LL(K_p)\to \om_C(F_p)$, and the $\om_C(F_p)$-valued pairing 
$\lan x,y\ran=\Nm(x+y)-\Nm(x)-\Nm(y)$ on $\LL(K_p)$ (see Remark \ref{Nm-rescaling-rem}).
Here and below we often omit $\a$ from the notation.

Furthermore, the same formulas as in the local case define a representation $r$ of $\SL_2(\A_C)$ on $\SS(\LL(\A_{\wt{C}}))$ and a commuting action of $\A_{\wt{C},1}^*$ (the
kernel of the norm homomorphism). 

\begin{lemma} We have a well defined $\SL_2(F)\times K^*_1$-invariant functional $\th$ on $\SS(\LL(\A_{\wt{C}}))$, given by
$$\th(\Phi)=\sum_{x\in \LL(K)}\Phi(x).$$
The map $\Phi\mapsto (g\mapsto \th(g\cdot \Phi))$ gives a well defined map of $\SL_2(\A_C)$-representations
\begin{equation}\label{kappa-L-eq}
\kappa_{\LL}:\SS(\LL(\A_{\wt{C}}))\to \SS(\SL_2(F)\backslash\SL_2(\A_C)).
\end{equation}
\end{lemma}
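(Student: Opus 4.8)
The plan is to follow the template of the finite-field computation in Section~\ref{fin-cusp-sec}, replacing the residue and duality statements for $C_0$ by their relative versions over $A$. First I would check that $\th$ is well defined: since $\LL(K)$ embeds as a discrete cocompact subgroup of $\LL(\A_{\wt{C}})$ and every $\Phi\in\SS(\LL(\A_{\wt{C}}))$ has compact support, the set $\LL(K)\cap\supp\Phi$ is finite, so only finitely many terms in $\th(\Phi)$ are nonzero. For the $\SL_2(F)$-invariance I would use that, since $F$ is a (square-zero, hence local Artinian) local ring, $\SL_2(F)$ is generated by elementary matrices together with the Weyl element; thus it suffices to verify $\th(r(g)\Phi)=\th(\Phi)$ for the three generator types $\diag(a,a^{-1})$, $\left(\begin{smallmatrix}1&z\\0&1\end{smallmatrix}\right)$ and $\left(\begin{smallmatrix}0&1\\-1&0\end{smallmatrix}\right)$ with $a\in F^*$, $z\in F$, using the explicit local formulas of Proposition~\ref{main-local-rep-prop} assembled over all $p$.

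Each verification reduces to one global identity, exactly as in Section~\ref{fin-cusp-sec}. For the diagonal generator, $x\mapsto ax$ permutes $\LL(K)$, so $\th(r(\diag(a,a^{-1}))\Phi)=\big(\prod_p|a|_{F_p}\big)\th(\Phi)$; the essential simplification over the finite-field case is the absence of the factor $\vartheta(a)$ (see Remark~\ref{Nm-rescaling-rem} and the comment after Proposition~\ref{main-local-rep-prop}), and $\prod_p|a|_{F_p}=\big(\prod_p q_p^{-v_p(\ov a)}\big)^2=1$ by the product formula on $C_0$ applied to $\ov a\in F_0^*$. For the upper unipotent, for each $x\in\LL(K)$ the element $z\cdot\Nm(x)$ lies in $\om_{C/A}(F)$, so $\psi_C(z\Nm(x))=\psi_A(\sum_p\Res_p(z\Nm(x)))=1$ by the residue theorem over $A$ (\cite[Sec.~VII.1]{Hart-RD}), leaving the sum unchanged. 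For the Weyl element I would invoke the global Poisson summation formula $\sum_{x\in\LL(K)}\widehat{\Phi}(x)=\sum_{x\in\LL(K)}\Phi(x)$ together with the fact that $\iota$ permutes $\LL(K)$.

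The $K^*_1$-invariance is immediate, since for $a\in K^*_1$ multiplication by $a$ is a bijection of $\LL(K)$, whence $\th(a\cdot\Phi)=\th(\Phi)$. Granting invariance of $\th$, the map $\kappa_{\LL}$ descends to $\SL_2(F)\backslash\SL_2(\A_C)$ because $\kappa_{\LL}(\Phi)(\gamma g)=\th(\gamma g\cdot\Phi)=\th(g\cdot\Phi)$ for $\gamma\in\SL_2(F)$, and it is $\SL_2(\A_C)$-equivariant by the tautological computation $\kappa_{\LL}(h\cdot\Phi)(g)=\th(gh\cdot\Phi)=\kappa_{\LL}(\Phi)(gh)$. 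Local constancy of $\kappa_{\LL}(\Phi)$ follows from $\Phi$ being fixed by some open compact subgroup, which makes $g\mapsto\th(g\cdot\Phi)$ right-invariant under it; the remaining support condition placing $\kappa_{\LL}(\Phi)$ in the space $\SS(\SL_2(F)\backslash\SL_2(\A_C))$ I would deduce from the orbit-decomposition framework of \cite[Sec.~3]{BKP-aut}.

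I expect the main obstacle to be the Poisson summation step, namely proving that $\LL(K)$ is its own orthogonal complement in $\LL(\A_{\wt{C}})$ with respect to the $U(1)$-valued pairing $\psi_C(\Tr(x\iota(y)))$ and that the self-dual measure normalized by $\vol(\hat{\LL})=1$ yields covolume one. This is the relative-over-$A$ analogue of the self-duality of $k(\wt{C})$ under the residue pairing used in Section~\ref{fin-cusp-sec}: I would prove it by combining the local self-duality $\widehat{\de_{\hat{\LL}_p}}=\de_{\hat{\LL}_p}$ with the relative Serre duality for $\pi:\wt{C}\to C$ over $\Spec(A)$, using the trace isomorphism $\tau:(\pi_*\om_{\wt{C}/A})^+\rTo{\sim}\om_{C/A}$ and the isomorphism $\Nm(\LL)\simeq\om_C$ to identify the annihilator of $\LL(K)$ with $\LL(K)$ itself.
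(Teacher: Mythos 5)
Your overall architecture is the right one: the paper states this lemma without proof, clearly intending the reader to transpose the three bullet points of the finite-field argument in Section \ref{fin-cusp-sec}, and that is exactly what you do. The well-definedness, the diagonal step (including the correct observation that no $\vartheta$-factor appears, only the product formula for $|\ov a|$), the unipotent step via the residue theorem over $A$, the $K^*_1$-invariance, and the descent/equivariance are all fine. The problems are in the two steps you yourself flag as delicate. For the Poisson step, your plan rests on the local self-duality $\widehat{\de_{\hat\LL_p}}=\de_{\hat\LL_p}$ at every place, and this is \emph{false} at the ramification points of $\pi:\wt C\to C$ --- which are unavoidable here, since the coverings of interest (Theorem \ref{main-thm2}, Proposition \ref{orbits-correspondence-prop}) are ramified. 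At a tamely ramified $p$ (with $\a$ an isomorphism near $p$), the annihilator of $\hat\LL_p$ under $\psi_{F_p}(\a\Tr(x\iota(y)))$ is $\mathfrak{d}_p^{-1}\hat\LL_p$ in both the horizontal and the $\eps$-direction, where $\mathfrak{d}_p$ is the different (of valuation one); hence $\widehat{\de_{\hat\LL_p}}=\de_{\mathfrak{d}_p^{-1}\hat\LL_p}$, and with the normalization $\vol(\hat\LL_p)=1$ the global covolume is $\vol(\LL(\A_{\wt C})/\LL(K))=q^{-2\chi(\wt C_0,\LL_0)}=q^{\deg R}\neq 1$ by Riemann--Hurwitz ($R$ the ramification divisor). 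What is true, and what your argument actually needs, is: (i) the annihilator of the \emph{rational} lattice $\LL(K)$ is $\LL(K)$ --- this uses only Serre duality at the generic point (the sheaves $\pi^*\om_{C/A}\ot\LL^{-1}$ and $\om_{\wt C/A}\ot\LL^{-1}$ have the same $K$-points) together with the residue theorem, so it is insensitive to ramification; and (ii) the Haar measure implicit in $r(w)$ must be the self-dual one, which is forced in any case: with $\vol(\hat\LL_p)=1$ at a ramified place one gets $r(w)^2\de_{\hat\LL_p}=q_p^2\,\de_{\hat\LL_p}\neq r(-1)\de_{\hat\LL_p}$, so the formulas of Proposition \ref{main-local-rep-prop} would not even define a representation there. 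Once (i) and (ii) are in place, covolume one is automatic (a self-dual lattice has covolume one for the self-dual measure) and Poisson summation gives $\th(r(w)\Phi)=\th(\Phi)$ on the nose; no separate covolume computation of the kind you propose is available or needed.

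The second, more serious gap is the support condition, which you propose to ``deduce from the orbit-decomposition framework of \cite[Sec.\ 3]{BKP-aut}.'' That framework decomposes the space $\SS(\SL_2(F)\backslash\SL_2(\A_C))$ of compactly supported functions; it cannot certify that a given function belongs to it, and here that is precisely what is at stake. The isotropic vectors of $\Nm$ in $\LL(K)$ (namely $0$ and all of $\eps\LL_0(K_0)$) produce contributions which do not die out along the cusp: for $\Phi=\de_{\hat\LL}$ and $g=\diag(a,a^{-1})$ with $\deg\div(\ov a)\ll 0$ one has $\LL(K)\cap a^{-1}\hat\LL=H^0(\wt C,\LL\ot\pi^*\OO_C(\div(a)))=0$, so the theta sum collapses to the $x=0$ term and $\kappa_\LL(\de_{\hat\LL})(g)=|a|_{\A}\neq 0$ arbitrarily far up the cusp. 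So no argument along your lines can place the image of the full map $\kappa_\LL$ inside compactly supported functions; compact support is only available for the components $\kappa_{\LL,c_0}$ with $c_0\neq 0$ (see \eqref{kappa-L-c0-map}), because these factor through $\wt{\SS}_{\eta_0}$ with $\eta_0$ regular elliptic (Theorem \ref{two-reps-two-funct-thm}), while the $c_0=0$ component (Remark \ref{nilp-rem}) must either be excluded or the target read as locally constant functions. A correct write-up should therefore either prove the lemma componentwise after the decomposition over $c_0$, or state explicitly in which function space the image of the $c_0=0$ part lives; your proposal does not engage with this point at all.
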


Our goal is to understand (most of) the image of $\kappa_\LL$, and in case of $G(\hat{\OO})$-invariant functions $\Phi$, relate $\kappa_{\LL}(\Phi)$ to functions
on Hitchin fibers.

Similarly to the local case, we have a global analog of the partial Fourier transform,
$$S_1:\SS(\LL(\A_{\wt{C}}))\rTo{\sim} \FF_{\LL},$$
where $\FF_{\LL}\sub \C_{lc}(\LL(\A_{\wt{C}})^2)$ consists of $f$ satisfying 
\begin{equation}\label{F-global-def-eq}
f(v+\eps v'_0)=\psi_{C_0}(B(v \mod (\eps),v'_0))f(v)
\end{equation}
(the global version of \eqref{F-def-eq}) and with the compact support in $\LL(\A_{\wt{C}_0})^2$.

One can easily check that $\th=\th_1\circ S_1$, where for $f\in \FF_{\LL}$, 
$$\th_1(f):=\sum_{x\in \LL(K_0)^2} f(x),$$
where we use the fact that $f|_{\LL(K)^2}$ descends to $f|_{\LL(K_0)^2}$ (due to triviality of $\psi_{C_0}$ on $\om_{C_0}(F_0)$).

So we should study the map $\kappa'_{\LL}=\kappa_{\LL}\circ S_1^{-1}$ sending $f$ to $g\mapsto \th_1(g\cdot f)$.

Let us denote by $\A_{\wt{C}_0}^-\sub \A_{\wt{C}_0}$ (resp., $K_0^-\sub K_0)$ the subgroup of $\iota$-antiinvariant elements, or equivalently the kernel of the trace map.
We have an inclusions of groups 
$$1+\eps \A_{\wt{C}_0}^-\sub \A^*_{\wt{C}_0,1}=\ker(\Nm:K^*\to F^*), \ \ 1+\eps K_0^-\sub K^*_1=\ker(\Nm:K^*\to F^*).$$
Note that the map $\kappa_{\LL}$ factors through the space of 
$(1+\eps K_0^-)$-coinvariants  $\SS(\LL(\A_{\wt{C}}))_{(1+\eps K_0^-)}\simeq (\FF_\LL)_{(1+\eps K_0^-)}$.
The latter space is equipped with an action of a compact group $(1+\eps \A_{\wt{C}_0}^-)/(1+\eps K_0^-)$ 
Thus, it is enough to study the restriction of $\kappa_{\LL}$ to isotypic subspaces 
$$\FF_{\LL,c_0}\sub (\FF_\LL)_{(1+\eps K_0^-)}$$ 
%with respect to characters $\chi$ of $\NN(\A_{\wt{C},0})/\NN(K_0)$, which correspond to 
associated with elements 
%$c_0\in \NN^{-1}\om_{\wt{C}_0}(K)$ 
$c_0\in \om_{\wt{C}_0}(K_0)$ such that $\iota^*c_0=-c_0$, where 
$\FF_{\LL,c_0}$ consists of $\phi$ such that $(1+\eps a_0)\phi\equiv \psi_{C_0}(a_0\cdot c_0)\phi$ for $a_0\in \A_{\wt{C}_0}^-$. 

Similarly to the local case, for $c_0\neq 0$ we consider the quadric $Q(\LL,c_0)\sub \LL(\A_{\wt{C}_0})^2$ given by \eqref{Q-c0-eq}.
Let us denote by $\wt{Q}(\LL,c_0)\sub \LL(\A_{\wt{C}})^2$ the preimage of $Q(c_0)$ under the reduction.
The natural restriction map identifies $\FF_{\LL,c_0}$ with the space of functions on $\wt{Q}(\LL,c_0)$ satisfying \eqref{F-def-eq}
and with compact support in $Q(\LL,c_0)$.

We denote by $\de_{\OO}\in \FF_{\LL,c_0}$ the unique function such that $\de_{\OO}|_{\wt{Q}(\LL,c_0)(\hat{\OO})}\equiv 1$ and
$\de_{\OO}(v)=0$ if the reduction of $v$ is not in $Q(\LL,c_0)(\hat{\OO})$.

The $\SL_2(F)$-invariant functional $\th_1$ on $(\FF_\LL)_{(1+eK_0^-)}$ restricts to the $\SL_2(F)$-invariant functional on $\FF_{\LL,c_0}$,
$$\th_{c_0}:\phi\mapsto \sum_{x\in Q(\LL,c_0)(K_0)}\phi(x),$$
where we use the fact that the restriction of $\phi\in \FF_{\LL}$ (resp., $\phi\in \FF_{\LL,c_0}$) to $\LL(K)^2$ (resp., $\wt{Q}(\LL,c_0)(K)$)
descends to a function on $\LL(K_0)^2$ (resp., $Q(\LL,c_0)(K_0)$).

Thus, the image of $\kappa_{\LL}$ is the sum over $c_0$ of the images of the maps
\begin{equation}\label{kappa-L-c0-map}
\kappa_{\LL,c_0}:\FF_{\LL,c_0}\to \SS(\SL_2(F)\backslash\SL_2(\A_C)): \phi\mapsto (g\mapsto \th_{c_0}(g\cdot\phi)).
\end{equation}
We will study these images for $c_0\neq 0$ (see Remark \ref{nilp-rem} regarding the case $c_0=0$).

The analog of Lemma \ref{v-c-lem} holds in the global setting with some changes. Namely, for $c\in \om_{\wt{C}}(K)$ such that $\iota^*c=-c$
(resp., $c_0\in \om_{\wt{C}_0}(K_0)$ such that $\iota^*c_0=-c_0$), we still have the
quadric $Q(\LL,c)\sub \LL(\A_{\wt{C}})^2$, and for $c_0\neq 0$,
the right action of $\SL_2(\A_C)$ on $Q(\LL,c)$ (resp., of $\SL_2(\A_{C_0})$ on $Q(\LL,c_0)$) is simply transitive. The analogs of elements $v_c$ (resp., $v_{c_0}$) are constructed as
follows. Fix an element $s\in \LL(K)\setminus 0$ (resp., $s_0\in \LL(K_0)\setminus 0$), and set
$$v_{c,s}:=(-\frac{c}{2\iota(s)},s)\in \LL(K)^2, \ \ v_{c_0,s_0}:=(-\frac{c_0}{2\iota(s_0)},s_0)\in \LL(K_0)^2.$$
Then $v_{c,s}\in Q(\LL,c)(K)$ (resp., $v_{c_0,s_0}\in Q(\LL,c_0)(K_0)$), and we have identification of the fibers of $\eta$ with $\A_{\wt{C},1}^*$-orbits.

Similarly to Lemma \ref{v-c-lem}(ii), (still for $c_0\neq 0$) we have identifications
\begin{equation}\label{ics-isom}
i_{c,s}:\A^*_{\wt{C},1}\rTo{\sim} \St_{\eta_{c,s}}, \ \  i_{c_0,s_0}:\A^*_{\wt{C}_0,1}\rTo{\sim} \St_{\eta_{c_0,s_0}}
\end{equation}
such that $av_{c,s}=v_{c,s}\iota_{c,s}(a)$ (resp., $av_{c_0,s_0}=v_{c_0,s_0}\iota_{c_0,s_0}(a)$), where $\eta_{c,s}=\eta(v_{c,s})$ (resp.,
$\eta_{c_0,s_0}=\eta(v_{c_0,s_0})$).

Furthermore, similarly to the local case, the map
$$\phi\mapsto r(g)(\phi)(v_{c,s})=\phi(v_{c,s}g)$$
gives an isomorphism from $\FF_{\LL,c_0}$ to the representation of $\SL_2(\A_C)$, compactly induced from the character 
$1+\eps A\mapsto \psi_{C_0}(\tr(\eta_{c_0,s_0}A))$ of $1+\eps\fg(A_{C_0})$.
%$\eta_{
%$A\mapsto \psi_C(\det(\iota(v_{c,s}),v_{c,s} A))$ of
%$1+\fg(\NN\A_F)$.

%Using this interpretation we see that our 

\subsubsection{Correspondence with adjoint orbits and induced representations}

Let $\LL_0$ be the line bundle on $\wt{C}_0$ induced by $\LL$, and let $\a_0$ be the generic isomorphism
from $\Nm(\LL_0)$ to $\om_{C_0}$, induced by $\a$.

For nonzero $c_0\in \om_{\wt{C}_0}(K_0)$ such that $\iota^*(c_0)=-c_0$, let 
$$\Om_{\LL_0,\a_0,c_0}:=\eta(Q(\LL_0,c_0)(K_0))$$ 
be the corresponding $\SL_2(F_0)$-orbit in $\fg\ot\om_{C_0}(F_0)$. Let us describe the obtained correspondence between our geometric data and 
$\SL_2(F_0)$-orbits in $\fg\ot\om_{C_0}(F_0)$.

\begin{prop}\label{orbits-correspondence-prop}
(i) Let $c_0\in \om_{\wt{C}_0}(K_0)\setminus 0$ be such that $\iota^*(c_0)=-c_0$, and let $s_0$ be a nonzero element of $\LL_0(K_0)$.
Then $\Om_{\LL_0,\a_0,c_0}$ is the orbit of
$$\eta_{c_0,\a_0,s_0}:=\left(\begin{matrix} 0 & -\a_0\Nm(s_0)\\ -\frac{c_0^2}{4\a_0\Nm(s_0)} & 0\end{matrix}\right).$$
This orbit is regular (semisimple) elliptic.

\noindent
(ii) Consider the groupoid $\GG$ whose objects are data $(\pi:\wt{C}_0\to C_0, \LL_0,\a_0,c_0)$, where $\pi:\wt{C}_0\to C_0$ is a double covering with $\wt{C}_0$ smooth irreducible, $\LL_0$ is a line bundle on $\wt{C}_0$,
$\a_0$ is a generic isomorphism of $\Nm(\LL_0)$ with $\om_{C_0}$, $c_0$ is a nonzero antiinvariant rational differential on $\wt{C}_0$. Morphisms
are given by isomorphisms of double coverings and by generic isomorphisms of line bundles $\LL_0$ (compatible with $\a_0,c_0$).
Then the map 
\begin{equation}\label{main-orbit-map}
(\wt{C}_0\to C_0,\LL_0,\a_0,c_0)\mapsto \Om_{\LL_0,\a_0,c_0}
\end{equation}
induces a bijection from isomorphism classes of objects in $\GG$ to the set of regular elliptic $\SL_2(F_0)$-orbits in $\fg\ot\om_{C_0}(F_0)$.
\end{prop}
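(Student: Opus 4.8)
The plan is to prove part (i) by a direct substitution and part (ii) by matching two parameterizations of the same set of orbits. For part (i), I would plug the global analogue of $v_{c_0,s_0}=(-c_0/(2\iota(s_0)),s_0)$ into formula \eqref{eta-formula-eq}. Since $\iota^*(c_0)=-c_0$ forces $\Tr(c_0)=0$, one gets $\lan x_1,x_2\ran=\Tr(-c_0/2)=0$, $\Nm(x_2)=\Nm(s_0)$ and $\Nm(x_1)=-c_0^2/(4\Nm(s_0))$; after applying the twist $\a_0$ these are exactly the entries of the stated antidiagonal matrix $\eta_{c_0,\a_0,s_0}$. That $\Om_{\LL_0,\a_0,c_0}$ is a single orbit equal to the orbit of this element is the global form of Lemma \ref{v-c-lem}(i). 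Since $\det(\eta_{c_0,\a_0,s_0})=-c_0^2/4$ is a nonzero quadratic differential on $C_0$, the element is regular semisimple with eigenvalues $\pm c_0/2$. Because $\wt{C}_0$ is irreducible, $K_0=k(\wt{C}_0)$ is a quadratic field extension of $F_0$, and by Lemma \ref{v-c-lem}(ii) the centralizer $\bSt_{\eta}$ is the norm-one torus $R^1_{K_0/F_0}\G_m$, which is anisotropic; this is precisely ellipticity.

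For part (ii), I would first reconstruct the discrete data from an orbit. For $\eta$ in a regular elliptic orbit $\Om$, the determinant $\det(\eta)\in\om_{C_0}^{\ot 2}(F_0)$ is a conjugation invariant, and ellipticity forces $-\det(\eta)$ not to be a square in $\om_{C_0}^{\ot 2}(F_0)$; hence $K_0:=F_0(\sqrt{-\det\eta})$ is a genuine quadratic field extension, whose associated smooth irreducible double cover is $\pi:\wt{C}_0\to C_0$, and $c_0:=2\sqrt{-\det\eta}$ is an antiinvariant rational differential, determined up to the sign governed by $\iota$. This produces a candidate object of $\GG$ apart from the line-bundle data. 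To classify the latter I would use the standard description of rational conjugacy classes inside a stable class: since $H^1(F_0,\SL_2)=1$, the $\SL_2(F_0)$-orbits of elements of fixed determinant $-c_0^2/4$ form a torsor over $H^1(F_0,\bSt_\eta)=H^1(F_0,R^1_{K_0/F_0}\G_m)\simeq F_0^*/\Nm(K_0^*)$, where the last isomorphism comes from Hilbert 90 applied to $1\to R^1_{K_0/F_0}\G_m\to R_{K_0/F_0}\G_m\to\G_m\to 1$.

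Next I would match this cohomological invariant with the geometric data. The orbit of $\eta_{c_0,\a_0,s_0}$ corresponds to the class of $a=\a_0\Nm(s_0)\in\om_{C_0}(F_0)^*$ modulo $\Nm(K_0^*)$: replacing $s_0$ by $\mu s_0$ with $\mu\in K_0^*$ multiplies $a$ by $\Nm(\mu)$, which is exactly the coboundary relation. On the other hand, for fixed $(\wt{C}_0,c_0)$, two pairs $(\LL_0,\a_0)$ and $(\LL_0',\a_0')$ are isomorphic in $\GG$ precisely when their generic norm forms agree, i.e.\ $\a_0\Nm(s_0)/\a_0'\Nm(s_0')\in\Nm(K_0^*)$, the same condition. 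Surjectivity then follows by realizing any prescribed class with $\LL_0=\OO_{\wt{C}_0}$ and $\a_0$ a rational section of $\om_{C_0}$ representing it; injectivity follows from completeness of the invariant together with the fact that the residual sign ambiguity $c_0\leftrightarrow -c_0$ is realized by the involution $\iota$, which is a genuine morphism in $\GG$. Well-definedness on isomorphism classes is immediate, since morphisms in $\GG$ preserve $c_0$ up to the $\iota$-sign and preserve the generic norm form.

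The step I expect to be the main obstacle is the injectivity: one must verify that the cohomological invariant $F_0^*/\Nm(K_0^*)$ classifying the orbits agrees, with the correct torsor and basepoint bookkeeping, with the generic-isomorphism invariant of $(\LL_0,\a_0)$, while simultaneously tracking the $\iota$-sign of $c_0$, so that a mere coincidence of the discrete invariants is upgraded to an actual morphism in $\GG$ rather than just an abstract matching.
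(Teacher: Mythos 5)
Your part (i) is the same computation as the paper's; the only divergence is ellipticity, which the paper gets directly ($-\det(\eta_{c_0,\a_0,s_0})=c_0^2/4$ cannot equal $x^2$ with $x$ invariant, since $c_0=\pm x$ would contradict antiinvariance), while you pass through anisotropy of the centralizer torus --- both work. For part (ii) your route differs in its key lemma: the paper never invokes Galois cohomology; it takes as known the concrete classification that every regular elliptic orbit is the orbit of some antidiagonal $A(\om_1,\om_2)$, with $A(\om'_1,\om'_2)$ conjugate to $A(\om_1,\om_2)$ iff $\om'_1=\Nm(\phi)\om_1$, $\om'_2=\Nm(\phi)^{-1}\om_2$, and then writes an explicit inverse to \eqref{main-orbit-map}: $k(\wt{C}_0)=F_0(\sqrt{\om_2/\om_1})$, $\LL_0=\OO_{\wt{C}_0}$, $\a_0=-\om_1$, $c_0=2\sqrt{\om_2/\om_1}\,\om_1$, using a generic section $s_0$ to identify an arbitrary $(\LL_0,\a_0)$ with $(\OO_{\wt{C}_0},\a_0\Nm(s_0))$ --- exactly your ``realize every class with trivial $\LL_0$'' step. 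You instead derive the classification from $H^1(F_0,\SL_2)=1$ and $H^1$ of the norm-one torus. What you gain is a justification of the orbit classification that the paper merely asserts. What it costs is precisely the step you flag: identifying the abstract torsor over $F_0^*/\Nm(K_0^*)$ with the concrete invariant $[\a_0\Nm(s_0)]$; since this group is infinite, you cannot finesse the identification by counting. It is closable in one computation: conjugating $A(\om_1,\om_2)$ into $A(u\om_1,u^{-1}\om_2)$ by $\diag(\sqrt{u},\sqrt{u}^{-1})$ produces the cocycle $\si\mapsto \si(\sqrt{u})/\sqrt{u}\in\{\pm1\}$ inside the norm-one torus, which is exactly the image of $u$ under the connecting map $F_0^*\to H^1(F_0,\bSt_\eta)$; alternatively one avoids cohomology altogether, in the paper's spirit, by noting that for $v\neq 0$ the class of $\det(v,\eta v)$ modulo $\Nm(K_0^*)$ is a complete invariant of the orbit (rescaling $v$ by $x\in F_0[\eta]^*\simeq K_0^*$ multiplies it by $\Nm(x)$, and equal invariants yield a determinant-one intertwiner between the bases $(v,\eta v)$ and $(v',\eta' v')$). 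Your handling of the $c_0\leftrightarrow -c_0$ ambiguity via $\iota$ agrees with the paper, where the same sign appears implicitly when one checks that the two constructions are mutually inverse.
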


\begin{proof}
(i) Note that $-\det(\eta_{c_0,s_0})=c_0^2/4$, which is not a square in $\om_{C_0}(F_0)$. Indeed, if $c_0^2=x^2$ then $c_0=\pm x$ which is impossible since
$\iota^*(c_0)=-c_0$. This implies that the orbit is regular elliptic.

\noindent
(ii) Let us constuct the inverse map. Regular elliptic $\SL_2(F_0)$-orbits in $\fg\ot F_0$ 
are represented by matrices of the form $A(f_1,f_2):=\left(\begin{matrix} 0 & f_1 \\ f_2 & 0\end{matrix}\right)$, where $f_1,f_2$ are nonzero, such that
$-\det A(f_1,f_2)=f_1f_2$ is not
a square in $F_0$. The matrix $A(f'_1,f'_2)$ represents the same orbit as $A(f_1,f_2)$ if and only if $f'_1=\Nm(\phi)f_1$, $f'_2=\Nm(\phi)^{-1}f_2$ for $\phi\in F_0(\sqrt{f_1f_2})$.
The same is true for elliptic $\SL_2(F_0)$-orbits in $\fg\ot \om_{C_0}(F_0)$ with $f_1,f_2$ replaced by nonzero elements $\om_1,\om_2\in \om_{C_0}(F_0)$. Then 
$-\det A(\om_1,\om_2)$ is in $\om_{C_0}^2(F_0)$ and we require that it is not a square of an element in $\om_{C_0}(F_0)$, or equivalently $\om_2/\om_1$ is not a square in $F_0$. 
We use $F_0(\sqrt{\om_2/\om_1})$ as the corresponding quadratic extension.
%, where $\om_0$ is any nonzero element in $\om_{C_0}(F_0)$.

Starting with the orbit of $A(\om_1,\om_2)$ (where $\om_1\om_2$ is not a square), we define the smooth curve $\wt{C}_0$ which is a double covering of $C_0$, by setting 
$$k(\wt{C}_0)=F_0(\sqrt{\om_2/\om_1}).$$
We take $\LL_0$ to be trivial, and we set 
$$\a_0:=-\om_1,  \ \ c_0:=2\sqrt{\om_2/\om_1}\cdot \om_1,$$
where in the last formula we view $\om_1$ as an invariant differential on $\wt{C}_0$, while $\sqrt{\om_2/\om_1}$ is an antiinvariant rational function on $\wt{C}_0$.
With this definition, we have $\eta_{c_0,\a_0,1}=A(\om_1,\om_2)$, which shows that this is the right inverse to \eqref{main-orbit-map}.

Conversely, if we start with $(\wt{C}_0\to C_0,\LL_0,\a_0,c_0)$, and a generic section $s_0$ of $\LL_0$, then we get 
$$\om_1=-\a_0\Nm(s_0), \ \ \om_2=-\frac{c_0^2}{4\a_0\Nm(s_0)},$$
then $\om_2/\om_1=(c_0/2\a_0\Nm(s_0))^2$, so we get an isomorphism 
$$F_0(\sqrt{\om_2/\om_1})\rTo{\sim} k(\wt{C}_0)$$
sending $\sqrt{\om_2/\om_1}$ to the antiinvariant rational function $c_0/2\a_0\Nm(s_0)$ on $\wt{C}_0$. 

Now $s_0$ induces an isomorphism of the data $(\LL_0,\a_0)$ with
$(\OO_{\wt{C}_0},-\a_0\Nm(s_0)=\om_1)$, so our maps are indeed mutually inverse.
\end{proof}

Recall that for $\eta_0\in \fg\ot \om_{C_0}(F_0)$ we have an $\SL_2(\A_C)$-representation $\wt{\SS}_{\eta_0}$ with the $\SL_2(F)$-invariant functional $\Theta_{\eta_0}$,
which induces an isomorphism $\wt{\SS}_{\eta_0}\simeq \SS(\SL_2(F)\backslash G(\A_C))_{\Om_{c_0}}$ (see Sec.\ \ref{recoll-sec}).

\begin{theorem}\label{two-reps-two-funct-thm} 
Fix $\wt{C}\to C$, $\LL$, $\a$, $c_0$ as before (in particular, $c_0\neq 0$).

\noindent
(i) For $\eta_0\in \Om_{\LL_0,\a_0,c_0}$, one has a map of $\SL_2(\A_C)\times \A^*_{\wt{C},1}$-representations
\begin{equation}\label{FLc0-to-S-eta-map}
\wt{\kappa}_{\LL,c_0}:\FF_{\LL,c_0}\to \wt{\SS}_{\eta_0}:\phi\mapsto (g\mapsto \th_{\eta_0,c_0}(r(g)\cdot\phi)),
\end{equation}
where 
$$\th_{\eta_0,c_0}(\phi)=\sum_{v\in Q(\LL,c_0)(K_0): \eta(v)=\eta_0} \phi(v),$$
and the action of $\A^*_{\wt{C},1}$ on $\wt{\SS}_{\eta_0}$ comes from the identification \eqref{ics-isom}
(for some choice of $(c,s)$).

Furthermore, we have equality of $\SL_2(F)$-invariant functionals on $\FF_{\LL,c_0}$,
\begin{equation}\label{th-c0-Theta-eq}
\th_{c_0}=\Theta_{\eta_0}\circ \wt{\kappa}_{\LL,c_0}.
\end{equation}

\noindent
(ii) The map of $\SL_2(\A_C)$-representations $\kappa_{\LL,c_0}$ (see \eqref{kappa-L-c0-map}) factors as the composition
$$\FF_{\LL,c_0}\rTo{\wt{\kappa}_{\LL,c_0}}\wt{\SS}_{\eta_0}\rTo{\kappa_{\eta_0}} \SS(\SL_2(F)\backslash\SL_2(\A_C)),$$
where $\kappa_{\eta_0}$ is given by \eqref{kappa-eta-def-eq}.
\end{theorem}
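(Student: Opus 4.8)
The plan is to deduce both parts from the description, established just before the theorem, of $\FF_{\LL,c_0}$ as the representation of $\SL_2(\A_C)$ compactly induced from the character $1+\eps A\mapsto\psi_{C_0}(\tr(\eta_{c_0,s_0}A))$ of $N_\A=1+\eps\fg(\A_{C_0})$, via $\phi\mapsto\wt\phi$ with $\wt\phi(g)=\phi(v_{c,s}g)$. Write $\eta_0=\eta_{c_0,s_0}$, so that $\eta_0\in\Om_{\LL_0,\a_0,c_0}$ and, by Proposition \ref{orbits-correspondence-prop}(i), $\eta_0$ is regular semisimple elliptic with $\bSt_{\eta_0}$ an anisotropic torus over $F_0$ and $\bSt_{\eta_0}(F_0)$ identified via \eqref{ics-isom} with the norm-one rational functions on $\wt{C}_0$. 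The first task is to rewrite $\th_{\eta_0,c_0}$ in induced-representation terms. By the global analogue of Lemma \ref{v-c-lem}(i),(ii), the $\SL_2(F_0)$-action on $Q(\LL,c_0)(K_0)$ is simply transitive and the fibres of $\eta$ are exactly the $\bSt_{\eta_0}(F_0)$-orbits; hence $\eta^{-1}(\eta_0)\cap Q(\LL,c_0)(K_0)$ is the single orbit $v_{c_0,s_0}\cdot\bSt_{\eta_0}(F_0)$, and using $av_{c,s}=v_{c,s}i_{c,s}(a)$ together with the exact sequence $1\to N_F\to\St_{\eta_0}(F)\to\bSt_{\eta_0}(F_0)\to1$ one obtains
\[
\wt{\kappa}_{\LL,c_0}(\phi)(g)=\th_{\eta_0,c_0}(r(g)\phi)=\sum_{\ga\in N_F\backslash\St_{\eta_0}(F)}\wt\phi(\ga g),
\]
so that $\wt{\kappa}_{\LL,c_0}$ is precisely the averaging of the induced vector $\wt\phi$ over $\St_{\eta_0}(F)$.

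Next I would check that $f:=\wt{\kappa}_{\LL,c_0}(\phi)$ lies in $\wt{\SS}_{\eta_0}$. Left $\St_{\eta_0}(F)$-invariance is immediate from the averaging, and the sum is well defined because $\psi_{\eta_0}$ is trivial on $N_F$. For $f(ug)=\psi_{\eta_0}(u)f(g)$ with $u\in N_\A$, one uses that $N_\A$ is normal in $\SL_2(\A_C)$ and that $\psi_{\eta_0}$ is invariant under $\St_{\eta_0}(F)$-conjugation: conjugation by $\ga$ sends $1+\eps X$ to $1+\eps\,\Ad(\bar\ga)X$, and $\tr(\eta_0\,\Ad(\bar\ga)X)=\tr(\eta_0 X)$ since $\bar\ga$ centralises $\eta_0$. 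The remaining point — compact support of $f$ modulo $N_\A\cdot\St_{\eta_0}(F)$ — is where ellipticity enters: exactly as in the proof of Lemma \ref{ind-char-ell-lem}, anisotropy of $\bSt_{\eta_0}$ makes $\St_{\eta_0}/\St_{\eta_0}(F)$ compact, and this, with the compact support of $\wt\phi$ modulo $N_\A$ (which also makes the defining sum finite), yields the claim. Equivariance of $\wt{\kappa}_{\LL,c_0}$ for $\SL_2(\A_C)$ is formal, since $\wt{\kappa}_{\LL,c_0}(r(h)\phi)(g)=\th_{\eta_0,c_0}(r(gh)\phi)=(h\cdot f)(g)$, and equivariance for $\A^*_{\wt{C},1}$ follows from Proposition \ref{part-F-prop}(ii) and the identification \eqref{ics-isom} transporting the $\A^*_{\wt{C},1}$-action to $\St_{\eta_0}$.

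To obtain \eqref{th-c0-Theta-eq} I would decompose the defining sum of $\th_{c_0}$ according to the value of $\eta$. As $v$ runs over $Q(\LL,c_0)(K_0)$, the element $\eta(v)$ runs over $\Om_{\LL_0,\a_0,c_0}$, an $\SL_2(F_0)$-orbit with stabiliser $\bSt_{\eta_0}(F_0)$, hence parametrised by $\bSt_{\eta_0}(F_0)\backslash\SL_2(F_0)$, while the inner sum over each fibre is $\th_{\eta(v),c_0}$. Using $\th_{\ga^{-1}\eta_0\ga,c_0}(\phi)=\th_{\eta_0,c_0}(r(\ga)\phi)=f(\ga)$ and the reduction isomorphism $\St_{\eta_0}(F)\backslash\SL_2(F)\simeq\bSt_{\eta_0}(F_0)\backslash\SL_2(F_0)$ (from the exact sequence above and the left $\St_{\eta_0}(F)$-invariance of $f$), this gives $\th_{c_0}(\phi)=\sum_{\ga\in\St_{\eta_0}(F)\backslash\SL_2(F)}f(\ga)=\Theta_{\eta_0}(f)$, which is \eqref{th-c0-Theta-eq}; this simultaneously finishes the construction of the map \eqref{FLc0-to-S-eta-map} in part (i).

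Finally, part (ii) is then formal. By definition \eqref{kappa-L-c0-map} and \eqref{th-c0-Theta-eq}, $\kappa_{\LL,c_0}(\phi)(g)=\th_{c_0}(r(g)\phi)=\Theta_{\eta_0}(\wt{\kappa}_{\LL,c_0}(r(g)\phi))$; by the $\SL_2(\A_C)$-equivariance just established this equals $\Theta_{\eta_0}(g\cdot\wt{\kappa}_{\LL,c_0}(\phi))$, which is $\kappa_{\eta_0}(\wt{\kappa}_{\LL,c_0}(\phi))(g)$ by \eqref{kappa-eta-def-eq}, so $\kappa_{\LL,c_0}=\kappa_{\eta_0}\circ\wt{\kappa}_{\LL,c_0}$. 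I expect the only genuinely delicate step to be the compact-support and finiteness verification in part (i), which rests squarely on the ellipticity of $\eta_0$ through the argument of Lemma \ref{ind-char-ell-lem}; everything else is bookkeeping across the reduction $C\to C_0$ and a direct transcription of the local computations of Lemma \ref{v-c-lem}.
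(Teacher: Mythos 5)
Your proof is correct, and in substance it runs on the same engine as the paper's: finiteness and compact support come from compactness of the support of $\phi$ in $Q(\LL,c_0)(\A_{\wt{C}_0})$ together with discreteness of rational points; the $N_\A$-equivariance is the identity of Corollary \ref{eta-action-cor} (equivalently, conjugation-invariance of $\psi_{\eta_0}$ under $\St_{\eta_0}(F)$); \eqref{th-c0-Theta-eq} is the decomposition of $\th_{c_0}$ over the fibers of $\eta$ above the orbit $\Om_{\LL_0,\a_0,c_0}$; and part (ii) is formal from \eqref{th-c0-Theta-eq}. The genuine difference is one of packaging: you transport everything through the compactly induced model $\FF_{\LL,c_0}\simeq\Ind_{N_\A}^{\SL_2(\A_C)}\psi_{\eta_0}$ and exhibit $\wt{\kappa}_{\LL,c_0}$ as the averaging map $\phi\mapsto\bigl(g\mapsto\sum_{\ga\in N_F\backslash\St_{\eta_0}(F)}\wt{\phi}(\ga g)\bigr)$, which is exactly the map \eqref{induced-repr-map}; the paper instead verifies the same identities directly on the quadric and introduces the induced-representation picture only afterwards, for Theorem \ref{main-thm1}. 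Your route has the merit of making Theorem \ref{main-thm1} an immediate corollary of Lemma \ref{ind-rep-surj-lem}, at the cost of having to justify the dictionary between fibers of $\eta$ on $K_0$-points and cosets $N_F\backslash\St_{\eta_0}(F)$ (which you do correctly, via the global analogue of Lemma \ref{v-c-lem} and surjectivity of $\SL_2(F)\to\SL_2(F_0)$). One small inaccuracy, not a gap: ellipticity is not what drives the support and finiteness verification. The support of $f$ modulo $N_\A\cdot\St_{\eta_0}(F)$ lies in the image of the (compact) support of $\wt{\phi}$ modulo $N_\A$, and finiteness of the defining sum follows from discreteness of $\bSt_{\eta_0}(F_0)\subset G(F_0)$ in $G(\A_{C_0})$; compactness of $\St_{\eta_0}/\St_{\eta_0}(F)$ as in Lemma \ref{ind-char-ell-lem} is never needed here. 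In this theorem, ellipticity of $\eta_0$ is simply an automatic property of the orbit (Proposition \ref{orbits-correspondence-prop}(i)), and it is used elsewhere (for cuspidality), not in this argument.
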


\begin{proof}
(i) First, we observe that the summation in the definition of $\th_{\eta_0,c_0}$ is finite since the support of $\phi$ in $Q(\LL,c_0)(\A_{\wt{C}_0})$
is compact. Furthermore, the fact that the function $g\mapsto\th_{\eta_0,c_0}(r(g)\phi)$ belongs to $\wt{\SS}_{\eta_0}$ boils down to
$\St_{\eta_0}(K)$-invariance of $\th_{\eta_0,c_0}$ and the identity of Corollary \ref{eta-action-cor}.
The compatibility with the  $\SL_2(\A_C)$-action is clear.
To check compatibility with the $\A^*_{\wt{C},1}$-action we use the fact that it acts transitively on the set of 
$v\in Q(\LL,c_0)$ such that $\eta(v)=\eta_0$ (see Lemma \ref{v-c-lem}), and hence, for any such $v$,
$$av=vi_{c,s}(a).$$

The equality \eqref{th-c0-Theta-eq} follows directly from the definitions.

\noindent
(ii) Using the definitions this immediately boils down to \eqref{th-c0-Theta-eq}.
\end{proof}

\begin{remark}\label{nilp-rem}
In the case $c_0=0$, the set $Q(\LL,0)$ consists of vectors $a\cdot v$, where $a\in \A^*_{\wt{C}_0}$, $v\in \A_{C_0}^2$, and $\eta(Q(\LL,0))$ is a union of
nilpotent orbits of $\SL_2(F_0)$. One can show that the image of $\kappa_{\LL,0}$ contains no nonzero cuspidal functions. 
\end{remark}

The map \eqref{FLc0-to-S-eta-map} has a natural interpretation in terms of induced representations. Suppose we have 
\begin{itemize}
\item an $l$-group $G$ (i.e., a topological group with a basis of open compact subgroups);
\item a closed normal subgroup $N\sub G$; 
\item a continuous character $\chi$ of $N$;
\item a discrete subgroup $\Ga\sub G$ such that $\chi|_{\Ga\cap N}\equiv 1$, $\Ga/\Ga\cap N$ is discrete in $G/N$, and the adjoint action of $\Ga$ preserves $\chi$.
%two discrete subgroups $\Ga_1\sub \Ga_2\sub G$ such that $\chi$ is trivial on $\Ga_2\cap N$ and the adjoint action of $\Ga_1$ preserves $\chi$. 
\end{itemize}
%Additional assumptions???
%\Ga_1=\Ga

Consider compactly induced representations 
$\Ind_N^G\chi$ and $\Ind_{\Ga N}^G\chi'$, where $\chi'$ is the unique character of $\Ga N$, trivial on $\Ga$, extending $\chi$.
Then we have a natural map of $G$-representations 
\begin{equation}\label{induced-repr-map}
\Theta:\Ind_N^G\chi\to \Ind_{\Ga N}^G \chi': \phi\mapsto (g\mapsto \sum_{\ga\in \Ga/\Ga\cap N}\phi(\ga g)). 
\end{equation}
%(given by summation over $\Ga_1/(\Ga_1\cap N)$
%and a $\Ga_2$ invariant functional on $\Ind_{\Ga_1 N}^G\chi'$ (given by summation over $\Ga_2/(\Ga_2\cap N)$.

\begin{lemma}\label{ind-rep-surj-lem}
In the above situation the map $\Theta$ is surjective.
\end{lemma}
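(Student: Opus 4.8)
The plan is to prove surjectivity by unfolding the covering map that underlies $\Theta$. Writing $X := N\backslash G$ (which, since $N$ is normal, is the locally profinite group $G/N$) and $\ov{\Ga} := \Ga N/N \simeq \Ga/(\Ga\cap N)$, the hypotheses say precisely that $\ov{\Ga}$ is a discrete — hence closed — subgroup of $X$, acting freely on the left by translations, with quotient $\ov{\Ga}\backslash X = \Ga N\backslash G$. Functions in $\Ind_N^G\chi$ (resp. $\Ind_{\Ga N}^G\chi'$) are exactly the compactly supported sections over $X$ (resp. over $\ov{\Ga}\backslash X$) of the line bundle attached to $\chi$, which descends along $q:X\to \ov{\Ga}\backslash X$ because $\chi$ is $\Ga$-invariant and trivial on $\Ga\cap N$. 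Under this dictionary $\Theta$ is the fibrewise-summation (push-forward) map $q_*$, and the task becomes: every compactly supported section downstairs is a push-forward of a compactly supported section upstairs.

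First I would record the covering structure. Since $\ov{\Ga}$ is discrete in the locally profinite group $X$, there is an open compact subgroup $U\subset X$ with $U\cap\ov{\Ga}=\{1\}$. Then for any $x\in X$ the coset $U_0:=Ux$ is open compact and satisfies $\ga U_0\cap U_0=\emptyset$ for every $1\neq\ga\in\ov{\Ga}$, so $q$ restricts to a homeomorphism of $U_0$ onto $q(U_0)$ and $q^{-1}(q(U_0))=\bigsqcup_{\ga\in\ov{\Ga}}\ga U_0$. Thus $q$ is a covering with a basis of evenly covered open compact sets. This step is where the discreteness hypothesis on $\ov{\Ga}$ is used essentially.

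Next, given $f\in\Ind_{\Ga N}^G\chi'$ with compact support $S\subset\ov{\Ga}\backslash X$, I would cover $S$ by finitely many evenly covered open compact sets and disjointify them (open compact sets form a basis of the totally disconnected space $\ov{\Ga}\backslash X$, and differences of open compacts are again open compact), obtaining a finite disjoint family $V_1,\dots,V_m$ of open compacts with $S\subset\bigsqcup_j V_j$, each $V_j$ inside an evenly covered set. For each $j$ I pick one sheet $\tilde V_j\subset X$ mapping homeomorphically onto $V_j$, and define $\phi$ as a function on $G$ by $\phi(g)=f(g)$ if $\pi(g)\in\bigsqcup_j\tilde V_j$ (where $\pi:G\to X$) and $\phi(g)=0$ otherwise. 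One checks directly that $\phi$ is locally constant, compactly supported modulo $N$, and satisfies $\phi(ng)=\chi(n)\phi(g)$, so $\phi\in\Ind_N^G\chi$.

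Finally I would verify $\Theta(\phi)=f$. Using $\pi(\ga g)=\ga\cdot\pi(g)$ and the left $\Ga$-invariance $f(\ga g)=f(g)$, one gets $\Theta(\phi)(g)=\#\{\ga\in\ov{\Ga}:\ \ga\cdot\pi(g)\in\bigsqcup_j\tilde V_j\}\cdot f(g)$. By disjointness of the $V_j$ and the even covering, the image $q(\pi(g))$ lies in at most one $V_j$, over which the fibre meets exactly the single sheet $\tilde V_j$, so this cardinality is $1$ when $q(\pi(g))\in\bigsqcup_j V_j$ (in particular on $S$) and $0$ otherwise, where $f$ vanishes anyway; hence $\Theta(\phi)=f$. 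I expect the main obstacle to be bookkeeping rather than a hard estimate: one must set up the equivariant push-forward cleanly, checking that the $\chi$-line bundle on $X$ really is the canonical $q$-pullback of the $\chi'$-line bundle on $\ov{\Ga}\backslash X$, so that restricting to one sheet and extending by zero yields a legitimate section of $\Ind_N^G\chi$ whose fibrewise sum reproduces $f$.
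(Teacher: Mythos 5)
Your proof is correct and follows essentially the same approach as the paper's: view the projection $G/N \to \Ga N\backslash G$ as a covering admitting evenly covered compact open sets, and produce a preimage of $f$ under $\Theta$ by restricting $f$ to a single sheet and extending by zero, the count of $\ov{\Ga}$-translates hitting the chosen sheets being exactly one over the support. The only difference is bookkeeping: the paper fixes one small compact open $\ov{U}$ with $\ov{U}\cap\ga\ov{U}=\emptyset$ and (implicitly, by translation and linearity) reduces to $f$ supported on $\Ga U$, whereas you cover the whole support at once by finitely many disjointified evenly covered compact opens; both amount to the same one-sheet lifting argument.
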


\begin{proof}
Let $\ov{U}$ be a small compact open in $G/N$, so that $\ov{U}\cap \ga \ov{U}=\emptyset$ for any $\ga\in \Ga\setminus \Ga\cap N$,
and let $U\sub G$ be the corresponding open neighborhood of $N$ (the preimage of $\ov{U}$ under the projection $G\to G/N$).
%and let $\Ga U$ be the corresponding compact open in $G/\Ga N$. 
It is enough to prove that any $f\in \Ind_{\Ga N}^G \chi'$, supported on $\Ga U$, is in the image of $\Theta$. Let $(\ga_i)$ be representatives
of $\Ga\cap N$-cosets in $\Ga$, with $\ga_1=1$. Then $\Ga U=\sqcup_i \ga_i U$. Define the function $\phi$ on $\Ga U$ by
$\phi|_U=f|_U$ and $\phi|_{\ga_i U}=0$ for $\ga_i\neq 1$. Then $\phi\in \Ind_N^G\chi$ and $\Theta(\phi)=f$.
\end{proof}

\begin{theorem}\label{main-thm1}
Assume characteristic of $k$ is $\neq 2$.
For every regullar elliptic orbit $\Om$ in $\fg\ot \om_{C_0}(F_0)$, there exists data $(\wt{C}\to C,\LL,\a,c_0)$ such that the map
$\kappa_{\LL,c_0}:\FF_{c_0}\to \SS(\SL_2(F)\backslash\SL_2(\A_C))$ (see \eqref{kappa-L-c0-map})
%associated with the natural $\SL_2(F)$-invariant functional on $\FF_{c_0}$ (see ???),
maps surjectively onto $\SS(\SL_2(F)\backslash\SL_2(\A_C))_\Om$.
%$c_0\neq 0$ one has a surjective map ???
\end{theorem}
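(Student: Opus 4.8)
The plan is to combine the factorization of $\kappa_{\LL,c_0}$ provided by Theorem \ref{two-reps-two-funct-thm}(ii) with the abstract surjectivity statement of Lemma \ref{ind-rep-surj-lem}, after first realizing the given orbit by geometric data over $C$. First I would produce the data. Given a regular elliptic orbit $\Om$ in $\fg\ot\om_{C_0}(F_0)$, Proposition \ref{orbits-correspondence-prop}(ii) furnishes reduction data $(\wt{C}_0\to C_0,\LL_0,\a_0,c_0)$ with $\Om_{\LL_0,\a_0,c_0}=\Om$ (explicitly $\LL_0=\OO_{\wt{C}_0}$, $k(\wt{C}_0)=F_0(\sqrt{\om_2/\om_1})$, etc., as in the proof of that proposition). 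It then remains to lift this to data over $C$: since $\operatorname{char}k\neq2$, the covering is cut out by $F(\sqrt{\wt{h}})$ for any unit $\wt{h}\in F^*$ reducing to the corresponding $h\in F_0^*$ (units lift along the square-zero surjection $F^*\to F_0^*$), and taking $\wt{C}$ to be the normalization of $C$ in this extension yields a smooth double covering $\wt{C}\to C$ with the prescribed reduction; one then lifts $\LL_0$ to $\LL=\OO_{\wt{C}}$ and $\a_0$ to a generic isomorphism $\a$ by lifting the differential $\om_1$. This produces admissible data $(\wt{C}\to C,\LL,\a,c_0)$ realizing $\Om$.

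Next I would reduce the claim to a statement about $\wt{\kappa}_{\LL,c_0}$. Fix $\eta_0\in\Om$. By Theorem \ref{two-reps-two-funct-thm}(ii) the map $\kappa_{\LL,c_0}$ factors as $\kappa_{\eta_0}\circ\wt{\kappa}_{\LL,c_0}$, and by \eqref{kappa-eta-eq} the second factor $\kappa_{\eta_0}\colon\wt{\SS}_{\eta_0}\rTo{\sim}\SS(\SL_2(F)\backslash\SL_2(\A_C))_{\Om_{\eta_0}}$ is an isomorphism onto $\SS(\SL_2(F)\backslash\SL_2(\A_C))_{\Om}$ (using $\Om_{\eta_0}=\Om$, since $\eta_0\in\Om$). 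Hence it suffices to prove that $\wt{\kappa}_{\LL,c_0}\colon\FF_{\LL,c_0}\to\wt{\SS}_{\eta_0}$ is surjective.

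The core of the argument is to identify $\wt{\kappa}_{\LL,c_0}$ with the induction map $\Theta$ of \eqref{induced-repr-map} and then invoke Lemma \ref{ind-rep-surj-lem}. I would take $G=\SL_2(\A_C)$, $N=N_\A$ (the kernel of reduction, $\simeq\fg\ot\A_{C_0}$), $\chi=\psi_{\eta_0}$, and $\Ga=\St_{\eta_0}(F)$. All hypotheses of the lemma hold: $\Ga\subset\SL_2(F)$ is discrete in $G$; $\Ga\cap N=N_F$ with $\psi_{\eta_0}|_{N_F}\equiv1$; $\Ga/(\Ga\cap N)\simeq\bSt_{\eta_0}(F_0)\subset\SL_2(\A_{C_0})\simeq G/N$ is discrete; and $\St_{\eta_0}$ fixes $\eta_0$, hence preserves $\chi$ under the adjoint action. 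Under the identification established in Section \ref{global-setup-sec}, $\FF_{\LL,c_0}\simeq\Ind_{N_\A}^{G}\psi_{\eta_0}=\Ind_N^G\chi$ via $\phi\mapsto(g\mapsto r(g)(\phi)(v_{c,s}))$, while unwinding the defining conditions of $\wt{\SS}_{\eta_0}$ shows $\wt{\SS}_{\eta_0}=\Ind_{\Ga N}^G\chi'$. Finally, writing the fiber $\{v\in Q(\LL,c_0)(K_0):\eta(v)=\eta_0\}$ as the single $\St_{\eta_0}(F_0)$-orbit of $v_{c_0,s_0}$ through the simply transitive action and the identification $i_{c,s}$ of Lemma \ref{v-c-lem}, the theta sum $\th_{\eta_0,c_0}(r(g)\phi)$ defining $\wt{\kappa}_{\LL,c_0}$ becomes exactly $\sum_{\ga\in\Ga/\Ga\cap N}\hat{\phi}(\ga g)$, i.e. $\Theta$. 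Lemma \ref{ind-rep-surj-lem} then yields surjectivity of $\Theta=\wt{\kappa}_{\LL,c_0}$, which completes the proof.

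I expect the main obstacle to be the bookkeeping in this last identification, namely matching the theta sum over the rational points of the fiber of $\eta$ with the coset sum over $\Ga/\Ga\cap N$, and verifying that the support and discreteness conditions of Lemma \ref{ind-rep-surj-lem} are met: it is precisely here that the simply transitive action of Lemma \ref{v-c-lem}, the discreteness of $F$ in $\A_C$, and the explicit descriptions of $\FF_{\LL,c_0}$ and $\wt{\SS}_{\eta_0}$ must all be made to line up. The lifting of the geometric data over $C$ is a genuine but routine step, secured by the assumption $\operatorname{char}k\neq2$.
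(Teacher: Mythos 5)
Your overall strategy coincides with the paper's: realize $\Om$ by reduction data via Proposition \ref{orbits-correspondence-prop}, lift the data to $C$, factor $\kappa_{\LL,c_0}=\kappa_{\eta_0}\circ\wt{\kappa}_{\LL,c_0}$ by Theorem \ref{two-reps-two-funct-thm}(ii), and deduce surjectivity of $\wt{\kappa}_{\LL,c_0}$ from Lemma \ref{ind-rep-surj-lem}, using the identifications of $\FF_{\LL,c_0}$ and $\wt{\SS}_{\eta_0}$ as representations compactly induced from $1+\eps\fg(\A_{C_0})$ and from $(1+\eps\fg(\A_{C_0}))\cdot\St_{\eta_0}(F)$ respectively. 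That part of your argument is correct and is exactly the paper's.

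The gap is in the step you dismiss as ``genuine but routine'': the construction of a smooth double covering $\wt{C}\to C$ lifting $\wt{C}_0\to C_0$. Taking ``the normalization of $C$ in $F(\sqrt{\wt{h}})$'' fails, because $C$ and $F$ are non-reduced. Every nilpotent element of $K=F[\sqrt{\wt{h}}]$ satisfies the monic equation $x^2=0$ over $\OO_{C,p}$, so the integral closure of $\OO_{C,p}$ in $K$ contains the entire nilpotent ideal $\eps K\simeq K_0$, which is not a finitely generated $\OO_{C,p}$-module (it contains $\eps f^{-n}$ for all $n$, where $f$ is a local equation of $p$). Hence the ``normalization'' is not finite over $C$; it is not a curve, let alone a flat double cover, and no smoothness argument can rescue it. The existence of the lift is a genuinely global statement: the local liftings of the covering exist, but they must be glued, and this is where the paper does real work. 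It writes $\pi_{0*}\OO_{\wt{C}_0}\simeq\OO_{C_0}\oplus M_0$ with the algebra structure determined by the ramification divisor $D_0\sub C_0$ (so that $\OO_{C_0}(D_0)\simeq M_0^2$), extends $D_0$ to a relative Cartier divisor $D\sub C$, and then lifts $M_0$ to a line bundle $M$ on $C$ with $\OO_C(D)\simeq M^2$, using the exact sequence $H^1(C_0,\OO)\to\Pic(C)\to\Pic(C_0)\to 0$ together with the $2$-divisibility of $H^1(C_0,\OO)$ --- this, rather than the lifting of units to square roots, is where $\operatorname{char} k\neq 2$ enters the geometric step. The algebra structure on $\OO_C\oplus M$ then defines $\wt{C}$, flat over $A$ by construction and smooth because $\wt{C}_0$ is. Your proof needs this argument (or an equivalent deformation-theoretic one) in place of the normalization claim; everything after that point stands as written.
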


\begin{proof}
By Proposition \ref{orbits-correspondence-prop}, we have a correspondence between elliptic orbits and data $(\wt{C}_0\to C_0,\LL_0,\a_0,c_0)$.
Given a smooth curve $C$ over $A$ and a double covering $\pi_0:\wt{C}_0\to C_0$, where $\wt{C}_0$ is smooth, 
we claim that exists a double covering $\pi:\wt{C}\to C$ of smooth curves over $A$ inducing $\pi_0$.
Indeed, we have $\pi_{0*}\OO_{\wt{C}_0}\simeq \OO_{C_0}\oplus M_0$, and the algebra structure on $\OO_{C_0}\oplus M_0$ is determined by the
ramification divisor, $D_0\sub C_0$, where $\OO_{C_0}(D_0)\simeq M_0^2$. We can extend $D_0$ to an effective Cartier divisor $D\sub C$.
We claim that there exists a  line bundle $M$ on $C$ extending $M_0$, such that $\OO_C(D)\simeq M^2$.
Indeed, this follows from the exact sequence 
$$H^1(C_0,\OO)\to \Pic(C)\to \Pic(C_0)\to 0$$
together with the fact that the group $H^1(C_0,\OO)$ is $2$-divisible.
Using $D$ we can define an algebra structure on $\OO_C\oplus M$, thus, defining the required double covering $\pi:\wt{C}\to C$ (the fact that $\wt{C}$ is smooth over $A$
follows from the smoothness of $\wt{C}_0$ and flatness of $\pi$).
Finally, we can extend $(\LL_0,\a_0)$ to a similar data $(\LL,\a)$ over $A$.

Since $\SS(\SL_2(F)\backslash\SL_2(\A_C))_\Om$ is the image of $\kappa_{\eta_0}$, by Theorem \ref{two-reps-two-funct-thm}, it remains to show that
the map $\wt{\kappa}_{\LL,c_0}$ is surjective. But this follows from Lemma \ref{ind-rep-surj-lem} due to the identification of 
$\FF_{\LL,c_0}$ with the compactly induced representation of $\SL_2(\A_C)$ from the character $1+\eps A\mapsto \psi_{C_0}(\tr(\eta_0A))$ of the subgroup $1+\eps\fg(\A_{C_0})$
(where $\eta_0=\eta_{c_0,s_0}$),
while $\wt{\SS}_{\eta_0}$ is compactly induced from the corresponding character of $(1+\eps\fg(\A_{C_0}))\cdot \bSt_{\eta_0}(F)$.
\end{proof}

\subsubsection{Relation to functions on moduli of Higgs $\SL_2$-bundles}

From now on we will be interested in $G(\hat{\OO})$-invariant vectors. 
We assume that $\a:\Nm(\LL)\to \om_C$ is an isomorphism.

\begin{lemma}
The space $\FF_{\LL,c_0}^{G(\hat{\OO})}$ vanishes unless the differential $c_0$ is regular on $\wt{C}_0$ and has a double zero at every ramification point of $\pi:\wt{C}_0\to C_0$.
\end{lemma}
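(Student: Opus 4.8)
The plan is to deduce the claim from $G(\hat{\OO})$-invariance of a hypothetical nonzero vector, localizing the support constraint at each closed point of $C_0$ and invoking the local integrality results already established. First I would use that any $\phi\in\FF_{\LL,c_0}^{G(\hat{\OO})}$ is in particular invariant under the subgroup $1+\eps\fg(\hat{\OO}_0)\subset \SL_2(\hat{\OO})=G(\hat{\OO})$. By the global analog of Corollary \ref{eta-action-cor}, the element $1+\eps A$ (for $A\in\fg(\A_{C_0})$) acts on $\FF_{\LL,c_0}$ by $r(1+\eps A)\phi(v)=\psi_{C_0}(\tr(\eta(v)A))\phi(v)$. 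Requiring invariance for all $A\in\fg(\hat{\OO}_0)$ forces $\phi$ to be supported on the locus of $v\in\wt{Q}(\LL,c_0)$ whose reduction $\ov{v}$ satisfies $\eta(\ov{v})\in\fg\ot\om_{C_0}(\hat{\OO}_0)$, i.e.\ whose associated Higgs field is everywhere integral. Thus, if $\FF_{\LL,c_0}^{G(\hat{\OO})}\neq 0$, then $Q(\LL,c_0)$ must contain a point $\ov{v}=(x_1,x_2)$ whose localization at every closed point $p$ of $C_0$ has $\Nm(x_1),\Nm(x_2),\lan x_1,x_2\ran$ all lying in $\om_{C_0}(\hat{\OO}_{0,p})$. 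This is exactly the hypothesis analyzed in Lemmas \ref{c0-int-lem} and \ref{c0-int-lem-bis}.

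Next I would run the local analysis point by point. At a split point, Lemma \ref{c0-int-lem-bis} shows the integrality of $\Nm(x_i)$ forces $x_i\in\OO_{\ov{E}}$; at an inert (unramified, non-split) point, Lemma \ref{c0-int-lem} forces $x_i\in\OO_{\ov{K}}$. In either case $\pi$ is unramified over $p$, there is no twist between $\om_{\wt{C}_0}$ and $\pi^*\om_{C_0}$, and $c_0=\det(\iota(\ov{v}),\ov{v})$ being integral says precisely that the differential $c_0$ is regular at the point(s) of $\wt{C}_0$ above $p$. At a ramification point the same argument (Lemma \ref{c0-int-lem}, $\ov{K}$ being a field) again gives $x_i\in\OO_{\ov{K}}$ and hence $c_0\in\OO_{\ov{K}}$ locally; the remaining task is to translate this integrality of the \emph{element} $c_0$ into a statement about the \emph{differential} $c_0$.

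For that translation I would use the isomorphism $\a:\Nm(\LL)\simeq\om_C$ together with the ramification twist $\om_{\wt{C}_0}\simeq\pi^*\om_{C_0}(R)$, where $R$ is the ramification divisor, which is reduced because $\operatorname{char} k\neq 2$. Under these identifications the condition $c_0\in\OO_{\ov{K}}$ becomes $c_0\in\om_{\wt{C}_0}(-R)$ near the ramification point $\wt{p}$, i.e.\ $c_0$ vanishes at $\wt{p}$. Finally, since $\iota^*c_0=-c_0$, writing $c_0=f\,du$ in a local parameter $u$ at $\wt{p}$ with $\iota(u)=-u$ forces $f$ to be an even function of $u$, so $v_{\wt{p}}(c_0)$ is even; combined with $v_{\wt{p}}(c_0)\geq 1$ this yields $v_{\wt{p}}(c_0)\geq 2$, a double zero. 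Collecting the per-point conclusions, $c_0$ is regular everywhere and has a double zero at every ramification point; taking the contrapositive gives the asserted vanishing.

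The main obstacle I anticipate is this last translation step at the ramification points: making the bookkeeping of the twist by $R$ precise, so that local integrality of the norm-valued element $c_0\in\OO_{\ov{K}}$ matches exactly the condition $c_0\in\om_{\wt{C}_0}(-R)$, and then using $\iota$-antiinvariance to promote the resulting simple vanishing to a genuine double zero. The unramified points are routine once Lemmas \ref{c0-int-lem} and \ref{c0-int-lem-bis} are cited; it is the interaction of the norm identification $\a$, the ramification twist, and the parity forced by antiinvariance that requires care.
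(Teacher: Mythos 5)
Your proposal is correct and follows essentially the same route as the paper: reduce via the support constraint to the local integrality Lemmas \ref{c0-int-lem} and \ref{c0-int-lem-bis}, then translate the integrality of the local element ($c_0/du$ at unramified points, $c_0/(u\,du)$ at ramification points) into regularity of the differential, with $\iota$-antiinvariance forcing the extra vanishing at ramification points. Your parity argument ($f$ even, $v(f)\geq 1$, hence $v(f)\geq 2$) is just a rephrasing of the paper's observation that the antiinvariant integral element $c_0/(u\,du)$ must lie in the maximal ideal.
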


\begin{proof}
This follows from Lemmas \ref{c0-int-lem} and \ref{c0-int-lem-bis}. 
More precisely, we use the fact that the corresponding local element 
$c_{0,p}$ is given by $c_0/du$ at a point where $\pi$ is unramified, and by $c_0/udu$ at a ramfication point of $\pi$ (where $u$ is a local parameter).  
Furthermore, at the ramification point $c_0/udu$ is still antiinvariant, so it vanishes at this point, which implies that $c_0$ has a double zero.
%This tells us that $c_0$ is regular on $\wt{C}_0$ and vanishes o
\end{proof}

Let us explain the relation between our data $(\wt{C}_0,\LL_0,c_0)$ and Higgs $\SL_2$-bundles (under some genericity assumptions).

\begin{lemma}\label{Higgs-corr-lem}
For a fixed smooth curve $C_0$ over $k$,
there is a natural equivalence of the following two groupoids:

\noindent
(G1) $(\pi:\wt{C}_0\to C_0,\LL_0,\a_0,c_0)$, where $\wt{C}_0$ is a smooth curve, $\pi$ is a double covering, $\LL_0$ a line bundle on $\wt{C}_0$ together with an isomorphism
$\a_0:\Nm(\LL_0)\rTo{\sim} \om_{C_0}$, and $c_0\in \om_{\wt{C}_0}$ is a nonzero regular differential, antiinvariant under the involution $\iota$ associated with the double covering,  such that
the divisor of zeros of $c_0$ is twice the ramification divisor.

\noindent
(G2) $(V,\phi)$ a Higgs $\SL_2$-bundle on $C_0$ such that $\de:=-\det(\phi)$ has only simple zeros.
%is not a square in $k(C_0)$.

Under this correspondence $V=\pi_*\LL_0$ and $\phi$ is induced by the norm map $\Nm:S^2V\to \Nm(\LL_0)\simeq \om_{C_0}$,
using the isomorphism 
\begin{equation}\label{End-S2V-isom}
\und{\End}_0(V)\rTo{\sim}S^2V^\vee:\phi\mapsto (v_1v_2\mapsto \frac{1}{2}(\det(\phi(v_1),v_2)+\det(\phi(v_2),v_1)))
\end{equation}
(where we use the trivialization of $\det(V)$). 
\end{lemma}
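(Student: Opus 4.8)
The statement is an $\om_{C_0}$-twisted, $\SL_2$-form of the Beauville--Narasimhan--Ramanan (BNR) spectral correspondence, so the plan is to construct the two functors explicitly and verify that they are mutually quasi-inverse. Since the morphisms in (G1) are (generic) isomorphisms of the data $(\pi,\LL_0,\a_0,c_0)$ and those in (G2) are isomorphisms of Higgs bundles, functoriality will be automatic once the object-level assignments are set up naturally, and I will concentrate on the objects. For the forward functor (G1)$\to$(G2) I would set $V=\pi_*\LL_0$ and build $\phi$ from the norm: the quadratic map $s\mapsto\Nm(s)=s\cdot\iota(s)$ on $V$ has polarization the trace form $\lan s_1,s_2\ran=\Tr(s_1\iota(s_2))$, a section of $S^2V^\vee\ot\Nm(\LL_0)$; composing with $\a_0$ and transporting through \eqref{End-S2V-isom} gives $\phi\in H^0(C_0,\und\End_0(V)\ot\om_{C_0})$, automatically traceless because \eqref{End-S2V-isom} identifies symmetric forms with \emph{traceless} endomorphisms.

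The conceptual heart of the lemma is that $V$ is genuinely an $\SL_2$-bundle, and I would establish this first. Writing $N:=\det(\pi_*\OO_{\wt C_0})$, the standard identity $\det(\pi_*\LL_0)\cong N\ot\Nm(\LL_0)$ together with $\a_0$ gives $\det V\cong\om_{C_0}\ot N$. I would then show that the hypotheses on $c_0$ force $\om_{C_0}\cong N^{-1}$. Indeed, from $N^{-2}\cong\OO_{C_0}(B)$ (branch divisor $B$) and the Riemann--Hurwitz relation $\om_{\wt C_0}\cong\pi^*(\om_{C_0}\ot N^{-1})$, the antiinvariant regular differentials on $\wt C_0$ are identified with $H^0(C_0,\om_{C_0})$, an antiinvariant $c_0$ corresponding to $\pi^*\om\cdot s_R$ with $s_R$ cutting out the ramification divisor $R$; the condition $\mathrm{div}(c_0)=2R$ then reads $\mathrm{div}(\om)=\tfrac12 B$, i.e.\ $\om_{C_0}\cong\OO_{C_0}(\tfrac12 B)\cong N^{-1}$. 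Thus $\a_0$ and $c_0$ together (not separately) canonically trivialize $\det V$. Next I would identify the invariants of $\phi$ by an \'etale-local computation (two points $p_1,p_2$ of $\wt C_0$ swapped by $\iota$, $V=\LL_0|_{p_1}\oplus\LL_0|_{p_2}$), which matches the norm form under \eqref{End-S2V-isom} with the push-forward of multiplication by the antiinvariant eigenvalue $c_0$ (read through $\a_0$); hence $\tr\phi=c_0+\iota^*c_0=0$ and $\de=-\det\phi$ equals $c_0^2/4$ descended to $C_0$, consistent with Proposition \ref{orbits-correspondence-prop}(i). A local check at a ramification point ($u$ on $\wt C_0$, $t=u^2$ on $C_0$, $c_0=u^2\cdot(\text{unit})\,du$) gives $\de\sim t\,(dt)^2$, a simple zero, while off the branch locus $c_0^2$ is nonvanishing; so $\de$ has only simple zeros and $(V,\phi)$ lies in (G2).

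For the inverse functor (G2)$\to$(G1) I would use the spectral curve $\wt C_0=\{\lambda^2=\de\}\sub\tot(\om_{C_0})$, where $\lambda$ is the tautological section. Because $\de$ has simple zeros, $\wt C_0$ is smooth and $\pi:\wt C_0\to C_0$ is a double covering with involution $\lambda\mapsto-\lambda$ and branch divisor $\{\de=0\}$. The eigen-sheaf $\LL_0=\coker(\lambda-\pi^*\phi)$ is then a line bundle with $\pi_*\LL_0\cong V$ by BNR; the $\SL_2$-trivialization of $\det V$ together with the canonical $N=\det(\pi_*\OO_{\wt C_0})\cong\om_{C_0}^{-1}$ for a spectral cover yields $\a_0:\Nm(\LL_0)\cong\om_{C_0}$; and $c_0:=\lambda|_{\wt C_0}$, read inside $\om_{\wt C_0}$ via $\pi^*\om_{C_0}\hra\om_{\wt C_0}$, is antiinvariant with $\mathrm{div}(c_0)=2R$ by the same local computation run backwards. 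I would finish by checking the two functors are mutually quasi-inverse: $\Phi_1\circ\Phi_2\simeq\id$ follows from BNR together with the \'etale-local identification of the norm form with multiplication by $\lambda$, and $\Phi_2\circ\Phi_1\simeq\id$ because the norm-form $\phi$ has spectral curve the original $\wt C_0$ re-embedded by $c_0$, with eigen-sheaf $\LL_0$.

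The main obstacle is bookkeeping rather than a single hard idea: one must track all the twists and canonical (un)trivializations coherently --- in particular the point that $\a_0$ and $c_0$ \emph{jointly} trivialize $\det V$, and the precise matching of normalizing constants between the norm-map description, the eigenvalue $\lambda$, and the formulas of Proposition \ref{orbits-correspondence-prop} --- and one must carry out the ramification-point analysis at the level of scheme structures, so that the equivalence of ``$\mathrm{div}(c_0)=2R$'' with ``$\de$ has simple zeros'' really yields smoothness of $\wt C_0$ and an honest line bundle $\LL_0$ (rather than a merely torsion-free eigen-sheaf).
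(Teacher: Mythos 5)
Your proposal follows essentially the same route as the paper's proof: in the forward direction, $V=\pi_*\LL_0$ with the Higgs field given by the norm form (equivalently, by the $\pi_*\OO_{\wt{C}_0}$-module structure, which the paper identifies with the norm form by a computation at the general point --- this is your \'etale-local step), the crux being that the hypothesis on $c_0$ forces $N^{-1}\cong\om_{C_0}$, where $N:=\det(\pi_*\OO_{\wt{C}_0})$, so that $\a_0$ trivializes $\det V$; and in the backward direction, the spectral curve $\{\lambda^2=\de\}\sub \tot(\om_{C_0})$, smooth because $\de$ has simple zeros, with the eigen-sheaf automatically a line bundle.

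However, the step where you force $\om_{C_0}\cong N^{-1}$ fails as written. You claim that the antiinvariant regular differentials on $\wt{C}_0$ are identified with $H^0(C_0,\om_{C_0})$, via a factorization $c_0=\pi^*\om\cdot s_R$, and that $\div(c_0)=2R$ translates into $\div(\om)=\frac{1}{2}B$. Neither statement is correct: the $\iota$-antiinvariant part of $\pi_*\om_{\wt{C}_0}$ is $\om_{C_0}\ot N^{-1}$ (locally at a ramification point, with $t=u^2$, it is generated by $du$), so the antiinvariant regular differentials form $H^0(C_0,\om_{C_0}\ot N^{-1})$; and with your factorization the divisor condition reads $\pi^*\div(\om)+R=2R$, i.e.\ $\pi^*\div(\om)=R$, which has no solution at all, because the pullback of any divisor from $C_0$ has even multiplicity along $R$ --- this is precisely why you were led to the non-integral expression $\frac{1}{2}B$. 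The repair: $c_0$ corresponds to a rational section $\gamma$ of $\om_{C_0}\ot N^{-1}$ with $\div(c_0)=\pi^*\div(\gamma)$ (the local generator $du$ is nonvanishing), so $\div(c_0)=2R=\pi^*B$ forces $\div(\gamma)=B$, whence $\om_{C_0}\ot N^{-1}\cong\OO_{C_0}(B)\cong N^{-2}$ and $\om_{C_0}\cong N^{-1}$, which is your intended conclusion. The paper's own device is the cleanest form of this argument: since $\div(c_0)=2R=\div(\pi^*F)$, where $F\in H^0(C_0,N^{-2})$ cuts out the branch divisor, there is a unique isomorphism $\pi^*N^{-2}\rTo{\sim}\om_{\wt{C}_0}$ sending $\pi^*F$ to $c_0$; it is anti-equivariant (it takes the invariant section $\pi^*F$ to the antiinvariant section $c_0$), so applying $\pi_*$ and matching $\iota$-eigenparts gives $N^{-1}\cong(\pi_*\pi^*N^{-2})^-\cong(\pi_*\om_{\wt{C}_0})^+\cong\om_{C_0}$. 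With this step repaired, the rest of your argument goes through and coincides with the paper's proof.
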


\begin{proof} To go from (G1) to (G2) we set $V:=\pi_*\LL_0$. 
This is a module over $\pi_*\OO_{\wt{C}_0}=\OO_{C_0}\oplus M^{-1}$, where
$\wt{C}_0$ is given by $x^2=F$ for some $F\in H^0(M^2)$, and the module structure is given by a map $\phi:V\to V\ot M$ such that
$\tr(\phi)=0$ and $-\det(\phi)=F$. 

By Grothendieck-Serre duality, the invariant part of $\pi_*\om_{\wt{C}_0}$ is identified with $\om_{C_0}$.
On the other hand, the fact that the divisor of zeros of $c_0$ is exactly $\pi^*\div(F)$, means that we have a unique $\iota$-antiinvariant isomorphism
$$\pi^*M^2\rTo{\sim} \om_{\wt{C}_0},$$
sending $\pi^*F$ to $c_0$. Applying $\pi_*$ to this isomorphism we deduce an isomorphism 
$$M\simeq (\pi_*\pi^*M^2)^-\simeq (\pi_*\om_{\wt{C}_0})^+\simeq \om_{C_0}.$$
Hence, we can view $\phi$ as a Higgs field on $V$ (and $F$ as a quadratic differential).
%$$\phi:=(\id_V\ot c_0)\wt{\phi}:V\to V\ot\om_{C_0}$$
%we get a Higgs bundle on $C_0$ with $-\det(\phi)=\de:=c_0^2F$.
The isomorphism $\Nm(\LL_0)\simeq \om_C$ corresponds to an isomorphism $\det V\simeq M^{-1}\om_{C_0}\simeq \OO_{C_0}$.

Conversely, starting from $(V,\phi)$ such that $\de=-\det(\phi)$ has only simple zeros, we construct $\pi:\wt{C}_0\to C_0$ as the divisor $u^2=p^*\de$
in $T^*C_0$ (where $p$ is the projection to $C_0$), where $u$ is the tautological section of $p^*\om_{C_0}$. Then the restriction of $u$ defines an
antiinvariant section $c_0$ of $\pi^*\om_{C_0}$ such that $c_0^2=\pi^*\de$. Equivalently, $c_0$ is the canonical antiinvariant section of $\pi^*\om_{C_0}$
corresponding to the identification of the antiinvariant part of $\pi_*\pi^*\om_{C_0}$ with $\OO_{C_0}$.
We can also view $c_0$ as an antiinvariant differential on $\wt{C}_0$.

Let us define the $\pi_*\OO_{\wt{C}_0}$-module structure on $V$, so that the action of the summand $\om_{C_0}\sub \pi_*\OO_{\wt{C}_0}$ 
is given by the Higgs field $\phi$. This structure gives a coherent sheaf $\LL_0$ on $\wt{C}_0$ such that $\pi_*\LL_0=V$. Since $\wt{C}_0$ is smooth, 
$\LL_0$ is automatically a line bundle (since $V$ is a rank $2$ bundle on $C_0$).
% with the action of $\pi_*\OO_{\wt{C}_0}$ .

To prove the equality of $\phi$ with the norm map $\Nm:S^2V\to\om_{C_0}$, we can argue over the general point of $C_0$. 
Consider a basis $(1, u)$ of $\pi_*\OO$, where $u$ is a section of $\om_{C_0}^{-1}$.
By definition, the canonical isomorphism 
\begin{equation}\label{norm-det-isom}
\om_{C_0}^{-1}\ot \Nm(\LL_0)\simeq \det(\pi_*\OO)\ot \Nm(\LL_0)\to \det(\pi_*\LL_0)
\end{equation}
takes $(u\we 1)\ot \Nm(s)$ to $us\we s$, where $s\in \LL_0$.
Now the assertion follows from the fact that by the definition of the isomorphism \eqref{End-S2V-isom},
$\phi$ corresponds to the quadratic map 
$$\om_{C_0}^{-1}\ot S^2\pi_*(\LL_0)\to \det(\pi_*\LL_0): u\ot s^2\mapsto us\we s.$$
\end{proof}

%\begin{remark}
%Explain why the condition that $c_0$ has only double zeros at ramification points is generic???
%\end{remark}

We conclude with an explicit computation of the functions on the moduli spaces of Higgs bundles corresponding to our cuspidal
functions on $\Bun_{\SL_2}(C)$ constructed via theta correspondence, i.e., the image of spherical vectors unders the maps
$\kappa_{\LL,c_0}$ (see \eqref{kappa-L-c0-map}).
% in the case when $C=C_0[e]/(e^2)$, $\wt{C}=\wt{C}_0[e]/(e^2)$. 
%We use the corresponding splitting???
Recall that we can identify the relevant cuspidal piece $\wt{\SS}_{\eta_0}^{G(\hat{\OO})}$ with sections of a $\C^*$-torsor $\rL_\psi$ on 
$$\MM_{\eta_0}^{Higgs}(C_0)\simeq \bSt_{\eta_0}(F_0)\backslash \{g\in \SL_2(\A_{C_0}) \ |\ g^{-1}\eta_0g\in \fg(\hat{\OO})\}/\SL_2(\hat{\OO}).$$

Let us denote by $\de_{\hat{\OO}}$ the function in $\FF_{\LL,c_0}^{G(\hat{\OO})}$ corresponding to integer points (see Sec.\ \ref{global-setup-sec} and 
Sec.\ \ref{modt2-sph-sec} in the local case).

\begin{theorem}\label{main-thm2} 
Assume characteristic of $k$ is $\neq 2$.
Let $\pi:\wt{C}\to C$ be a double covering of smooth curves over $A$, such that the corresponding double covering
$\pi_0:\wt{C}_0\to C_0$ is associated with a quadratic differential $\de$ with simple zeros on $C_0$. We denote by 
$c_0$ be the canonical antiinvariant differential on $\wt{C}_0$ (see the proof of Lemma \ref{Higgs-corr-lem}).
Let also $\LL$ be a line bundle on $\wt{C}$
equipped with an isomorphism $\a:\Nm(\LL)\rTo{\sim}\om_{C/A}$.
%, and let ???

\noindent
(i) Let $f^{Higgs}_{\LL,c_0}$ be the element of $\SS(\MM_{\eta_0}^{Higgs}(C_0),\rL_\psi)$ associated with $\wt{\kappa}_{\LL,c_0}(\de_{\OO})$ (where $\eta_0=\eta_{c_0,\a_0,s_0}$
for some $s_0\in \LL_0(K_0)$), and let
$(V_0=\pi_{0,*}\LL_0,\phi_0)$ denote the Higgs $\SL_2$-bundle associated with $(\LL_0,\a_0,c_0)$ via the correspondence of Lemma \ref{Higgs-corr-lem}.
Then $f^{Higgs}_{\LL,c_0}$ is supported at the point $(V_0,\phi_0)$, and if we use $V:=\pi_*\LL$ to trivialize $\rL_\psi$ at this point, then 
$$f^{Higgs}_{\LL,c_0}=2\de_{(V_0,\phi_0)}.$$
%$(V_0,\phi\in H^0(\und{\End}_0(V)\ot\om_{C_0})$ on $C_0$ (where $\det(\phi)=-c_0^2/4$), one has

\noindent
(ii) Let $f_{\LL,c_0}$ be the function on $\Bun_{\SL_2}(C)$ associated with $\kappa_{\LL,c_0}(\de_\OO)$, and let $(V_0,\phi_0)$ be the corresponding Higgs $\SL_2$-bundle
(see part (i)). Set $V:=\pi_*\LL$; this is an $\SL_2$-bundle on $C$ whose reduction is $V_0$.
%Since $C=C_0[e]/(e^2)$, we can view $V'\in \Bun_{\SL_2}(C)$ as a pair 
%$$V'=(V'_0\in \Bun_{\SL_2}(C_0),y\in H^1(\und{\End}_0(V'_0))).$$
Then for $V'\in \Bun_{\SL_2(C)}$, one has $f_{\LL,c_0}(V')=0$ unless the reduction of $V'$ is isomorphic to $V_0$, and
$$f_{\LL,c_0}(V+y)=2\sum_{\phi\in H^0(\und{\End}(V_0)\ot\om_{C_0}): \det(\phi)=-\de}\psi(y(\phi)),$$
%$$f_{\LL,c_0}(V_0,y)=\sum_{x\in H^0(\wt{C}_0,\pi^*V_0\ot\LL): D(x)=c_0}\psi(y(\Nm(x))),$$
where $y\in H^1(\und{\End}(V_0))\simeq H^0(\und{\End}_0(V_0)\ot\om_{C_0})^*$, and we use the standard action of this group on the fiber of
$\Bun_{\SL_2}(C)\to \Bun_{\SL_2}(C_0)$.

\noindent
(iii) Assume now that $C=C_0\times_{\Spec(k)} \Spec(A)$, $\wt{C}=\wt{C}_0\times_{\Spec(k)}\Spec(A)$, and $(\LL,\a)$ is obtained by extension of scalars to $A=k[\eps]/(\eps^2)$ from the data $(\LL_0,\a_0)$ over $k$.
Let $\chi$ be a character of $\PP_0(\wt{C}_0/C_0)$ (see Sec.\ \ref{fin-cusp-sec}), and let $f_{\LL_0,c_0}$ denote the corresponding function on $\Bun_{\SL_2}(C)$.
Then the function 
$$\Phi_{\LL_0,\chi}:=\sum_{\xi\in \PP_0(\wt{C}_0/C_0)} \chi(\xi) f_{\xi\ot \LL,c_0}$$
on $\Bun_{\SL_2}(C)$ is nonzero and is an eigenfunction for all Hecke operators $T_p$ associated with simple divisors $p\sub C$ (i.e., $p$ is a $A$-flat subscheme whose reduction
from $A$ to $k$ is a closed point $p_0\sub C_0$).
More precisely, such $p$ corresponds to a tangent vector $v$ at $p_0$ and 
$$T_p\Phi_{\LL_0,\chi}=\begin{cases} q^2(\psi(\lan c_0,v\ran)\chi(\OO(p_1-p_2))+\psi(-\lan c_0,v\ran)\chi(\OO(p_2-p_1)))\Phi_{\LL_0,\chi}, & \pi^{-1}(p)=\{p_1,p_2\}, p_1\neq p_2,\\
0, & \text{otherwise}.\end{cases}$$
% $p$ is not a ramification point of $\pi:\wt{C}_0\to C_0$ then
\end{theorem}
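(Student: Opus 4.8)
The plan is to handle the three parts in order, reducing each to machinery already in place. For part (i) I would begin from the description of $\wt{\kappa}_{\LL,c_0}$ in Theorem \ref{two-reps-two-funct-thm}(i): the spherical vector $\de_{\OO}$ is sent to the function $g\mapsto\sum_{v\in Q(\LL,c_0)(K_0),\,\eta(v)=\eta_0}\de_{\OO}(vg)$ on $\SL_2(\A_{C_0})$, which, being $\SL_2(\hat{\OO})$-invariant, is the section $f^{Higgs}_{\LL,c_0}$ of $\rL_\psi$ over $\MM^{Higgs}_{\eta_0}(C_0)$. By the global form of Lemma \ref{v-c-lem} (Section \ref{global-setup-sec}) the set $\{v:\eta(v)=\eta_0\}$ is a single free orbit of $K^*_1=\ker(\Nm\colon k(\wt{C}_0)^*\to k(C_0)^*)$, so the value at a double coset $g$ counts the $a\in K^*_1$ for which $a\,v_{c_0,s_0}\,g$ is everywhere integral. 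The first task is the support statement: applying the local spherical computations of Section \ref{modt2-sph-sec} (Propositions \ref{Hecke-non-split-0-prop}, \ref{Hecke-ram-0-prop}, \ref{Hecke-split-0-prop}) place by place, together with the spectral correspondence of Lemma \ref{Higgs-corr-lem}, I would show that everywhere-integrality forces the spectral line bundle to be exactly $\LL_0$, i.e. the Higgs bundle $(V_0,\phi_0)=(\pi_{0,*}\LL_0,\,\cdot\,)$. The remaining count is the number of everywhere-integral elements of the orbit, namely the global norm-one units $H^0(\wt{C}_0,\OO^*)\cap K^*_1$; since $\de$ has simple zeros the cover $\wt{C}_0$ is ramified, hence geometrically connected with constant field $k$, so $\Nm(a)=a^2$ on constants and this group is $\{\pm1\}$. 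This is exactly where $\operatorname{char}k\neq2$ enters, and it gives the value $2$ once one checks that the trivialisation of $\rL_\psi$ by $V=\pi_*\LL$ contributes trivially.

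For part (ii) I would use Theorem \ref{two-reps-two-funct-thm}(ii) to factor $\kappa_{\LL,c_0}=\kappa_{\eta_0}\circ\wt{\kappa}_{\LL,c_0}$, and then Lemma \ref{kappa-modular-lem} and Proposition \ref{Fourier-prop}, which identify $\kappa_{\eta_0}$ on spherical vectors with the push-forward along $\forg\colon\MM^{Higgs}_{\eta_0}(C)\to\Bun_{\SL_2}(C)$, equivalently the Fourier transform for the torsor $\Bun_{\SL_2}(C)\to\Bun_{\SL_2}(C_0)$. Feeding $f^{Higgs}_{\LL,c_0}=2\de_{(V_0,\phi_0)}$ from part (i) into this transform at once yields vanishing of $f_{\LL,c_0}(V')$ unless $V'$ reduces to $V_0$ (the transform leaves the $C_0$-bundle unchanged), and on the fibre over $V_0$ the value $2\sum_{\phi}\psi(\langle\phi,y\rangle)$, the sum over the Higgs fields in the fibre of $\forg$, which are precisely those $\phi\in H^0(\und{\End}(V_0)\ot\om_{C_0})$ with $\det(\phi)=-\de$ (the characteristic polynomial being fixed by the orbit $\Om_{\eta_0}$ via Proposition \ref{orbits-correspondence-prop}). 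Finally I would identify $\langle\phi,y\rangle$ with $y(\phi)$ under the Serre duality $H^1(\und{\End}(V_0))\simeq H^0(\und{\End}_0(V_0)\ot\om_{C_0})^*$ that underlies the Fourier transform.

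Part (iii) is where the global construction meets the local Hecke computations. Writing $\Phi_{\LL_0,\chi}=\sum_{\xi}\chi(\xi)f_{\xi\ot\LL,c_0}$, I would prove nonvanishing exactly as in the finite-field Proposition of Section \ref{fin-cusp-sec}: restricting to the diagonal (torus) bundles, part (ii) turns each summand into a count of global sections, and choosing $\xi$ so that the relevant degree is zero isolates one nonzero contribution, producing a nonzero alternating sum. For the eigenvalues, since $T_p$ acts through the single place $p$, I would localise and quote Section \ref{modt2-sph-sec}: at an inert point (Proposition \ref{Hecke-non-split-0-prop}) and a ramified point (Proposition \ref{Hecke-ram-0-prop}) one has $T_1(t)\de_{\OO}=0$, giving eigenvalue $0$, while at a split point Corollary \ref{Hecke-split-0-cor} gives $q^2(z+z^{-1})$ with $z=\chi(t,t^{-1})$. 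The crux is to match $z$ with the global data: the $A$-flat divisor $p$ is a first-order deformation of $p_0$, so the lift $t$ of the uniformizer carries the tangent vector $v$; writing $t=t_0(1+\eps w)$ and using $\chi|_{NK^*_1}(1+c)=\psi_E(cc_0)$ together with the residue pairing, one finds $\chi(t,t^{-1})=\chi(\OO(p_1-p_2))\cdot\psi(\langle c_0,v\rangle)$, and substitution gives the stated formula.

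I expect the main obstacle to be the support-and-value computation of part (i): controlling the everywhere-integral points of the global orbit $K^*_1\cdot v_{c_0,s_0}$ simultaneously at all places, and verifying that the unique surviving double coset is $(V_0,\phi_0)$ with multiplicity exactly $2$, requires a careful interplay between the local pictures of Section \ref{modt2-sph-sec} and the spectral correspondence of Lemma \ref{Higgs-corr-lem}. The compatibility of this ``$2$'' with the Fourier unfolding in part (ii) -- reconciling the single groupoid point of support against the full set of Higgs fields with $\det(\phi)=-\de$ on $V_0$, using smoothness of the spectral curve -- is the other delicate bookkeeping point that I would need to pin down.
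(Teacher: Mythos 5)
Your overall architecture does match the paper's: part (ii) is derived exactly as in the paper (Theorem \ref{two-reps-two-funct-thm} plus Lemma \ref{kappa-modular-lem} and Proposition \ref{Fourier-prop} applied to the output of (i)), and your Hecke-eigenvalue computation in (iii), including the matching $\chi(t,t^{-1})=\chi(\OO(p_1-p_2))\,\psi(\lan c_0,v\ran)$ obtained by writing $t=t_0(1+\eps w)$, is the same reduction to Propositions \ref{Hecke-non-split-0-prop}, \ref{Hecke-ram-0-prop}, \ref{Hecke-split-0-prop} that the paper performs. The genuine gap is in part (i), which is where essentially all the work of the theorem lives. The support-and-multiplicity statement cannot be obtained by ``applying the local spherical computations of Section \ref{modt2-sph-sec} place by place'': those propositions describe local $\SL_2(\OO)$-orbit structure and Hecke actions, and they cannot detect which \emph{global} double cosets carry a rational point of the orbit that is integral at all places simultaneously. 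What is needed, and what occupies the bulk of the paper's proof, is the global dictionary: (a) an integral $v$ in the fiber $\eta^{-1}(\eta_0)$ is the same as a sheaf map $v:V(g)\to V_0=\pi_{0,*}\LL_0$; (b) the identity \eqref{eta-Nm-eq}, $\eta(v)=\Nm_{\LL_0}\circ S^2(v)$, proved by pulling back to $\wt{C}_0$ and using \eqref{eta-main-identity}, which converts the orbit condition $\eta(v)=\eta_0$ into compatibility with the Higgs fields, i.e.\ \eqref{S2-fields-condition}; and (c) the computation that the quadric condition $\det(\iota(v),v)=c_0$ is equivalent to $\det(v)=1$, which uses the precise normalization of $c_0$ as the canonical antiinvariant differential together with the isomorphism \eqref{norm-det-isom}. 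Point (c) is indispensable for your own endgame: without it you only know that $v$ is a nonzero Higgs-compatible map (equivalently a nonzero section of a degree-zero line bundle on $\wt{C}_0$, which does force the spectral line bundle to be $\LL_0$), but it is the resulting norm-one condition on that section which cuts the count down to the global norm-one units $\{\pm 1\}$ that you correctly identify. Your proposal asserts the conclusions of (a)--(c) but gives no argument for them, and the tools you cite would not produce one.

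A secondary divergence: your nonvanishing argument in (iii) is different from the paper's and is riskier. You propose to evaluate $\Phi_{\LL_0,\chi}$ at suitable bundles, imitating the finite-field argument of Section \ref{fin-cusp-sec}. But downstairs the summands $f_{\xi\ot\LL,c_0}$ are exponential sums whose supports in $\Bun_{\SL_2}(C)$ may overlap (for instance $\pi_{0,*}(\xi\ot\LL_0)$ can be isomorphic to $\pi_{0,*}\LL_0$ for $\xi\ot\LL_0\simeq\iota^*\LL_0$), so a pointwise evaluation must rule out cancellation between different $\xi$'s, which your outline does not address. The paper avoids this entirely by working upstairs: by part (i) and Lemma \ref{Higgs-corr-lem}, the sections $f^{Higgs}_{\xi\ot\LL,c_0}$ are (twice) delta-functions supported at pairwise distinct points of the Higgs groupoid, so their $\chi$-weighted sum is visibly nonzero, and this nonvanishing is preserved by the fiberwise-invertible Fourier transform of Proposition \ref{Fourier-prop}.
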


\begin{proof}
(i) Recall that the rank $2$ bundle $V(g)$ associated with an adele $g=(g_p)\in \SL_2(\A_{C_0})$ is trivialized at the general point, and for each $p\in C_0$,
the $\OO_{C_0,p}$-submodule $V(g)_p\sub F_0^2$ is given by $V(g)_p=F_0^2\cap g_p\hat{\OO}_p^2$. If in addition $g^{-1}\eta_0g\in \fg\ot \om(\hat{\OO})$,
then $\eta_0$ defines a Higgs field on $V(g)$. If we think of $F_0^2$ as column space, then we can identify the fiber of the dual bundle $V(g)^\vee$ 
with the row space $F_0^{2,rows}$, so that $V(g)^\vee_p=F_0^{2,rows}\cap \hat{\OO}_p^{2,rows}g_p^{-1}$.

By definition, if we lift $g$ to $\wt{g}\in \SL_2(\A_C)$ then 
$$\wt{\kappa}_{\LL,c_0}(\de_{\OO})(\wt{g})=\sum_{v\in Q(\LL,c_0)(K_0):\eta(v)=\eta_0}\de_{\OO}(\wt{v}\wt{g}),$$
where $\wt{v}\in \wt{Q}(\LL,c_0)(K)$ is any lifting of $v$ (the value does not depend on a choice of $\wt{v}$).

By definition, $\de_{\OO}(\wt{v}\wt{g})$ is nonzero if and only if $vg\in \LL_0(\hat{\OO})^2$, which means that $v$ can be viewed as
a global section of $\pi^*V(g)^\vee\ot \LL_0$. Furthermore, if $\wt{v}\wt{g}\in \LL(\hat{\OO})^2$, i.e., $\wt{v}$ corresponds to a global section of 
$V(\wt{g})^\vee \ot \LL$ lifting $v$, then $\de_{\OO}(\wt{v}\wt{g})=1$. 

Next, let us analize the condition $\eta(v)=\eta_0$, viewing $v$ as a section of 
$$H^0(\wt{C}_0,\pi^*V(g)^\vee\ot \LL_0)\simeq \Hom(\pi^*V(g),\LL_0)\simeq \Hom(V(g),V_0).$$
Recall that under the identification $\und{\End}_0(V_0)\ot\om_{C_0}\simeq S^2V_0^\vee\ot\om_{C_0}$,
$\phi_0$ corresponds to the norm map 
$$\Nm_{\LL_0}:S^2V_0\to \Nm(\LL_0)\simeq \om_{C_0}.$$
Now we claim that 
\begin{equation}\label{eta-Nm-eq}
\eta(v)=\Nm_{\LL_0}\circ S^2(v)\in \Hom(S^2V(g),\om_{C_0})\simeq H^0(\und{\End}_0(V(g))\ot\om_{C_0}),
\end{equation}
where we use the natural quadratic map
$$S^2:\Hom(V(g),V_0)\to \Hom(S^2V(g),S^2V_0).$$

We will prove \eqref{eta-Nm-eq} in two steps. First, we claim that
$$\pi^*\eta(v)=\iota(v)\cdot v\in H^0(\wt{C}_0,S^2(\pi^*V(g)^\vee)\ot \iota^*(\LL_0)\ot\LL_0)\simeq H^0(\wt{C}_0,\pi^*(\und{\End}_0(V(g))\ot\Nm(\LL_0))).$$  
Indeed, this follows directly from the identity \eqref{eta-main-identity}.

Now to prove \eqref{eta-Nm-eq}, it remains to check the equality
$$\pi^*(\Nm_{\LL_0}\circ S^2(v))=\iota(v)\cdot v\in S^2(\pi^*V(g)^\vee)\ot \iota^*(\LL_0)\ot\LL_0,$$
where on the left we view $S^2(v)$ as a morphism $V(g)\to \pi_*\LL_0$. We can rewrite the left-hand side as the composition
$$S^2(\pi^*V(g))\to S^2(\pi^*\pi_*\LL_0)\to S^2(\iota^*(\LL_0)\oplus \LL_0)\to \iota^*(\LL_0)\ot \LL_0,$$
where the last arrow is the natural projection. It remains to observe that the map $\pi^*V(g)\to \iota^*(\LL_0)\oplus \LL_0$ obtained from $v$ has components
$\iota^*(v)$ and $v$.

Thus, the condition $\eta(v)=\eta_0$ becomes
\begin{equation}\label{S2-fields-condition}
\phi_0\circ S^2(v)=\eta_0\in \Hom(S^2V(g),\om_{C_0}).
\end{equation}

Next, let us analyze the condition $v\in Q(\LL(c_0))$, i.e., $\det(\iota(v),v)=c_0$. We claim that if we view $v$ as a map of 
vector bundles $V(g)\to V_0$ then it is equivalent to $\det(v)=1$.
Indeed, $\det(\iota(v),v)$ is the determinant of the map of vector bundles $\pi^*V(g)\to \iota^*(\LL_0)\oplus \LL_0$,
where we use the isomorphism $\iota^*(\LL_0)\ot \LL_0\simeq \pi^*\om_{C_0}$. 
Equivalently, it is given as $F\circ\pi^*\det(v)$ where we view $v$ as the map of vector bundles $V(g)\to \pi_*\LL_0$,
and $F$ is the map
$$\det \pi^*(\pi_*\LL_0)\to \det (\iota^*(\LL_0)\oplus \LL_0)\simeq \iota^*(\LL_0)\ot\LL_0\simeq \pi^*\om_{C_0}.$$
It remains to check that under the natural trivialization of $\det \pi_*\LL_0$, $F$ coincides with $c_0$. We can argue over the general point of $C_0$.
Then as in the proof of Lemma \ref{Higgs-corr-lem}, we use the fact that the natural isomorphism \eqref{norm-det-isom} sends
$(u\we 1)\ot \Nm(s)$ to $us\we s$. Since $F(us\we s)=\Nm(s)$, the assertion now follows from the fact that $c_0$ is induced by $u$ (see the proof of Lemma \ref{Higgs-corr-lem}).

%For an $\SL_2$-bundle $V_0$ on $C_0$, let us consider the following two maps:
%\begin{equation}\label{Nm-V-eq}
%\Nm: H^0(\wt{C}_0,\pi^*V_0\ot \LL_0)\to H^0(C,S^2(V_0)\ot \om_{C_0})\simeq  H^0(\und{\End}_0(V_0)\ot\om_{C_0}).
%\end{equation}
%\begin{equation}\label{D-def-eq}
%D: H^0(\wt{C}_0,\pi^*V_0\ot \LL)\rTo{s\mapsto \iota(s)\ot s} H^0(\pi^*V_0\ot \pi^*V_0\ot \pi^*\Nm(\LL))\to H^0(\pi^*\det(V_0)\ot \pi^*\om_{C_0})\simeq H^0(\pi^*\om_{C_0}).
%\simeq H^0(C_0,V_0\ot \pi_*\LL_0)\to H^0(C_0,\det(V_0)\ot \det(\pi_*\LL_0))\simeq H^0(C_0,\pi_*\om_{C_0})^-.
%\end{equation}
%It is easy to see that the image belongs to the antiinvariant part of $H^0(\pi^*\om_{C_0})$.

%$$\Phi_{\LL,c_0}(V_0,\phi)=
%\#\{x\in H^0(\wt{C}_0,\pi^*V_0\ot\LL): D(x)=c_0, \Nm(x)=\phi\},$$
%where we use the maps \eqref{Nm-V-eq} and \eqref{D-def-eq}.

Thus, in our summation, $v$ has to be an isomorphism $V(g)\rTo{\sim}V_0$ with determinant $1$, and the condition 
\eqref{S2-fields-condition} simply says that it is compatible with the Higgs fields, so we get an isomorphism of $\SL_2$-Higgs bundles
$(V(g),\eta_0)\rTo{\sim} (V_0,\phi_0)$.
Note that the Higgs bundle $(V_0,\phi_0)$ has no invariant line subbundles (since $-\det(\phi_0)=\de$ is not a square), 
so it has only $\pm 1$ as automorphisms of determinant $1$. This leads to the claimed formula.

\noindent
(ii) This follows easily from (i) and from Theorem \ref{two-reps-two-funct-thm} and Proposition \ref{Fourier-prop}.

\noindent
(iii) The function $\Phi_{\LL_0,\chi}$ is obtained by the Fourier transform of Proposition \ref{Fourier-prop} from the function 
$$\sum_{\xi\in \PP_0(\wt{C}_0/C_0)} \chi(\xi) f^{Higgs}_{\xi\ot \LL,c_0}.$$
The fact that this function is nonzero follows from (i) and the fact that the Higgs $\SL_2$-bundles associated with $(\xi\ot \LL,c_0)$
are all non-isomorphic (by Lemma \ref{Higgs-corr-lem}).

For every point $p\in C_0$, the corresponding antiinvariant element of the extension of local fields, $c_{0,p}$, is given by $c_0/du$, if 
$\pi:\wt{C}_0\to C_0$ is unramified at $p$, and by $c_0/udu$ if $p$ is a ramification point 
(where $u$ is a local parameter on $\wt{C}_0$). It follows that $v(c_{0,p})=0$ away from the ramification locus and $v(c_{0,p})=1$ if $p$ is
a ramification point. Now the assertion follows from the calculation of Hecke operators in these cases given in Propositions \ref{Hecke-non-split-0-prop},
\ref{Hecke-ram-0-prop} and \ref{Hecke-split-0-prop}.
\end{proof}

%\begin{remark}
%The formula for $f_{\LL,c_0}$ can be rewritten in the form resembling the case of the finite field (see ???)
%$$f_{\LL,c_0}(V'_0,y)=\sum_{x\in H^0(\wt{C}_0,\pi^*(V'_0)^\vee \ot\LL_0): D(x)=c_0}\psi(y(\Nm(x))),$$
%where we use the natural quadratic maps
%$$D:H^0(\wt{C}_0,\pi^*(V_0')^\vee\ot\LL_0)\to H^0(\iota^*\LL_0\ot\LL_0)\simeq H^0(\pi^*\om_{C_0}),$$
%$$\Nm:H^0(\wt{C}_0,\pi^*(V_0')^\vee\ot\LL_0)\to H^0(C_0,S^2(V'_0)^\vee\ot \Nm(\LL_0))\simeq H^0(\und{\End}_0(V'_0)\ot\om_{C_0}).$$
%\end{remark}

\end{document}